\DeclareFontFamily{OT1}{eusb}{} \DeclareFontShape{OT1}{eusb}{m}{n} {<5> <6> <7> <8> <9> <10> <11> <12> <14.4> eusb10}{}
\DeclareMathAlphabet{\eusb}{OT1}{eusb}{m}{n}
\DeclareFontFamily{OT1}{eusm}{} \DeclareFontShape{OT1}{eusm}{m}{n} {<5> <6> <7> <8> <9> <10> <11> <12> <14.4> eusm10}{}
\DeclareMathAlphabet{\eusm}{OT1}{eusm}{m}{n}
\DeclareFontFamily{OT1}{eufm}{} \DeclareFontShape{OT1}{eufm}{m}{n} {<5> <6> <7> <8> <9> <10> <11> <12> <14.4> eufm10}{}
\DeclareMathAlphabet{\mathfrak}{OT1}{eufm}{m}{n}
\DeclareFontFamily{OT1}{fraktura}{}
\DeclareFontShape{OT1}{fraktura}{m}{n} {<5> <6> <7> <8> <9> <10> <11> <12> <13> <14.4> [1.1] eufm10}{}
\DeclareMathAlphabet{\fraktura}{OT1}{fraktura}{m}{n}
\DeclareFontFamily{OT1}{cmfi}{} \DeclareFontShape{OT1}{cmfi}{m}{n} {<5> <6> <7> <8> <9> <10> <11> <12> <13> <14.4> [0.9] cmfi10}{}
\DeclareMathAlphabet{\cmfi}{OT1}{cmfi}{b}{n}
\DeclareFontFamily{OT1}{cmss}{} \DeclareFontShape{OT1}{cmss}{m}{n} {<5> <6> <7> <8> <9> <10> <11> <12> <13> <14.4> cmss10}{}
\DeclareMathAlphabet{\cmss}{OT1}{cmss}{m}{n}
\newtheoremstyle{thm}{1.5ex}{1.5ex}{\itshape\rmfamily}{} {\bfseries\rmfamily}{}{2ex}{}
\newtheoremstyle{def}{1.5ex}{1.5ex}{\rmfamily\sl}{} {\bfseries\rmfamily}{}{2ex}{}
\newtheoremstyle{rem}{1.3ex}{1.3ex}{\rmfamily}{} {\bfseries\rmfamily}{}{2ex}{}
\newtheoremstyle{ass}{1.5ex}{1.5ex}{\rmfamily\sl}{} {\bfseries\rmfamily}{}{2ex}{}
\newenvironment{proofsect}[1] {\vskip0.1cm\noindent{\rmfamily\itshape#1.}}{\qed\vspace{0.15cm}}
\theoremstyle{thm}
\newtheorem{theorem}{Theorem}[section]
\newtheorem{lemma}[theorem]{Lemma}
\newtheorem{proposition}[theorem]{Proposition}
\newtheorem*{Main Theorem}{Main Theorem.}
\newtheorem{corollary}[theorem]{Corollary}
\newtheorem{conjecture}[theorem]{Conjecture}
\theoremstyle{def}
\theoremstyle{rem}
\newtheorem{remark}[theorem]{{Remark}}
\numberwithin{equation}{section}
\renewcommand{\section}{\secdef\sct\sect}
\newcommand{\sct}[2][default]{\refstepcounter{section}
\addcontentsline{toc}{section}
{{\tocsection {}{\thesection}{\!\!\!\!#1\dotfill}}{}}
\vspace{0.7cm}
\centerline{ 
\scshape\arabic{section}.\ #1} \nopagebreak \vspace{0.2cm}}
\newcommand{\sect}[1]{
\vspace{0.4cm} \centerline{\large\scshape\rmfamily #1}
\vspace{0.2cm}}
\renewcommand{\subsection}{\secdef\subsct\sbsect}
\newcommand{\subsct}[2][default]{\refstepcounter{subsection}
\addcontentsline{toc}{subsection}
{{\tocsection{\!\!}{\hspace{1.2em}\thesubsection}{\!\!\!\!#1\dotfill}}{}}
\nopagebreak\vspace{0.45\baselineskip} {\flushleft\bf
\arabic{section}.\arabic{subsection}~\bf #1.~}
\\*[3mm]\noindent
\nopagebreak}
\newcommand{\sbsect}[1]{\vspace{0.1cm}\noindent
\textfb{#1.~}\vspace{0.1cm}}
\renewcommand{\subsubsection}{%
\secdef \subsubsect\sbsbsect}
\newcommand{\subsubsect}[2][default]{%
\refstepcounter{subsubsection}
\addcontentsline{toc}{subsubsection}{{\tocsection{\!\!}
{\hspace{3.05em}\thesubsubsection}{\!\!\!\!#1\dotfill}}{}}
\nopagebreak
\vspace{0.15\baselineskip} \nopagebreak {\flushleft\rmfamily
\itshape\arabic{section}.\arabic{subsection}.\arabic{subsubsection}
\ \rmfamily #1\/.}\ }
\newcommand{\sbsbsect}[1]{\vspace{0.1cm}\noindent
\rmfamily \itshape
\arabic{section}.\arabic{subsection}.\arabic{subsubsection} \
\sffamily #1\/.\ }
\renewcommand{\caption}[1]{%
\vglue0.5cm
\refstepcounter{figure}
\begin{minipage}{0.9\textwidth}\small {\sc Figure~\thefigure. }#1\end{minipage}}
\newcommand{\diam}{\operatorname{diam}}
\newcommand{\textd}{\text{\rm d}\mkern0.5mu}
\newcommand{\texti}{\text{\rm  i}\mkern0.7mu}
\newcommand{\texte}{\text{\rm  e}\mkern0.7mu}
\newcommand{\Var}{\text{\rm Var}}
\newcommand{\Cov}{\text{\rm \Cov}}
\newcommand{\1}{{1\mkern-4.5mu\textrm{l}}}
\renewcommand{\1}{\text{\sf 1}}
\renewcommand{\AA}{\mathcal A}
\newcommand{\EE}{\mathcal E}
\newcommand{\FF}{\mathcal F}
\newcommand{\PP}{\mathcal P}
\newcommand{\RR}{\mathcal R}
\newcommand{\TT}{\mathcal T}
\newcommand{\UU}{\mathcal U}
\newcommand{\E}{\mathbb E}
\newcommand{\N}{\mathbb N}
\newcommand{\BbbP}{\mathbb P}
\newcommand{\Q}{\mathbb Q}
\newcommand{\R}{\mathbb R}
\newcommand{\Z}{\mathbb Z}
\newcommand{\scrF}{\mathscr{F}}
\newcommand{\frb}{\mathfrak b}
\newcommand{\frp}{\mathfrak p}
\newcommand{\twoeqref}[2]{(\ref{#1}--\ref{#2})}
\newcommand{\cc}{{\text{\rm c}}}
\def\myffrac#1#2 in #3{\raise 2.6pt\hbox{$#3 #1$}\mkern-1.5mu\raise 0.8pt\hbox{$#3/$}\mkern-1.1mu\lower 1.5pt\hbox{$#3 #2$}}
\newcommand{\ffrac}[2]{\mathchoice%
        {\myffrac{#1}{#2} in \scriptstyle}
        {\myffrac{#1}{#2} in \scriptstyle}
        {\myffrac{#1}{#2} in \scriptscriptstyle}
        {\myffrac{#1}{#2} in \scriptscriptstyle}
}
\newcommand{\mbC}{\mathbf{C}}
\newcommand{\intr}{\text{\rm int}}
\newcommand{\hull}{\text{\rm hull}}
\newcommand{\vol}{\text{\rm fill}}
\newcommand{\Leb}{\text{\rm Leb}}
\newcommand{\len}{\text{\rm len}}
\newcommand{\poly}{\text{\rm poly}}
\newcommand{\rmB}{{\rm B}}
\newcommand{\wt}{\widetilde}
\newcommand{\wh}{\widehat}
\newcommand{\bae}{\begin{equation}\begin{aligned}}
\newcommand{\eae}{\end{aligned}\end{equation}}
\newcommand{\ind}{{\mathbbm{1}}}
\newcommand{\eqd}{\overset{\textd}=}
\newcounter{podsekce}
\newcommand\podsekce[1]{\refstepcounter{podsekce}\vskip0.05cm\noindent{(\thepodsekce)\itshape\ #1:}\ }
\begin{document}


\title[Isoperimetry in Supercritical Percolation]
{\large 
Isoperimetry in two-dimensional percolation}
\author[M.~Biskup, O.~Louidor, E.B.~Procaccia and R.~Rosenthal]
{M.~Biskup$^{1,2}$, O.~Louidor$^{1}$, E.B.~Procaccia$^{3}$ and R.~Rosenthal$^{4}$}
\thanks{\hglue-4.5mm\fontsize{9.6}{9.6}\selectfont\copyright\,\textrm{2012} \textrm{M.~Biskup, O.~Louidor, E.B.~Procaccia and R.~Rosenthal.
Reproduction, by any means, of the entire
article for non-commercial purposes is permitted without charge.\vspace{2mm}}}
\maketitle

\vspace{-5mm}
\centerline{\textit{$^1$Department of Mathematics, UCLA, Los Angeles, California, USA}}
\centerline{\textit{$^2$School of Economics, University of South Bohemia, \v Cesk\'e Bud\v ejovice, Czech Republic}}
\centerline{\textit{$^3$Faculty of Mathematics and Computer Science, Weizmann Institute of Science, Rehovot, Israel}}
\centerline{\textit{$^4$Einstein Institute of Mathematics, Hebrew University, Jerusalem, Israel}}

\vspace{-0mm}
\begin{abstract}
We study isoperimetric sets, i.e.,  sets with minimal boundary for a prescribed volume, on the unique infinite connected component of supercritical bond percolation on the square lattice. In the limit of the volume tending to infinity, properly scaled isoperimetric sets are shown to converge (in the Hausdorff metric) to the solution of an isoperimetric problem in~$\R^2$  with respect to a particular norm. As part of the proof we also show that the anchored isoperimetric profile as well as the Cheeger constant of the giant component in finite boxes scale to deterministic quantities. This settles a conjecture of Itai Benjamini for the square lattice.
\end{abstract}


\section{Introduction and Results}
\vglue-0.2cm
\subsection{Motivation}\noindent
Isoperimetry is a subject that lies at the heart of geometric measure theory. It provides a fundamental link between metric structures and measures on the underlying space. Isoperimetric inequalities have served as an essential tool for many analytical results. Indeed, they play a crucial role for subjects such as concentration of measure, Nash and Sobolev inequalities, 
spectra of Laplacians (Faber-Krahn and Poincar\'e inequalities), heat-kernel estimates, elliptic PDEs, mixing bounds for diffusions/random walks, etc. Isoperimetric problems, i.e., the characterization of sets of a 
prescribed volume with minimal boundary measure, have been around since the inception of modern science. Attempts for their solution lay the foundations for important methods in mathematics; e.g., the calculus of variations.

The classical isoperimetric problems were stated for the continuum but they have recently found their way into discrete mathematics as well (see, e.g., Chung~\cite[Chapter~2]{Chung-book}). For a finite graph $G=(V,E)$, isoperimetry is often characterized by the \emph{Cheeger constant}
\begin{equation}
\label{eqn:Cheeger}
\Phi_{G}:=
    \min\biggl\{\frac{|\partial_G U|}{|U|}\colon U\subset V,\,0<|U|\le\frac12|V|\biggr\},
\end{equation}
where $\partial_G U$ denotes the edge-boundary of $U$ in $G$, i.e., the set of edges in $E$ with exactly one endpoint in~$U$. The name owes its origin to the thesis of Cheeger~\cite{Cheeger-thesis}, where the bound $\lambda_1\ge c\Phi_G^2$ 
was derived for the first nonzero eigenvalue $\lambda_1$ of the negative Laplacian. (Cheeger's work deals with manifolds; for graph versions and connections to Markov chains see, e.g., Varopoulos~\cite{Varopoulos}, Lawler and Sokal~\cite{Lawler-Sokal}, Sinclair and Jerrum~\cite{Jerrum-Sinclair}.)
In computer science, the problem of finding the Cheeger constant of a graph is known as the {\em sparsest cut}.

When $G$ is infinite 
and amenable, then $\Phi_G=0$ by definition and so \eqref{eqn:Cheeger} is not very useful. A number of surrogates can be substituted instead; for our purposes the most interesting is the \emph{anchored isoperimetric profile}. Given a vertex $0 \in V$, to be called an \emph{anchor}, the isoperimetric profile of $G$ anchored at $0$ is the function $\Phi_{G, 0} \colon \R_+ \to \R_+$ given by
\begin{equation}
\label{eqn:Profile}
\Phi_{G, 0}(r) :=\inf\biggl\{\frac{|\partial_G U|}{|U|}
    \colon 0 \in U \subset V,\, G(U) \text{ \rm connected},\, 0<|U| \le r \biggr\},
\end{equation}
where $G(U)$ is the restriction of $G$ to vertices in~$U$. 
Isoperimetric profiles have proved to be instrumental for delicate mixing-time and heat-kernel estimates for Markov chains (see, e.g., Lov\'asz-Kannan~\cite{Lovasz-Kannan} or the books by Levin, Peres and Wilmer~\cite{Book-LPW} and Montenegro and Tetali~\cite{Montenegro-Tetali}). 

Solutions of isoperimetric problems, i.e., the minimizers in (\ref{eqn:Cheeger}--\ref{eqn:Profile}) are called {\em isoperimetric sets}. In graphs with an underlying geometrical structure, such as lattices, the isoperimetric sets can sometimes be characterized also geometrically. For instance, on~$\Z^d$, they correspond to balls in~$\ell^\infty$-metric (i.e., square boxes). However, aside from a few examples where the underlying geometry is simple and regular, describing isoperimetric sets is a difficult task.

In the present paper we study isoperimetric sets on graphs arising from bond percolation on~$\Z^d$; particularly, for~$d=2$. The subject of percolation is now considerably evolved, see Kesten~\cite{Kesten-percolation}, Grimmett~\cite{grimmett1999percolation}, Bollob\'as and Riordan~\cite{Bollobas-Riordan}, so we recount only the basic facts. Regard~$\Z^d$ as the graph with edge set $\EE^d$ given by all (unordered) nearest-neighbor pairs. Let~$\BbbP$ denote the product (Bernoulli) measure on $\{0,1\}^{\EE^d}$ with the density of~1's given by $p \in [0,1]$. An $\omega$ sampled from $\BbbP$ can be identified with a subgraph of~$\Z^d$, with edge set 
given by the edges~$e$ satisfying $\omega(e)=1$. These edges will be referred to as \emph{open}; 
edges $e$ with $\omega(e)=0$ will be called \emph{closed}. 

It is well known  that, for $d \geq 2$, there is a $p_\cc(d) \in (0,1)$ such that whenever $p > p_\cc(d)$,
the graph associated with $\omega$ contains a unique infinite connected component $\BbbP$-almost surely. This component, usually referred to as the \emph{infinite cluster}, will be denoted by  $\mbC^\infty$. Thanks to ergodicity of~$\BbbP$ with respect to the shifts, the asymptotic density of $\mbC^\infty$ in~$\Z^d$ is
\begin{equation}
\label{eqn:theta_p}
    \theta_p := \BbbP(0 \in \mbC^\infty),
\end{equation}
with $\theta_p>0$ for $p>p_\cc(d)$.
Similarly, for $p > p_\cc(d)$ with probability tending rapidly to $1$ as $n\to\infty$, 
the restriction of $\omega$ to the box $\rmB_\infty(n) := [-n, n]^d \cap \Z^d$, contains a unique connected component whose size is linear in $|\rmB_\infty(n)|$ and all other components are of size at most poly-logarithmic in~$n$. We shall denote this  \emph{giant component} by $\mbC^n$.

It is not hard to surmise the leading order of the anchored isoperimetric profile of~$\mbC^\infty$ and the Cheeger constant of $\mbC^n$ by invoking an analogy with the full lattice:
\begin{equation}
\label{E:1.4}
    \Phi_{\mbC^\infty, 0}(n) \asymp n^{-1/d} \quad\text{ and } \quad
    \Phi_{\mbC^n} \asymp n^{-1},\quad\qquad n\to\infty.
\end{equation}
Thanks to sophisticated facts from percolation theory, these bounds can be established with probability tending rapidly to one (Benjamini and Mossel \cite{benjamini2003mixing}, Mathieu and Remy \cite{mathieu2004isoperimetry}, Rau~\cite{Rau}, Berger, Biskup, Hoffman and Kozma \cite{berger2008anomalous} and Pete \cite{ECP1390}). This led Itai~Benjamini to formulate the following natural conjecture:

\begin{conjecture}
\label{con1}
For $p>p_\cc(\Z^d)$ (and $d\ge2$) the limit $\lim\limits_{n \to \infty} \,n\, \Phi_{\mbC^n}$ exists $\BbbP$\rm-a.s.
\end{conjecture}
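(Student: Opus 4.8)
The plan is to deduce the conjecture --- which we establish in dimension $d=2$, thereby settling the planar case --- from the stronger assertion that $n\,\Phi_{\mbC^n}$ converges, $\BbbP$-a.s.\ and in $L^1$, to an explicit deterministic constant $\varphi_{\mbC}$. The first step is to isolate $\varphi_{\mbC}$ through a continuum isoperimetric problem. Let $\beta_p$ be the norm on $\R^2$ characterized by the requirement that, for any bounded region $R$ with piecewise-smooth boundary, the minimal number of \emph{open} edges whose removal separates $nR\cap\mbC^\infty$ from its complement in $\mbC^\infty$ equals $(1+o(1))\,n\int_{\partial R}\beta_p(\nu_R)\,\dm\HH^1$ as $n\to\infty$; by planar duality such a cut is carried by a dual contour whose cost is its length minus the number of its edges that are closed in the primal lattice (equivalently open in the dual, where percolation is subcritical), and a subadditive-ergodic argument shows that $\beta_p$ is a genuine norm and that the limit exists. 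One then sets
\[
  \varphi_{\mbC}\;:=\;\frac{1}{\theta_p}\,\inf\Bigl\{\,\frac{\int_{\partial W\cap(-1,1)^2}\beta_p(\nu_W)\,\dm\HH^1}{\Leb(W)}\ :\ W\subset[-1,1]^2\ \text{of finite perimeter},\ 0<\Leb(W)\le 2\,\Bigr\},
\]
where only the part of $\partial W$ lying \emph{strictly inside} the box is charged --- the free boundary condition makes the portion on the walls cost nothing. By the corresponding (free-boundary) Wulff theorem this infimum is attained, by a dilate of the Wulff shape of $\beta_p$ suitably positioned and truncated by the walls, and $\varphi_{\mbC}$ is a finite, strictly positive, deterministic number depending only on $p$.

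For the lower bound, note first that by \eqref{E:1.4} the quantity $n\,\Phi_{\mbC^n}$ is bounded away from $0$ and $\infty$, so it suffices to analyze sets $U_n\subset\mbC^n$ with $|\partial_{\mbC^n}U_n|=\Theta(n)$ and $|\partial_{\mbC^n}U_n|/|U_n|\le\Phi_{\mbC^n}+o(1/n)$; rescale such a set to $W_n:=\tfrac{1}{n}U_n\subset[-1,1]^2$. The coarse-graining and lower-semicontinuity estimates underlying the Wulff construction (Dobrushin--Koteck\'y--Shlosman, Alexander, Cerf, Campanino--Ioffe--Velenik) then yield, off an event of stretched-exponentially small $\BbbP$-probability: (i) $\{W_n\}$ is $L^1$-precompact with every subsequential limit $W$ of finite perimeter and $\Leb(W)\le 2$; (ii) $|U_n|=\theta_p\,n^2\,\Leb(W_n)\,(1+o(1))$, using that the density of $\mbC^\infty$ in any macroscopic region concentrates at $\theta_p$ and that low-density \emph{arms} carry more boundary per unit volume than they can be worth; and (iii) since $\partial_{\mbC^n}U_n$ is itself an open-edge cut isolating $U_n$, a union bound over the admissible coarse-grained shapes (numbering $e^{o(n)}$) together with the large-deviation lower bound for minimal cuts gives $|\partial_{\mbC^n}U_n|\ge(1-o(1))\,n\int_{\partial W\cap(-1,1)^2}\beta_p(\nu_W)\,\dm\HH^1$. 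Consequently $\BbbP\bigl(n\,\Phi_{\mbC^n}<\varphi_{\mbC}-\epsilon\bigr)$ is summable in $n$ for every $\epsilon>0$, and Borel--Cantelli gives $\liminf_n n\,\Phi_{\mbC^n}\ge\varphi_{\mbC}$ $\BbbP$-a.s.

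For the upper bound, let $W_\star$ realize the infimum defining $\varphi_{\mbC}$. The Ornstein--Zernike / Wulff construction produces, with $\BbbP$-probability $1-e^{-n^{c}}$, an open-edge cut of cost $(1+o(1))\,n\int_{\partial W_\star\cap(-1,1)^2}\beta_p(\nu_{W_\star})\,\dm\HH^1$ lying within sup-distance $o(n)$ of $\partial(nW_\star)$ (inside the box; near the walls there is nothing to cut); take $U_n^\star\subset\mbC^n$ to be the cluster vertices it encloses, with any small trapped components deleted. Then $|\partial_{\mbC^n}U_n^\star|\le(1+o(1))\,n\int_{\partial W_\star\cap(-1,1)^2}\beta_p(\nu_{W_\star})\,\dm\HH^1$ while, by density concentration, $|U_n^\star|=\theta_p\,n^2\,\Leb(W_\star)(1+o(1))\le\tfrac{1}{2}|\mbC^n|$, so $n\,\Phi_{\mbC^n}\le n\,|\partial_{\mbC^n}U_n^\star|/|U_n^\star|\le\varphi_{\mbC}+o(1)$ off a summable-probability event. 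A second appeal to Borel--Cantelli yields $\limsup_n n\,\Phi_{\mbC^n}\le\varphi_{\mbC}$ $\BbbP$-a.s., and combined with the lower bound this gives $n\,\Phi_{\mbC^n}\to\varphi_{\mbC}$ $\BbbP$-a.s.; in particular the limit in the conjecture exists.

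The main obstacle is the lower bound --- more precisely, step~(ii): decoupling the volume constraint from the surface cost. Since the Cheeger optimizer need not be connected, $U_n$ might a priori be a union of many microscopic cluster-connected fragments, or a macroscopic droplet decorated with long thin low-density \emph{tentacles}, and one must preclude any such configuration from beating the Wulff shape. The natural device is a renormalization on blocks of a large but fixed size in which the \emph{bad} blocks --- those where the cluster is sparse or poorly connected --- form a subcritical site process, so that on the good blocks the cluster has uniformly positive density and its minimal cuts obey the clean large-deviation bounds; the continuum problem then reduces to an anisotropic isoperimetric inequality for $\beta_p$ together with a stability (rigidity) statement forcing near-minimizers to be $L^1$-close to a single box-truncated Wulff shape. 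In dimension $d\ge3$ the same strategy should go through with planar duality replaced by the minimal-cut/maximal-flow description of $(d-1)$-dimensional surfaces, but the control of defect regions and of the norm $\beta_p$ is substantially more delicate, which is why we restrict to $d=2$.
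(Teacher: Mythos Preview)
The paper does \emph{not} prove Conjecture~\ref{con1}. It proves instead Theorem~1.3 for the modified Cheeger constant $\widetilde\Phi_{\mbC^n}=\widetilde\Phi_{\mbC^n,\mbC^\infty}$, in which the boundary of $U$ is measured in $\mbC^\infty$ rather than in $\mbC^n$ (edges ``sticking out'' of $\rmB_\infty(n)$ are counted). The free-boundary quantity $\Phi_{\mbC^n}$ in the conjecture is explicitly left open; see Section~1.3(1), where the authors say the ``difficult part is the analysis of the minimizers for the continuum isoperimetric problem'' with free boundary, and that already the shape of the optimizer (half the square when $p=1$) is unclear for $p$ significantly below~$1$. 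So there is no proof in the paper to compare your proposal to; you are attempting something strictly beyond what the paper establishes.

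Your sketch has a genuine gap precisely at the point the paper flags. You invoke a ``(free-boundary) Wulff theorem'' asserting that the infimum of $\int_{\partial W\cap(-1,1)^2}\beta_p(\nu_W)\,\dm\HH^1/\Leb(W)$ over $W\subset[-1,1]^2$ with $\Leb(W)\le 2$ is attained by a suitably placed and wall-truncated Wulff shape. No such result is cited, and it is not standard: the free-boundary anisotropic isoperimetric problem in a square is not the same as the unconstrained Wulff problem, the optimizer can depend nontrivially on~$\beta_p$, and you need not only existence of a minimizer but enough regularity to run the approximation-by-lattice-circuits step in your upper bound. Relatedly, your lower bound (step~(iii)) treats $\partial_{\mbC^n}U_n$ as a dual contour, but with free boundary a minimizer $U_n$ that reaches $\partial\rmB_\infty(n)$ has a boundary which is an \emph{arc}, not a closed circuit; the right-most-path/interface machinery the paper develops (Proposition~2.2, Proposition~4.1) produces closed curves and does not immediately handle arcs terminating on the box walls at zero cost. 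The paper avoids both issues by switching to $\widetilde\Phi_{\mbC^n}$, which forces every boundary edge to lie in $\mbC^\infty$ and hence every minimizer to be enclosed by a genuine right-most circuit inside~$\rmB_\infty(n)$.

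A secondary remark on method: even for the version the paper does prove, your outline follows the large-deviation/coarse-graining route of the classical Wulff constructions (Alexander--Chayes--Chayes, Dobrushin--Koteck\'y--Shlosman, Cerf), whereas the paper deliberately takes a different path based on almost-sure concentration: it defines $\beta_p$ via the right-boundary distance $b([x],[y])$, proves subadditive convergence and then a quantitative concentration estimate (Theorem~3.1, via Kesten's martingale inequality), and passes between circuits and curves directly (Propositions~4.1--4.2) rather than through $L^1$-compactness of rescaled indicators. That alternative is interesting, but it does not by itself close the free-boundary gap above.
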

\noindent

With this conjecture in sight, Procaccia and Rosenthal~\cite{procaccia2011concentration} have recently established the following bound: For $d\ge2$ and $p>p_\cc(d)$, there is $C=C(d,p)<\infty$ such that
\begin{equation}
\Var(n \Phi_{\mbC^n}) \leq C n^{2-d}.
\end{equation}
This implies concentration of $n \Phi_{\mbC^n}$ around its mean in all dimensions $\ge3$. Unfortunately, no information could be obtained about the limit of 
$\E(n \Phi_{\mbC^n}$). 

The principal aim of the present paper is to prove Benjamini's conjecture for the isoperimetric profile as well as a version of the Cheeger constant (see below) for supercritical bond percolation on~$\Z^2$. In addition, and perhaps more importantly, we characterize the asymptotic shape of the  minimizing sets via an isoperimetric problem in~$\R^2$
with respect to a particular norm.
The latter falls inside a class of variational problems whose solution is given by the so called 
``Wulff construction.'' This is a well-known term in statistical physics; indeed, it has surfaced before in the studies of droplet shapes in two-dimensional percolation (Alexander, Chayes and Chayes~\cite{ACC}) and the Ising model (Dobrushin, Koteck\'y, Shlosman~\cite{Book:DKS}, Ioffe and Schonmann~\cite{Ioffe-Schonmann}). See also Cerf~\cite{Cerf-book} and Bodineau, Ioffe and Velenik~\cite{BIV} for extensions to higher dimensions and general~overviews.  

In spite of certain similarities, there are substantial differences that make the above studies quite different from ours. First, the isoperimetric problem there arises from a large-deviation principle and thus shape-optimization occurs only at the continuum level. Second, the corresponding norm is defined there as the asymptotic decay rate of certain probabilities, while for us it comes from actual random variables. Third, in our context we thus have to control also the concentration of these random variables around their mean; a step that is not needed in~\cite{ACC,Book:DKS,Ioffe-Schonmann}. 

\subsection{Results}
We will now proceed to state the main conclusions of the present paper. All of our considerations will be limited to~$d=2$. Here we recall that, thanks to Kesten's Theorem~\cite{Kesten}, $p_\cc(\Z^2)=\ffrac12$. Notwithstanding, we will keep writing $p_\cc(\Z^2)$ as it is more illuminating. We begin with the isoperimetric profile where the result is easiest to formulate:

\begin{theorem}[Isoperimetric profile]
\label{thm:Profile}
Let $d = 2$ and $p > p_\cc(\Z^2)$.  Then there exists a constant $\varphi_p \in (0,\infty)$
(given by \eqref{eqn:IsoOfBeta} below)
such that $\BbbP(\cdot | 0 \in \mbC^\infty)$-almost surely,
\begin{equation}
\label{E:1.6}
    \lim_{n \to \infty} n^{1/2}\, \Phi_{\mbC^\infty,0}(n) = \theta_p^{-1/2} \varphi_p  ,
\end{equation}
where $\theta_p$ is defined in \eqref{eqn:theta_p}.
\end{theorem}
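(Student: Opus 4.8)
The plan is to sandwich $n^{1/2}\Phi_{\mbC^\infty,0}(n)$ between two bounds converging to the same limit, both governed by a surface tension $\beta_p$ built from the percolation configuration and by the continuum isoperimetric problem it generates. As a preliminary one constructs $\beta_p$: for a given direction one looks at the minimal number of \emph{open} edges needed to sever the two short faces of a long thin lattice rectangle perpendicular to that direction, and shows (via FKG together with a subadditive/concentration argument) that after normalization this converges to a deterministic quantity which extends to a norm $\beta_p(\cdot)$ on $\R^2$; the crucial point is that $\beta_p(x)>0$ for $x\neq0$, which is precisely where supercriticality $p>p_\cc(\Z^2)$ enters, since a macroscopic separating cut must sever a positive density of ``robustly connected'' open edges. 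One then sets $\varphi_p:=\inf\{\int_{\partial^* A}\beta_p(\nu_A)\,\dm\HH^1:\ A\subset\R^2\text{ of finite perimeter},\ |A|=1\}$; the solution of the Wulff problem identifies the infimum with a dilate of the Wulff shape $W_p$, gives $\varphi_p\in(0,\infty)$, and yields by scaling $\int_{\partial^* A}\beta_p(\nu_A)\,\dm\HH^1\ge\varphi_p|A|^{1/2}$ for every finite-perimeter $A\subset\R^2$. The factor $\theta_p^{-1/2}$ in \eqref{E:1.6} will then appear through the passage between the lattice-volume of a region and the number of cluster vertices it contains, the latter being an $\theta_p$-fraction of the former.

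\emph{Upper bound.} Fix $\ep>0$, let $W$ be a translate of $((1-\ep)n/\theta_p)^{1/2}W_p$ placed so as to contain the origin, and build, with $\BbbP(\,\cdot\mid 0\in\mbC^\infty)$-probability tending to $1$, a connected $U\subseteq\mbC^\infty$ with $0\in U$, $|U|\le n$ and $|\partial_{\mbC^\infty}U|\le(1+\ep)\varphi_p(n/\theta_p)^{1/2}$, as follows: tile a thin tube around $\partial W$ by mesoscopic boxes of side $\ell$, in each ``good'' box (which occurs with high probability) select a minimal open-edge cut separating the inner from the outer face of the tube, of size at most $(1+o(1))\beta_p(\nu)\,\ell$ by the definition of $\beta_p$, patch these cuts into a dual circuit $\Gamma$ encircling the origin and running near $\partial W$, and take $U:=\mbC^\infty\cap\intr(\Gamma)$, joined to $0$ by a geodesic in $\mbC^\infty$ of poly-logarithmic length (which costs negligible volume and boundary). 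Then $|\partial_{\mbC^\infty}U|$ is at most the number of open edges crossed by $\Gamma$, i.e.\ $(1+o(1))\int_{\partial W}\beta_p(\nu)\,\dm\HH^1=(1+o(1))(1-\ep)^{1/2}\varphi_p(n/\theta_p)^{1/2}$, while $|U|=(1+o(1))\theta_p|W|=(1+o(1))(1-\ep)n$ by the law-of-large-numbers control of the cluster density in a macroscopic region, and connectedness of the bulk of $\mbC^\infty\cap\intr(\Gamma)$ is a standard feature of the supercritical cluster. Hence $n^{1/2}\Phi_{\mbC^\infty,0}(n)\le(1-\ep)^{-1/2}\theta_p^{-1/2}\varphi_p+o(1)$.

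\emph{Lower bound.} Fix $\ep>0$; one must show $|\partial_{\mbC^\infty}U|\ge(1-\ep)\theta_p^{-1/2}\varphi_p|U|^{1/2}$ for every connected $U\ni 0$ in $\mbC^\infty$ with $|U|$ large (for $|U|$ below a fixed multiple of $n^{1/2}$ the trivial bound $|\partial_{\mbC^\infty}U|\ge 1$ already suffices, and one may also assume $|\partial_{\mbC^\infty}U|\le M|U|^{1/2}$ for a large constant $M$, the bound being immediate otherwise). Since $\partial_{\mbC^\infty}U$ disconnects $U$ from infinity in $\omega$, it dominates (in cardinality) a minimal such cutset $S$, and by planar duality $S$ is exactly the set of open edges crossed by the outer dual circuit $\Gamma$ of the vertex set that $S$ separates off; thus $|\partial_{\mbC^\infty}U|\ge\#\{\text{open edges crossed by }\Gamma\}$, $U\subseteq\intr(\Gamma)$, and (since holes of that vertex set carry no cluster vertices) $\mbC^\infty\cap\intr(\Gamma)\supseteq U$. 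A mesoscopic coarse-graining — good boxes, concentration of minimal open-edge cuts around $\beta_p(\nu)\,\ell$, and lower semicontinuity of the anisotropic perimeter along the rescaled coarse-grained curves (whose lengths are bounded in terms of $|\partial_{\mbC^\infty}U|$, so that BV compactness applies) — gives $|\partial_{\mbC^\infty}U|\ge(1-o(1))\varphi_p|\intr(\Gamma)|^{1/2}$, uniformly over all such $\Gamma$ near the origin; at the mesoscopic level the entropy over the possible shapes of $\Gamma$ is $\exp(O(|\partial_{\mbC^\infty}U|/\ell))$, which, with $\ell\to\infty$, is negligible against the exponential concentration of the cluster density on the enclosed region. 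Finally $|U|\le|\mbC^\infty\cap\intr(\Gamma)|\le(\theta_p+o(1))|\intr(\Gamma)|$, whence $|\partial_{\mbC^\infty}U|\ge(1-o(1))\varphi_p(|U|/\theta_p)^{1/2}$; taking the infimum over $|U|\le n$ (attained near $|U|=n$) gives $n^{1/2}\Phi_{\mbC^\infty,0}(n)\ge(1-\ep)\theta_p^{-1/2}\varphi_p-o(1)$. Since $r\mapsto\Phi_{\mbC^\infty,0}(r)$ is non-increasing, proving both bounds along a geometric subsequence $n_k=\lceil(1+\de)^k\rceil$ (so the probabilistic errors are summable and Borel--Cantelli applies), interpolating, and letting $\de,\ep\downarrow0$ through countable sequences upgrades them to the almost sure limit \eqref{E:1.6}.

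\emph{Main obstacle.} The heart of the argument is the sharp surface-tension lower bound: proving that, uniformly over all (bulky) contours near the origin, the number of severed open edges is at least $(1-o(1))$ times the continuum anisotropic perimeter of the enclosed region. This couples a delicate renormalization (identification of good mesoscopic boxes, control of minimal open-edge cuts and of their fluctuations), a compactness and lower-semicontinuity argument for the anisotropic perimeter functional, and the entropy estimate above, all of which must be made uniform over the configuration space. Constructing $\beta_p$ with its norm and strict-positivity properties, and producing the matching global cut by patching mesoscopic cuts in the upper bound, are the other substantial ingredients; the remaining inputs (chemical-distance bounds, density and large-scale connectedness of the supercritical cluster, exponential concentration of the cluster density) are available from percolation theory.
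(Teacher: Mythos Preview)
Your plan is correct in outline and would work, but it takes a genuinely different technical route from the paper. You set up $\beta_p$ as a cut-based surface tension (minimal number of open edges separating the short faces of a long rectangle) and then run the classical Wulff-construction machinery---coarse-graining, BV compactness, lower semicontinuity of the anisotropic perimeter---as in the Alexander--Chayes--Chayes and Cerf approaches. The paper instead defines $\beta_p$ via a first-passage-percolation-type quantity: the \emph{right-boundary distance} $b(x,y)$, the minimum over \emph{open right-most paths} from $x$ to $y$ of the number of open edges in their right boundary. Existence of the norm then comes from the subadditive ergodic theorem, and sharp concentration from Kesten's martingale bound for FPP (after a modification that penalizes closed edges). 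The lower bound is obtained directly: one extracts the right-most boundary circuit $\gamma$ of $U$, takes its polygonal approximation, and uses the concentration of $b$ on each segment to get $\frb(\gamma)\ge(1-\epsilon)\len_{\beta_p}(\lambda)$ for a nearby simple curve $\lambda$---no compactness or semicontinuity is invoked. The upper bound is similarly direct: nearly-optimal right-most paths along the segments of a polygonal approximation of $\partial\widehat W_p$ are $*$-concatenated into a single open circuit, and $U$ is taken as $\vol(\gamma)\cap\mbC^\infty$, which is automatically connected (every vertex is joined to $\gamma$ by its path to infinity) and contains $0$ without any geodesic patching.

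Your approach buys a framework that in principle extends to $d\ge3$ and reuses established large-deviation technology; the paper's approach buys a more elementary, 2D-specific argument with explicit stretched-exponential error bounds and avoids the heavier geometric-measure-theory apparatus. The two definitions of $\beta_p$ should coincide (your dual circuit $\Gamma$ is exactly the dual $\gamma_\star$ of the paper's right-most circuit, and the open edges $\Gamma$ crosses are precisely the open edges in $\partial^+\gamma$), but this equivalence is not entirely trivial and is not something the paper spells out.
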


\begin{remark}
We write the limit value as a product of two (for now implicit) terms to emphasize their origin: $\theta_p$ arises from the volume restriction while $\varphi_p$ captures a boundary length.
\end{remark}

Next we will address the Cheeger constant for the giant component $\mbC^n$. As it turns out, it is more natural to look at the quantity $\widetilde\Phi_{\mbC^n, \mbC^\infty}$, where for a finite subgraph $G=(V,E)$ of a (possibly infinite) graph $H$, we define
\begin{equation}
\label{eqn:Cheeger'}
\widetilde\Phi_{G, H} :=
    \inf\biggl\{\frac{|\partial_{H} U|}{|U|}\colon U\subset V,\,0<|U|\le\frac12|V|\biggr\}.
\end{equation}
The rationale behind the use of $\widetilde\Phi_{\mbC^n, \mbC^\infty}$, or $\wt{\Phi}_{\mbC^n}$ for short, in place of $\Phi_{\mbC^n}$ is to avoid giving unfair advantage to sets which are ``attached'' to the boundary of $\rmB_\infty(n)$. We can now state a theorem which settles a version of Conjecture~\ref{con1}:

\begin{theorem}[Cheeger constant]
\label{thm:Cheeger}
Let $d = 2$ and $p > p_\cc(\Z^2)$ and let $\varphi_p$ be as in Theorem~\ref{thm:Profile} and~$\theta_p$ as in \eqref{eqn:theta_p}. Then,
$\BbbP$-almost surely,
\begin{equation}
\label{E:1.8}
    \lim_{n \to \infty} n \,\wt{\Phi}_{\mbC^n} = \frac1{\sqrt{2}} \theta_p^{-1} \varphi_p .
\end{equation}
\end{theorem}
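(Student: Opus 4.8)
The plan is to reduce Theorem~\ref{thm:Cheeger} to a deterministic continuum isoperimetric problem in the square and to transfer that to $\wt{\Phi}_{\mbC^n}$ using the same coarse-graining estimates that underlie Theorem~\ref{thm:Profile}. Write $Q_n:=[-n,n]^2$, let $\beta_p$ be the norm on $\R^2$ for which the continuum isoperimetric problem governs the limit shape (as in the paper's characterization), let $W_{\beta_p}$ be its Wulff shape, and recall that $\mathcal P_{\beta_p}(A)\ge\varphi_p\sqrt{|A|}$ for every finite-perimeter $A\subset\R^2$, with equality exactly for dilates of $W_{\beta_p}$; here $\varphi_p$ is the constant of Theorem~\ref{thm:Profile} (this is consistent because minimizing $\mathcal P_{\beta_p}(A)/(\theta_p|A|)$ over $|A|\le n/\theta_p$ gives $\theta_p^{-1/2}\varphi_p\,n^{-1/2}$). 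The arithmetic producing $\tfrac1{\sqrt2}$: since $|\mbC^n|=(1+o(1))\,4\theta_p n^2$ almost surely, the constraint $|U|\le\tfrac12|\mbC^n|$ corresponds to continuum volume at most $(1+o(1))\,2n^2=\tfrac12|Q_n|$; the cheapest continuum set of volume $2n^2$ is the dilate of $W_{\beta_p}$ to that volume, of $\beta_p$-perimeter $\varphi_p\sqrt2\,n$, and dividing by the cluster volume $2\theta_p n^2$ yields $\varphi_p/(\sqrt2\,\theta_p n)$. The deterministic input that makes this attainable is that this dilate \emph{fits} inside $Q_n$: since $W_{\beta_p}$ is convex and invariant under the symmetries of $\Z^2$, reflection in an axis forces $(\pm w,0),(0,\pm w)\in W_{\beta_p}$, where $w$ is the half-width of $W_{\beta_p}$; thus $W_{\beta_p}$ contains the diamond of radius $w$ and lies in $[-w,w]^2$, so $2w^2\le|W_{\beta_p}|\le 4w^2$, and the dilate of volume $2n^2$ has half-width $\le n$. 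I would isolate this as a short geometric lemma.

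\emph{Upper bound.} Fix $\varepsilon\in(0,1)$ and let $A$ be the dilate of $W_{\beta_p}$ to volume $(1-\varepsilon)2n^2$ centered at the origin; by the lemma its half-width is at most $\sqrt{1-\varepsilon}\,n$, so $A\subset\rmB_\infty((1-\eta)n)$ with $\eta:=1-\sqrt{1-\varepsilon}>0$. Put $U:=\mbC^\infty\cap A$. On the almost surely eventual event that $\omega^n$ has a single non-polylogarithmic component, every vertex of $\mbC^\infty$ inside $\rmB_\infty((1-\eta)n)$ lies in $\mbC^n$ (it is joined inside $\rmB_\infty(n)$ to $\partial\rmB_\infty(n)$, so its box-component has diameter $\ge\eta n$), hence $U\subset\mbC^n$. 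By the ergodic theorem, together with a variance bound for uniformity, $|U|=(1+o(1))\,\theta_p(1-\varepsilon)2n^2<\tfrac12|\mbC^n|$ for large $n$; by surface-order concentration of the number of cluster edges crossing the boundary of the fixed macroscopic region $A$, $|\partial_{\mbC^\infty}U|=(1+o(1))\,\mathcal P_{\beta_p}(A)=(1+o(1))\,\varphi_p\sqrt{(1-\varepsilon)2}\,n$. (These two facts are the ``forward'' half of the coarse-graining behind Theorem~\ref{thm:Profile}.) Dividing and then letting $\varepsilon\downarrow0$ gives $\limsup_n n\,\wt{\Phi}_{\mbC^n}\le\varphi_p/(\sqrt2\,\theta_p)$.

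\emph{Lower bound.} Let $U\subset\mbC^n$ with $0<|U|\le\tfrac12|\mbC^n|$ be arbitrary. The core step is that, on an almost surely eventual event and with error terms uniform over all such $U$ and all positions of $U$ in $Q_n$, one can attach to $U$ a finite-perimeter set $\widehat U\subset\R^2$ with $|\widehat U|=(1+o(1))|U|/\theta_p$ and $\mathcal P_{\beta_p}(\widehat U)\le(1+o(1))|\partial_{\mbC^\infty}U|$; this is the ``backward'' half of the Theorem~\ref{thm:Profile} coarse-graining, now needed uniformly over the box rather than around a fixed anchor (a union bound over the $\asymp n^2$ admissible block positions against the stretched-exponential concentration estimates upgrades the quantifier). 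Combining with the continuum isoperimetric inequality $\mathcal P_{\beta_p}(\widehat U)\ge\varphi_p\sqrt{|\widehat U|}$,
\begin{equation*}
\begin{aligned}
\frac{|\partial_{\mbC^\infty}U|}{|U|}
&\ \ge\ (1-o(1))\,\frac{\varphi_p\sqrt{|U|/\theta_p}}{|U|}
\ =\ (1-o(1))\,\frac{\varphi_p}{\sqrt{\theta_p|U|}}\\[1mm]
&\ \ge\ (1-o(1))\,\frac{\varphi_p}{\sqrt{\theta_p\cdot\tfrac12|\mbC^n|}}
\ =\ (1-o(1))\,\frac{\varphi_p}{\sqrt2\,\theta_p\,n},
\end{aligned}
\end{equation*}
where we used $|U|\le\tfrac12|\mbC^n|$ and $|\mbC^n|=(1+o(1))\,4\theta_p n^2$. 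Hence $\liminf_n n\,\wt{\Phi}_{\mbC^n}\ge\varphi_p/(\sqrt2\,\theta_p)$, which with the upper bound gives \eqref{E:1.8}. Disconnectedness of $U$ is harmless: no open edge joins two distinct components of the graph $\mbC^n(U)$ (its endpoints lie in one giant cluster, hence in one component), so $|\partial_{\mbC^\infty}U|=\sum_i|\partial_{\mbC^\infty}U_i|$, and peeling off the components of size below a slowly growing threshold --- which, by discrete isoperimetry in $\mbC^\infty$, only raises the ratio --- together with $\sum_i\sqrt{a_i}\ge\sqrt{\sum_i a_i}$ reduces to the macroscopic case.

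\emph{Main obstacle.} The single hard ingredient is the ``backward'' coarse-graining: that no set $U$, however fractal its boundary or however spread out, can beat the $\beta_p$-perimeter of its continuum coarse-graining by more than a factor $1+o(1)$. This is the Wulff-construction heart of the argument and relies on convexity of $\beta_p$, a renormalization/block scheme, and surface-order concentration; compared with Theorem~\ref{thm:Profile}, the only extra work is the indicated uniformity over $U\subset\rmB_\infty(n)$. The remaining ingredients --- the geometric fitting lemma, the asymptotics of $|\mbC^n|$, the forward construction, and the treatment of disconnected $U$ --- are comparatively routine.
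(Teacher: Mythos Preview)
Your lower bound is in line with the paper's argument (Proposition~5.6 via Proposition~4.1), and the geometric fitting lemma you isolate is exactly Lemma~5.1(4). The treatment of disconnected $U$ is also fine, though the paper does it more directly: since $|\partial_{\mbC^\infty}U|/|U|$ is a weighted average of the same ratio over the connected components, the infimum is attained on a connected set.

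The gap is in your upper bound. You set $U:=\mbC^\infty\cap A$ for the scaled Wulff shape $A$ and assert $|\partial_{\mbC^\infty}U|=(1+o(1))\,\mathcal P_{\beta_p}(A)$ ``by surface-order concentration of the number of cluster edges crossing $\partial A$.'' But in this paper $\beta_p$ is \emph{not} defined as an edge-crossing density: it is the limit of the \emph{optimized} right-boundary distance $b([0],[nx])/n$, an infimum over open right-most paths. There is no reason the count of $\mbC^\infty$-edges crossing a fixed straight segment should equal (rather than exceed) $\beta_p$ times its length; the optimization in $b$ lets the path wiggle to avoid open right-boundary edges, which a fixed straight cut cannot do. So your construction only yields $|\partial_{\mbC^\infty}U|\le(1+o(1))\,\mathcal P_\tau(A)$ for some other norm $\tau\ge\beta_p$, and the resulting upper bound on $n\,\wt\Phi_{\mbC^n}$ does not match the lower bound.

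The paper's upper bound (Proposition~5.7, resting on Proposition~4.2) avoids this by \emph{constructing} an open right-most circuit $\gamma$ that approximates the Wulff curve and has $\frb(\gamma)\le(1+\epsilon)R\varphi_p$: one takes a polygonal approximation $\poly(x_0,\dots,x_N)$ of $\hat\gamma_p$, finds for each segment an $\epsilon$-optimal open right-most path $\gamma_k\in\RR([Rx_{k-1}],[Rx_k])$ using the concentration estimate (Theorem~3.1 and Proposition~3.2), and $*$-concatenates these into a circuit (Lemma~4.5). Then $U:=\vol(\gamma)\cap\mbC^\infty$ works because every edge of $\partial_{\mbC^\infty}U$ lies in $\partial^+\gamma$, so $|\partial_{\mbC^\infty}U|\le\frb(\gamma)$. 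In short, the missing idea is that to realize the $\beta_p$-perimeter you must let the discrete boundary be a near-minimizing right-most circuit, not the trace of a fixed continuum curve.
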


\begin{remark}
The factor $1/\sqrt 2$ arises from the factor $\ffrac12$ in \eqref{eqn:Cheeger'} and the fact that $\rmB_\infty(n)$ has $\sim (2n)^2$ vertices. In particular, if $\ffrac12$ in \eqref{eqn:Cheeger'} is replaced by $\alpha\in(0,\ffrac12]$, then 
$1/\sqrt2$~is replaced by~$1/(2\alpha^{1/2})$~in \eqref{E:1.8}. 
\end{remark}

Having established the existence of a limit value, the next natural question is its characterization.  This requires an introduction of an isoperimetric problem in $\R^2$ for a specific norm. To set up the necessary notation, for a curve~$\lambda$, i.e., a continuous map $\lambda\colon[0,1]\to\R^2$, and a norm~$\rho$ on~$\R^2$, let the $\rho$-length of~$\lambda$ be defined as
\begin{equation}
\len_\rho(\lambda):=\sup_{N\ge1}\,\,\sup_{0\le t_0<\dots<t_N\le1}\,\,\sum_{i=1}^N\,\rho\bigl(\lambda(t_i)-\lambda(t_{i-1})\bigr).
\end{equation}
The curve~$\lambda$ is 
called rectifiable if $\len_\rho(\lambda)<\infty$ for any 
norm~$\rho$ on~$\R^2$. If~$\lambda$ is simple and closed (i.e., Jordan), its interior $\intr(\lambda)$ is the unique bounded component of~$\R^2\setminus\lambda$.

In Theorem~\ref{thm:BetaP} we will define a specific norm~$\beta_p$ associated with supercritical percolation with parameter~$p$. This norm  is invariant under reflections through the axes and the diagonals of~$\Z^2$. (The definition is rather involved and so we leave further specifics to Section~\ref{sec2}.) For this norm~$\varphi_p$ then solves a classical isoperimetric problem:

\begin{theorem}[Limit value]
\label{thm-limit-value}
Let $d = 2$ and $p > p_\cc(\Z^2)$ and let $\beta_p$ be the norm from Theorem~\ref{thm:BetaP}. Then $\varphi_p$ from Theorems~\ref{thm:Profile} and \ref{thm:Cheeger} satisfies
\begin{equation}
\label{eqn:IsoOfBeta}
\varphi_p = \inf \bigl\{\,\len_{\beta_p}(\lambda) \colon 
    \lambda \text{\, is a Jordan curve in~$\R^2$, }
    \Leb(\intr(\lambda)) = 1 \bigr\} \, .
\end{equation}
Here $\Leb$ stands for the Lebesgue measure on~$\R^2$.
\end{theorem}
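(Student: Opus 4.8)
\emph{Strategy and the norm.} The plan is to realise $\beta_p$ as a first-passage-percolation norm on the dual lattice, to identify the right-hand side of \eqref{eqn:IsoOfBeta} as the associated continuum isoperimetric constant, and then to match it with the limit already furnished by Theorem~\ref{thm:Profile}. On the dual lattice $(\Z^2)^*=\Z^2+(\tfrac12,\tfrac12)$ assign to each dual edge $e^*$ the weight $\omega(e)\in\{0,1\}$, where $e$ is the primal edge crossed by $e^*$, and let $T$ be the resulting first-passage time. Since $p>p_\cc(\Z^2)=\tfrac12$, the weight-zero edges form a \emph{subcritical} percolation, so their clusters have exponential tails; hence a dual path joining two points at Euclidean distance $r$ must traverse at least of order $r$ weight-one edges outside an event of stretched-exponentially small probability. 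Kingman's subadditive ergodic theorem, applied first along rational directions and then extended by homogeneity and subadditivity, yields a deterministic, positively homogeneous, subadditive limit $\beta_p(x):=\lim_n n^{-1}T(0,\lfloor nx\rfloor)$, which the previous remark shows to be strictly positive off the origin; thus $\beta_p$ is a norm, and its symmetry under the reflections of $\Z^2$ through axes and diagonals is inherited from the invariance of $\BbbP$ and of the weight rule. The relevance of $\beta_p$ to isoperimetry is planar duality: for connected $U\subset\mbC^\infty$ with connected complement, $\partial_{\mbC^\infty}U$ is carried by a Jordan contour in $(\Z^2)^*$ each of whose crossed primal edges is either closed (not counted) or an open edge incident to $U$ (counted once), so $|\partial_{\mbC^\infty}U|$ equals the $T$-length of that contour; the ``no holes'' reduction lets one take $U=\mbC^\infty\cap\intr(\text{contour})$.

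\emph{Reduction and the upper bound.} Write $\varphi_p'$ for the right-hand side of \eqref{eqn:IsoOfBeta}. Classical Wulff theory for the norm $\beta_p$ gives $\varphi_p'\in(0,\infty)$, the sharp inequality $\len_{\beta_p}(\lambda)\ge\varphi_p'\sqrt{\Leb(\intr\lambda)}$ for rectifiable Jordan curves (with equality for the dilated Wulff shape), and lower semicontinuity of $\len_{\beta_p}$ under convergence of curves together with their enclosed areas. It suffices to prove $\lim_n n^{1/2}\Phi_{\mbC^\infty,0}(n)=\theta_p^{-1/2}\varphi_p'$ almost surely; comparison with \eqref{E:1.6} then forces $\varphi_p=\varphi_p'$. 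For the upper bound take a dilate $W$ of the Wulff shape with $\Leb(W)=(1-\delta)n/\theta_p$, translate so that $0\in\mbC^\infty\cap W=:U$, and test $\Phi_{\mbC^\infty,0}(n)$ against $U$; the ergodic theorem (with an exponential rate for a Borel--Cantelli step) gives $|U|=(1+o(1))(1-\delta)n\le n$, while running near-geodesics of $T$ along the sides of a fine polygonal approximation of $\partial W$ yields a dual circuit of $T$-length $(1+o(1))\len_{\beta_p}(\partial W)=(1+o(1))\varphi_p'\sqrt{(1-\delta)n/\theta_p}$ enclosing $U$. Dividing, letting $n\to\infty$ and then $\delta\downarrow0$ gives $\limsup_n n^{1/2}\Phi_{\mbC^\infty,0}(n)\le\theta_p^{-1/2}\varphi_p'$.

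\emph{The lower bound.} Let $U_n$ attain $\Phi_{\mbC^\infty,0}(n)$ (an infimum over finitely many sets); a priori discrete isoperimetry together with the (already established) boundedness of $n^{1/2}\Phi_{\mbC^\infty,0}(n)$ forces $|U_n|\asymp n$, hence $|U_n|\to\infty$. Coarse-grain the outer boundary contour $\gamma_n^*$ of $U_n$ on blocks of side $K$ with $1\ll K\ll\sqrt n$: on the overwhelmingly likely event that all blocks are ``good'', $\gamma_n^*$ is shadowed by a polygon of Euclidean length $O(\sqrt n)$, so the rescaled polygons $n^{-1/2}\gamma_n^*$ are precompact and any subsequential limit $\lambda$ is a rectifiable Jordan curve; since (after the no-holes reduction) $U_n=\mbC^\infty\cap\intr\gamma_n^*$, the ergodic theorem gives $\Leb(\intr\lambda)=\lim n^{-1}|U_n|/\theta_p$. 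A block-wise lower bound $(1-o(1))\beta_p(\text{direction})\times(\text{width})$ on the number of weight-one edges crossed inside each good block, combined with lower semicontinuity, gives $|\partial_{\mbC^\infty}U_n|\ge(1-o(1))\sqrt n\,\len_{\beta_p}(\lambda)\ge(1-o(1))\sqrt n\,\varphi_p'\sqrt{\Leb(\intr\lambda)}=(1-o(1))\varphi_p'\sqrt{|U_n|/\theta_p}$. Since $|U_n|\le n$, dividing by $|U_n|$ and multiplying by $\sqrt n$ yields $n^{1/2}\Phi_{\mbC^\infty,0}(n)\ge(1-o(1))\theta_p^{-1/2}\varphi_p'$, which together with the upper bound completes the identification.

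\emph{The main obstacle} is the block-wise lower bound on passage times, i.e.\ ruling out that some wildly oscillating cutset of size $O(\sqrt n)$ attains an anomalously small open-edge count --- and this must hold uniformly over the exponentially many such cutsets. The remedy is a concentration inequality for passage times across a slab (of bounded-differences/martingale or Talagrand type) together with a chaining argument over a polynomial net of mesoscopic directions and positions, sharp enough that Borel--Cantelli upgrades the estimate to an almost-sure statement; its indispensable companion is the geometric regularisation --- the no-holes reduction and the coarse-graining --- which forces the rescaled cutsets into a compact family while perturbing lengths and enclosed areas only by $o(1)$ after rescaling.
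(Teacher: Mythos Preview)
Your proposal is not wrong in spirit, but it mistakes what needs to be proved here. In the paper the formula \eqref{eqn:IsoOfBeta} is the very \emph{definition} of $\varphi_p$ used throughout the proofs of Theorems~\ref{thm:Profile} and~\ref{thm:Cheeger}; the norm~$\beta_p$ is constructed directly in Theorem~\ref{thm:BetaP} (via the right-boundary distance on right-most paths) and its lattice symmetries are Proposition~\ref{prop:SymmetryForBeta}. So the paper's proof of Theorem~\ref{thm-limit-value} is literally two lines: point to Section~\ref{sec2} for~$\beta_p$ and observe that the limit value was identified as the Wulff constant in the course of proving the preceding theorems. What you sketch instead is a complete re-derivation of Theorem~\ref{thm:Profile} --- upper and lower bounds and all --- only to then ``compare with \eqref{E:1.6}''. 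That is circular: you are invoking the theorem whose content you are reproducing.

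If your intention is rather to offer an alternative route to the \emph{package} (Theorems~\ref{thm:Profile} and~\ref{thm-limit-value} together), then your dual first-passage-percolation formulation is a legitimate variant, closely related to the paper's right-most-path machinery through the duality in Proposition~\ref{prop-duality} and Corollary~\ref{cor-duality}. Two points of caution. First, your claimed equality ``$|\partial_{\mbC^\infty}U|$ equals the $T$-length of the dual contour'' is in general only an inequality: the outer dual contour of the hole-filled region may cross open primal edges that lie in finite clusters, so its $T$-length can exceed $|\partial_{\mbC^\infty}U|$. For the lower bound you need the reverse inequality, and this requires choosing the contour more carefully --- in effect the interface of the paper's right-most circuit, which gives $\frb(\gamma)\le|\partial_{\mbC^\infty}U|$ directly. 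Second, your dual-FPP norm and the paper's right-boundary norm are not obviously the same object (the paper restricts to \emph{open} primal right-most paths), though either would serve for the existential statement of the theorem. The concentration/chaining step you identify as the crux is exactly what the paper handles in Section~\ref{sec3} (Theorem~\ref{thm:ConcentrationOfB} and Proposition~\ref{prop:GeomConcentration}) for its version of the norm.
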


Isoperimetric problems in $\R^d$ have a long history and much is known about them. In particular, thanks to observations made by Wulff~\cite{Wulff}, a minimizer of \eqref{eqn:IsoOfBeta} can be explicitly constructed. (This is what is referred to as the ``Wulff Construction.'') Define
\begin{equation}
\label{eqn:4.5}
    W_p := \bigcap_{\hat{n} \colon  \|\hat{n}\|_2 =1}
        \bigl\{x\in\R^2 \colon  \hat{n}\cdot x \leq \beta_p(\hat{n})\bigr\} ,
   \end{equation}
where $\hat n\cdot x$ is the Euclidean scalar product, and let
\begin{equation}
\label{E:1.12x}
\widehat{W}_p:=W_p / \sqrt{\Leb(W_p)} \,.
\end{equation}
Here and henceforth we adopt the notation (for $A\subset\R^2$, $\zeta\in\R$, $\xi\in\R^2$)
\begin{equation}
\zeta A:=\{\zeta x\colon x\in A\}\quad\text{and}\quad\xi+A:=\{\xi+x\colon x\in A\} .
\end{equation}
Hence, $\widehat{W}_p$ is $W_p$ normalized to have a unit area. Note that $W_p$ can be viewed as the unit ball in the dual norm $\beta_p'$, standardly defined for $y \in \R^2$ as
\begin{equation}
	\beta_p'(y) = \sup \{x \cdot y \colon  x \in \R^2,\, \beta_p(x) \leq 1\} .
\end{equation}
Since $\widehat W_p$ is a convex domain, its boundary is a simple curve, so we can set
\begin{equation}
\label{eqn:4.51}
    \hat{\gamma}_p := \partial \widehat{W}_p.
\end{equation}
Then $\hat{\gamma}_p$ is a minimizer of~\eqref{eqn:IsoOfBeta}. A proof using the Brunn-Minkowski inequality was given by Taylor in~\cite{Taylor-1}.

In Taylor~\cite{Taylor-2}, it is then shown that the minimizer is unique up to shifts. Let us write $\|x-y\|$ for the $\ell^\infty$-distance between~$x$ and~$y$ and~$d_{\rm H}$ for the $\ell^\infty$-Hausdorff metric on compact sets in~$\R^2$, 
\begin{equation}
\label{E:dH}
d_{\rm H}(A,B):=\max\Bigl\{\,\sup_{x\in A}\inf_{y\in B}\Vert x-y\Vert,\,\,\sup_{y\in B}\inf_{x\in A}\Vert x-y\Vert\Bigr\}.
\end{equation}
Dobrushin, Koteck\'y and Shlosman~\cite[Theorem~2.3]{Book:DKS} expressed the uniqueness of the minimizer quantitatively as follows: For any rectifiable Jordan curve~$\lambda$ enclosing a region of a unit Lebesgue area,
\begin{equation}
\label{E:1.15a}
\inf_{\xi\in\R^2}\, d_{\rm H}\bigl(\xi+\intr(\lambda),\widehat{W}_p\bigr)\le C_p\frac{\,\sqrt{\,\len_{\beta_p}(\lambda)^2-\len_{\beta_p}(\hat{\gamma}_p)^2}}{\len_{\beta_p}(\hat{\gamma}_p)^2}.
\end{equation}
Here $C_p$ is a constant depending 
only on~$\beta_p$. In~\cite{Book:DKS}, the bound is on 
$d_{\rm H}\bigl(\xi+\lambda,\hat{\gamma}_p\bigr)$, but as the interior of both curves has the same Lebesgue measure, it readily extends to $d_{\rm H}\bigl(\xi+\intr(\lambda),\widehat{W}_p\bigr)$. The proof in~\cite{Book:DKS} uses a generalized Bonnesen inequality for the metric on~$\R^2$ induced by~$\beta_p$ (see \cite[Section~2.5]{Book:DKS} or the monograph by Burago and Zalgaller~\cite[Theorem~1.3.1]{Burago-Zalgaller}).

\smallskip
As a consequence, 
we are able to derive an almost sure shape theorem for the isoperimetric sets of $\Phi_{\mbC^\infty, 0}$ and $\wt{\Phi}_{\mbC^n}$. More explicitly, conditioned on $0 \in \mbC^\infty$, let~$\hat{\UU}_{\mbC^\infty}(r)$ denote the set of minimizers for \eqref{thm:Profile} with $G:=\mbC^\infty$.
Similarly, write~$\hat{\UU}_{\mbC^n}$ for the set of minimizers for \eqref{eqn:Cheeger'} with $G:=\mbC^n$ and $H:=\mbC^\infty$. We have:

\begin{theorem}[Limit shape --- isoperimetric profile]
\label{thm-1.6}
Let $d=2$, $p > p_\cc(\Z^2)$ and let $\widehat W_p$ be related to $\beta_p$ 
as in \eqref{E:1.12x}. Then 
\begin{equation}
\label{E:1.16c}
    \max_{U \in \hat{\UU}_{\mbC^\infty}(n)}
       \,\, \inf_{\xi \in \R^2}
            d_{\rm H} \bigl(n^{-1/2} U , \,\,  \xi + \theta_p^{-1/2}\widehat{W}_p \bigr)
                \,\,\underset{n\to\infty}\longrightarrow\,\, 0
\end{equation}
and
\begin{equation}
\label{E:1.15}
    \max_{U \in \hat{\UU}_{\mbC^\infty}(n)} \big| |U|/n - 1 \big| \,\,\underset{n\to\infty}\longrightarrow\,\, 0
\end{equation}
hold for $\BbbP(\cdot | 0 \in \mbC^\infty)$-almost every realization of~$\omega$.
\end{theorem}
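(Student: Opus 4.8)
The plan is to combine three ingredients: the sharp profile asymptotics of Theorem~\ref{thm:Profile}, the coarse-graining machinery that must be developed in order to prove it, and the quantitative Wulff stability estimate~\eqref{E:1.15a}. I first dispose of \eqref{E:1.15}. Fix $\omega$ in the a.s.\ event on which Theorem~\ref{thm:Profile} holds, and let $U\in\hat\UU_{\mbC^\infty}(n)$. Since $U$ is admissible in the definition of $\Phi_{\mbC^\infty,0}(|U|)$ we have $\Phi_{\mbC^\infty,0}(|U|)\le|\partial_{\mbC^\infty}U|/|U|=\Phi_{\mbC^\infty,0}(n)$, and because $|U|\le n$ and $r\mapsto\Phi_{\mbC^\infty,0}(r)$ is non-increasing this is an equality. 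Now $\Phi_{\mbC^\infty,0}(r)\ge1/r$ for every finite $r\ge1$ (every admissible set has edge-boundary $\ge1$), while $\Phi_{\mbC^\infty,0}(n)\to0$; hence $\min_{U\in\hat\UU_{\mbC^\infty}(n)}|U|\to\infty$, and applying the asymptotics of Theorem~\ref{thm:Profile} at the levels $|U|$ and $n$ gives $|U|/n\to1$ uniformly over minimizers. In particular, for every minimizer $|\partial_{\mbC^\infty}U|=\Phi_{\mbC^\infty,0}(n)\,|U|=\varphi_p\sqrt{n/\theta_p}\,(1+o(1))$.

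\textbf{Coarse-graining.} Since for each $n$ there are only finitely many minimizers, for \eqref{E:1.16c} it suffices to show that $\inf_\xi d_{\rm H}(n^{-1/2}U_n,\xi+\theta_p^{-1/2}\widehat W_p)\to0$ for an arbitrary (measurably selected, ``worst'') sequence $U_n\in\hat\UU_{\mbC^\infty}(n)$. Fix a mesoscopic scale $K=K(n)\to\infty$ with $K=o(\sqrt n)$. Using the renormalization estimates that underlie Theorem~\ref{thm:Profile} --- exponential decay of dual connectivities, the identification of $\beta_p(\hat n)$ as the asymptotic minimal number of open edges one must sever per unit length of a macroscopic interface with normal $\hat n$, and density/chemical-distance control of $\mbC^\infty$ at scale $K$ --- one coarse-grains the outer boundary circuit of the cutset $\partial_{\mbC^\infty}U_n$ into a polygonal Jordan curve $\lambda_n\subset\R^2$ (already rescaled by $n^{-1/2}$) satisfying: (a) every point of $n^{-1/2}U_n$ lies within $o(1)$ of $\overline{\intr(\lambda_n)}$; (b) $\len_{\beta_p}(\lambda_n)\le n^{-1/2}|\partial_{\mbC^\infty}U_n|(1+o(1))$, since the outer circuit is a sub-cutset and $\beta_p(\hat n)\ell$ lower-bounds the edge-cost of any interface of length $\ell$ and normal $\hat n$; (c) $\Leb(\intr(\lambda_n))\ge\theta_p^{-1}n^{-1}|U_n|(1-o(1))$, because $\intr(\lambda_n)$ contains all $K$-boxes substantially met by $U_n$ and each carries $(1+o(1))\theta_pK^2$ cluster vertices.

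\textbf{Near-optimality, Wulff stability, and transfer.} By the volume step, (b) gives $\len_{\beta_p}(\lambda_n)\le\theta_p^{-1/2}\varphi_p(1+o(1))$ and (c) gives $\Leb(\intr(\lambda_n))\ge\theta_p^{-1}(1-o(1))$. By the scaling form of the $\beta_p$-isoperimetric inequality of Theorem~\ref{thm-limit-value}, namely $\len_{\beta_p}(\lambda)\ge\varphi_p\,\Leb(\intr(\lambda))^{1/2}$ for every Jordan $\lambda$, these bounds are compatible only if both hold with equality up to $1+o(1)$; thus $\Leb(\intr(\lambda_n))\to\theta_p^{-1}$, and after rescaling to unit enclosed area ($\mu_n:=\Leb(\intr(\lambda_n))^{-1/2}\lambda_n$, so $\len_{\beta_p}(\mu_n)\to\varphi_p=\len_{\beta_p}(\hat\gamma_p)$) the curve $\mu_n$ is an $o(1)$-near-minimizer of the continuum problem. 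Then \eqref{E:1.15a} gives $\inf_\xi d_{\rm H}(\xi+\intr(\mu_n),\widehat W_p)\to0$, and undoing the $\theta_p^{1/2}(1+o(1))$-rescaling (using that $\widehat W_p$ is bounded) yields $\inf_\xi d_{\rm H}(\xi+\intr(\lambda_n),\theta_p^{-1/2}\widehat W_p)\to0$. To finish \eqref{E:1.16c} via the triangle inequality for $d_{\rm H}$ it remains to check $d_{\rm H}(n^{-1/2}U_n,\intr(\lambda_n))\to0$; one inclusion is (a), so the issue is that $U_n$ fills $\intr(\lambda_n)$. If some $x\in\intr(\lambda_n)$ had $B(x,\epsilon\sqrt n)\cap U_n=\emptyset$, then either $x$ lies within $\epsilon\sqrt n$ of $\lambda_n$ (hence within $\epsilon\sqrt n+O(K)$ of $U_n$ by (a)), or $B(x,\epsilon\sqrt n)\subset\intr(\lambda_n)$ contributes $\ge c\epsilon^2n$ cluster vertices disjoint from $U_n$, forcing the cluster-volume inside $\lambda_n$ to exceed $|U_n|+c\epsilon^2n$; but by (c) and the above that cluster-volume equals $\theta_p\Leb(\intr(\lambda_n))\,n\,(1+o(1))=n(1+o(1))=|U_n|(1+o(1))$, a contradiction for fixed $\epsilon$ and large $n$. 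Hence $d_{\rm H}(n^{-1/2}U_n,\intr(\lambda_n))<2\epsilon$ eventually, which completes \eqref{E:1.16c}.

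\textbf{Main obstacle.} Everything downstream of the coarse-graining is soft; the real work --- and the reason this is not a short argument --- is the coarse-graining construction itself: from a discrete minimizer, which a priori may be quite rough inside the cluster and riddled with holes, one must extract a continuum Jordan curve whose $\beta_p$-length matches the discrete edge-boundary to leading order (both the bound (b) and the fact that $\beta_p$ is exactly the right surface tension). This is precisely the content behind the lower bound of Theorem~\ref{thm:Profile} and is where all the percolation input is concentrated. A secondary but genuinely necessary ingredient is the volume accounting used above to rule out macroscopic holes or deep fjords in the minimizer, which one would be tempted to overlook.
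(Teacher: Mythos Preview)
Your proposal is correct and follows essentially the same route as the paper: the coarse-graining you describe is precisely Proposition~\ref{lem:CircuitToLoopApprox} (applied to the outer right-boundary circuit of $U$ as in Proposition~\ref{prop-5.6}), the density/volume accounting is Lemma~\ref{lem:CInfDensity}, and the stability/filling steps match the paper's use of \eqref{E:1.15a} and the hole-exclusion argument at the end of the proof of Theorems~\ref{thm-1.6}--\ref{thm-1.7}. One minor but pleasant difference: for \eqref{E:1.15} the paper invokes Corollary~\ref{cor-volume} (i.e., re-runs the lower-bound event of Proposition~\ref{prop-5.6}), whereas your argument via the identity $\Phi_{\mbC^\infty,0}(|U|)=\Phi_{\mbC^\infty,0}(n)$ together with Theorem~\ref{thm:Profile} is shorter and uses only the already-established limit.
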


For the minimizers of \eqref{eqn:Cheeger'} we similarly get:

\begin{theorem}[Limit shape --- Cheeger constant]
\label{thm-1.7}
Let $d=2$, $p > p_\cc(\Z^2)$ and let $\widehat W_p$ be related to~$\beta_p$ 
as in \eqref{E:1.12x}. Then, for~$n$ sufficiently large, all minimizers are connected (as induced subgraphs of~$\mbC^n$) and
\begin{equation}
\label{E:1.17c}
    \max_{U \in \hat{\UU}_{\mbC^n}}
        \inf_{\xi \in \R^2}
            d_{\rm H}\bigl( n^{-1}U , \,\,  \xi + \sqrt{2}\,\wh{W}_p \bigr) \,\,\underset{n\to\infty}\longrightarrow\,\, 0
\end{equation}
and
\begin{equation}
\label{E:1.18}
    \max_{U \in \hat{\UU}_{\mbC^n}} \left| \frac{|U|}{(\theta_p |\rmB_\infty(n)|/2)} - 1 \right| \,\,\underset{n\to\infty}\longrightarrow\,\, 0
\end{equation}
hold for $\BbbP$-almost every realization of~$\omega$.
\end{theorem}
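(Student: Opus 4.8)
\medskip\noindent\textit{Proof strategy.}
The plan is to reduce Theorem~\ref{thm-1.7} to the anchored limit--shape statement of Theorem~\ref{thm-1.6} and the sharp Cheeger constant of Theorem~\ref{thm:Cheeger}, the genuinely new ingredients being a volume--concentration estimate for Cheeger minimizers and a connectivity argument. Fix $p>p_\cc(\Z^2)$ and put $\rho:=\tfrac1{\sqrt2}\theta_p^{-1}\varphi_p$. Work on the almost--sure event on which $|\mbC^n|=(1+o(1))\theta_p|\rmB_\infty(n)|$, on which $n\,\wt\Phi_{\mbC^n}\to\rho$, and on which the lower--bound estimates behind Theorems~\ref{thm:Profile}--\ref{thm-limit-value} hold uniformly over subsets of $\mbC^n$ and all large $n$; concretely I will use that some $c=c(p)>0$ makes $|\partial_{\mbC^\infty}U|\ge c|U|^{1/2}$ for every $\mbC^\infty$--connected $U\subseteq\mbC^n$, and that, writing $A(U):=\Leb(n^{-1}U)=(1+o(1))|U|/(\theta_p n^2)$, one has $|\partial_{\mbC^\infty}U|\ge(1-o(1))\,n\,\varphi_p\,A(U)^{1/2}$ for every $U\subseteq\mbC^n$ --- the latter since, by scaling of \eqref{eqn:IsoOfBeta}, a planar region of area $A$ has $\beta_p$--perimeter at least $\varphi_p A^{1/2}$. \emph{Step 1 (volume).} For a minimizer $U$ of \eqref{eqn:Cheeger'} the constraint reads $A(U)\le(1+o(1))|\mbC^n|/(2\theta_p n^2)=2+o(1)$, and the refined bound gives $|\partial_{\mbC^\infty}U|/|U|\ge(1-o(1))\varphi_p/(\theta_p n\,A(U)^{1/2})$, a quantity decreasing in $A(U)$; since the left side equals $\wt\Phi_{\mbC^n}=(1+o(1))\rho/n$, this forces $A(U)\to2$, i.e.\ $|U|/(\theta_p|\rmB_\infty(n)|/2)\to1$, proving \eqref{E:1.18}, and also $|\partial_{\mbC^\infty}U|=(1+o(1))\sqrt2\,\varphi_p\,n$.

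\emph{Step 2 (connectivity).} Write a minimizer $U$ as the disjoint union $\bigsqcup_i U_i$ of its $\mbC^\infty$--connected components, so that $\wt\Phi_{\mbC^n}$ equals the $|U_i|$--weighted average of the ratios $|\partial_{\mbC^\infty}U_i|/|U_i|$ (the edge--boundaries being disjoint). Fix a constant $\delta=\delta(p)>0$ with $c\,\delta^{-1/2}>\rho$. If some $U_j$ had $|U_j|\le\delta n^2$, its ratio would be at least $c|U_j|^{-1/2}\ge c(\delta n^2)^{-1/2}>\rho/n\ge(1-o(1))\wt\Phi_{\mbC^n}$, so the ratio of $U\setminus U_j$ --- nonempty and still admissible for \eqref{eqn:Cheeger'} --- would be strictly below $\wt\Phi_{\mbC^n}$, contradicting minimality. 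Hence every $|U_i|>\delta n^2$; since $\sum_i|U_i|=(1+o(1))2\theta_p n^2$ there are boundedly many, and if there were at least two, each $U_i$ would satisfy $A(U_i)\le2-\delta/\theta_p+o(1)<2$, hence by the refined bound ratio at least $(1+\varepsilon)\rho/n$ for a fixed $\varepsilon=\varepsilon(\delta)>0$, and so would their weighted average $\wt\Phi_{\mbC^n}$ --- again impossible. So for $n$ large $U$ has a single $\mbC^\infty$--component; since $\mbC^n$ and $\mbC^\infty$ induce the same edges on $U\subseteq\mbC^n$, $U$ is connected as a subgraph of $\mbC^n$, which is the first assertion of the theorem. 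The same accounting applied to the outer Jordan curve of $U$ together with its hole curves (a hole of area $h$ adding at least $\varphi_p h^{1/2}$ to the interface length) rules out holes of area $\gg n^2$.

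\emph{Step 3 (shape).} By Steps 1--2, $U$ is a $\mbC^\infty$--connected, essentially simply connected set with $|U|=(1+o(1))2\theta_p n^2$ and $|\partial_{\mbC^\infty}U|=(1+o(1))\sqrt2\,\varphi_p n$; that is, $U$ is an approximate isoperimetric set of the right volume. Fix $x_0\in U$; then $U-x_0$ is admissible for the anchored problem \eqref{eqn:Profile} with anchor $x_0$ and radius $r_n:=|U|$, with ratio $\wt\Phi_{\mbC^n}=(1+o(1))\theta_p^{-1/2}\varphi_p r_n^{-1/2}$, so it is an approximate minimizer of $\Phi_{\mbC^\infty,x_0}(r_n)$. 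Taking a union bound over $x_0\in\rmB_\infty(n)$ --- afforded by translation invariance of $\BbbP$ together with the quantitative form of the convergence in Theorem~\ref{thm-1.6} via Borel--Cantelli --- the conclusion of Theorem~\ref{thm-1.6} applies to $U-x_0$, and substituting $r_n^{-1/2}=(1+o(1))(2\theta_p)^{-1/2}n^{-1}$ (which turns $\theta_p^{-1/2}\wh{W}_p$ into $\sqrt2\,\wh{W}_p$) yields $\inf_{\xi\in\R^2}d_{\rm H}(n^{-1}U,\xi+\sqrt2\,\wh{W}_p)\to0$, i.e.\ \eqref{E:1.17c}. Alternatively one bypasses Theorem~\ref{thm-1.6}: associate to $U$ a polygonal Jordan curve $\lambda_n$ with $n^{-1}U$ within Hausdorff distance $o(1)$ of $\intr(\lambda_n)$, $\len_{\beta_p}(\lambda_n)\le(1+o(1))\sqrt2\,\varphi_p$ and $\Leb(\intr\lambda_n)=2+o(1)$; normalizing to unit enclosed area and noting the normalized $\beta_p$--length tends to $\varphi_p=\len_{\beta_p}(\hat{\gamma}_p)$, the Dobrushin--Koteck\'y--Shlosman stability bound \eqref{E:1.15a} forces Hausdorff--closeness to $\wh{W}_p$, whence \eqref{E:1.17c}. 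All estimates being uniform over $U\in\hat{\UU}_{\mbC^n}$, the maxima in \eqref{E:1.17c} and \eqref{E:1.18} cause no difficulty.

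\emph{Main obstacle.} The hard part is not the isoperimetric rigidity --- that is imported from \eqref{E:1.15a} (or Theorem~\ref{thm-1.6}) --- but the two uniform--over--the--box inputs used above: the lower bound $|\partial_{\mbC^\infty}U|\ge(1-o(1))n\varphi_p A(U)^{1/2}$ together with the near--optimal--interface construction, valid \emph{simultaneously for all} $U\subseteq\mbC^n$ and almost surely for all large $n$ --- which amounts to re--running the coarse--graining behind Theorems~\ref{thm:Profile}--\ref{thm-limit-value} with control uniform over $\rmB_\infty(n)$ and over all scales from poly--logarithmic upward --- and the ensuing component analysis of Step 2, where all $\mbC^\infty$--components and holes down to mesoscopic size must be controlled before the continuum estimates apply. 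Once these are in hand, Theorem~\ref{thm:Cheeger} pins the constant, strict concavity of $t\mapsto t^{1/2}$ forces a single macroscopic piece of area $2$, and \eqref{E:1.15a} identifies its shape as $\sqrt2\,\wh{W}_p$.
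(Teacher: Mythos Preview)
Your overall architecture is right, and your ``alternative'' route in Step~3 --- pass to a polygonal Jordan curve $\lambda_n$, normalize, and invoke the Bonnesen-type stability bound \eqref{E:1.15a} --- is exactly what the paper does. In fact the paper proves Theorems~\ref{thm-1.6} and~\ref{thm-1.7} simultaneously with that argument (see the joint proof in Section~5.4): from Corollary~\ref{cor-volume} one gets \eqref{E:1.18} and the near-optimality bound $|\partial_{\mbC^\infty}U|/\sqrt{|U|}\le(1+\delta)\theta_p^{-1/2}\varphi_p$; then the right-boundary circuit $\gamma$ of $U$ is approximated by a curve $\lambda$ via Proposition~\ref{lem:CircuitToLoopApprox}, and \eqref{E:1.15a} applied to the normalized $\hat\lambda$ gives Hausdorff closeness to $\widehat W_p$. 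So your alternative \emph{is} the paper's proof.

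Your primary route in Step~3, however, has a genuine gap: Theorem~\ref{thm-1.6} is stated only for \emph{exact} minimizers $U\in\hat\UU_{\mbC^\infty}(n)$, and your $U$ is merely an approximate minimizer of the anchored problem. Nothing in the statement of Theorem~\ref{thm-1.6} lets you conclude shape convergence for near-minimizers; to do so you would have to open up its proof and observe that it goes through for any $U$ satisfying the near-optimality bound (which is true, but at that point you are re-doing the paper's argument, not reducing to the theorem).

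Two smaller comparisons. For connectivity, the paper's argument is a bit slicker than your component-surgery: since the ratio of a disconnected minimizer is the weighted average of the component ratios, \emph{every} component is itself a minimizer (the minimum-ratio component is, and then the remainder inherits the same ratio by subtraction); the volume lower bound then forces each component to occupy at least $(1-\delta)\theta_p|\rmB_\infty(n)|/2$ of the box, so there is room for only one. For ``holes'', the paper does not argue via extra $\beta_p$-perimeter contributed by hole curves; instead it bounds $r:=d_{\rm H}(U,\vol(\gamma))$ directly by noting that a box $x+\rmB_\infty(r)\subset\vol(\gamma)$ disjoint from $U$ forces $|U|\le(\theta_p+\epsilon)|\vol(\gamma)|-(\theta_p-\epsilon)|\rmB_\infty(r)|$ by the density estimate (Lemma~\ref{lem:CInfDensity}), which, plugged into the chain of inequalities, gives $|\rmB_\infty(r)|\le C\epsilon|U|$. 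This avoids having to decompose $\partial_{\mbC^\infty}U$ into outer and inner contours.
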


\subsection{Discussion and open problems}
We finish the introduction with a brief discussion of various limitations of our results; we use 
this as an opportunity to point out some open problems.

\podsekce{Free and periodic boundary conditions}
As noted before Theorem~\ref{thm:Cheeger}, our results on the Cheeger constant are limited to the quantity~$\widetilde\Phi_{\mbC^n}$, which includes the edges ``sticking out'' of~$\rmB_\infty(n)$. Conjecture~\ref{con1} is instead formulated for ``free'' boundary conditions (no edges sticking out), but one can also take periodic boundary conditions (edges that ``stick out'' connect to vertices on the opposite side of the box). A question is 
whether these cases can be resolved as well. The reduction to an isoperimetric problem in~$\R^2$ for the norm~$\beta_p$ should still be feasible; a difficult part is the analysis of the minimizers for the continuum isoperimetric problem. For $p=1$, the solution is half of the square (free b.c.) or a band around the torus (periodic b.c.). However, it is not at all clear how this changes once~$p$ is  lowered significantly below~$1$.

\podsekce{Regularity of minimizers}
Related to the previous problem is the question of regularity of the norm~$\beta_p$ (beyond its continuity which is automatic) and thus also the regularity of the limiting curve. In particular it is not clear whether $\hat{\gamma}_p$ may have cusps or flat portions.

\podsekce{Near criticality}
The limit shape is defined for all $p>p_\cc$, but not for $p_\cc$, for which there is no percolation. Notwithstanding, one is naturally interested in the behavior of the shape in the limit when $p\downarrow p_\cc$. In analogy to the Ising model and super-critical percolation, we conjecture that the limit is the Euclidean 
ball.

\podsekce{Near $p=1$}
On the other hand, it is easy to show that when $p \uparrow 1$, the boundary norm $\beta_p$ converges to the $\ell^1$-norm uniformly on $\{x \in \R^2 \colon  \|x\|_2 = 1\}$. Consequently, $\widehat W_p$ converges to the 
box of unit volume (in the Hausdorff metric on $\R^2$).

\podsekce{Size of the holes}
Although the definition of the Cheeger constant ensures that a minimizer can always be taken connected with a connected complement, neither our modified definition nor that for the isoperimetric profile ensures the connectedness of the complement and, in fact, they need not be such. Our proof gives an approximation, to the leading order in linear size, of the isoperimetric sets by a convex set in the continuum. The estimates on the Hausdorff distance then show that the diameter of the potential ``holes'' is negligible compared to the diameter of the whole set. We believe that they should not be significantly larger than the log of the diameter.

\podsekce{Deviation tails}
Another question of reasonable interest is whether one can derive (reasonably) sharp probabilistic estimates for finding isoperimetric sets of a given (large) volume whose boundary length or shape significantly deviates from the limiting values. We in fact tend to expect these tail-estimates to exhibit different scaling for positive and negative deviations; similarly to those found for passage times in first passage percolation (see Kesten~\cite{Kesten-first-passage}).

\podsekce{Shape fluctuations}
Related to this is the question of shape fluctuations. These have been addressed in the context of ``classical'' Wulff constructions by Alexander~\cite{Alexander}, Hammond and Peres~\cite{Hammond-Peres} and Hammond~\cite{Hammond-1,Hammond-2,Hammond-3}. An interesting feature there is that the fluctuations are of order~$n^{1/3}$. It is interesting to ponder about the connection to (still conjectural) $n^{2/3}$-scaling of transversal fluctuations for minimal-length paths in first passage percolation; cf~Chatterjee~\cite{Chatterjee}, Auffinger and Damron~\cite{Auffinger-Damron} for recent work on this. The problem of fluctuations of minimal-length paths can be formulated in the present context as well, see Section~\ref{sec2}.

\podsekce{Higher dimensions}
A final, and at this point completely open, problem is that of dimensions $d\ge3$. In the context of Wulff construction for percolation and Ising model, generalizing the two-dimensional proofs to higher dimensions required very considerable effort (Cerf~\cite{Cerf}, Bodineau~\cite{Bodineau}, see the reviews by Cerf~\cite{Cerf-book} and Bodineau, Ioffe and Velenik~\cite{BIV}). Although some inspiration can be drawn from these for our problem as well, it will probably be limited to facts about geometry of random surfaces and isoperimetric problems in~$\R^d$.

\subsection{Outline}
The remainder of this paper is organized as follows: In Section~\ref{sec2} we introduce the notions of a right-most path and its right-boundary length and use these to define the boundary norm~$\beta_p$. In Section~\ref{sec3} we then derive concentration estimates that permit us to control the rate of convergence of the right-boundary length (scaled by the geometric distance) to~$\beta_p$. Section~\ref{sec4} then shows that circuits on the lattice can be matched with closed curves in $\R^2$, such that the right-boundary length of the former is approximately the $\beta_p$-length of the latter. This, in turn, is used in Section~\ref{sec5} to control the size of the boundary of subsets of $\mbC^\infty$ or~$\mbC^n$ and thereby prove the main theorems.

\subsection{General notation}
In general, $\|x\|_q$ will denote the $\ell^q$-norm of $x \in \R^2$. However, we will regularly write $\|x\|$ for~$\|x\|_\infty$. We set $\rmB_q(r) := \{x \in \R^2 \colon  \|x\|_q \leq r\}$ but, to simplify notation, we will often regard $\rmB_\infty(n)$ also as the $\ell^\infty$-ball in~$\Z^2$. The length of a curve $\lambda$ with respect to the $\Vert\cdot\Vert_q$-norm will be denoted by $\len_q(\lambda)$. The notation $\poly(x,y)$ stands for the closed linear segment in~$\R^2$ from~$x$ to~$y$. A polygonal line is then the curve $\poly(x_0,\dots,x_n):=\poly(x_0,x_1)\circ\dots\circ\poly(x_{n-1},x_n)$ where ``$\circ$'' denotes the usual concatenation of curves. Finally, we shall write $C$,$C'$, $C''$, etc. to denote non-negative constants whose value may change line by line, unless stated otherwise.

\section{The Boundary Norm}
\label{sec2}\noindent
In this section we define the boundary norm $\beta_p$ that lies at the core of all of our arguments. A key notion is that of a right-most path which, as we shall see later, will be used to characterize the ``shape'' of finite sets in $\Z^2$.

\subsection{Right-most paths and their right boundaries}
Consider an unoriented planar graph~$G=(V,E)$ embedded in $\R^2$. A path $\gamma$ in~$G$ from~$x$ to~$y$ of length $|\gamma| = n$, is a sequence of vertices $(x_0,x_1,\dots,x_n)$ such that $(x_i,x_{i+1})\in E$ for every $i\in\{0,\dots,n-1\}$ and $x_0=x$, $x_n=y$. The path is called \emph{simple} if it traverses each edge in~$G$ at most once in each direction (that is $\gamma$ is simple in the directed version of $G$). The path is called a \emph{circuit} if $x_n = x_0$; in this case we may identify indices modulo~$n$ (e.g. $x_{n+1} = x_1$). 

When both~$x_{i-1}$ and~$x_{i+1}$ are defined, the {\em right-boundary edges} at vertex $x_i$ are obtained by listing all (oriented) edges emanating from~$x_i$ counter-clockwise starting from, but not including, $(x_i,x_{i-1})$ and ending with, but not including, $(x_i,x_{i+1})$. If~$x_{i-1}$ or~$x_{i+1}$ is undefined, which can only happen at the endpoints of a non-circular path, the set of boundary edges at~$x_i$ is empty. The \emph{right boundary} $\partial^+\gamma$ of~$\gamma$ is the set of all right-boundary edges at all vertices of $\gamma$, see~Fig.~\ref{fig1}. Notice that if $\gamma$ visits a vertex multiple times, each visit may contribute distinct right-boundary edges to~$\partial^+\gamma$. A path is said to be \emph{right-most} if it is simple and it does not use any edge (regardless of orientation) in its right boundary. Let $\RR(x,y)$ be the set of all right-most paths from~$x$ to~$y$.

\newcounter{obrazek}

\begin{figure}[t]

\refstepcounter{obrazek}
\centerline{\includegraphics[width=4.7in]{./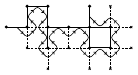}}
\begin{quote}
\small
{\sc Fig.~\theobrazek\ }
\label{fig1}
An example of a right-most path $\gamma$ (solid edges) and its right-boundary~$\partial^+\gamma$ (dashed edges). The associated interface (wiggly curve on the medial graph) reflects on the edges in $\gamma$ and cuts through the edges in~$\partial^+\gamma$.
\normalsize
\end{quote}
\end{figure}

Now fix $G:=\Z^2$. For $\omega \in \Omega$ and a right-most path $\gamma$, set
\begin{equation}\label{eqn:1.3}
    \frb(\gamma) := 
    \bigl|\{ e \in \partial^+ \gamma \colon  \omega(e) = 1\}\bigr|.
\end{equation}
This is the \emph{right-boundary length} of~$\gamma$ in configuration $\omega$. Note that $\partial^+\gamma$ may include an edge in both orientations; both of these then contribute to~$\frb(\gamma)$.
If $x$ and $y$ are connected in $\omega$, we then define the \textit{right-boundary distance} by
\begin{equation}\label{eqn:903.2}
    b(x, y) :=\inf\bigl \{\frb(\gamma) \colon  \gamma \in \RR(x,y),\, \text{open}\bigr\}.
\end{equation}
Finally, to ensure containment of the arguments of $b(\cdot,\cdot)$ in $\mbC^\infty$, for each~$x\in\R^d$ we define the ``nearest vertex'' $[x]\in\mbC^\infty$ as follows: Suppose the probability space is large enough to carry a collection of random variables $\{\eta_z\}_{z\in\Z^2}$ that are i.i.d.\ uniform on~$[0,1]$ and independent of~$\omega$ under~$\BbbP$. For $x \in \R^2$, we then let $[x]$ denote the vertex $z$ on $\mbC^\infty$ which is nearest to $x$ in the $\ell^\infty$-norm, taking the one with a minimal~$\eta_z$ in case there is a tie. Obviously, $[x]$ depends on~$\omega$ and the $\eta$'s but we will not make this notationally explicit.

\smallskip
The main result of Section~\ref{sec2} is:

\begin{theorem}[The boundary norm]
\label{thm:BetaP}
For any $p>p_\cc(\Z^2)$ and any $x \in \R^2$, the limit
\begin{equation}
\label{eqn:902}
	\beta_p(x) 
	:= \lim_{n \to \infty} \frac{b([0], [nx])}{n}
\end{equation}
exists $\BbbP$-a.s.\ and is non-random, non-zero (for $x\ne0$) and finite. The limit also exists in $L^1$ and the convergence is uniform on $\{x\in\R^2\colon\: \|x\|_2 = 1\}$. Moreover, 
\begin{enumerate}
\item[(1)]
$\beta_p$ is homogeneous, i.e., $\beta_p(\lambda x)=|\lambda|\beta_p(x)$ for all $x\in\R^2$ and all $\lambda \in \R$, and
\item[(2)]
$\beta_p$ obeys the triangle inequality,
\begin{equation}
\beta_p(x+y)\le\beta_p(x)+\beta_p(y),\qquad x,y\in\R^2.
\end{equation}
\end{enumerate}
In other words, $\beta_p$ is a norm on $\R^2$.
\end{theorem}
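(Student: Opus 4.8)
The plan is to realize $b([0],[nx])$ as an almost-subadditive (more precisely, nearly superadditive after a correction, since one wants to add path lengths) cocycle and apply Kingman's subadditive ergodic theorem, then upgrade the existence of the limit to the norm properties. First I would fix notation: for $u,v\in\Z^2$ lying on $\mbC^\infty$, $b(u,v)$ as defined in \eqref{eqn:903.2} is the infimum of the right-boundary length over open right-most paths from $u$ to $v$. The central structural fact needed is a \emph{concatenation/subadditivity inequality}: for $u,v,w\in\mbC^\infty$,
\begin{equation}
b(u,w)\le b(u,v)+b(v,w)+\text{(error)},
\end{equation}
where the error accounts for the fact that concatenating two right-most paths need not be right-most (double-counted or newly-created right-boundary edges near the splice point $v$). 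One must argue this error is $O(1)$, or can be absorbed, by a local surgery argument: near $v$ one re-routes through a bounded patch of $\mbC^\infty$, and since the infinite cluster has finite local "repair cost" (a union bound / BK-type estimate gives that local detours of length $O(\log n)$ suffice with overwhelming probability, but in fact an $O(1)$ correction in expectation suffices for Kingman). This makes $n\mapsto b([0],[nx])$ a subadditive sequence in the Kingman sense along the ray $\R_+x$, with stationary increments under the ergodic $\Z^2$-shift restricted to the cluster, and with the integrability $\E\, b([0],[x])<\infty$ coming from a crude deterministic bound $\frb(\gamma)\le 4|\gamma|$ together with the known fact (Antal--Pisztora chemical-distance bound) that the right-boundary distance between nearby cluster points has exponential tails.

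Granting this, Kingman's theorem yields, for each fixed $x$ with rational (or arbitrary) coordinates, existence of $\beta_p(x):=\lim_n b([0],[nx])/n$ a.s.\ and in $L^1$, constant by ergodicity, with $\beta_p(x)<\infty$. Homogeneity $\beta_p(\lambda x)=|\lambda|\beta_p(x)$ for rational $\lambda$ is immediate from the definition by passing to subsequences, and for real $\lambda$ by the continuity/Lipschitz estimate below; symmetry in $\lambda\to-\lambda$ holds because reversing a right-most path from $u$ to $v$ yields a right-most path from $v$ to $u$ whose right-boundary is the \emph{left}-boundary of the original — here one needs the small observation that by the reflection symmetry of $\BbbP$ (invariance under the lattice reflections), $b(u,v)$ and $b(v,u)$ have the same distribution and, after another subadditivity argument, the same limit. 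The triangle inequality $\beta_p(x+y)\le\beta_p(x)+\beta_p(y)$ follows from the concatenation inequality applied to the three points $[0],[nx],[n(x+y)]$ and dividing by $n$, the $O(1)$ error washing out. Non-degeneracy $\beta_p(x)>0$ for $x\ne 0$ is the one place an \emph{upper} bound on cluster geometry is not enough: one needs that an open right-most path from $[0]$ to $[nx]$ must have right-boundary length growing linearly. This I would get from a renormalization/block argument — e.g.\ tile $\R^2$ by large boxes declared "good" if they contain a unique crossing cluster with the usual regularity; in a good box any right-most path picks up a bounded-below number of open right-boundary edges simply because it must traverse the box and the box has positive density of open edges adjacent to it — so linearly many good boxes along the way force $b([0],[nx])\ge cn$. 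Goodness percolates (Pisztora's renormalization) for $p>p_\cc$, giving the bound a.s.

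Two items remain: the uniform convergence on the unit circle and the Lipschitz continuity that glues the rational directions into a genuine norm. Continuity follows from the estimate $|b(u,v)-b(u,v')|\le C\|v-v'\|$ with exponential tails on $C$, again via Antal--Pisztora: one appends a short open right-most detour from $v$ to $v'$ of controlled right-boundary length. This gives $|\beta_p(x)-\beta_p(y)|\le C\|x-y\|_1$, so $\beta_p$ extends from $\Q^2$ to a finite, symmetric, subadditive, homogeneous, continuous function on $\R^2$ — i.e.\ a norm once non-degeneracy is in hand. The uniform convergence on $\{\|x\|_2=1\}$ then follows from pointwise a.s.\ (hence $L^1$) convergence plus equicontinuity: the prelimit functions $x\mapsto b([0],[nx])/n$ are uniformly Lipschitz on the circle with the same constant (after taking expectations, and then a.s.\ via concentration — or one can cite the concentration estimates promised for Section~\ref{sec3}). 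I expect the \textbf{main obstacle} to be the concatenation inequality with a genuinely controlled error: right-most paths are a subtle notion (the right-boundary depends on the cyclic order of \emph{all} incident edges, not just those in the path), so checking that splicing two right-most paths, after a bounded local repair, produces a right-most path whose right-boundary length is at most the sum plus $O(1)$ requires a careful, essentially combinatorial-topological, argument about interfaces on the medial graph — precisely the picture indicated in Figure~\ref{fig1}. Everything else is a fairly standard application of Kingman's theorem plus Pisztora-style renormalization.
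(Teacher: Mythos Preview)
Your proposal is essentially correct and follows the same architecture as the paper: Kingman's subadditive ergodic theorem applied to $b([0],[nx])$ via a concatenation inequality, integrability from the Antal--Pisztora chemical-distance bound, extension from $\Z^2$ to $\R^2$ by homogeneity and Lipschitz continuity, and uniformity on the unit circle by approximation with finitely many rational directions. The concatenation inequality you flag as the main obstacle is precisely Lemma~\ref{prop:MergingOfPaths}: the $*$-concatenation $\gamma*\gamma'$ (run $\gamma$ until it first meets $\gamma'$, then switch) of two right-most paths is again right-most, with $\bigl|(\partial^+(\gamma*\gamma'))\setminus(\partial^+\gamma\cup\partial^+\gamma')\bigr|\le 2$, so the error is a deterministic $+2$ and no probabilistic local surgery is needed.

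The one substantive difference is non-degeneracy $\beta_p(x)>0$. You propose Pisztora-style renormalization; the paper instead exploits planar duality (Proposition~\ref{prop:RenormalizedLD}): by Corollary~\ref{cor-duality}, the edges of $\partial^+\gamma$ form a right-most path $\gamma_\star$ on the dual lattice with $|\gamma_\star|\ge|\gamma|/3-2$, so a long $\gamma$ with small $\frb(\gamma)$ yields a long dual path that is nearly open in the \emph{subcritical} dual measure, which is ruled out by a direct change-of-measure (``sprinkling'') estimate. Your block argument as stated has a gap --- the claim that a right-most path crossing a good box automatically collects a bounded-below number of \emph{open} right-boundary edges is not immediate, since $\partial^+\gamma$ is fixed by the path's combinatorics rather than by the ambient density of open edges --- though it could likely be repaired with more work; the duality route is simply cleaner and sharper in $d=2$. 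A minor correction: reversing a right-most path yields a \emph{left}-most path, not a right-most one; the paper obtains $\beta_p(-x)=\beta_p(x)$ from invariance of $\BbbP$ under $90^\circ$ rotations (Lemma~\ref{lemma-2.13}) rather than from path reversal alone.
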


\noindent The norm $\beta_p$ inherits all symmetries of the lattice as the following proposition shows.
\begin{proposition}
\label{prop:SymmetryForBeta}
Let $\beta_p$ be defined as in Theorem~\ref{thm:BetaP}. Then for all $(x_1, x_2) \in \R^2$,
\begin{equation}
	\beta_p\bigl((x_1, x_2)\bigr)=\beta_p\bigl((x_2, x_1)\bigr)=\beta_p\bigl((\pm x_1,\pm x_2)\bigr)
\end{equation}
for any choice of the two signs~$\pm$. 
\end{proposition}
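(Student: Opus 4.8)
The plan is to recognise the eight maps in the statement as the orbit of $(x_1,x_2)$ under the dihedral group $D_4$ of symmetries of the square, and to use that $D_4$ is generated by the $90^\circ$ rotation $R\colon(x_1,x_2)\mapsto(-x_2,x_1)$ together with the diagonal reflection $D\colon(x_1,x_2)\mapsto(x_2,x_1)$. So it suffices to prove $\beta_p\circ R=\beta_p$ and $\beta_p\circ D=\beta_p$. Both $R$ and $D$ are graph automorphisms of $\Z^2$, are $\ell^\infty$-isometries, and fix the origin; for such a map $g$, pushing a configuration forward by $g$ and pushing the i.i.d.\ field forward by $(g\eta)_z:=\eta_{g^{-1}z}$ preserves the joint law of $(\omega,\eta)$, sends $\mbC^\infty(\omega)$ to $\mbC^\infty(g\omega)$, and hence gives $g\,[v]_{\omega,\eta}=[gv]_{g\omega,g\eta}$ for every $v$. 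Moreover $g$ carries right-most lattice paths to right-most paths if $g$ preserves orientation, and to left-most paths (defined below) if $g$ reverses it, in both cases preserving the relevant boundary length read in $g\omega$. Throughout I would first treat $x\in\Q^2$ and extend to all $x$ at the end by continuity of $\beta_p$.

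For $R$, which preserves orientation, the push-forward sends $\omega$-open right-most paths to $R\omega$-open right-most paths with the same right-boundary length and commutes with $v\mapsto[v]$; combined with $(R\omega,R\eta)\overset{d}{=}(\omega,\eta)$ this gives $b_\omega([0],[nx])\overset{d}{=}b_\omega([0],[nRx])$ for every $n$. Dividing by $n$ and using that, by Theorem~\ref{thm:BetaP}, both sides converge $\BbbP$-a.s.\ to the non-random constants $\beta_p(x)$ and $\beta_p(Rx)$, we conclude $\beta_p(Rx)=\beta_p(x)$.

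For $D$, which reverses orientation, the push-forward turns right-most paths into left-most ones, so I introduce the mirror notions: the left boundary $\partial^-\gamma$ (as $\partial^+\gamma$, but listing edges clockwise), left-most paths, $\frb^-(\gamma):=|\{e\in\partial^-\gamma\colon\omega(e)=1\}|$, and $b^-(x,y):=\inf\{\frb^-(\gamma)\colon\gamma\text{ a left-most open path from }x\text{ to }y\}$. Two deterministic bijections of paths do the combinatorial work: reversing a path is a bijection between left-most $x\to y$ paths and right-most $y\to x$ paths, since one checks from the definitions that the right boundary of a reversed path equals $\partial^-$ of the original; and $\gamma\mapsto D\gamma$ is a bijection between right-most $x\to y$ paths and left-most $Dx\to Dy$ paths with $\partial^+(D\gamma)=D(\partial^-\gamma)$, carrying $\omega$-open paths to $D\omega$-open paths with $\frb_\omega(\gamma)=\frb^-_{D\omega}(D\gamma)$. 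Consequently $b^-_\omega(x,y)=b_\omega(y,x)$ and $b_\omega(x,y)=b^-_{D\omega}(Dx,Dy)$ for every $\omega$. Running the proof of Theorem~\ref{thm:BetaP} verbatim for left-most paths (the construction is mirror-symmetric), $n^{-1}b^-_\omega([0],[nx])$ converges $\BbbP$-a.s.\ to a non-random limit $\beta_p^-(x)$, and I claim $\beta_p^-=\beta_p$. Indeed, by the reversal bijection $b^-_\omega([0],[nx])=b_\omega([nx],[0])$, and translation invariance of the law of $(\omega,\eta)$ (taking $x\in\Q^2$ and $n$ a multiple of the denominator of $x$, so that $nx\in\Z^2$) gives $b_\omega([nx],[0])\overset{d}{=}b_\omega([0],[-nx])$; since $n^{-1}b_\omega([0],[-nx])\to\beta_p(-x)=\beta_p(x)$ a.s.\ by homogeneity of $\beta_p$, the a.s.\ limit $\beta_p^-(x)$ of the left-hand side must equal $\beta_p(x)$. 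Finally, by the $D$-bijection $b_\omega([0],[nx])=b^-_{D\omega}([0]_{D\omega,D\eta},[nDx]_{D\omega,D\eta})$; dividing by $n$, letting $n\to\infty$, and using that $(D\omega,D\eta)$ has the same law as $(\omega,\eta)$ so that the right-hand side tends $\BbbP$-a.s.\ to $\beta_p^-(Dx)$, we obtain $\beta_p(x)=\beta_p^-(Dx)=\beta_p(Dx)$. Continuity of $\beta_p$ removes the restriction to $\Q^2$, and together with $\beta_p\circ R=\beta_p$ this gives invariance of $\beta_p$ under all of $D_4$, which establishes the proposition.

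The main obstacle is the reflection $D$: since it reverses planar orientation it does not act on right-most paths at all, so the definition of $\beta_p$ cannot be transported through it directly. Passing to left-most paths and composing the reversal and reflection bijections resolves this, but it shifts the difficulty onto the identity $\beta_p^-=\beta_p$, whose only non-formal ingredient is that the limiting norm is unchanged when the two endpoints of $b(\cdot,\cdot)$ are interchanged --- which I extract from translation invariance of $\BbbP$ together with the homogeneity $\beta_p(-x)=\beta_p(x)$. The rotation $R$, on the other hand, is an entirely routine push-forward.
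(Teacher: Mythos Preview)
Your proof is correct and follows essentially the same route as the paper. The paper also handles rotations by noting that right-most paths are carried to right-most paths with the same right-boundary length, and handles reflections by observing that a reflection sends right-most paths to left-most paths while reversal sends left-most paths back to right-most paths with endpoints swapped, then closes the loop via translation invariance and the already-established $\beta_p(-x)=\beta_p(x)$. The only stylistic difference is that the paper composes reflection and reversal in one step to obtain directly $b([0],[(x_1,x_2)])\overset{\textd}{=}b([(x_1,-x_2)],[0])$, whereas you introduce the intermediate objects $b^-$ and $\beta_p^-$ and prove $\beta_p^-=\beta_p$ separately; this is more explicit but not substantively different.
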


The remainder of Section~\ref{sec2} is devoted to the proof of Theorem~\ref{thm:BetaP} and Proposition~\ref{prop:SymmetryForBeta}. As a preparation, we will first need to introduce some geometric facts concerning right-most paths and their right boundaries and also some relevant properties of bond percolation on~$\Z^2$.

\begin{figure}[t]
\refstepcounter{obrazek}
\centerline{\includegraphics[width=3.8in]{./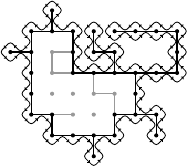}}
\bigskip
\begin{quote}
\small
{\sc Fig.~\theobrazek\ }
\label{fig1a}
The boundary interface $\partial$ (wiggly line) winding around a finite connected subgraph of~$\Z^2$ (solid edges). The black edges are those lying on the right-most circuit~$\gamma$ associated with~$\partial$. The vertices enclosed by $\partial$ (gray and black bullets) are part of the set $\vol(\gamma)$ to be defined and used later.
\normalsize
\end{quote}
\end{figure}

\subsection{Geometry of right-most paths}
Our arguments will rely heavily on planar duality. Recall that with each planar graph $G$ we can associate another planar graph, its \emph{dual}~$G_\star$, by identifying the faces of~$G$ with the vertices of~$G_\star$ and then, naturally, a \emph{dual edge}~$e_\star$ with each \emph{primal edge}~$e$. If~$e$ is oriented, we orient~$e_\star$ so that it points from the face on the left of $e$ to the face on the right of~$e$. (The dual of~$e_\star$ is then the reversal of~$e$.) 

With a planar graph $G=(V,E)$ we can also associate the so-called \emph{medial graph}, $G_\sharp=
(V_\sharp,E_\sharp)$. This is the graph with vertices~$V_\sharp:=E$ and an edge $(e,e')\in E_\sharp$ whenever $e$ and $e'$ are adjacent edges on a face of~$G$. We orient the edges in~$E_\sharp$ clockwise in each face of~$G$; thanks to planarity, edges will then be oriented counterclockwise around each vertex of $G$ (or, equivalently, in each face of $G_\star$). An \emph{interface} is then an edge self-avoiding (oriented) path in the graph $G_\sharp$ which does not use its initial or terminal vertex more than once, except to close a cycle. The medial graph of~$G$ is also the medial graph of~$G_\star$, once we reverse the orientation of its edges. Therefore, an interface in~$G_\sharp$ run backwards is an interface in $(G_\star)_\sharp$.

As Fig.~\ref{fig1} shows, when an interface $\partial$ visits an edge of~$G$ it either ``reflects'' on it or ``cuts-through'' it (reflecting on its dual). More precisely, let $e_0, \dots, e_n$ be the sequence of edges of~$G$ (i.e., vertices of the medial graph) visited by~$\partial$. Whenever $e_{i-1},e_i,e_{i+1}$ are well defined, we say that the interface \emph{reflects on~$e_i$} if $e_{i-1},e_i,e_{i+1}$ lie on the same face of~$G$; otherwise it \emph{cuts through~$e_i$}. Notice that unless $\partial$ is a cycle, it does not reflect or cut through~$e_0$, $e_n$.

For any connected finite subgraph $H \subset G$, there is a unique interface $\partial$ in $G_\sharp$ which surrounds~$H$ and reflects only on edges which belong to $H$ (see Fig~\ref{fig1a}). We shall call $\partial$ the \emph{outer boundary interface} of~$H$. Necessarily, $\partial$ goes around~$H$ in the counterclockwise direction.
Associated with $\partial$ there are two circuits, $\gamma$ in~$G$ and $\gamma_\star$ in~$G_\star$ that describe equally well the outer ``shape'' of~$H$. Here~$\gamma$ is obtained by traversing $\partial$ and listing all edges of~$G$ on which the interface reflects while~$\gamma_\star$ is obtained similarly from the reversal~$\partial_\star$ of~$\partial$ viewed as an interface on~$G_\star$. A key fact for our purposes is that both $\gamma$ and~$\gamma_\star$ are right-most
and that the edges in $\partial^+ \gamma$ are (outer) boundary edges of $H$ (i.e. edges which emanate from vertices in $H$ but not included in its edge-set):
\begin{proposition}
\label{prop-duality}
For each outer boundary interface~$\partial = (e_1,\dots,e_m)$, the sub-sequence of edges $(e_{k_1},\dots,e_{k_n})$ that are not cut through by $\partial$ form a right-most path $\gamma$. This mapping is one-to-one and onto the set of all right-most paths. In particular, $\gamma$ is a right-most circuit if and only if $\partial$ is a cycle in the medial graph. Finally, the edges in $\partial \setminus (e_{k_1},\dots,e_{k_n})$ (oriented properly) form $\partial^+\gamma$.
\end{proposition}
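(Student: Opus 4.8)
The plan is to set up the correspondence in both directions and check it is inverse to itself. First I would fix an interface $\partial=(e_1,\dots,e_m)$ in $G_\sharp$ and examine, vertex by vertex of the medial graph, the local picture at each $e_i$. Recall $e_i$ is an edge of $G$; the interface enters $e_i$ from one side and leaves from one side. The key local observation is: when $\partial$ reflects on $e_i$ (the edges $e_{i-1},e_i,e_{i+1}$ lie on a common face of $G$), the interface ``bounces off'' $e_i$, and $e_i$ is one of the primal edges we keep; when $\partial$ cuts through $e_i$, the interface passes from one face of $G$ to the other across $e_i$, and we discard $e_i$ (it becomes a reflection edge for the dual interface). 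So the retained subsequence $(e_{k_1},\dots,e_{k_n})$ is exactly the list of edges on which $\partial$ reflects, which by the discussion preceding the proposition is precisely the circuit (or path) $\gamma$ associated to $\partial$.

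Next I would verify $\gamma$ is a right-most path in the sense of the definition in Section~\ref{sec2}, i.e., that it is simple (traverses each edge at most once in each direction) and uses no edge of its right boundary $\partial^+\gamma$. Simplicity of $\gamma$ follows from the edge-self-avoidance of $\partial$ in the medial graph: a repeated use of a primal edge $e$ in the same orientation by $\gamma$ would force $\partial$ to traverse the medial vertex $e$ twice with the same incoming/outgoing local configuration, contradicting that $\partial$ is edge-self-avoiding (and uses its initial/terminal medial vertex at most once except to close a cycle). For the right-boundary property, I would identify, at a vertex $x_i=x(e_{k_i})\cap x(e_{k_{i+1}})$ visited by $\gamma$, the oriented edges listed counter-clockwise between $(x_i,x_{i-1})$ and $(x_i,x_{i+1})$: these are exactly the primal edges that the interface $\partial$ crosses (cuts through) while it travels, inside the corner at $x_i$, from reflecting on $e_{k_i}$ to reflecting on $e_{k_{i+1}}$ — because the medial edges are oriented clockwise in each face, hence counter-clockwise around each vertex of $G$. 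Thus the edges of $\partial\setminus(e_{k_1},\dots,e_{k_n})$, oriented as dictated by $\partial$, are precisely the elements of $\partial^+\gamma$; this proves the last sentence of the proposition and, simultaneously, shows $\gamma$ uses none of them (an edge cannot be both reflected on and cut through at a given passage, and an edge appearing in $\partial^+\gamma$ was cut through, so it is not among the $e_{k_j}$ — here one must be mildly careful that a primal edge may be visited by $\partial$ at most twice, once in each orientation, and handle each visit separately, exactly as the running remark about ``each visit contributing distinct right-boundary edges'' anticipates).

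For surjectivity and injectivity I would construct the inverse map: given a right-most path $\gamma=(x_0,\dots,x_n)$, build an interface on the medial graph by ``tracing the right side'' of $\gamma$ — start just to the right of the oriented edge $(x_0,x_1)$ and walk along the medial graph, reflecting on each edge of $\gamma$ in turn and cutting through precisely the right-boundary edges encountered in the corner at each $x_i$. That this produces a well-defined, edge-self-avoiding medial path (closing into a cycle iff $\gamma$ is a circuit) uses exactly the right-most hypothesis: since $\gamma$ does not use any edge of $\partial^+\gamma$, the traced interface never needs to reflect on an edge it has already cut through, and simplicity of $\gamma$ prevents it from re-reflecting. Applying the forward map to this interface returns $\gamma$, and applying the backward map to the interface obtained from a given $\partial$ returns $\partial$ (both checks are local: the two constructions are defined by the same local rule at each face/corner). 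The statement about circuits versus cycles is immediate from the construction, since closure of $\gamma$ corresponds to closure of $\partial$.

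The main obstacle I anticipate is purely bookkeeping rather than conceptual: making the local case analysis at each medial vertex fully rigorous — in particular the degenerate situations where $\gamma$ visits a vertex $x_i$ more than once, or traverses an edge in both orientations, so that the ``corner'' at $x_i$ is split into several angular sectors each contributing its own block of right-boundary edges. One must phrase the correspondence in terms of (oriented edge, side) pairs — equivalently, directed corners — rather than vertices of $G$, and check the counter-clockwise ordering of emanating edges matches the order in which $\partial$ cuts through them. Once the correct bookkeeping framework (directed corners of $G$, i.e. darts, as the real index set) is in place, every verification reduces to a single picture at one corner, and both the bijectivity and the identification of $\partial^+\gamma$ follow.
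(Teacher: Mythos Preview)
Your proposal is correct and follows essentially the same route as the paper: extract the reflected edges to form~$\gamma$, identify the cut-through edges at each corner as the right-boundary edges via the counter-clockwise orientation around vertices, and build the inverse by tracing the right side of~$\gamma$ inserting the right-boundary edges between consecutive path edges. The paper's write-up is terser and handles the edge-simplicity check for the constructed interface by a short contradiction argument rather than via the darts/directed-corners formalism you sketch, but the content is the same.
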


\begin{proofsect}{Proof}
Consider an interface $\partial:=(e_1,\dots,e_m)$. If~$\partial$ reflects on~$e_i$, orient~$e_i$ in the ``direction''~$\partial$ sweeps by it, if $\partial$ cuts through~$e_i$, orient~$e_i$ so that it points left-to-right as it is traversed by~$\partial$. Let $(e_{k_1},\dots,e_{k_n})$ be as above. It is easy to check that, for each $i=1,\dots,n-1$, the initial vertex of each edge among $e_{k_i+1},\dots,e_{k_{i+1}}$ is the terminal vertex of~$e_{k_i}$. Calling this vertex~$x_i$, we obviously have~$e_{k_i}=(x_{i-1},x_i)$ and so, setting $x_0$ to the initial vertex of~$e_{k_1}$ and $x_n$ to the terminal vertex of~$e_{k_n}$, the sequence~$\gamma:=(x_0,\dots,x_n)$ is a path. The orientation of~$G_\sharp$ and the fact that~$\partial$ is edge-simple ensures that~$\gamma$ is simple. Moreover, the edges $e_{k_{i}+1},\dots,e_{k_{i+1}-1}$ are then the right-boundary edges of~$\gamma$ at~$x_i$. Finally, $\gamma$ is right-most because once $\partial$ cuts through an edge (making it part of $\partial^+\gamma$) it cannot reflect on it later.

For the opposite direction, let $\gamma:=(x_0,\dots,x_n)$ be a right-most path. The corresponding interface~$\partial = (e_1, \dots, e_m)$ is constructed as follows: The first edge in $\partial$ will be $(x_0,x_1)$. Then, for each $k=1, \dots, N$ such that both $(x_{k-1}, x_k)$ and $(x_k, x_{k+1})$ are on $\gamma$, we add to $\partial$ all right-boundary edges emanating from $x_k$ in a counter-clockwise order and then also $(x_k, x_{k+1})$.

The construction ensures that~$\partial$ is oriented in accord with~$G_\sharp$. To see that $\partial$ is edge-simple, suppose the opposite. Then $\partial$ uses a pair $(e_i,e_{i+1})$ one more time at a later index. If $\partial$ reflects on $e_{i+1}$, then $e_{i+1}$ is an edge in~$\gamma$ and so it cannot be seen (and reflected upon) by~$\partial$ again as~$\gamma$ is itself simple. If, on the other hand, $\partial$ cuts through~$e_{i+1}$, then $e_{i+1}$ is encountered twice as a right-boundary edge of a vertex in $\gamma$, which again cannot happen since $\gamma$ is edge simple. Thus~$\partial$ is edge simple. Similar considerations show that the first and last edge cannot repeat unless $\gamma$ is a circuit. This shows that $\partial$ is an interface. Since $\partial$ uses all edges of $\gamma$ without cutting through them, applying the mapping in the statement to $\partial$ will give back $\gamma$. This shows that the map $\partial\mapsto\gamma$ is onto. An inspection of the above shows that same applies to $\gamma\mapsto\partial$.
\end{proofsect}


\begin{corollary}
\label{cor-duality}
For any right-most path~$\gamma$, the set of edges dual to $\partial^+\gamma$ defines a right-most path~$\gamma_\star$ on~$G_\star$. 
\end{corollary}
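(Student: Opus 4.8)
The plan is to derive Corollary~\ref{cor-duality} by applying Proposition~\ref{prop-duality} twice, once in $G$ and once in $G_\star$, exploiting the fact — already recorded in the discussion preceding Proposition~\ref{prop-duality} — that the medial graph of $G$ coincides with that of $G_\star$ up to reversing all edge orientations, and that an interface in $G_\sharp$, run backwards, is an interface in $(G_\star)_\sharp$. First I would use Proposition~\ref{prop-duality} to pass from the given right-most path $\gamma$ to the associated interface $\partial=(e_1,\dots,e_m)$ in $G_\sharp$; the proposition tells us that, under this correspondence, $\gamma$ is exactly the sub-sequence of medial vertices of $\partial$ that are \emph{not} cut through, while $\partial^+\gamma$ (with the orientation convention from the proof of Proposition~\ref{prop-duality}) is exactly the complementary sub-sequence, namely the medial vertices through which $\partial$ cuts.

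The next step is a local duality dictionary. For any three consecutive medial vertices $e_{i-1},e_i,e_{i+1}$ of an interface, the orientation rule defining $G_\sharp$ forces exactly one of two situations: the three lie on a common face of $G$ (a reflection on $e_i$ in $G$), or the three lie around a common vertex of $G$, equivalently on a common face of $G_\star$ (a cut-through of $e_i$ in $G$, which is precisely a reflection on the dual edge $e_{i\star}$ in $G_\star$). Since this classification depends only on the unordered triple, reversing the direction of traversal does not affect it, so the medial vertices that $\partial$ cuts through in $G$ are precisely the medial vertices that the reversed interface $\partial_\star$ reflects on in $G_\star$, and symmetrically for reflections of $\partial$ versus cut-throughs of $\partial_\star$.

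Finally I would invoke Proposition~\ref{prop-duality} a second time, now for the interface $\partial_\star$ in the planar graph $G_\star$. It produces a right-most path $\gamma_\star$ in $G_\star$ whose edge sequence consists exactly of the medial vertices not cut through by $\partial_\star$; by the dictionary of the previous paragraph these are exactly the medial vertices cut through by $\partial$ in $G$, i.e.\ the edges $e_\star$ dual to the edges of $\partial^+\gamma$. When $\gamma$ is a circuit, $\partial$ and $\partial_\star$ are cycles and this is an exact equality of edge sets; when $\gamma$ is not a circuit one must additionally append the two terminal medial vertices $e_1,e_m$ (which are neither reflected nor cut through) at the two ends. This is the assertion of the corollary, and as a byproduct one reads off that $\partial^+\gamma_\star$ is the set of edges dual to $\gamma$.

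The one place requiring genuine care — and thus the main obstacle — is the orientation bookkeeping: Proposition~\ref{prop-duality} assigns specific orientations to the edges of $\gamma_\star$ (``left-to-right as traversed'' for cut-through edges, ``in the sweeping direction'' for reflected edges), and one has to check that, after the reversal $\partial\mapsto\partial_\star$, these coincide with the dual-edge orientations inherited by $\partial^+\gamma$ under the convention that $e_\star$ points from the face on the left of $e$ to the face on the right of $e$. This is a finite, picture-level verification with no real mathematical content, but it is where one must be careful to state the corollary with the correct orientations rather than merely as an equality of underlying edge sets.
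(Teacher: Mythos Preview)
Your proposal is correct and follows exactly the paper's route: pass from $\gamma$ to its interface $\partial$ via Proposition~\ref{prop-duality}, reverse to obtain an interface $\partial_\star$ in $(G_\star)_\sharp$, and apply Proposition~\ref{prop-duality} again in $G_\star$; the paper compresses your ``local duality dictionary'' and orientation check into the single word ``obviously.'' Your remark about the two terminal medial vertices in the non-circuit case is a genuine technical caveat that the paper's terse proof glosses over, though it is harmless for the only later use of the corollary (in Proposition~\ref{prop:RenormalizedLD}), where the extra pair of edges only helps.
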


\begin{proofsect}{Proof}
Consider the interface $\partial$ related to~$\gamma$ as stated in Proposition~\ref{prop-duality}. Then its reversal $\partial_\star$ is an interface on~$G_\star$ and so it induces a right-most path $\gamma_\star$. Obviously, $e_\star$ is an (oriented) edge in $\gamma_\star$ if and only if its primal edge~$e$ belongs to~$\partial^+\gamma$.
\end{proofsect}

The graph $G:=\Z^2$ is certainly planar and it will have both a dual and a medial. Using the standard embedding of~$\Z^2$ into~$\R^2$, the dual of~$\Z^2$ can be identified with $\Z^2_\star=(\ffrac12,\ffrac12)+\Z^2$ and its medial~$\Z^2_\sharp$ with a scaled and rotated copy of~$\Z^2$. Proposition~\ref{prop-duality} explains our reliance on right-most paths in the definition of~$\beta_p$. We finish with two simple lemmas that will be needed for the proof of Theorem~\ref{thm:BetaP}. 

\begin{lemma}
\label{prop:DualPathLength}
For every right-most path $\gamma$,    
\begin{equation}
    \label{eqn:A2.8}
        \frac{|\gamma|}{3} - 2 \leq \bigl| \partial^+\gamma \bigr| \leq 3|\gamma| \, .
    \end{equation}
\end{lemma}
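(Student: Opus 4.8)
\emph{Proof strategy.} Write $\gamma=(x_0,\dots,x_n)$, so that $|\gamma|=n$, and prove the two bounds separately. The upper bound is a degree count. Every edge of $\partial^+\gamma$ emanates from one of the \emph{interior} positions $x_1,\dots,x_{n-1}$ of $\gamma$ — the two endpoints of a non-circular path contribute no right-boundary edges by definition, while for a circuit all $n$ positions are interior. At an interior position $x_i$ the right-boundary edges form a subset of the $\deg_{\Z^2}(x_i)=4$ oriented edges issuing from $x_i$, and at least $(x_i,x_{i-1})$ is excluded from it; hence at most $3$ of them arise at $x_i$. Summing over the at most $n$ interior positions gives $|\partial^+\gamma|\le 3n=3|\gamma|$, and when $\gamma$ is not a circuit there are only $n-1$ interior positions, so in fact $|\partial^+\gamma|\le 3(n-1)$. (One could replace $3$ by $2$ except at ``U-turns'' of $\gamma$, but this refinement is not needed.)

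For the lower bound I would use planar duality. By Corollary~\ref{cor-duality}, the edges dual to $\partial^+\gamma$ form a right-most path $\gamma_\star$ on the dual lattice $\Z^2_\star$, which is merely a translate of $\Z^2$. Whatever the precise convention, $|\gamma_\star|$ differs from $|\partial^+\gamma|$ only by the contribution of the (at most two) medial edges sitting at the ends of the boundary interface $\partial$ associated with $\gamma$ by Proposition~\ref{prop-duality}, so $|\gamma_\star|\le|\partial^+\gamma|+2$, with equality when $\gamma$, and hence $\partial$, is a cycle. Running the correspondence of Proposition~\ref{prop-duality} once more, this time on the reversed interface $\partial_\star$ that induces $\gamma_\star$, identifies $\partial^+\gamma_\star$ with the duals of the edges of $\gamma$ on which $\partial$ reflects, i.e.\ with all edges of $\gamma$ save the (at most two) extremal ones; consequently $|\partial^+\gamma_\star|\ge|\gamma|-2$, again with equality for a circuit.

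It remains to feed $\gamma_\star$ into the upper bound, which holds verbatim on $\Z^2_\star$. If $\gamma$ (and hence $\gamma_\star$) is a circuit, then $|\gamma|=|\partial^+\gamma_\star|\le 3|\gamma_\star|=3|\partial^+\gamma|$. If $\gamma$ is not a circuit, then neither is $\gamma_\star$, so $|\partial^+\gamma_\star|\le 3(|\gamma_\star|-1)$, and therefore $|\gamma|-2\le 3\bigl((|\partial^+\gamma|+2)-1\bigr)=3|\partial^+\gamma|+3$. In either case $|\gamma|\le 3|\partial^+\gamma|+5$, which yields $|\partial^+\gamma|\ge|\gamma|/3-\tfrac53\ge|\gamma|/3-2$, as claimed. (The lower bound is in any event vacuous once $|\gamma|\le 6$, since its right-hand side is then $\le 0$.)

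The only genuinely delicate point is the bookkeeping of those extremal medial edges of the interface, which is what forces the additive constant down to exactly the ``$2$'' appearing in the statement; everything else is the degree count above together with the duality already set up in Proposition~\ref{prop-duality} and Corollary~\ref{cor-duality}. A duality-free alternative for the lower bound is to note that a right-most path cannot take four right-turns in a row — doing so would close up a unit square, contradicting simplicity — so at least one of every four consecutive interior positions produces a right-boundary edge; this, however, gives only the weaker constant $|\gamma|/4-O(1)$ and does not seem to reach the sharp factor $1/3$.
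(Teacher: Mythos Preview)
Your argument is correct, but it is organized differently from the paper's proof. The paper argues directly on the associated interface~$\partial$: since the medial graph of~$\Z^2$ is $4$-regular and~$\partial$ is edge-simple, in any four consecutive steps~$\partial$ must both reflect at least once (contributing an edge to~$\gamma$) and cut through at least once (contributing an edge to~$\partial^+\gamma$). Combined with $|\gamma|+|\partial^+\gamma|=|\partial|$ (up to the two endpoints), this yields both inequalities in one stroke. You instead get the upper bound by an elementary degree count at each interior position of~$\gamma$, and then bootstrap the lower bound by applying that same upper bound to the dual path~$\gamma_\star$ furnished by Corollary~\ref{cor-duality}, carefully tracking the two extremal medial vertices.

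What each approach buys: your upper bound is self-contained and does not even mention the interface, which is pleasant; but your lower bound leans on the full duality correspondence (Proposition~\ref{prop-duality} and Corollary~\ref{cor-duality}) and requires the endpoint bookkeeping you flag. The paper's route is more symmetric---the two bounds are literally dual to one another under the same ``four-step'' observation---and avoids the case split between circuits and non-circuits.

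One small correction: your parenthetical equality claims are reversed. For a \emph{circuit} one has $|\gamma_\star|=|\partial^+\gamma|$ and $|\partial^+\gamma_\star|=|\gamma|$ exactly (no~$\pm 2$), while for a non-circuit the two endpoint medial vertices belong to both~$\gamma$ and~$\gamma_\star$, giving $|\gamma_\star|=|\partial^+\gamma|+2$ and $|\partial^+\gamma_\star|=|\gamma|-2$. Fortunately your case-by-case computation uses the correct identities in each case, so the final inequality $|\partial^+\gamma|\ge|\gamma|/3-5/3$ stands.
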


\begin{proofsect}{Proof}
Let $\gamma$ be a right-most path and $\partial$ its associated interface. Since the degree of each vertex in~$\Z^2$ is four, in every four steps, $\partial$ visits at least one edge of~$\gamma$ and at least one edge of~$\partial^+\gamma$; otherwise, it would not be edge-simple. Since no (oriented) edge is visited more than once, the claim follows.
\end{proofsect}

We shall now define a way to concatenate two adjacent right-most paths such that the resulting path is also right-most. Let $\gamma = (u_0, u_1, \dots, u_n)$ and $\gamma' = (v_0, v_1, \dots, v_{m})$ be paths and set
\begin{equation}
	k := \min \{i \colon  u_i \in \gamma'\} \quad\text{and}\quad
	l := \max \{i \colon  v_i = u_k \} .
\end{equation}
Then the $*$-concatenation of $\gamma$ and $\gamma'$ denoted $\gamma * \gamma'$ is the path
$(u_0, \dots, u_k, v_{l+1}, \dots, v_m)$. 

\begin{lemma}
\label{prop:MergingOfPaths}
For any $\gamma\in\RR(x,y)$ and $\gamma'\in\RR(y,z)$ we have $\gamma'' := \gamma * \gamma'\in\RR(x,z)$. Moreover,\begin{equation}
\label{eqn:903}
	\big| (\partial^+\gamma'') \setminus 
		( \partial^+\gamma \cup 
		\partial^+\gamma' ) \big| \leq 2.
\end{equation}
\end{lemma}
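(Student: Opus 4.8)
\smallskip\noindent\textbf{Proof plan.} The plan is to work directly from the explicit form of $\gamma''=\gamma*\gamma'$. Write $\gamma=(u_0,\dots,u_n)$ and $\gamma'=(v_0,\dots,v_m)$, and let $k,l$ be as in the definition of the $*$-concatenation, so that $\gamma''=(u_0,\dots,u_k,v_{l+1},\dots,v_m)$ is an initial segment of $\gamma$ glued to the final segment $(v_l,\dots,v_m)$ of $\gamma'$ along the single vertex $u_k=v_l$. I will treat the case in which $\gamma''$ is not a circuit (equivalently $x\ne z$), which is the one needed below. Everything rests on two elementary observations, which I would record first. By minimality of $k$, none of $u_0,\dots,u_{k-1}$ lies on $\gamma'$; hence the vertex sets of the two segments meet only in $\{u_k\}$, and --- using also maximality of $l$ --- the vertex $u_k$ is visited exactly once by each segment, and so exactly once by $\gamma''$. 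The second observation is simply that every vertex of $\Z^2$ has degree~$4$. Also note that, as edge sets, $\gamma''$ is the union of the initial and final segments.

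Granting this, I would first prove \eqref{eqn:903}. Here $\partial^+\gamma''$ is the union, over the interior positions of $\gamma''$, of the right-boundary edges there. At a position strictly inside the initial segment, i.e.\ at $u_i$ with $1\le i\le k-1$, the neighbours on $\gamma''$ are $u_{i-1},u_{i+1}$, exactly as on $\gamma$, so the edges contributed there lie in $\partial^+\gamma$; symmetrically, positions strictly inside the final segment contribute edges of $\partial^+\gamma'$. The only position whose local picture on $\gamma''$ differs from both $\gamma$ and $\gamma'$ is the junction $u_k$ (with neighbours $u_{k-1}$ and $v_{l+1}$ on $\gamma''$), so $\partial^+\gamma''\setminus(\partial^+\gamma\cup\partial^+\gamma')$ is contained in the set of right-boundary edges of $\gamma''$ at $u_k$. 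Since $u_k$ has degree~$4$ and the predecessor edge $(u_k,u_{k-1})$ and successor edge $(u_k,v_{l+1})$ are distinct (because $u_{k-1}\notin\gamma'$ whereas $v_{l+1}\in\gamma'$), the counter-clockwise arc of edges strictly between them contains at most $4-2=2$ elements, which is \eqref{eqn:903}.

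Next I would verify $\gamma''\in\RR(x,z)$. For edge-simplicity: the directed edges of $\gamma''$ are those of the initial segment (a sub-path of the simple path $\gamma$) together with those of the final segment (a sub-path of the simple path $\gamma'$); neither family repeats an edge, and no edge lies in both, since every edge of the final segment has both endpoints on $\gamma'$ while every edge of the initial segment has an endpoint among $u_0,\dots,u_{k-1}$, none of which is on $\gamma'$. To see $\gamma''$ avoids $\partial^+\gamma''$, take $e\in\partial^+\gamma''$ and split on which position of $\gamma''$ contributes $e$. If it is a position interior to the initial segment, then $e\in\partial^+\gamma$, so $\gamma$ --- and hence the initial segment --- avoids $e$; moreover $e$ is incident to some $u_i$ with $i<k$, which is not on $\gamma'$, so the final segment also avoids $e$. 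The final-segment case is symmetric, using that $v_j$ with $j>l$ is never a vertex of the initial segment. Finally, if $e$ is a right-boundary edge at $u_k$, then $e$ joins $u_k$ to a vertex other than $u_{k-1}$ and $v_{l+1}$, whereas the only edges of $\gamma''$ incident to $u_k$ are $(u_{k-1},u_k)$ and $(u_k,v_{l+1})$, as $u_k$ is visited only once by $\gamma''$; so $\gamma''$ avoids $e$. Together with edge-simplicity this gives right-mostness.

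I expect the only genuinely non-mechanical step to be the pair of observations about $k$ and $l$ --- that the two segments overlap only at $u_k$ and that $u_k$ is visited just once; once these are secured, the computation of $\partial^+\gamma''$ and the verification of right-mostness are essentially forced. A minor caveat is the circuit case $x=z$, where the closing vertex of $\gamma''$ would act as a second junction; it is not needed here and would require a separate argument.
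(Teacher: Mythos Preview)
Your argument is correct and follows essentially the same route as the paper's proof: both isolate the single junction vertex $u_k=v_l$, observe that away from it the right-boundary edges of $\gamma''$ already lie in $\partial^+\gamma\cup\partial^+\gamma'$, and bound the contribution at the junction by~$2$ using that $\Z^2$ is $4$-regular. The paper packages this by introducing the three auxiliary paths $\gamma_L=(u_0,\dots,u_k)$, $\gamma_M=(u_{k-1},u_k,v_{l+1})$, $\gamma_R=(v_l,\dots,v_m)$ and writing $\partial^+\gamma''=\partial^+\gamma_L\cup\partial^+\gamma_M\cup\partial^+\gamma_R$, whereas you do the same bookkeeping by a direct case split on positions of~$\gamma''$; the content is identical.

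One small point: your argument tacitly assumes $k\ge1$ and $l\le m-1$ (you invoke $u_{k-1}$ and $v_{l+1}$ at the junction). The degenerate cases $k=0$ or $l=m$ --- where the junction is an endpoint of $\gamma''$ and hence contributes no right-boundary edges --- are even easier, and your worry about the case $x=z$ is unfounded: there $k=0$, $l=m$, so $\gamma''=(x)$ is the trivial path and both claims are vacuous. You might add a sentence dispatching these degenerate cases up front.
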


\begin{proof}
The case $x=z$ is trivial, since in this case $\gamma'' = (x)$. Otherwise, clearly $\gamma''$ is a path from $x$ to $z$. Then let
\begin{equation}
	\gamma_L := (u_0, \dots, u_k), \quad
	\gamma_M := (u_{k-1}, u_k=v_l, v_{l+1}) \quad\text{and}\quad
	\gamma_R := (v_{l}, \dots, v_m) .
\end{equation}
Note these are all right-most paths. From the construction it follows that $\partial^+ \gamma_L$ has no edges in common with $\gamma_M \cup \gamma_R$, and $\partial^+ \gamma_R$ has no edges in common with $\gamma_L \cup \gamma_M$ and that $\partial^+ \gamma_M$ has no edges in common with $\gamma_L \cup \gamma_R$. Hence, $\gamma''$ is right-most. Moreover, $\partial^+ \gamma_L \subseteq \partial^+ \gamma$, $\partial^+ \gamma_R \subseteq \partial^+ \gamma'$ and $|\partial^+ \gamma_M| \leq 2$. Since also
$\partial^+ \gamma'' = \partial^+ \gamma_L \cup \partial^+ \gamma_M \cup \partial^+ \gamma_R$, the claim follows. 
\end{proof}

\subsection{Percolation inputs}
In this section we will assemble some useful facts concerning percolation on~$\Z^2$. Percolation on~$\Z^d$ is a well studied subject; see Grimmett~\cite{grimmett1999percolation} for a standard reference. For our purposes, we will need the following three facts:
\settowidth{\leftmargini}{(11)}
\begin{enumerate}
\item[(1)] \emph{Exponential decay of subcritical connectivities}: Let $\mbC(0)$ denote the connected component of~$0$ in~$\omega$. For each $p<p_\cc(\Z^2)$, there are $C,C'<\infty$
\begin{equation}
\label{E:2.10b}
\BbbP\bigl(|\mbC(0)|\ge n)\le C\texte^{-C n},\qquad n\ge1.
\end{equation}
In particular, if $x\leftrightarrow y$ denote the event that $x$ and~$y$ are connected by an open path in~$\omega$, then
\begin{equation}
\label{E:2.9a}
\BbbP(x\leftrightarrow y)\le C\texte^{-C'\Vert x-y\Vert},\qquad x,y\in\Z^2.
\end{equation}
\item[(2)] \emph{Duality}: The planar nature of~$\Z^2$ permits us to encode a percolation configuration $\omega$ by means of its dual counterpart $\omega_\star$ which is defined by
\begin{equation}
\omega_\star(e_\star):=1-\omega(e).
\end{equation}
Note that $\BbbP(\omega_\star(e_\star)=1)=1-p$, so the dual edges are occupied with dual probability $p_\star:=1-p$. Thanks to Kesten's celebrated result~\cite{Kesten} we know that $p_\cc(\Z^2)=\ffrac12$ and so $\BbbP(\omega \in \cdot)$ is supercritical ($p > p_\cc(\Z^2)$) if and only if $\BbbP(\omega_* \in \cdot)$ is a subcritical ($p_\star<p_\cc(\Z^2)$). In particular, \eqref{E:2.9a} applies to $\omega_\star$ for all $p>p_\cc(\Z^2)$.
\item[(3)] \emph{Comparison of graph and lattice distance}:
Whenever $x\leftrightarrow y$, we can define $D_\omega(x,y)$ as the length of the shortest open path connecting~$x$ to~$y$. (When $x\nleftrightarrow y$ we set $D_\omega(x,y)=\infty$.) Thanks to a result of Antal and Pisztora~\cite[Theorem~1.1]{Antal-Pisztora} we know that, for any $p>p_\cc(\Z^2)$ there is $\rho=\rho(p,d)$ such that
\begin{equation}
\label{E:2.10a}
\limsup_{\|y\|\to\infty}\,\frac1{\|y\|}\,\log
\BbbP\bigl(0\leftrightarrow y,\,D_\omega(0,y)>\rho\|y\|\bigr)<0.
\end{equation}
In particular, $D_\omega(x,y)$ is at large distances comparable with the lattice distance.
\end{enumerate}
We will now use these facts to prove that the distance between the infinite cluster and any fixed point on the lattice has exponential tails.

\begin{lemma}
\label{prop:DistanceToCInf}
Suppose $p>p_\cc(\Z^2)$. There are $C,C' > 0$ such that for all $x \in \Z^2$ and $r > 0$,
\begin{equation}
\label{E:2.7c}
	\BbbP\bigl(\|[x] - x\|) > r\bigr) \leq C\texte^{-C' r}
\end{equation}
\end{lemma}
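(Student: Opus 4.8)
The plan is to reduce the bound to a geometric property of the percolation configuration and then close it using the exponential decay of subcritical cluster sizes recorded in item~(1) of the Percolation inputs. Since $\BbbP$ together with the i.i.d.\ field $\{\eta_z\}$ is invariant under lattice translations, $\BbbP(\|[x]-x\|>r)$ is independent of $x$, so take $x=0$. As $[0]$ is, by definition, a vertex of $\mbC^\infty$ nearest to $0$ in $\ell^\infty$, one has $\|[0]\|=\dist_\infty(0,\mbC^\infty)$, whence, with $n:=\lfloor r\rfloor$,
\begin{equation}
\bigl\{\,\|[0]\|>r\,\bigr\}=\bigl\{\,\rmB_\infty(n)\cap\mbC^\infty=\emptyset\,\bigr\}=:\AA_n .
\end{equation}
(If $r<1$ the event is $\{0\notin\mbC^\infty\}$, which is absorbed by choosing $C$ large; so assume $n\ge1$.) It thus suffices to show $\BbbP(\AA_n)\le C\texte^{-C'n}$.

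On $\AA_n$ no vertex of $\rmB_\infty(n)$ belongs to an infinite open cluster, so—by uniqueness of the infinite cluster—there is no open path from $\rmB_\infty(n)$ to infinity; equivalently, the cluster $\bigcup_{v\in\rmB_\infty(n)}\mbC(v)$ is a finite connected subgraph of $\Z^2$ all of whose outer boundary edges are $\omega$-closed. Planar duality for bond percolation on~$\Z^2$ (cf.\ \cite{grimmett1999percolation}) then supplies a circuit $\Delta$ in the dual lattice, every edge of which is open for $\omega_\star$ (equivalently closed for $\omega$), which encircles $\rmB_\infty(n)$. The point now is to pin $\Delta$ to a fixed ray so that a one-dimensional union bound closes the argument. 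Since $\Delta$ surrounds $\rmB_\infty(n)$, the eastward ray from the origin crosses it, so $\Delta$ passes through the dual vertex $u_k:=(k+\tfrac12,\tfrac12)$ for some integer $k\ge n$; and, encircling $\rmB_\infty(n)$, the connected set $\Delta$ also contains a dual vertex with first coordinate $\le-n-\tfrac12$. Hence $\Delta$ furnishes an $\omega_\star$-open path from $u_k$ to $\{y\in\R^2\colon y_1\le-n-1\}$, so that the $\omega_\star$-cluster of $u_k$ has at least $k+n$ vertices.

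Consequently $\AA_n\subseteq\bigcup_{k\ge n}\{\,|\mbC_\star(u_k)|\ge k+n\,\}$. Because $p>p_\cc(\Z^2)=\tfrac12$, the dual density is $p_\star=1-p<\tfrac12$, so $\omega_\star$ is subcritical and \eqref{E:2.10b} applies to it. A union bound then gives
\begin{equation}
\BbbP(\AA_n)\le\sum_{k\ge n}\BbbP\bigl(|\mbC_\star(u_k)|\ge k+n\bigr)\le\sum_{k\ge n}C\texte^{-C(k+n)}\le C'\texte^{-C'n},
\end{equation}
and since $n=\lfloor r\rfloor\ge r-1$ this yields $\BbbP(\|[x]-x\|>r)\le C''\texte^{-C''r}$.

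The translation reduction and the Peierls-type summation are routine. The step that actually drives the proof is the observation that a closed dual circuit encircling $\rmB_\infty(n)$ must simultaneously cross a fixed ray at some ``radius'' $k\ge n$ and travel all the way around the box, so it contains a dual-open path of length growing with~$k$; this is exactly what makes the sum over $k$ summable (using only $\len(\Delta)\ge 8n$ would leave a divergent sum). The main points to handle with care are the appeal to two-dimensional planar duality to produce the enclosing closed dual circuit from the absence of an open connection to infinity, and setting up the dual lattice and the notion of ``crossing a ray'' consistently—both standard, but worth stating precisely.
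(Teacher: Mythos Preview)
Your proof is correct and follows essentially the same Peierls-type duality argument as the paper: both reduce to the absence of $\mbC^\infty$ in $\rmB_\infty(n)$, invoke the enclosing $\omega_\star$-open dual circuit, pin it to the horizontal axis, and sum the resulting subcritical estimate. The only cosmetic differences are that the paper tracks the two crossings $x\pm n_\pm e_1$ and applies the two-point connectivity bound \eqref{E:2.9a} (summing over $n_+,n_-\ge r$), whereas you track a single crossing $u_k$ and use the cluster-size bound \eqref{E:2.10b}; also, your phrase ``finite connected subgraph'' for $\bigcup_{v\in\rmB_\infty(n)}\mbC(v)$ is a slip (the union need not be connected), but this is immaterial since the existence of the dual circuit is the standard consequence you actually use.
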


\begin{proof}
On $\{\|[x] - x\|>r\}$, where $r\in\N$, there is no point of~$\mbC^\infty$ in the box of side~$2r+1$ centered at~$x$. By duality, this box is therefore circumnavigated by a dual path whose edges are open (in~$\omega_\star$). In particular, there are vertices $x_+,x_-\in\Z^2$ of the form $x_\pm:=x\pm n_\pm e_1$, where $e_1$ is the unit vector in the first coordinate direction and $n_\pm\ge r$, whose dual neighbors lie on this path. Since $p>p_\cc(\Z^2)$, the dual percolation is subcritical and the probability that this occurs is exponentially small in $n_++n_-$. Summing over $n_\pm\ge r$, we get \eqref{E:2.7c}.
\end{proof}

The following lemma is an extension of \eqref{E:2.10a}, of which we will make frequent use.
\begin{lemma}
\label{prop:ImprovedPisztora}
Suppose $p>p_\cc(\Z^2)$. There are $\alpha_1,C,C' > 0$
such that for all $x,y \in \Z^2$,
\begin{equation}
\BbbP\bigl(D_\omega([x], [y]) > r\bigr)\le C\texte^{-C' r},\qquad r > \alpha_1 \|y-x\|.
\end{equation}
\end{lemma}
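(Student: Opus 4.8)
The plan is to bound $D_\omega([x],[y])$ by routing through an auxiliary lattice vertex $z$ placed at Euclidean distance of order $r$ from $x$ — and hence also from $y$, since $\|x-y\|<r/\alpha_1$ is comparatively tiny — and then to control the two chemical-distance legs $D_\omega([x],[z])$ and $D_\omega([z],[y])$ via \eqref{E:2.10a}. The reason an auxiliary \emph{far} vertex is needed is that \eqref{E:2.10a} only produces a bound of order $\texte^{-cr}$ on $\BbbP\bigl(u\leftrightarrow w,\,D_\omega(u,w)>\rho\|u-w\|\bigr)$ when $\|u-w\|$ itself has order $r$; applied directly to the pair $([x],[y])$ it would be worthless whenever $\|x-y\|=O(1)$, since then $\|[x]-[y]\|=O(1)$ and \eqref{E:2.10a} controls the probability only at the irrelevant rate $\texte^{-c\|x-y\|}$. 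Throughout we take $\rho\ge1$ without loss of generality, reduce to $r$ large (small $r$ is absorbed into~$C$), and fix at the end a small $\epsilon>0$ and a large $\alpha_1$, both depending only on~$\rho$.

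The first step is to use Lemma~\ref{prop:DistanceToCInf} to reduce to a statement about pairs of vertices of $\mbC^\infty$ that are genuinely far apart. Fix a deterministic $z\in\Z^2$ with $\|z-x\|=\lfloor r/(3\rho)\rfloor$ and set $\GG:=\bigcap_{a\in\{x,y,z\}}\{\|[a]-a\|\le\epsilon r\}$. By Lemma~\ref{prop:DistanceToCInf} and a union bound, $\BbbP(\GG^{\mathrm c})\le 3C\texte^{-C'\epsilon r}$, which is already of the desired form. On $\GG$, the $\ell^\infty$ triangle inequality together with $\|x-y\|<r/\alpha_1$ gives
\begin{equation}
\label{E:plan-legs}
\frac{c_0}{\rho}\,r\ \le\ \|[x]-[z]\|,\ \|[z]-[y]\|\ \le\ \frac1{2\rho}\,r
\end{equation}
for an absolute constant $c_0\in(0,\tfrac12)$ and all large $r$ — the lower bound coming from $\|z-x\|\approx r/(3\rho)$, the upper bound from choosing $\epsilon$ small and $\alpha_1$ large. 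In particular $\rho\|[x]-[z]\|<r/2$ and $\rho\|[z]-[y]\|<r/2$ on~$\GG$.

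The second step is to split the chemical distance. Since $[x],[y],[z]\in\mbC^\infty$ and $D_\omega$ is a metric on $\mbC^\infty$, we have $D_\omega([x],[y])\le D_\omega([x],[z])+D_\omega([z],[y])$, so on $\{D_\omega([x],[y])>r\}$ at least one of the two legs exceeds $r/2$; by \eqref{E:plan-legs} that same leg then also exceeds $\rho$ times the $\ell^\infty$-distance between its endpoints. Moreover, both endpoints of each leg lie in $\mbC^\infty$, hence are connected in $\omega$. Writing $N:=\#\bigl(\Z^2\cap[-\epsilon r,\epsilon r]^2\bigr)\le Cr^2$ and taking a union bound over the $N^2$ possible positions of $([x],[z])$ and the $N^2$ possible positions of $([z],[y])$ — all of which satisfy $\|u-w\|\ge c_0 r/\rho$ by \eqref{E:plan-legs} — we obtain
\begin{equation}
\label{E:plan-union}
\BbbP\bigl(\GG\cap\{D_\omega([x],[y])>r\}\bigr)\ \le\ 2N^2\ \sup_{\|u-w\|\ge c_0 r/\rho}\ \BbbP\bigl(u\leftrightarrow w,\ D_\omega(u,w)>\rho\|u-w\|\bigr).
\end{equation}
By translation invariance and \eqref{E:2.10a}, the supremum is at most $C\texte^{-C''r}$ for all large $r$, so the right-hand side is $\le Cr^4\texte^{-C''r}$, which is of the required form. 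Adding this to the bound on $\BbbP(\GG^{\mathrm c})$ and adjusting constants completes the proof.

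The only genuinely delicate point is the bookkeeping underlying \eqref{E:plan-legs}: the distance $\|z-x\|$, the slack $\epsilon$, and the threshold $\alpha_1$ must be tuned so that \emph{simultaneously} (a) $\|[x]-[z]\|$ and $\|[z]-[y]\|$ are of exact order $r$, so that \eqref{E:2.10a} decays like $\texte^{-cr}$ rather than at the useless rate $\texte^{-c\|z-x\|}$ with a vanishingly small exponent relative to what we need; and (b) $\rho$ times each of these distances stays below $r/2$, so that a leg exceeding $r/2$ really does force $D_\omega>\rho\|\cdot\|$ on that leg. This competition is exactly why one inserts the auxiliary vertex rather than applying \eqref{E:2.10a} to $([x],[y])$ directly.
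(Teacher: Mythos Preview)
Your proof is correct and follows the same overall strategy as the paper: route through an auxiliary vertex at distance of order~$r$ from both endpoints and apply the Antal--Pisztora bound \eqref{E:2.10a} to the two legs, with Lemma~\ref{prop:DistanceToCInf} handling the passage from $x,y$ to $[x],[y]$. The one noteworthy implementation difference is how the auxiliary vertex is chosen. You fix a \emph{deterministic} $z$ at distance $\approx r/(3\rho)$ from~$x$ and work with its projection~$[z]$ onto $\mbC^\infty$, paying for this with a union bound over the $O(r^2)$ possible locations of each of $[x],[y],[z]$. The paper instead exploits that $0\in\mbC^\infty$ forces any infinite open path from~$0$ to cross $\partial\rmB_\infty(2r)$; taking $z$ to be such a (random) crossing point makes $0\leftrightarrow z$ and $z\in\mbC^\infty$ automatic, and the union bound is only over the $O(r)$ vertices of $\partial\rmB_\infty(2r)$. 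Both polynomial prefactors are absorbed harmlessly into the exponential, so neither approach has a real advantage here; the paper's choice is perhaps slightly slicker in that it uses the percolation structure (the existence of an infinite path) to manufacture the intermediate vertex rather than invoking Lemma~\ref{prop:DistanceToCInf} a third time.
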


\begin{proofsect}{Proof}
In light of Lemma~\ref{prop:DistanceToCInf} and the translation invariance of~$\BbbP$, it suffices to show that, for some $\alpha,C,C'\in(0,\infty)$ and any $y\in\Z^2$,
\begin{equation}
\label{E:2.9}
\BbbP\bigl(\,0,y\in\mbC^\infty,\,\,D_\omega(0,y) >\alpha r\bigr) \leq  C\texte^{-C' r},\qquad r>\|y\|.
\end{equation}
We will invoke \eqref{E:2.10a} and a short argument. Recall that $\rmB_\infty(r):=\{y\in\Z^2\colon\|y\|\le r\}$. Thanks to the triangle inequality for $D_\omega$, on $\{0,y\in\mbC^\infty,\,\,D_\omega(0,y) >5\rho r\}$, where~$\rho$ is as in \eqref{E:2.10a}, there must be a point $z\in\partial\rmB_\infty(2r)\cap\mbC^\infty$ such that $\{0\leftrightarrow z,\,D_\omega(0,z)>2\rho r\}\cup\{y\leftrightarrow z,\,D_\omega(y,z)>3\rho r\}$ holds. Hence,
\begin{multline}
\qquad
\BbbP\bigl(0,y\in\mbC^\infty,\,\,D_\omega(0,y) >5\rho r\bigr)
\\
\le\sum_{z\in\partial\rmB_\infty(2r)}\Bigl(\BbbP\bigl(0\leftrightarrow z,\,D_\omega(0,z)>2\rho r\bigr)+\BbbP\bigl(y\leftrightarrow z,\,D_\omega(y,z)>3\rho r\bigr)\Bigr).
\qquad
\end{multline}
Assuming $\Vert y\Vert\le r$, \eqref{E:2.10a} along with $r\le\Vert z\Vert\le 2r$ and $r\le\Vert y-z\Vert\le 3r$ imply that both probabilities on the right are bounded by $C\texte^{-C'r}$ for some $C,C'>0$, independent of~$y$ and~$z$. As $|\partial \rmB_\infty(2r)|=O(r)$, the bound \eqref{E:2.9} follows with $\alpha:=5\rho$.
\end{proofsect}

Finally we prove that a right-most path $\gamma$ cannot have too few open edges in $\partial^+\gamma$.

\begin{proposition}
\label{prop:RenormalizedLD}
For $p>p_\cc(\Z^2)$ there are $\alpha_2,C,C' > 0$, such that for  all $n \geq 0$,
\begin{equation}
\label{eqn:931}
	\BbbP\Bigl(\exists \gamma \in \bigcup_{x \in \Z^2} \RR(0,x) \colon  |\gamma| \geq n 
		, |\frb(\gamma)| \leq \alpha_2 n\Bigr) \leq C\texte^{-C' n} .
\end{equation}
\end{proposition}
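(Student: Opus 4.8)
The plan is to combine planar duality with a renormalisation argument exploiting the subcriticality of the dual measure. Since $p>p_\cc(\Z^2)=\ffrac12$, the dual configuration $\omega_\star$ is subcritical, so \eqref{E:2.10b}--\eqref{E:2.9a} apply to $\omega_\star$; in particular $\omega_\star$-open clusters are exponentially unlikely to be large.

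First I would pass to the dual path. Fix a right-most path $\gamma$ with $|\gamma|=m\ge n$. By Lemma~\ref{prop:DualPathLength}, $|\partial^+\gamma|\ge m/3-2$, and by Corollary~\ref{cor-duality} the edges dual to $\partial^+\gamma$ form a right-most path $\gamma_\star$ in $\Z^2_\star$ of length $|\partial^+\gamma|$; moreover an edge of $\gamma_\star$ is $\omega_\star$-open exactly when its primal edge in $\partial^+\gamma$ is $\omega$-closed. Hence $\gamma_\star$ has exactly $\frb(\gamma)$ edges that are $\omega_\star$-closed, so on the event in \eqref{eqn:931} there is a right-most dual path $\gamma_\star$, passing within $O(1)$ of the origin, of length at least $m/3-2$, with at most $\alpha_2 m$ of its edges $\omega_\star$-closed --- a long dual path that is ``mostly $\omega_\star$-open''. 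It thus suffices to show the probability of such a path existing is at most $Ce^{-C'n}$.

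The heart of the argument is a renormalisation. Tile $\Z^2$ (hence $\Z^2_\star$) by boxes of a large side $N$, to be fixed at the very end. Call a box \emph{bad} if some $\omega_\star$-open cluster with more than $\sqrt N$ vertices meets the concentric box of side $3N$, and \emph{good} otherwise. By \eqref{E:2.10b} for $\omega_\star$, $\BbbP(\text{box bad})\le q(N)$ with $q(N)\to0$ as $N\to\infty$; since badness is a local event, the bad-box field is finite-range dependent and hence (by standard stochastic domination of such fields by Bernoulli site percolation) dominated by an i.i.d.\ site percolation with parameter $\tilde q(N)\to0$. The decisive geometric input is that inside a good box every $\omega_\star$-open sub-path of $\gamma_\star$ is trapped: it stays in a ball of radius $\sqrt N$, so it cannot cross the box, and it uses $O(\sqrt N)$ edges. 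Consequently, whenever $\gamma$ lays down a disproportionately large number of edges in a good box, $\gamma_\star$ must jump between many distinct small $\omega_\star$-clusters there, and hence spend many of its $\omega_\star$-closed edges inside that box. A pigeonhole count --- using that $\gamma$ has $O(N^2)$ edges per box and at least $m$ edges in total --- then shows that, provided $\alpha_2=\alpha_2(N)$ is small enough, the $*$-connected set $\mathcal V$ of boxes met by $\gamma$ satisfies $|\mathcal V|\ge c\,m/N^2\ge c\,n/N^2$ while at least a fraction $\rho_N>0$ of the boxes of $\mathcal V$ are bad. Therefore the event in \eqref{eqn:931} is contained in the event that some $*$-connected set of boxes through the origin's box, of some size $j\ge c\,n/N^2$, has at least $\rho_N j$ bad boxes, whose probability a union bound over $*$-connected animals bounds by $\sum_{j\ge c\,n/N^2}\bigl(2\,C_{\mathrm{an}}\,\tilde q(N)^{\rho_N}\bigr)^{j}$, with $C_{\mathrm{an}}$ the growth rate of $*$-connected animals through a fixed box. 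Fixing $N$ so large that $2\,C_{\mathrm{an}}\,\tilde q(N)^{\rho_N}<1$ makes this at most $Ce^{-C'n}$, as desired.

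I expect the main difficulty to lie in the deterministic claim just used. Because a right-most path can be essentially space-filling inside a single $N$-box, one cannot simply charge an $\omega_\star$-closed edge to each good box met by $\gamma$; the real work is the bookkeeping of how the edges of $\partial^+\gamma$ --- and among them the $\omega_\star$-closed ones --- distribute between ``heavy'' and ``light'' boxes and between good and bad boxes, ruling out in particular that many lightly-visited good boxes absorb the $\omega_\star$-open edges of $\gamma_\star$ at no cost, and in extracting a lower bound on $\rho_N$ that, even if it decays with $N$, does so slowly enough to beat the fixed growth constant $C_{\mathrm{an}}$.
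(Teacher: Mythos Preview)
Your reduction to the dual path is exactly what the paper does: by Corollary~\ref{cor-duality} and Lemma~\ref{prop:DualPathLength}, the event in \eqref{eqn:931} forces a simple dual path of length $\ge n/3-2$, starting near the origin, with at most $\alpha_2 n$ edges that are $\omega_\star$-closed. After that point, however, the paper and your proposal diverge, and the renormalisation route you sketch runs into a real obstacle that is more than bookkeeping.

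The issue is the balance between your bad-box fraction $\rho_N$ and the bad-box probability $\tilde q(N)$. With the threshold $\sqrt N$ you chose, an open run in a small cluster has $O(\sqrt N)$ edges, so to force most open edges into large clusters you need $\alpha_2\lesssim 1/\sqrt N$. The number of good boxes visited is then $O(\alpha_2 L)$ (each short run touching $O(1)$ boxes), while the number of bad boxes is only $\Omega(L/N^2)$, giving $\rho_N=O(N^{-3/2})$. Since $\tilde q(N)=\exp(-c(p)\sqrt N)$, you get $\tilde q(N)^{\rho_N}=\exp(-c(p)N^{-1})\to1$, which cannot beat the lattice-animal constant $C_{\mathrm{an}}$. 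Replacing $\sqrt N$ by a threshold $K$ of order $N$ makes $\tilde q(N)=\exp(-c(p)N)$ and $\rho_N=O(1/N)$, so $\tilde q(N)^{\rho_N}=\exp(-c(p)\cdot\text{const})$; this is a fixed number that does \emph{not} tend to zero with $N$, and for $p$ close to $p_\cc$ the rate $c(p)$ is too small to beat $\log C_{\mathrm{an}}$. In short, the quantity $\rho_N\log(1/\tilde q(N))$ stays bounded uniformly in $N$ and is governed by the subcritical decay rate, so renormalisation does not buy you the room you need. Your closing paragraph correctly locates the difficulty but underestimates it: it is not a matter of careful counting, it is a genuine mismatch of exponents.

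The paper avoids all of this with a short direct argument at the level of configurations. After the duality step, one has to show that under the subcritical measure $\BbbP_{p_\star}$, the event $\AA_{n,\alpha}$ that some simple path of length $n$ through the origin has at most $\alpha n$ closed edges is exponentially unlikely. The trick is a surgery: for $\sigma\in\AA_{n,0}$ (a fully open path of length $n$ exists), let $\TT_{n,\alpha}(\sigma)$ be the set of configurations obtained by closing at most $\alpha n$ edges along a fixed such path. One checks $\AA_{n,\alpha}\subseteq\bigcup_{\sigma\in\AA_{n,0}}\TT_{n,\alpha}(\sigma)$, and since closing $k$ edges multiplies the probability by $(p/(1-p))^k$, summing gives $\BbbP_{p_\star}(\AA_{n,\alpha})\le Cn\,e^{c(\alpha,p)n}\,\BbbP_{p_\star}(\AA_{n,0})$ with $c(\alpha,p)\to0$ as $\alpha\to0$. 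Now $\AA_{n,0}$ forces $|\mathbf C(0)|\ge n$, so $\BbbP_{p_\star}(\AA_{n,0})\le e^{-c'(p)n}$ by \eqref{E:2.10b}. Choosing $\alpha$ small enough so that $c(\alpha,p)<c'(p)$ finishes the proof. This energy--entropy comparison works for every $p>p_\cc(\Z^2)$ with no coarse-graining at all.
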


\begin{proof}
Recall that $\omega_\star$ is the dual configuration of $\omega$, that is
$\omega_\star(e_\star) = 1-\omega(e)$. By Corollary~\ref{cor-duality} and Lemma~\ref{prop:DualPathLength}, on the event in \eqref{eqn:931} there is a dual right-most path and, in particular, a simple path $\gamma_\star$ which ends at a dual neighbor of~$0$, has length at least $n/3 - 2$ and contains less than $\alpha_2 n$ edges~$e$ with $\omega_\star(e)=0$. 
As there are only four dual neighbors of~$0$, replacing $\omega_\star$ by~$\omega$ for ease of notation, it suffices to show 
\begin{equation}
\label{E:2.10}
    \BbbP_{p_\star} \bigl(\exists \gamma\colon 0\in\gamma \text{, simple},\, 
        |\gamma| = n ,\, |\{e \in \gamma \colon  e \notin \omega\}| \leq \alpha_2 n\bigr) \leq C \texte^{-C' n} ,
\end{equation}
where $p_\star:=1-p$ is the dual to~$p$ and $\BbbP_{p_\star}$ is the dual percolation measure.
    
To prove \eqref{E:2.10}, let us define, for any $\alpha \geq 0$, the set
\begin{equation}
    \AA_{n, \alpha} := \big \{
        \exists \gamma\colon 0\in\gamma \text{, simple},
		|\gamma| = n ,\, |\{e \in \gamma \colon  e \notin \omega\}| \leq 
			\alpha n \big \}.
\end{equation}
Note that $\AA_{n,\alpha}$ depends only on the edges in a box of side length~$n$ centered at the origin. This permits us to regard $\AA_{n,\alpha}$ as a subset of a finite sample space. 

Fixing an (arbitrary) ordering of $\{\gamma\colon 0\in\gamma \text{, simple},\, |\gamma|=n\}$, define the map $\TT_{n, \alpha} \colon  \AA_{n, 0} \to 2^{\AA_{n, \alpha}}$ as follows: Given $\sigma \in \AA_{n, 0}$, let $\gamma$ be the simple $\sigma$-open path of length $n$ from the origin that is minimal in the above ordering. Then $\TT_{n, \alpha}(\sigma)$ is the set of all configurations that are obtained from~$\sigma$ by closing at most $\alpha n$ edges in $\gamma$. We claim
\begin{equation}
\label{E:2.15d}
    \AA_{n, \alpha} \subseteq \bigcup_{\sigma \in \AA_{n,0}} \TT_{n, \alpha}(\sigma).
\end{equation}
Indeed, 
given $\sigma' \in \AA_{n, \alpha}$, find the minimal (in the above ordering) simple path $\gamma$ which has at most $\alpha n$ closed edges in~$\sigma'$. Then set $\sigma$ to be the configuration obtained from $\sigma'$ by opening all edges along this path. Clearly, $\sigma \in \AA_{n, 0}$ and $\sigma' \in \TT_{n, \alpha}(\sigma)$, proving \eqref{E:2.15d}.

For brevity let $\BbbP_{p_\star}(\sigma)$ denote the probability of the configuration~$\sigma$. (Recall that we regard $\sigma$ as a configuration on a finite set of edges only.) If $\sigma'\in\TT_{n,\alpha}(\sigma)$ has~$k$ close edges on the minimal $\sigma$-open path~$\gamma$, then $\BbbP_{p_\star}(\sigma')=(\frac p{1-p})^k\BbbP_{p_\star}(\sigma)$. The union bound then shows
\begin{equation}
    \BbbP_{p_\star}(\TT_{n, \alpha}(\sigma)) \leq
       \biggl[ \sum_{k=0}^{\lfloor\alpha n\rfloor} \left( \frac p{1-p} \right)^{k}\binom nk\biggr]      
       \BbbP_{p_\star}(\sigma).
\end{equation}
Denoting
\begin{equation}
c(\alpha, p) := \sup_{0\le s\le\alpha}\biggl\{
s\log\frac p{1-p} - \bigl(s\log s + (1-s) \log (1-s)\bigr)\biggr\}
\end{equation}
the Stirling bound gives us
\begin{equation}
\BbbP_{p_\star}(\TT_{n, \alpha}(\sigma))\le Cn\,\texte^{c(\alpha,p)n}\,\BbbP_{p_\star}(\sigma)
\end{equation}
and summing over $\sigma$ yields
\begin{equation}
    \BbbP_{p_\star}(\AA_{n, \alpha}) \leq \sum_{\sigma \in \AA_{n, 0}}
        \BbbP_{p_\star}(\TT_{n, \alpha}(\sigma)) \leq Cn\, \texte^{c(\alpha, p)n}\,
            \BbbP_{p_\star}(\AA_{n, 0}).
\end{equation}
But on $\AA_{n, 0}$, the connected component of the origin has at least $n$ edges, and thus order~$n$ vertices. By \eqref{E:2.10b},  $\BbbP_{p_\star}(\AA_{n, 0})\le\texte^{-c'(p) n}$ for some $c'(p)>0$. Noting that $c(p,\alpha)\downarrow0$ as $\alpha\downarrow0$, choosing~$\alpha$ sufficiently small, we obtain exponential decay for $\BbbP_{p_\star}(\AA_{n, \alpha})$.
\end{proof}

\subsection{Proof of Theorem~\ref{thm:BetaP} and Proposition~\ref{prop:SymmetryForBeta}}
The proof of the theorem will come in several parts. First we will establish the existence of the limit in \eqref{eqn:902} along multiples of integers.

\begin{lemma}
\label{lem:BetaPExistenceOnZ2}
Let $p>p_\cc(\Z^2)$. Then for all $x \in \Z^2$, the limit in \eqref{eqn:902} exists pointwise $\BbbP$-a.s.\ and in $L^1$ and is non-random and finite.
\end{lemma}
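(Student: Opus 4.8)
The plan is to recognise $n\mapsto b([0],[nx])$ as an essentially subadditive, stationary and integrable sequence and then to quote Kingman's subadditive ergodic theorem. Fix $x\in\Z^2$; the case $x=0$ is immediate, since the length-zero path at $[0]$ belongs to $\RR([0],[0])$ and has empty right boundary, so $b([0],[0])=0$ and one sets $\beta_p(0):=0$. Assume $x\neq0$ and, for integers $0\le m\le n$, put $g(m,n):=b([mx],[nx])$. The points $[mx]$ and $[nx]$ lie on $\mbC^\infty$, hence are connected in $\omega$, and the integrability step below exhibits an open right-most path joining them, so $g(m,n)<\infty$ $\BbbP$-a.s. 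For subadditivity: if $\gamma\in\RR([0],[mx])$ and $\gamma'\in\RR([mx],[nx])$ are open right-most paths, then by Lemma~\ref{prop:MergingOfPaths} the concatenation $\gamma*\gamma'$ lies in $\RR([0],[nx])$, is open (its edge set is contained in that of $\gamma$ together with that of $\gamma'$), and satisfies $\frb(\gamma*\gamma')\le\frb(\gamma)+\frb(\gamma')+2$ by \eqref{eqn:903}. Taking infima gives $g(0,n)\le g(0,m)+g(m,n)+2$, so $h(m,n):=g(m,n)+2$ is genuinely subadditive, $h(0,n)\le h(0,m)+h(m,n)$, and $h\ge2$.

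For stationarity, let $\theta$ be translation of the probability space, carrying both $\omega$ and the i.i.d.\ field $\{\eta_z\}_{z\in\Z^2}$, by the vector $x$. As $\BbbP$ is a product measure and $x\neq0$, $\theta$ is measure preserving and mixing, in particular ergodic. Since the map $y\mapsto[y]$, the class of right-most paths, the right boundary $\partial^+$ and the functional $\frb$ are all equivariant under lattice translations, one has $g(m,n)=g(0,n-m)\circ\theta^{m}$, and hence $h(m+1,n+1)=h(m,n)\circ\theta$; thus $\{h(m,n)\}_{0\le m\le n}$ is a stationary subadditive family.

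For integrability I would bound $g(0,1)=b([0],[x])$ by the chemical distance $D_\omega([0],[x])$. Pick a shortest open path $P=(u_0,\dots,u_L)$ with $u_0=[0]$ and $u_L=[x]$; it is simple and $L=D_\omega([0],[x])$. A single open edge is a right-most path whose right boundary is empty (a length-one path has no interior vertex), hence has $\frb=0$; so applying Lemma~\ref{prop:MergingOfPaths} inductively to $\gamma_1:=(u_0,u_1)$ and $\gamma_j:=\gamma_{j-1}*(u_{j-1},u_j)$ produces open right-most paths $\gamma_j\in\RR([0],u_j)$ with $\frb(\gamma_j)\le\frb(\gamma_{j-1})+2$. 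In particular there is an open right-most path from $[0]$ to $[x]$ of right-boundary length at most $2D_\omega([0],[x])$, so $b([0],[x])\le 2D_\omega([0],[x])$. By Lemma~\ref{prop:ImprovedPisztora}, applied to the fixed pair $0,x$, the random variable $D_\omega([0],[x])$ has exponentially decaying tails, hence finite mean, so $\E\,h(0,1)=\E\,b([0],[x])+2<\infty$. Kingman's subadditive ergodic theorem then yields that $h(0,n)/n$, and therefore $g(0,n)/n=b([0],[nx])/n$, converges $\BbbP$-a.s.\ and in $L^1$ to $\inf_{n\ge1}\E\,h(0,n)/n\in[0,\infty)$; ergodicity of $\theta$ forces the limit to be $\BbbP$-a.s.\ constant, and we define $\beta_p(x)$ to be that constant, which gives existence, $L^1$ convergence, non-randomness and finiteness.

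The computations are routine once Lemma~\ref{prop:MergingOfPaths} is in hand; the only delicate point is integrability. The trick is to notice that a single open edge is a right-most path of zero right-boundary length, so that iterating the merging lemma along a geodesic of $\mbC^\infty$ controls $b([0],[x])$ linearly by $D_\omega([0],[x])$, which Lemma~\ref{prop:ImprovedPisztora} handles; trying instead to bound the length of an arbitrary right-most open path between the two points directly would be considerably harder.
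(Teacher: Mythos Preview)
Your proof is correct and follows essentially the same route as the paper: set $X_{m,n}:=b([mx],[nx])+2$, derive subadditivity from Lemma~\ref{prop:MergingOfPaths}, check stationarity and ergodicity under the shift by~$x$, and apply the subadditive ergodic theorem with integrability coming from Lemma~\ref{prop:ImprovedPisztora}. The only point of divergence is the integrability step: the paper observes that a $D_\omega$-geodesic is vertex-simple and hence already right-most, so Lemma~\ref{prop:DualPathLength} gives $b([0],[x])\le 3D_\omega([0],[x])$ in one line, whereas you rebuild a right-most path edge-by-edge via iterated $*$-concatenation; your detour is valid (and even yields the slightly better constant~$2$), but unnecessary once one notices that the geodesic itself is admissible.
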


\begin{proof}
For $x=0$ the claim is trivial so fix some $x \in \Z^2 \setminus\{0\}$ and for $0 \leq m < n$ define
\begin{equation}
	b_{m,n} := b\bigl([mx], [nx]\bigr).
\end{equation}
By virtue of Lemma~\ref{prop:MergingOfPaths}, for any $x,y,z$ that are connected in $\omega$,
\begin{equation}
\label{eqn:905}
	b(x,z) \leq b(x,y) + b(y,z) + 2.
\end{equation}
It thus follows 
\begin{equation}
\label{E:2.19}
	b_{0,n} \leq b_{0,m} + b_{m,n} + 2,\qquad 0\le m<n.
\end{equation}
This puts us in a position to extract the limit by subadditivity arguments. 

We will specifically rely on Liggett's version~\cite{Liggett} of Kingman's Subadditive Ergodic Theorem 
which states that if $(X_{m,n})_{0\le m< n}$ are non-negative random variables satisfying
\settowidth{\leftmargini}{(11111)}
\begin{enumerate}
\item[(1)] $X_{0,n}\le X_{0,m}+X_{m,n}$ for all $0<m<n$,
\item[(2)] $\{X_{nk,(n+1)k}\colon n\ge1\}$ is stationary and ergodic for each $k\ge1$,
\item[(3)] the law of $\{X_{m,m+k}\colon k\ge1\}$ is independent of $m\ge1$, and
\item[(4)] $E X_{0,1}<\infty$,
\end{enumerate}
then the limit $\lim_{n\to\infty}X_{0,n}/n$ exists a.s.\ and in~$L^1$ and equals $\lim_{n\to\infty}E X_{0,n}/n < \infty$ almost surely. We proceed to check the conditions for the case at hand.

Define $X_{m,n}:=b_{m,n}+2$. Then~(1) follows from \eqref{E:2.19}. For~(2) let us write $b(x,y;\omega)$ to explicate the dependence on~$\omega$ and let $\tau_z\omega$ denote the shift of~$\omega$ by~$z$. 
By the definition of $[v]$, we have
\begin{equation}
\label{eqn:904}
	b\bigl([u + z]_\omega, [v + z]_\omega; \omega\bigr) = 
		b\bigl([u]_{\tau_z\omega}, [v]_{\tau_z\omega}, \tau_z\omega\bigr) \ ;\quad u,v,z \in \Z^2 ,
\end{equation}
where the subindex $[u]_\omega$ indicates which percolation configuration the bracket is taken in.
From here we get $X_{m,n}=X_{0,n-m}\circ\tau_x^m$; the conditions~(2,3) then hold by the translation invariance and ergodicity of the law~$\BbbP$ under the shift $\tau_x$. As to~(4), here we note that, by Lemma~\ref{prop:DualPathLength},
\begin{equation}
\label{eqn:906}
	b\bigl([x], [y];\omega\bigr) \leq 3 D_\omega\bigl([x],
		[y]\bigr) \,.
\end{equation}
From here we get~(4) using Lemma~\ref{prop:ImprovedPisztora}. The claim immediately follows.
\end{proof}

\begin{remark}
We note that the use of Subadditive Ergodic Theorem is not required for proving the convergence \eqref{eqn:902} in the mean. This would come at no loss as almost sure convergence could then be extracted from the concentration bounds in Theorem~\ref{thm:ConcentrationOfB}. However, we find the fact that almost-sure convergence can be proved using soft methods more appealing.
\end{remark}

Next we will show that $\beta_p$ is {\em positive} homogeneous and sub-additive on~$\Z^2$:

\begin{lemma}
For $\beta_p$ as in Lemma~\ref{lem:BetaPExistenceOnZ2},
\begin{equation}
\label{E:hom}
\beta_p(nx)=n\beta_p(x),\qquad x\in\Z^2,\,n\in\N,
\end{equation}
and
\begin{equation}
\label{E:triangle}
\beta_p(x+y) \leq \beta_p(x) + \beta_p(y),\qquad x,y\in\Z^2.
\end{equation}
\end{lemma}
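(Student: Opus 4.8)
The plan is to obtain positive homogeneity essentially for free from the mere existence of the limit defining $\beta_p$, and then to derive subadditivity from the approximate triangle inequality \eqref{eqn:905} together with the shift-covariance \eqref{eqn:904} and the fact that lattice translations preserve $\BbbP$. For homogeneity I would fix $x\in\Z^2$ and $n\in\N$ (the cases $n=0$ or $x=0$ being trivial) and simply observe that, by Lemma~\ref{lem:BetaPExistenceOnZ2}, the limit $\beta_p(nx)=\lim_{k\to\infty}b([0],[knx])/k$ exists $\BbbP$-a.s. Writing $b([0],[knx])/k=n\cdot b([0],[knx])/(kn)$ and using that $b([0],[jx])/j\to\beta_p(x)$ as $j\to\infty$, in particular along the subsequence $j=kn$, then gives $\beta_p(nx)=n\beta_p(x)$.

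For subadditivity I would fix $x,y\in\Z^2$ and note that $[0]$, $[nx]$, $[n(x+y)]$ all lie on $\mbC^\infty$, hence are pairwise connected in $\omega$, so \eqref{eqn:905} applies and yields
\[
b\bigl([0],[n(x+y)]\bigr)\ \le\ b\bigl([0],[nx]\bigr)+b\bigl([nx],[n(x+y)]\bigr)+2 .
\]
The next step is to identify the middle term via the covariance identity \eqref{eqn:904} with $u=0$, $v=ny$, $z=nx$, which gives $b\bigl([nx],[n(x+y)];\omega\bigr)=\bigl(b([0],[ny];\,\cdot\,)\bigr)\circ\tau_{nx}(\omega)$. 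I would then divide by $n$, take expectations, and use the $\tau_{nx}$-invariance of $\BbbP$ to replace $\E\,b([nx],[n(x+y)])$ by $\E\,b([0],[ny])$, arriving at
\[
\frac1n\,\E\, b\bigl([0],[n(x+y)]\bigr)\ \le\ \frac1n\,\E\, b\bigl([0],[nx]\bigr)+\frac1n\,\E\, b\bigl([0],[ny]\bigr)+\frac2n .
\]
Finally I would let $n\to\infty$ and invoke the $L^1$-convergence part of Lemma~\ref{lem:BetaPExistenceOnZ2} (equivalently, the fact that the limit in \eqref{eqn:902} equals $\lim_n\E\,b([0],[nx])/n$), so that all three averages converge to the corresponding values of $\beta_p$, yielding $\beta_p(x+y)\le\beta_p(x)+\beta_p(y)$.

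The one point that needs care is the ``moving'' shift $\tau_{nx}$ in $b([nx],[n(x+y)])$: one cannot directly quote the a.s.\ convergence of $b([0],[ny])/n$ evaluated at $\tau_{nx}\omega$, since $\tau_{nx}$ varies with $n$. This is exactly why I would pass to expectations \emph{before} taking the limit — for each fixed $n$ the translation $\tau_{nx}$ is measure preserving, which makes the bound immediate — rather than attempt an almost-sure argument, which at this stage of the paper (the concentration estimates come only later) would require additional input. The stray additive constant $2$ coming from \eqref{eqn:905} should also be tracked throughout, but it is harmless once divided by $n$.
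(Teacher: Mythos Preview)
Your proof is correct and follows essentially the same route as the paper: homogeneity from the existence of the limit along subsequences, and the triangle inequality by combining \eqref{eqn:905} with the shift identity \eqref{eqn:904}, passing to expectations, dividing by~$n$, and letting $n\to\infty$ using the $L^1$ convergence. Your explicit remark about why one must take expectations \emph{before} the limit (to handle the $n$-dependent shift~$\tau_{nx}$) is a nice clarification that the paper leaves implicit.
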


\begin{proofsect}{Proof}
The positive homogeneity \eqref{E:hom} follows from the very existence of the limit. For the triangle inequality \eqref{E:triangle} fix $x,y \in\Z^2$ and note that for all $n \geq 1$, by \eqref{eqn:905} and~\eqref{eqn:904},
\begin{equation}
\begin{split}
	\E\, b\bigl([0], [n(x+y)]\bigr) \ & \leq 
		\E\, b\bigl([0], [nx]\bigr) + \E\, b\bigl([nx], [n(x+y)]\bigr) + 2 \\	
	& =
		\E\, b\bigl([0], [nx]\bigr) + \E\, b\bigl([0],[ny]\bigr) + 2 .
\end{split}
\end{equation}
Dividing both sides by $n$ and taking $n \to \infty$ we obtain \eqref{E:triangle} as desired.
\end{proofsect}

\begin{lemma}
\label{lemma-2.13}
Let $\beta_p$ be as in Lemma~\ref{lem:BetaPExistenceOnZ2}. Then for all $(x_1, x_2) \in \Z^2$,
\begin{equation}
	\beta_p\bigl((x_1, x_2)\bigr)=\beta_p\bigl((x_2, x_1)\bigr)=\beta_p\bigl((\pm x_1,\pm x_2)\bigr)
\end{equation}
for any choice of the two signs~$\pm$. 
\end{lemma}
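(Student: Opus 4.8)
The plan is to show that $\beta_p$ is invariant under every symmetry of the square lattice, i.e.\ under the dihedral group generated by the rotation $\rho\colon(x_1,x_2)\mapsto(-x_2,x_1)$ and the reflection $R\colon(x_1,x_2)\mapsto(-x_1,x_2)$; the orbit of $(x_1,x_2)$ under this group is $\{(\pm x_1,\pm x_2),(\pm x_2,\pm x_1)\}$, which contains all the points appearing in the statement. For a lattice symmetry $g$ write $g\omega$ for the configuration with $(g\omega)(ge):=\omega(e)$, and let $g$ act on the auxiliary variables $\{\eta_z\}$ in the same way. Since $g$ preserves $\ell^\infty$-distances, the infinite cluster and the lexicographic tie-breaking rule, we get $[gx]=g[x]$ in $(g\omega,g\eta)$, while $\BbbP$ is invariant under $(\omega,\eta)\mapsto(g\omega,g\eta)$. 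Finally, recall from Lemma~\ref{lem:BetaPExistenceOnZ2} that $\beta_p(z)=\lim_{n\to\infty}\tfrac1n\,\E\,b([0],[nz])$, the $L^1$-limit determining the constant.

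For the rotations, which preserve the cyclic order of edges around every vertex, the map $\gamma\mapsto\rho\gamma$ is a bijection of $\RR_\omega(x,y)$ onto $\RR_{\rho\omega}(\rho x,\rho y)$ with $\partial^+(\rho\gamma)=\rho(\partial^+\gamma)$ and preserving openness; hence $\frb_{\rho\omega}(\rho\gamma)=\frb_\omega(\gamma)$ and $b_{\rho\omega}(\rho x,\rho y)=b_\omega(x,y)$. Taking $x=[0]$, $y=[nz]$, using $\rho 0=0$ and equivariance of $[\,\cdot\,]$, and then the $\BbbP$-invariance above, gives $\E\,b([0],[n\rho z])=\E\,b([0],[nz])$ for all $n$, so $\beta_p(\rho z)=\beta_p(z)$; applying this with $\rho^2$ yields $\beta_p(-z)=\beta_p(z)$.

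For the reflection, the subtlety is that $R$, being orientation-reversing, carries right-most paths to \emph{left-most} paths, so it must be composed with path-reversal. Write $\partial^-\gamma$ for the ``left boundary'' of a path (the edges on the complementary counter-clockwise arc at each interior vertex). One checks from the definitions that (i) reversal is an involutive bijection between right-most and left-most paths with $\partial^+(\gamma^{\mathrm{rev}})=\partial^-\gamma$ and $\partial^-(\gamma^{\mathrm{rev}})=\partial^+\gamma$; and (ii) since $R$ reverses the cyclic order at each vertex, $\partial^+(R\delta)=R(\partial^-\delta)$, so $R$ maps left-most paths in $\omega$ bijectively to right-most paths in $R\omega$. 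Composing, $\Psi\colon\gamma\mapsto(R\gamma)^{\mathrm{rev}}$ is a bijection of $\RR_\omega(x,y)$ onto $\RR_{R\omega}(Ry,Rx)$ with $\partial^+(\Psi\gamma)=R(\partial^+\gamma)$, preserving openness; therefore $\frb_{R\omega}(\Psi\gamma)=\frb_\omega(\gamma)$ and $b_{R\omega}(Ra,Rb)=b_\omega(b,a)$, with the interchange of endpoints.

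To finish, using $\BbbP$-invariance under $(\omega,\eta)\mapsto(R\omega,R\eta)$, $R0=0$ and equivariance of $[\,\cdot\,]$, we get $\E\,b([0],[nRz])=\E\,b_{R\omega}(R[0],R[nz])=\E\,b([nz],[0])$; and by \eqref{eqn:904} together with translation invariance of $\BbbP$, $\E\,b([nz],[0])=\E\,b([0],[-nz])$. Dividing by $n$ and letting $n\to\infty$ via Lemma~\ref{lem:BetaPExistenceOnZ2} gives $\beta_p(Rz)=\beta_p(-z)=\beta_p(z)$, the last step by the rotation part. Hence $\beta_p$ is invariant under $\langle\rho,R\rangle$, which proves the claim. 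The only step requiring genuine care is the bookkeeping of the left/right boundaries in (i)--(ii); the rest is a routine transport of lattice symmetry through the subadditive limit, analogous to the use of translation invariance in the proof of Lemma~\ref{lem:BetaPExistenceOnZ2}.
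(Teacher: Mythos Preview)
Your proof is correct and follows essentially the same route as the paper's own argument: both establish the symmetries at the level of the distribution of $b([0],[nx])$ by using that rotations preserve right-most paths and right boundaries directly, while reflections must be composed with path reversal (since they turn right-most paths into left-most ones), and then invoke translation invariance to handle the resulting endpoint swap before passing to the $L^1$-limit. One terminological slip: you refer to a ``lexicographic tie-breaking rule,'' but the paper's $[x]$ is defined via the auxiliary i.i.d.\ variables~$\{\eta_z\}$; your argument is unaffected since you correctly transport these under~$g$ as well.
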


\begin{proof}
We will prove that these symmetries hold in distribution for $b([x],[y])$. Since $\BbbP$ (and the definition of~$[x]$) is invariant under rotations by $90^\circ$, since right boundaries (and right-most paths) remain such under these rotations, for all $x_1, x_2 \in \Z$ we have
\begin{equation}
	b\bigl([0], [(x_1, x_2])\bigr) 
		\, \eqd \, b\bigl([0], [(-x_2, x_1)]\bigr)  
		\, \eqd \, b\bigl([0], [(-x_1, -x_2)]\bigr)  
		\, \eqd \, b\bigl([0], [(x_2, -x_1)]\bigr).  
\end{equation}
Using this together with the invariance of~$\BbbP$ with respect to translations, one obtains
\begin{equation}
\label{E:5.16}
 b\bigl([(x_1,x_2)],[0]\bigr)\,\eqd\, b\bigl([0],[(-x_1,-x_2)]\bigr)\,\eqd\, b\bigl([0],[(x_1,x_2)]\bigr) .
\end{equation}
As to the reflections through the axes, while $\BbbP$ (and the definition of~$[x]$) is still invariant, right-boundaries (and right-most paths) reflect into left-boundaries and left-most paths (which are the obvious ``left'' counterparts of our standard ``right'' objects). However, a left-most path becomes right-most when travelled backwards and so we get
\begin{equation}
	b\bigl([0],[(x_1,x_2)]\bigr) 
	\,\eqd\,b\bigl([(x_1, -x_2)], [0] \bigr) 
	\,\eqd\,b\bigl([(-x_1, x_2)], [0] \bigr) .
\end{equation}
Combining all of the above we see that the law of $x\mapsto  b([0],[x])$ has all of the stated symmetries. Then $\beta_p$ inherits all these symmetries as the $L^1$ limit in \eqref{eqn:902}.
\end{proof}

In particular, the above two lemmas show that $\beta_p$ is homogeneous on $\Z^2$. 
As such it admits a well-defined and unique homogeneous extension to a function~$\beta_p\colon\Q^2\to[0,\infty)$ via
\begin{equation}
\label{E:2.25}
	\beta_p(x/q) := \beta_p(x)/q,  \qquad x \in \Z^2,\, q \in \N.
\end{equation}
We then note:

\begin{lemma}
\label{lemma-2.10}
The function $\beta_p\colon\Q^2\to[0,\infty)$ from \eqref{E:2.25} extends continuously to~$\R^2$. This extension is homogeneous and satisfies the triangle inequality as stated in (1-2) of Theorem~\ref{thm:BetaP}. Moreover,
\begin{equation}
\label{E:2.26}
\sup\biggl\{\,\frac{\beta_p(x)}{\Vert x\Vert} \colon  x \in \R^2 \setminus \{0\}\,\biggr\}<\infty.
\end{equation}
\end{lemma}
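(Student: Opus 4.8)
The plan is to transport a linear upper bound for $\beta_p$ from $\Z^2$ to $\Q^2$, combine it with sub‑additivity to get a Lipschitz estimate on $\Q^2$, and then extend $\beta_p$ by density to $\R^2$, checking that homogeneity and the triangle inequality survive the limit; the sup bound \eqref{E:2.26} then falls out for free.

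\emph{Linear bound on $\Z^2$.} For $x\in\Z^2\setminus\{0\}$, Lemma~\ref{lem:BetaPExistenceOnZ2} (i.e.\ Liggett's theorem applied to $X_{m,n}=b([mx],[nx])+2$) gives $\beta_p(x)=\lim_{n\to\infty}\E\,b([0],[nx])/n$. By \eqref{eqn:906} we have $\E\,b([0],[nx])\le 3\,\E\,D_\omega([0],[nx])$, and writing $\E\,D_\omega([0],[nx])=\int_0^\infty\BbbP(D_\omega([0],[nx])>r)\,\textd r$ and splitting the integral at $\alpha_1 n\|x\|$, Lemma~\ref{prop:ImprovedPisztora} (applied to the points $0$ and $nx$) yields $\E\,D_\omega([0],[nx])\le\alpha_1 n\|x\|+\int_{\alpha_1 n\|x\|}^\infty C\texte^{-C'r}\,\textd r\le\alpha_1 n\|x\|+C''$. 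Dividing by $n$ and letting $n\to\infty$ gives $\beta_p(x)\le 3\alpha_1\|x\|$, which is trivially true at $x=0$ as well. By the defining relation \eqref{E:2.25} together with the positive homogeneity \eqref{E:hom}, the bound $\beta_p(x)\le 3\alpha_1\|x\|$ persists for every $x\in\Q^2$.

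\emph{Lipschitz continuity on $\Q^2$ and extension to $\R^2$.} First observe that $\beta_p$ is $\Q_+$‑homogeneous and sub‑additive on $\Q^2$: both follow from \eqref{E:2.25}, \eqref{E:hom} and \eqref{E:triangle} by clearing denominators (for sub‑additivity, pick $N\in\N$ with $Nx,Ny\in\Z^2$ and use $\beta_p(x+y)=N^{-1}\beta_p(Nx+Ny)\le N^{-1}(\beta_p(Nx)+\beta_p(Ny))=\beta_p(x)+\beta_p(y)$). Combining sub‑additivity with the linear bound, for $x,y\in\Q^2$ we get $\beta_p(x)\le\beta_p(y)+\beta_p(x-y)\le\beta_p(y)+3\alpha_1\|x-y\|$, and symmetrically, hence $|\beta_p(x)-\beta_p(y)|\le 3\alpha_1\|x-y\|$. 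Thus $\beta_p$ is Lipschitz on the dense set $\Q^2$, so it admits a unique continuous — in fact Lipschitz, with the same constant — extension to $\R^2$; in particular $\beta_p(x)\le 3\alpha_1\|x\|$ for all $x\in\R^2$, which is precisely \eqref{E:2.26}. The identities $\beta_p(x+y)\le\beta_p(x)+\beta_p(y)$ and $\beta_p(qx)=q\beta_p(x)$ for $q\in\Q_+$ then pass from $\Q^2$ to $\R^2$ by approximating $x,y$ (and $q$) by rationals and invoking continuity; combined with $\beta_p(-x)=\beta_p(x)$, which holds on $\Z^2$ by Lemma~\ref{lemma-2.13} and hence on $\R^2$ by continuity, this gives $\beta_p(\lambda x)=|\lambda|\beta_p(x)$ for all $\lambda\in\R$, i.e.\ (1--2) of Theorem~\ref{thm:BetaP}.

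\emph{Main obstacle.} The only genuinely probabilistic ingredient is the linear upper bound, which rests on the Antal--Pisztora comparison through Lemma~\ref{prop:ImprovedPisztora}; everything after that is soft. The one thing to be careful about is purely bookkeeping: Lipschitz continuity is meaningless on the discrete set $\Z^2$, so the linear bound must first be carried over to $\Q^2$ before continuity can be invoked, and it is the continuous extension — not the a priori $\Q^2$‑valued map — that serves as the norm $\beta_p$ in the remainder of the paper.
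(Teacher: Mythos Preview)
Your argument is correct and follows the same skeleton as the paper's proof: check sub-additivity and positive homogeneity on~$\Q^2$, establish a linear upper bound $\beta_p(x)\le C\|x\|$ there, combine the two to get Lipschitz continuity, and extend by density to~$\R^2$ with all properties passing to the limit. The one substantive difference is how the linear bound is obtained. You go back to the Antal--Pisztora estimate (Lemma~\ref{prop:ImprovedPisztora}) and integrate the tail of $D_\omega([0],[nx])$ to get the explicit constant $3\alpha_1$. The paper instead argues more softly: once $\beta_p(e_1),\beta_p(e_2)<\infty$ are known (already part of Lemma~\ref{lem:BetaPExistenceOnZ2}), the triangle inequality and the symmetries of Lemma~\ref{lemma-2.13} give $\beta_p((x_1,x_2))\le |x_1|\beta_p(e_1)+|x_2|\beta_p(e_2)\le C\|x\|$ on~$\Q^2$ directly, without re-invoking any probabilistic input. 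Your route yields an explicit Lipschitz constant, while the paper's is slightly more self-contained at this stage; both are perfectly valid.
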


\begin{proofsect}{Proof}
It is easy to check that $\beta_p$ is homogeneous and obeys the triangle inequality on~$\Q^2$. This (and the finiteness of $\beta_p$ along, say, the coordinate directions) imply that $x\mapsto\beta_p(x)/\Vert x\Vert$ is bounded on~$\Q^2\setminus\{0\}$. In particular, $\beta_p(x)\to0$ as $x\to0$ on~$\Q^2$. Thanks to the triangle inequality, $x\mapsto\beta_p(x)$ is Lipschitz-continuous on~$\Q^2$ and, by the denseness of~$\Q^2$ in~$\R^2$, it can be extended continuously to all of~$\R^2$. The bound \eqref{E:2.26} is inherited from the same bound on~$\Q^2\setminus\{0\}$.
\end{proofsect}

Next we will show that~$\beta_p$ is non-degenerate on the unit circle:

\begin{lemma}
\label{lem:BetaPIsNonTrivial}
For~$\beta_p$ from Lemma~\ref{lemma-2.10},
\begin{equation}
\label{E:2.30d}
\inf \biggl\{\,\frac{\beta_p(x)}{\Vert x\Vert} \colon  x \in \R^2 \setminus \{0\}\,\biggr\}> 0.
\end{equation}

\end{lemma}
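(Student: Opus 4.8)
The plan is to deduce everything from Proposition~\ref{prop:RenormalizedLD}. By the homogeneous extension \eqref{E:2.25} and the continuity established in Lemma~\ref{lemma-2.10}, it is enough to find a constant $c>0$ with $\beta_p(x)\ge c\|x\|$ for all $x\in\Z^2\setminus\{0\}$; the bound then passes to $\Q^2$ by homogeneity and to $\R^2$ by continuity, and \eqref{E:2.30d} follows. So I would fix $x\in\Z^2\setminus\{0\}$ and, recalling from Lemma~\ref{lem:BetaPExistenceOnZ2} that $\beta_p(x)=\lim_{n\to\infty}\tfrac1n\E\,b([0],[nx])$, try to show $\E\,b([0],[nx])\ge c\|x\|\,n\,(1-o(1))$.

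The mechanism is as follows. Any open right-most path $\gamma_n\in\RR([0],[nx])$ achieving the infimum in the definition of $b([0],[nx])$ (the infimum is attained, since $\frb$ is $\Z_{\ge0}$-valued) must have at least $\|[nx]-[0]\|$ steps, and by the triangle inequality $\|[nx]-[0]\|\ge n\|x\|-\|[nx]-nx\|-\|[0]\|$, which, by Lemma~\ref{prop:DistanceToCInf}, is at least $\ell_n:=\lfloor\tfrac12 n\|x\|\rfloor$ off an event of probability $\le C\texte^{-C'n}$. Thus $\gamma_n$ is typically a right-most path of length $\ge\ell_n$ emanating from the vertex $[0]$. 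Proposition~\ref{prop:RenormalizedLD} says that such paths, when they start at the origin, have more than $\alpha_2\ell_n$ open edges in their right boundary except on an event of probability $\le C\texte^{-C'\ell_n}$. To transfer this to the random starting vertex $[0]$, I would first use Lemma~\ref{prop:DistanceToCInf} again to place $[0]$ inside $\rmB_\infty(\sqrt{\ell_n})$ off an event of probability $\le C\texte^{-C'\sqrt{\ell_n}}$, and then apply translation invariance of~$\BbbP$ together with a union bound over the $O(\ell_n)$ admissible locations of $[0]$. Off the union of these three small-probability events one gets $b([0],[nx])=\frb(\gamma_n)>\alpha_2\ell_n$, hence $\BbbP\bigl(b([0],[nx])\ge\alpha_2\ell_n\bigr)\to1$.

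To finish I would simply use $b([0],[nx])\ge0$ to write $\E\,b([0],[nx])\ge\alpha_2\ell_n\,\BbbP\bigl(b([0],[nx])\ge\alpha_2\ell_n\bigr)$, divide by~$n$, and let $n\to\infty$, which yields $\beta_p(x)\ge\tfrac12\alpha_2\|x\|$. (An alternative endgame avoids treating $\Z^2$-directions one at a time: having shown $\beta_p\not\equiv0$ — for instance $\beta_p(e_1)>0$ by the above with $x=e_1$ — one notes that $\{x:\beta_p(x)=0\}$ is a linear subspace by homogeneity and the triangle inequality, and is invariant under the lattice symmetries of Lemma~\ref{lemma-2.13}; hence it equals $\{0\}$, and since $\beta_p$ is continuous and positive on the compact unit circle, the infimum in \eqref{E:2.30d} is attained and positive.) I expect the main obstacle to be the bookkeeping in the middle paragraph: Proposition~\ref{prop:RenormalizedLD} is rooted at the origin, whereas here the relevant paths join the random vertices $[0]$ and $[nx]$, so one must localize both endpoints and sum the union-bound contributions — this is precisely what forces the threshold $\ell_n$ to be linear in~$n$, so that all the exponential error terms stay $o(1)$.
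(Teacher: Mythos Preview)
Your proof is correct and follows essentially the same route as the paper's: reduce to lattice directions, control the locations of $[0]$ and $[nx]$ via Lemma~\ref{prop:DistanceToCInf}, invoke Proposition~\ref{prop:RenormalizedLD} after a union bound over the possible positions of the starting vertex, and pass to expectation. Your version is in fact a bit more careful than the paper's written argument: you track the factor $\|x\|$ through the threshold $\ell_n=\lfloor\tfrac12 n\|x\|\rfloor$ and thereby obtain $\beta_p(x)\ge\tfrac12\alpha_2\|x\|$ directly, whereas the paper only records $\beta_p(x)\ge C$ uniformly on $\Z^2\setminus\{0\}$ and leaves the passage to $\beta_p(x)/\|x\|\ge c$ implicit in the phrase ``continuity and homogeneity''. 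Your alternative endgame---$\{\beta_p=0\}$ is a subspace invariant under the lattice symmetries of Lemma~\ref{lemma-2.13}, hence $\{0\}$---is a clean way to close that gap with almost no computation.
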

\begin{proof}
Invoking continuity and homogeneity, it suffices to prove this on~$\Z^2\setminus\{0\}$. Fix an $x$ in this set. The union bound then yields
\begin{multline}
\qquad
\BbbP \bigl(b([0], [nx]) \le\alpha n /3\big) \leq \BbbP\bigl(\|[0]\| > \ffrac n3\bigr) + \BbbP\bigl(\|[nx] - nx\| > \ffrac n3\bigr) \\ 
\qquad+ \sum_{\|y\| \leq n/3} \BbbP\bigl(\exists \gamma \in \cup_{x \in \Z^2} \RR(y,x) \colon  \big|\gamma| \geq \ffrac n3,\, \frb(\gamma) \leq \alpha n / 3\bigr).
\quad
\end{multline}
Invoking Lemma~\ref{prop:DistanceToCInf} and Proposition~\ref{prop:RenormalizedLD} and setting $\alpha:=\alpha_2$, the right-hand side decays exponentially in~$n$ with all constant uniform in $x \in \Z^2 \setminus\{0\}$. It follows that $\E b([0], [nx]) \geq C n$ for some $C > 0$ independent of $x$. Dividing by $n$ and taking $n \to \infty$, we get \eqref{E:2.30d}.
\end{proof}

\begin{proof}[Proof of Theorem~\ref{thm:BetaP}]
The existence of~$\beta_p$ as a norm, as well as the limit \eqref{eqn:902} for~$x\in\Z^2$, was established in the above lemmas; it remains to verify that the limit \eqref{eqn:902} applies to this extension for $x$ that are not on the lattice. To this end, let $\epsilon\in(0,2\pi)$, define $N:= \lfloor 2\pi/\epsilon \rfloor$ and let $\hat{u}_0 \dots, \hat{u}_N \in \Q^2$ be such that
\begin{equation}
	\| \hat{u}_k - \texte^{\texti k\epsilon} \|_2\leq \epsilon/2,\qquad k=0,\dots,N,
\end{equation}
where $\texte^{\texti k\epsilon}:=(\cos(k\epsilon),\sin(k\epsilon))$.
Finally, choose $M\in\N$ so that $M \hat{u}_k \in \Z^2$ for all $k$.

From the construction, for each $x \in \R^2 \setminus \{0\}$, we may find $k$ such that 
\begin{equation}
\label{eqn:907}
\hat{x} := x / \|x\|_2	 \quad\text{obeys}\quad\|\hat{u}_k-\hat{x}\|_2 \leq \epsilon .
\end{equation}
Setting $y_n  := \lfloor n\|x\|_2/M \big\rfloor M\hat{u}_k$, we have $y_n \in \Z^2$ and 
\begin{equation}
	\big\| y_n - nx \big\|_2 
		\ \leq \  n\|x\|_2 \|\hat{u}_k - \hat{x}\|_2 + C M
		\ \leq \ n\epsilon \|x\|_2 + C M  .
\end{equation}
Then using Lemma~\ref{prop:ImprovedPisztora} for some $\alpha > 0$ and all $r > \alpha (\epsilon \|x\|_2 + M/n)$, we observe
\begin{equation}
\label{E:2.35}
\BbbP\bigl(D_\omega([nx], [y_n])/n > r\bigr) \leq C\texte^{-C'nr}
\end{equation}
and so by Borel-Cantelli,
\begin{equation}
\label{eqn:906.1}
\limsup_{n \to \infty} D_\omega([nx], [y_n])/n 
	\leq \alpha \epsilon \|x\|_2,\qquad \BbbP\text{-a.s.}.
\end{equation}
Similarly, we also get
\begin{equation}
\label{eqn:906.2}
\E \bigl(D_\omega([nx], [y_n])/n\bigr)
	\leq \alpha (\epsilon \|x\|_2 + M/n) + C/n \, .
\end{equation}
On the other hand, by \eqref{eqn:905} and \eqref{eqn:906.1},
\begin{equation}
\begin{aligned}
\label{eqn:909}
	\bigl|\, b([0], [nx]) - b([0], [y_n]) \, \bigr| 
		&\leq \ b([nx], [y_n]) + 2
		\\&\leq \ 3D_\omega([nx], [y_n]) + 2 .
\end{aligned}
\end{equation}
Lemma~\ref{lem:BetaPExistenceOnZ2} then yields
\begin{equation}
\label{eqn:908}
	\lim_{n \to \infty}  \frac{b([0], [y_n])}{n}
		= \|x\|_2 \beta_p(\hat{u}_k),\qquad \BbbP\text{-a.s and in }L^1.
\end{equation}
Finally, \eqref{eqn:907} and the fact that $\beta_p$ is an equivalent norm imply also
\begin{equation}
\label{eqn:910}
	\big|\|x\|_2 \beta_p(\hat{u}_k) - \beta_p(x) \big| \leq C \epsilon \|x\|_2
\end{equation}
for some $C>0$.

Combining \twoeqref{eqn:906.1}
{eqn:910} we obtain both $\BbbP$-almost surely and in $L^1$,
\begin{equation}
\label{eqn:911}
\limsup_{n \to \infty} \left | \frac{b([0], [nx])}{n} - \beta_p(x) \right | \leq (3\alpha + C)\epsilon \|x\|_2 ,
\end{equation}
and since this is true for all $\epsilon > 0$, the existence of the limit for general $x \in \R^2$ is established.

To justify the claim about uniformity, note that there are only finitely many $\hat{u}_k$'s for a given $\epsilon>0$ and there are only at most order-$n$ distinct values of~$[nx]$ when~$x$ ranges through the unit circle in~$\R^2$. The former ensures uniformity of \eqref{eqn:908}, the latter implies that \eqref{E:2.35} still holds when the quantifier $\forall x\in\{z\colon\Vert z\Vert_2=1\}$ is inserted into the probability on the left. The conclusions \twoeqref{eqn:906.1}{eqn:906.2} then also hold (for a given~$\omega$) uniformly in~$x$ with~$\Vert x\Vert_2=1$ and, consequently, so does the limsup in \eqref{eqn:911}. The same applies to $L^1$ convergence.
\end{proof}

\begin{proof}[Proof of Proposition~\ref{prop:SymmetryForBeta}]
The symmetries of $\beta_p$ as function on $\Z^2$ as proved in Lemma~\ref{lemma-2.13} are preserved under its extension to all of~$\R^2$.
\end{proof}

\section{Concentration estimates}
\label{sec3}\noindent
Theorem~\ref{thm:BetaP} defines the boundary norm $\beta_p$ as the almost sure limit \eqref{eqn:902}. However, our applications will require control of the rate of convergence which we achieve by proving the following  concentration estimate:

\begin{theorem}[Concentration for right-boundary distance]
\label{thm:ConcentrationOfB}
Let $p>p_\cc(\Z^2)$. For each $\epsilon > 0$ there is $C > 0$ and $N>0$ such that for all $x,y \in \Z^2$ with $\Vert x-y\Vert>N$,
\begin{equation}	
\label{E:3.1}
	\BbbP \biggl(\left|\frac{b([x],[y])}{\beta_p(y-x)} - 1 \right|	> \epsilon \biggr) \leq \texte^{-C\log^2 \|y-x\|} 
	.
\end{equation}
\end{theorem}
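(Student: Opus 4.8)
The plan is to reduce, by translation invariance of $\BbbP$ and the shift-equivariance of $x\mapsto[x]$ recorded in \eqref{eqn:904}, to the case $x=0$; write $n:=\|y\|$, so that the bound must be shown uniformly over $y$ with $n>N$. We then split the task in two: (a) identify the mean, $\E\,b([0],[y])=\beta_p(y)\bigl(1+o(1)\bigr)$ uniformly as $n\to\infty$; and (b) a concentration estimate around the mean, $\BbbP\bigl(\,|b([0],[y])-\E\,b([0],[y])|>\tfrac{\epsilon}{3}\beta_p(y)\,\bigr)\le\texte^{-C\log^2 n}$. Granting (a) and (b), \eqref{E:3.1} follows for $n$ large, since then the $o(n)$ error in (a) is below $\tfrac{\epsilon}{3}\beta_p(y)$ and $\beta_p(y)\ge cn$ by \eqref{E:2.30d}.

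Part (a) is the softer half. For the lower bound, Liggett's subadditive ergodic theorem (as used in the proof of Lemma~\ref{lem:BetaPExistenceOnZ2}) identifies $\beta_p(y')$ with $\inf_k\bigl(\E\,b([0],[ky'])+2\bigr)/k$, so $\E\,b([0],[ky'])\ge k\beta_p(y')-2=\beta_p(ky')-2$; writing an arbitrary $y\ne0$ as a positive integer multiple of a primitive vector gives $\E\,b([0],[y])\ge\beta_p(y)-2$. For the upper bound, Theorem~\ref{thm:BetaP} provides $b([0],[ny])/n\to\beta_p(y)$ in $L^1$ uniformly over the unit circle; comparing $[y]$ with $[\,m\hat y\,]$, where $\hat y:=y/\|y\|_2$ and $m:=\lceil\|y\|_2\rceil$, via $\bigl|b([0],[y])-b([0],[m\hat y])\bigr|\le 3D_\omega\bigl([y],[m\hat y]\bigr)+2$ (Lemma~\ref{prop:DualPathLength} and \eqref{eqn:905}) together with the exponential tails of Lemmas~\ref{prop:DistanceToCInf} and \ref{prop:ImprovedPisztora} then yields $\E\,b([0],[y])=\beta_p(y)+o(n)$ uniformly in $y$.

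Part (b) carries the weight. Set $\ell:=C_0\log^2 n$ with $C_0$ large (to be fixed at the end), and work on two good events. Let $E_1$ be the event that $\|[0]\|\vee\|[y]-y\|\le\log^2 n$, that $D_\omega([0],[y])\le C_1 n$, and that every open right-most path $\gamma$ starting within $\log^2 n$ of the origin with $\frb(\gamma)\le 3C_1 n$ has $|\gamma|\le C_2 n$; by Lemmas~\ref{prop:DistanceToCInf} and \ref{prop:ImprovedPisztora}, Proposition~\ref{prop:RenormalizedLD}, and the upper bound \eqref{E:2.26} on $\beta_p$, one gets $\BbbP(E_1^{\mathrm c})\le\texte^{-cn}$, and on $E_1$ the optimizer for $b([0],[y])$ and all its single-edge perturbations are supported in $\Lambda:=\rmB_\infty(C_3 n)$. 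For each edge $e\in\Lambda$ let $H_e$ be the event — measurable with respect to the edges of the $\ell$-ball around $e$ other than $e$ itself — that the two endpoints of $e$ are joined within that ball by an open right-most path of $\frb$-length at most $\ell$. By the $*$-concatenation of Lemma~\ref{prop:MergingOfPaths} (to splice such a detour into an optimizing path while keeping it right-most) and the comparison $|\partial^+\gamma|\le3|\gamma|$ of Lemma~\ref{prop:DualPathLength}, on $H_e$ flipping $\omega(e)$ moves $b([0],[y])$ by at most $2\ell$; meanwhile $H_e^{\mathrm c}$ forces a long open dual obstruction near $e$, so by subcritical decay of dual connectivities (Section~\ref{sec2}, facts (1)--(2)) $\BbbP(H_e^{\mathrm c})\le\texte^{-c\ell}$, whence $E_2:=\bigcap_{e\in\Lambda}H_e$ satisfies $\BbbP(E_2^{\mathrm c})\le|\Lambda|\,\texte^{-c\ell}\le\texte^{-c'\log^2 n}$ once $C_0$ is large. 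On $E_1\cap E_2$ the quantity $b([0],[y])$ depends only on the edges of $\Lambda$, has oscillation at most $2\ell$ in each coordinate, and has only $O(n\,\mathrm{polylog}\,n)$ influential coordinates — the last because, by Proposition~\ref{prop:RenormalizedLD}, any right-most path from $[0]$ to $[y]$ whose $\frb$ is within $2\ell$ of the optimum differs from the optimizer only through excursions of total area $O(n\,\mathrm{polylog}\,n)$. A martingale concentration inequality suited to increments bounded only on a likely event (Freedman's inequality, or the typical-bounded-differences version of McDiarmid's) then delivers a deviation bound of order $\texte^{-cn/\mathrm{polylog}\,n}$, and combining it with $\BbbP(E_1^{\mathrm c})$ and $\BbbP(E_2^{\mathrm c})$ gives (b).

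The principal obstacle is the discrete-Lipschitz analysis behind $E_2$ together with the count of influential edges: one must show that closing a single edge of the optimizing right-most path only imposes a detour of polylogarithmic $\beta_p$-cost — this is exactly where the local surgery of Lemma~\ref{prop:MergingOfPaths}, the bound $|\partial^+\gamma|\le3|\gamma|$, and the absence of macroscopic holes in the supercritical cluster (subcritical dual exponential decay) are used — and, dually, that opening an edge far from the optimizer cannot lower $b([0],[y])$ substantially, which is forced by Proposition~\ref{prop:RenormalizedLD} (long right-most paths are $\frb$-expensive, so near-optimal competitors stay in a thin corridor around the optimizer). The fact that the final rate is only $\texte^{-C\log^2 n}$, rather than exponential, stems from the union bound over the $|\Lambda|=O(n^2)$ edges of the events $H_e^{\mathrm c}$, each of probability $\texte^{-c\ell}$ with $\ell\asymp\log^2 n$; the martingale tail on the good event, and the mean estimate of Part (a), are comfortably smaller.
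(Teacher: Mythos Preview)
Your Part~(a) is fine and is essentially what the paper does: the uniform $L^1$-convergence of Theorem~\ref{thm:BetaP} gives \eqref{E:3.40}, which is exactly the statement $\E\,b([x],[y])=\beta_p(y-x)(1+o(1))$ uniformly in the direction.

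The genuine gap is in Part~(b), in the step you flag as ``principal'': the claim that only $O(n\,\mathrm{polylog}\,n)$ edges are influential. Your justification is that ``any right-most path from $[0]$ to $[y]$ whose $\frb$ is within $2\ell$ of the optimum differs from the optimizer only through excursions of total area $O(n\,\mathrm{polylog}\,n)$.'' This is a statement about the geometry of \emph{all} near-optimal paths --- effectively a transversal-fluctuation bound --- and Proposition~\ref{prop:RenormalizedLD} does not give it. That proposition bounds $|\gamma|$ by a constant times $\frb(\gamma)$, so each near-optimal path has length $O(n)$; it says nothing about the \emph{union} of near-optimal paths, which is what controls the set of influential edges. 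A priori there could be many near-optimal paths that are far apart from one another, and the area swept out by their union could be much larger than $n\,\mathrm{polylog}\,n$. (Indeed, the paper's Proposition~\ref{prop:GeomConcentration}(\ref{part:GC-2}) only asserts the \emph{existence} of one near-optimal path close to the segment, not that all of them are.) Without this, the sum of conditional variances in your martingale inequality is only bounded by $\ell^2\cdot|\Lambda|=O(n^2\log^4 n)$, which yields a useless tail.

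The paper sidesteps this entirely by replacing $b([x],[y])$ with a modified distance $\hat b(x,y)$ that allows closed edges on the path at a penalty $h(\|y-x\|)=\log^4\|y-x\|$ per edge. This has two effects. First, flipping any single edge changes $\hat b(x,y)$ by at most $2h(\|y-x\|)$ deterministically, so the increment bound holds everywhere, not just on a good event. Second, and crucially, an edge $e_k$ is influential for $\hat b$ only if it lies on (or on the right boundary of) a \emph{fixed canonical optimizer} $\hat\gamma(\omega)$; one does not need to control all competitors. The sum of the $U_k$'s in Kesten's martingale theorem (Theorem~\ref{thm-Kesten}) is then bounded by a constant times $h^2\,|\hat\gamma|$, and $|\hat\gamma|=O(nh)$ with exponentially small failure probability (Lemma~\ref{lem:BoundsOnOptPath}). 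The passage from $\hat b(x,y)$ back to $b([x],[y])$ is a separate short argument (Proposition~\ref{lem:BApproxBHat}): with probability $1-\texte^{-C\log^2 n}$ every $\hat b$-optimizer is already open, because the penalty for a single closed edge exceeds the cost of any local open detour.
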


Apart from this (measure theoretic) concentration estimate for the value of~$b(x,y)$, in order to prove existence of the limiting shape of the isoperimetric sets, we will need to control the geometric concentration of the paths minimizing, or nearly minimizing, $\frb(\gamma)$. To this end, whenever $x,y\in\mbC^\infty$, let us call the path $\gamma \in \RR(x,y)$ \emph{$\epsilon$-optimal} if
\begin{equation}
	\frb(\gamma) - b(x,y) \leq \epsilon \|y-x\| .
\end{equation}
We will write $\Gamma_\epsilon(x,y)$ for the set $\epsilon$-optimal paths in~$\RR(x,y)$; the (absolute) minimizers then constitute the set $\Gamma_0(x,y)$. Note that, since $\frb(\gamma)$ is integer valued, $\Gamma_0(x,y)\ne\emptyset$ $\BbbP$-a.s.\ for any $x,y\in\mbC^\infty$ (assuming, of course, $p>p_\cc(\Z^2)$). Recall also the notation $d_{\rm H}(A,B)$ from \eqref{E:dH}.

\begin{proposition}
\label{prop:GeomConcentration}
Let $p>p_\cc(\Z^2)$. There are $\alpha,C,C' >0$ such that for all $x,y \in \Z^2$, we have:
\begin{enumerate}
\item \label{part:GC-1}
For any $t > \alpha \|x-y\|$,
	\begin{equation}
	\label{eqn:921}
	\BbbP\bigl(\exists \gamma \in \Gamma_0([x], [y]) \colon |\gamma| > t\bigr) \leq C\texte^{-C' t}.
	\end{equation}
\item \label{part:GC-2}
	For all $\epsilon > 0$, once $\|y-x\|$ is sufficiently large (depending on $\epsilon$),
		\begin{equation}
		\label{eqn:922}	
		\BbbP\bigl(\forall \gamma \in \Gamma_\epsilon([x], [y]) \colon d_{\rm H}(\gamma,\, \poly(x,y)) > \epsilon \|y-x\|\bigr) \leq
				C\texte^{-C' \log^2\|y-x\|} ,
		\end{equation}
		where $\poly(x,y)$ is the linear segment connecting $x$ and $y$.
\end{enumerate}
\end{proposition}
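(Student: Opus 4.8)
The two parts of Proposition~\ref{prop:GeomConcentration} are of a rather different nature, so I would treat them separately, though both hinge on the exponential estimates in Proposition~\ref{prop:RenormalizedLD} and Lemma~\ref{prop:ImprovedPisztora}.

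\textbf{Part \ref{part:GC-1}.} The idea is that a very long optimal right-most path must have a correspondingly large right boundary. If $\gamma\in\Gamma_0([x],[y])$ with $|\gamma|>t$, then by Proposition~\ref{prop:RenormalizedLD} (applied to the sub-path of $\gamma$ starting at $[x]$; note $\gamma$ is a right-most path from $[x]$ to somewhere, so we can quantify over $\bigcup_{z}\RR([x],z)$ after translating) we would have, off an event of probability $\le Ce^{-C't}$, that $\frb(\gamma)\ge\alpha_2 t$. On the other hand, optimality gives $\frb(\gamma)=b([x],[y])\le 3D_\omega([x],[y])$ by \eqref{eqn:906}, and by Lemma~\ref{prop:ImprovedPisztora}, $D_\omega([x],[y])\le C\|x-y\|$ off an event of probability $\le Ce^{-C'\|x-y\|}$. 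So if we pick $\alpha$ with $\alpha\alpha_2>3\cdot(\text{the Pisztora constant})$, i.e., $\alpha$ large enough that $\alpha_2 t> 3D_\omega([x],[y])$ whenever $t>\alpha\|x-y\|$ and the good events hold, then the event in \eqref{eqn:921} forces one of the two bad events, each exponentially unlikely in $t$ (using $t>\alpha\|x-y\|$ to convert the $\|x-y\|$ decay into $t$ decay). This part should be routine; the only mild care is in the case $[x]=[y]$, handled trivially, and in making the union over possible right-most paths via Proposition~\ref{prop:RenormalizedLD}, which already contains such a union.

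\textbf{Part \ref{part:GC-2}.} This is the substantive part. The statement says that, with very high probability, \emph{every} nearly-optimal right-most path from $[x]$ to $[y]$ stays within $\epsilon\|y-x\|$ of the straight segment. Suppose not: some $\gamma\in\Gamma_\epsilon([x],[y])$ reaches a point $w$ with $\dist(w,\poly(x,y))>\epsilon\|y-x\|$. Split $\gamma$ at $w$ into $\gamma_1\in\RR([x],w)$ and $\gamma_2\in\RR(w,[y])$; then by Lemma~\ref{prop:MergingOfPaths}-type additivity of right boundaries (one direction of \eqref{eqn:903}), $\frb(\gamma)\ge \frb(\gamma_1)+\frb(\gamma_2)-C\ge b([x],[w])+b([w],[y])-C$. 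On the other hand $\epsilon$-optimality gives $\frb(\gamma)\le b([x],[y])+\epsilon\|y-x\|$. So the detour via $w$ is only profitable if
\[
b([x],[w])+b([w],[y]) \le b([x],[y]) + \epsilon\|y-x\| + C.
\]
Now I would invoke the concentration theorem, Theorem~\ref{thm:ConcentrationOfB}: with probability $\ge 1-e^{-C\log^2(\cdot)}$, each of $b([x],[w])$, $b([w],[y])$, $b([x],[y])$ is within a multiplicative $(1\pm\delta)$ of the deterministic norm values $\beta_p(w-x)$, $\beta_p(y-w)$, $\beta_p(y-x)$. But $\beta_p$ is a genuine norm equivalent to $\|\cdot\|$ (Lemmas~\ref{lemma-2.10} and~\ref{lem:BetaPIsNonTrivial}), so by the \emph{strict} triangle inequality for an equivalent norm away from collinear configurations, $\beta_p(w-x)+\beta_p(y-w)-\beta_p(y-x)\ge c\cdot\dist(w,\poly(x,y))\ge c\,\epsilon\|y-x\|$ for a dimensional constant $c>0$ (this uses only that $\beta_p$ is comparable to Euclidean norm; one gets a linear-in-distance lower bound on the "excess" of the triangle inequality). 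Choosing $\delta$ small relative to $c$ makes this contradict the displayed inequality. The union bound over the relevant $w$ — it suffices to take $w$ a lattice point, and $w$ can be assumed within $O(\|y-x\|)$ of the segment by Part~\ref{part:GC-1} (a far-away $w$ would force $|\gamma|$ huge) — costs a polynomial factor in $\|y-x\|$, absorbed by the $e^{-C\log^2}$ decay.

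\textbf{Main obstacle.} The delicate point is the quantitative strict triangle inequality for $\beta_p$: I need $\beta_p(a)+\beta_p(b)-\beta_p(a+b)\gtrsim \|a\|+\|b\|-\|a+b\|$, uniformly, which follows from the equivalence of $\beta_p$ with the Euclidean norm together with the elementary fact that for \emph{any} norm comparable to $\ell^2$, the triangle-inequality deficit is bounded below by a constant times the corresponding $\ell^2$ deficit — but care is needed because $\beta_p$ need not be strictly convex, so one cannot use strict convexity directly; instead one argues via the sandwiching $c_1\|\cdot\|_2\le\beta_p\le c_2\|\cdot\|_2$ and the Euclidean deficit, which is positive and controlled by $\dist(w,\poly(x,y))$. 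The other bookkeeping obstacle is handling the quantifier "$\forall\gamma$" and "$\exists w$ on $\gamma$" simultaneously: one fixes the endpoints $[x],[y]$ (finitely many lattice values once we also union over the positions of $[x],[y]$, each exponentially close to $x,y$ by Lemma~\ref{prop:DistanceToCInf}), then unions over the discrete set of candidate detour points $w$, reducing the "bad path exists" event to a union of the events $\{b([x],[w])+b([w],[y])\le b([x],[y])+\epsilon\|y-x\|+C\}$ over $w$ far from the segment, and applies Theorem~\ref{thm:ConcentrationOfB} three times per $w$.
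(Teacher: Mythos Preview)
Your Part~\ref{part:GC-1} is essentially the paper's argument (Lemma~\ref{lem:BoundsOnOptPath}(3)): bound $b([x],[y])\le 3D_\omega([x],[y])$ via Lemma~\ref{prop:ImprovedPisztora}, then invoke Proposition~\ref{prop:RenormalizedLD}. Fine.

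Your Part~\ref{part:GC-2} has a genuine gap at exactly the point you flag as the ``main obstacle.'' The claimed elementary fact --- that for any norm $\rho$ with $c_1\|\cdot\|_2\le\rho\le c_2\|\cdot\|_2$ one has $\rho(a)+\rho(b)-\rho(a+b)\ge c\bigl(\|a\|_2+\|b\|_2-\|a+b\|_2\bigr)$ --- is false. Take $\rho=\|\cdot\|_\infty$ on $\R^2$, $a=(1,\tfrac12)$, $b=(1,-\tfrac12)$: the $\ell^\infty$-deficit is $1+1-2=0$ while the $\ell^2$-deficit is $2\sqrt{5}/\!{2}-2>0$. Sandwiching cannot rescue this: from $\rho(a)+\rho(b)\ge c_1(\|a\|_2+\|b\|_2)$ and $\rho(a+b)\le c_2\|a+b\|_2$ you only get $\rho(a)+\rho(b)-\rho(a+b)\ge c_1(\|a\|_2+\|b\|_2)-c_2\|a+b\|_2$, which is negative for nearly collinear $a,b$ unless $c_1=c_2$. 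Since the paper explicitly leaves open whether $\beta_p$ is strictly convex (see the discussion of ``regularity of minimizers'' in Section~1.3), you have no strict triangle inequality to fall back on, and your detour argument collapses. Indeed, if $\beta_p$ happened to have a flat facet in the direction of $y-x$, there would be points $w$ at distance $\epsilon\|y-x\|$ from $\poly(x,y)$ with $\beta_p(w-x)+\beta_p(y-w)=\beta_p(y-x)$ exactly, so the stronger statement you are implicitly proving --- that \emph{every} $\epsilon$-optimal path hugs the segment --- may well be false.

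The paper sidesteps this entirely by proving only what the proposition actually asserts: the \emph{existence} of one $\epsilon$-optimal path close to $\poly(x,y)$. It constructs such a path directly. Subdivide $\poly(x,y)$ into $N=\lceil 4\alpha/\epsilon\rceil$ equal pieces at points $u_0=x,u_1,\dots,u_N=y$, pick $\gamma_k\in\Gamma_0([u_{k-1}],[u_k])$, and set $\gamma_\epsilon:=((\gamma_1*\gamma_2)*\cdots)*\gamma_N$. Theorem~\ref{thm:ConcentrationOfB} on each piece and homogeneity of $\beta_p$ give $\frb(\gamma_\epsilon)\le\beta_p(y-x)+\tfrac\epsilon2\|y-x\|\le b([x],[y])+\epsilon\|y-x\|$, so $\gamma_\epsilon\in\Gamma_\epsilon$. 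Part~\ref{part:GC-1} applied to each $\gamma_k$ gives $|\gamma_k|\le\alpha\|u_k-u_{k-1}\|<\tfrac\epsilon2\|y-x\|$, which forces $\gamma_k$ (hence $\gamma_\epsilon$) into an $\epsilon\|y-x\|$-neighborhood of the segment. No convexity of $\beta_p$ is needed --- only subadditivity and homogeneity.
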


\begin{remark}
The estimates in \eqref{E:3.1} and \eqref{eqn:922} are stated in the form which is sufficient for the purposes of this paper, but they are far from optimal as far as the actual decay goes. Indeed, with modest changes to the proofs one should be able to improve these into $\texte^{-C\|y-x\|^{1/2-\epsilon}}$. (The bottleneck is Proposition~\ref{lem:BApproxBHat}, where one would need to replace the penalty function $h(t)$ by something that is nearly linear in~$t$.) However, we believe that even this may not be optimal.
\end{remark}

The remainder of Section~\ref{sec3} is devoted to the proofs of Theorem~\ref{thm:ConcentrationOfB} and Proposition~\ref{prop:GeomConcentration}. The underlying idea is to write $b([x],[y])-\E\,b([x],[y])$ as a Doob martingale and apply an Azuma-type concentration estimate. Unfortunately, such estimates generally require a representation using a martingale with \emph{bounded} increments which, due to the requirement that the optimizing paths be open in the underlying percolation configuration, is not the case for the random variable~$b([x],[y])$. We will thus have to work with a modified right-boundary distance $\hat b(x,y)$ that has this property. This is a quantity similar to $b(x,y)$; the principal difference is that it allows, at a huge penalty, inclusion of non-fully open paths. This permits us to invoke a concentration estimate from Kesten's study~\cite{kesten1993speed} of the shortest-time paths in first-passage percolation (see Theorem~\ref{thm-Kesten} in Section~\ref{sec3.2}).

\subsection{Modified right-boundary distance}
We proceed to introduce the modified right-boundary distance $\hat b(x,y)$ and derive a number of its properties. Throughout we assume $p>p_\cc(\Z^2)$. For a path $\gamma$ we set 
\begin{equation}
	\frp(\gamma):= 
	\bigl|\{ e \in \gamma \colon \omega(e) =  0\}\bigr|,
\end{equation} 
where the opposite orientations of an edge are considered distinct.
The modified version $\hat{\frb}(\gamma)$ of $\frb(\gamma)$ is defined for a right-most, but not necessarily open, path $\gamma$ with endpoints~$x$ and~$y$ by
\begin{equation}
	\hat{\frb}(\gamma) 
	: =    \frb(\gamma) + h(\|y - x\|) \frp(\gamma) ,
\end{equation}
where we set, once and for all,
\begin{equation}
\label{E:3.7}
	h(t) := \max\bigl\{\log^4 t,1\}.
\end{equation}
For $x,y \in \Z^2$ we then define the \textit{modified boundary distance}:
\begin{equation}
\label{eqn:920}
    \hat{b}(x, y) 
    := \inf \bigl\{\hat{\frb}(\gamma) \colon
        \gamma \in \RR(x,y)\bigr\}.
\end{equation}
The set of minimizers in \eqref{eqn:920} will be denoted by $\hat{\Gamma}(x,y) = \hat{\Gamma}(x,y ;\; \omega)$.

The first lemma controls the length of any optimal path for $b(x,y)$ and $\hat{b}(x,y)$. Part~\ref{part:BOOP-3} is identical to part~\ref{part:GC-1} of Proposition~\ref{prop:GeomConcentration}; we restate it here so that we can include its proof already at this point.

\begin{lemma}
\label{lem:BoundsOnOptPath}
For $p>p_\cc(\Z^2)$, there are $\alpha > 0$, $C, C' > 0$ such that for all $x,y \in \Z^2$,
\settowidth{\leftmargini}{(1111)}
\begin{enumerate}
\item \label{part:BOOP-1}
	$\hat{b}(x,y) \leq C \|y-x\|h\bigl(\|y-x\|\bigr)$.
\medskip
\item \label{part:BOOP-2}
	If $t > \alpha h\bigl(\|y-x\|\bigr)\|y-x\|$, then
	\begin{equation}
		\BbbP(\exists \gamma \in \hat{\Gamma}(x,y)
			\colon |\gamma| > t) \leq C \texte^{-C' t} .
	\end{equation}
\item \label{part:BOOP-3}
	If $t > \alpha \|y-x\|$, then
	\begin{equation}
	\label{eqn:933}
		\BbbP(\exists \gamma \in \Gamma_0([x],[y])
			\colon |\gamma| > t) \leq C \texte^{-C' t} .
	\end{equation}
\end{enumerate}
\end{lemma}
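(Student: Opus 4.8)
The plan is to prove the three parts in the stated order, using part~\ref{part:BOOP-1} to produce the a priori upper bound that drives parts~\ref{part:BOOP-2} and~\ref{part:BOOP-3}.

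For part~\ref{part:BOOP-1} the key point is a geometric observation: every ``L-shaped'' lattice path --- one that moves monotonically along one coordinate axis and then monotonically along the other --- is right-most. This is verified directly from the definition of the right boundary by inspecting the finitely many local configurations (the interior of each of the two straight segments, and the single corner); in each case the right-boundary edges lie strictly to one side of the path and are never traversed by it. So for $x\ne y$ (the case $x=y$ being trivial) one picks such a $\gamma\in\RR(x,y)$; then $|\gamma|=\|x-y\|_1\le2\|x-y\|$, and $\frp(\gamma)\le|\gamma|$ trivially, while Lemma~\ref{prop:DualPathLength} gives $\frb(\gamma)\le3|\gamma|$. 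Hence $\hat b(x,y)\le\hat\frb(\gamma)=\frb(\gamma)+h(\|y-x\|)\,\frp(\gamma)\le\bigl(3+h(\|y-x\|)\bigr)2\|y-x\|\le C\|y-x\|\,h(\|y-x\|)$, using $h\ge1$.

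Parts~\ref{part:BOOP-2} and~\ref{part:BOOP-3} follow the same pattern: an optimal path that is long must, by Proposition~\ref{prop:RenormalizedLD}, have many open edges in its right boundary, whereas optimality together with an a priori bound forces this number to be small. For part~\ref{part:BOOP-2}, write $L:=\|y-x\|$ and suppose $\gamma\in\hat\Gamma(x,y)$ with $|\gamma|>t$ and $t>\alpha h(L)L$. Optimality and part~\ref{part:BOOP-1} give $\frb(\gamma)\le\hat\frb(\gamma)=\hat b(x,y)\le C_1Lh(L)<C_1t/\alpha$. On the other hand $\gamma$ is a right-most path started at the \emph{fixed} vertex $x$ with $|\gamma|\ge\lfloor t\rfloor$, so Proposition~\ref{prop:RenormalizedLD} (translated to base point $x$, applied at scale $n:=\lfloor t\rfloor$) shows that, outside an event of probability $\le C\texte^{-C'\lfloor t\rfloor}$, one has $\frb(\gamma)>\alpha_2\lfloor t\rfloor\ge\alpha_2t/2$ once $t\ge2$. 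Choosing $\alpha:=\max\{2,2C_1/\alpha_2\}$ makes the two bounds incompatible, so the event in question lies inside the exceptional event, which has probability $\le C\texte^{-C't}$.

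For part~\ref{part:BOOP-3} the same argument is run with $b$ in place of $\hat b$ and the a priori bound $\frb(\gamma)=b([x],[y])\le3D_\omega([x],[y])$ from~\eqref{eqn:906}: on $\{D_\omega([x],[y])\le\alpha_2t/6\}$, whose complement has probability $\le C\texte^{-C't}$ by Lemma~\ref{prop:ImprovedPisztora} provided $\alpha\ge6\alpha_1/\alpha_2$, we get $\frb(\gamma)\le\alpha_2t/2$. Since the optimal path now starts at the random vertex $[x]$, one first uses Lemma~\ref{prop:DistanceToCInf} to confine $[x]$ to $\rmB_\infty(x,t)$ outside an event of probability $\le C\texte^{-C't}$, and then applies Proposition~\ref{prop:RenormalizedLD} via a union bound over the $O(t^2)$ possible positions of $[x]$; the polynomial prefactor is absorbed into the exponential. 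On the intersection of these good events a $\gamma\in\Gamma_0([x],[y])$ with $|\gamma|>t$ would satisfy $\frb(\gamma)>\alpha_2\lfloor t\rfloor\ge\alpha_2t/2$, contradicting $\frb(\gamma)\le\alpha_2t/2$; with $\alpha:=\max\{2,6\alpha_1/\alpha_2\}$ this finishes the proof. The only genuinely nontrivial step is part~\ref{part:BOOP-1}: a priori it is not obvious that short right-most paths exist between arbitrary endpoints, since the right-most constraint could in principle force detours, and the resolution --- that L-shaped paths always qualify --- although elementary, must be checked carefully against the definition of the right boundary; the rest is bookkeeping, the only mild care being the localization of the random vertex $[x]$ in part~\ref{part:BOOP-3} and a uniform choice of the constant~$\alpha$.
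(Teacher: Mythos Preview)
Your proof is correct and follows essentially the same approach as the paper's: part~(1) via a shortest lattice path, parts~(2) and~(3) by combining the a~priori upper bound on~$\frb(\gamma)$ for optimal paths (from part~(1), resp.\ from \eqref{eqn:906} and Lemma~\ref{prop:ImprovedPisztora}) with Proposition~\ref{prop:RenormalizedLD}. You are slightly more careful than the paper in two places: you explicitly verify that the L-shaped path is right-most (the paper just takes ``any shortest lattice path''), and in part~(3) you explicitly localize the random vertex~$[x]$ via Lemma~\ref{prop:DistanceToCInf} and a union bound before invoking Proposition~\ref{prop:RenormalizedLD}, a step the paper leaves implicit (the same device appears in the proof of Lemma~\ref{lem:BetaPIsNonTrivial}).
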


\begin{proof}
For (1), let $\gamma$ be any path (not necessarily open) from $x$ to $y$ that is shortest in the lattice distance. Since $\frb(\gamma),\frp(\gamma) \, \le\,  3|\gamma| \,=\, 3\|x-y\|$ and $h\ge1$, we get $\hat{\frb}(\gamma) \leq 6\|y-x\|\,h\bigl(\|y-x\|\bigr)$.

For part~(2), let $\alpha_2$ be as in Proposition~\ref{prop:RenormalizedLD}. If $t > 6 (\alpha_2)^{-1} \|y-x\|\,h\bigl(\|y-x\|\bigr)$, then by part~(1) any $\gamma\in\hat\Gamma(x,y)$ obeys $\frb(\gamma)\le\hat{\frb}(\gamma)<\alpha_2t$. Hence,\begin{equation}
\begin{aligned}
\BbbP\bigl(\exists \gamma \in \hat{\Gamma}(x,y)\colon |\gamma| > t\bigr) &\leq
\BbbP\bigl(\exists \gamma \in \RR(x,y) \colon |\gamma| > t,\, \frb(\gamma) <\alpha_2t\bigr),
\end{aligned}
\end{equation}
which decays exponentially in~$t$ by Proposition~\ref{prop:RenormalizedLD}. Part~(2) thus holds with $\alpha:=(\alpha_2)^{-1}$.

Lastly, for part~(3), by Lemma~\ref{prop:ImprovedPisztora} and
\eqref{eqn:906}, if $t > 3 \alpha_1 \|y-x\|$
\begin{equation}
\BbbP\bigl(b([x], [y]) > t\bigr)
	\leq \BbbP\bigl(3D_\omega([x], [y]) > t\bigr) \leq C' \texte^{-C'' t} .
\end{equation}
But, on $\{b([x], [y]) \leq t\}\cap\{\exists \gamma \in \Gamma([x],[y]) \colon |\gamma| > (\alpha_2)^{-1} t\}$  there \emph{is} a path
$\gamma \in \RR([x],[y])$ with $|\gamma| > (\alpha_2)^{-1} t$ and  $\frb(\gamma) \leq t$. By Proposition~\ref{prop:RenormalizedLD} this has probability at most $C' \texte^{-C''t}$. Therefore, we get \eqref{eqn:933} as soon as $t > 3 \alpha_1 (\alpha_2)^{-1} \|y-x\| + C$.
\end{proof}

Next we wish to argue that with high probability, the quantities $\hat{b}(x,y)$ and $b([x], [y])$ are (relatively) close to each other once $\Vert x-y\Vert$ is large:

\begin{proposition}
\label{lem:BApproxBHat}
For $p>p_\cc(\Z^2)$, there are $C, C' > 0$, such that for all $x,y \in \Z^2$ with $N:=\|x-y\|$ large enough,
\begin{equation}
\label{eqn:YKW1}
	\BbbP \Bigl(\,\bigl|b([x], [y]) - \hat{b}(x,y)\bigr|
	\geq C\, h(N)\log^2(N) \Bigr) \leq \texte^{-C' \log^2(N)}.
\end{equation}
\end{proposition}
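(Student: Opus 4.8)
\medskip
\noindent\textit{Proof strategy.}
Write $N:=\|x-y\|$. Two effects separate $b([x],[y])$ from $\hat b(x,y)$: the anchoring to $\mbC^\infty$, and the penalised use of closed edges. The first is handled by a cheap endpoint surgery: on the event $G_0:=\{\|[x]-x\|\vee\|[y]-y\|\le\log^2 N\}$, which by Lemma~\ref{prop:DistanceToCInf} has $\BbbP(G_0^{\rm c})\le\texte^{-C'\log^2 N}$, prepending and appending shortest lattice paths (of length $\le 2\log^2 N$) to a right-most path, passing from path length to right-boundary length by Lemma~\ref{prop:DualPathLength}, and merging by the $*$-concatenation of Lemma~\ref{prop:MergingOfPaths}, one obtains $|\hat b(x,y)-\hat b([x],[y])|\le C\,h(N)\log^2N$ on $G_0$; the factor $h(N)$ is the price of the (possibly closed) connecting edges, and the discrepancy between $h(\|[x]-[y]\|)$ and $h(N)$ is $O(\log^5 N/N)$ per edge, hence negligible. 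Thus it suffices to show $|b([x],[y])-\hat b([x],[y])|\le C\log^2N$ off an event of probability $\le\texte^{-C'\log^2N}$. One direction is automatic: an open right-most path from $[x]$ to $[y]$ has $\frp\equiv0$, so $\hat{\frb}=\frb$ along it, giving $\hat b([x],[y])\le b([x],[y])$ deterministically.

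For the reverse inequality, fix $\gamma\in\hat\Gamma([x],[y])$ and let $B_x,B_y$ be the $\ell^\infty$-balls about $[x],[y]$ of radius $R$, with $R$ a large enough multiple of $\log^2 N$. I work on the intersection of the following events, each of whose complements has probability $\le\texte^{-C'\log^2N}$: (i) $|\gamma|\le CNh(N)$, so $\gamma$ stays in the box $\Lambda$ of radius $CNh(N)$ about $[x]$ (Lemma~\ref{lem:BoundsOnOptPath}(2)); (ii) every $\omega_\star$-open cluster meeting $\Lambda$ has at most $C_1\log^2 N$ edges --- this uses that $\omega_\star$ is subcritical since $p>p_\cc(\Z^2)$, together with \eqref{E:2.10b} and a union bound over the $O((Nh(N))^2)$ vertices of $\Lambda$; (iii) $D_\omega([x],a)\le C_2\log^2N$ for every $a\in\mbC^\infty$ with $\|a-[x]\|\le R+1$, and likewise with $[y]$ in place of $[x]$ (Lemma~\ref{prop:ImprovedPisztora} and a union bound over the $O(\log^4 N)$ such $a$); (iv) no vertex of $\Lambda$ lies in a finite $\omega$-open cluster of diameter $\ge N/2$ (duality and \eqref{E:2.10b} for $\omega_\star$).

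On this event I claim $\gamma$ uses no closed edge with both endpoints outside $B_x\cup B_y$. Suppose it used such an edge $e$. Then $e_\star$ is $\omega_\star$-open, its $\omega_\star$-cluster $D$ has at most $C_1\log^2 N$ edges, and the primal edges dual to the outer boundary of $D$ form an $\omega$-open circuit $\Theta$ enclosing $D$, hence $e$; taking $\Theta$ to be the right-most circuit associated via Proposition~\ref{prop-duality} and Corollary~\ref{cor-duality} with the outer boundary interface of $D$, we have $|\Theta|=O(|D|)$ and $\frb(\Theta)\le 3|\Theta|\le C\log^2 N$ by Lemma~\ref{prop:DualPathLength}. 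Since $e$ is far from $[x]$ and $[y]$ while $\Theta$ stays within $C\log^2 N$ of $e$, the region bounded by $\Theta$ contains neither $[x]$ nor $[y]$ (this is where $R$ must be large enough); as $\gamma$ joins $[x]$ to $[y]$, both outside $\Theta$, and uses $e$, which lies inside $\Theta$, it must visit $\Theta$. Rerouting the segment of $\gamma$ between its first and last visit to $\Theta$ along the appropriate arc of $\Theta$ --- spliced in by two applications of Lemma~\ref{prop:MergingOfPaths} --- yields a right-most path $\gamma'$ from $[x]$ to $[y]$ with $\frp(\gamma')\le\frp(\gamma)-1$ (the closed edge $e$ is deleted, while $\Theta$ is open) and $\frb(\gamma')\le\frb(\gamma)+\frb(\Theta)+4\le\frb(\gamma)+C\log^2N$. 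Since $h(N)=\log^4 N$ eventually dominates $C\log^2N$, this gives $\hat{\frb}(\gamma')<\hat{\frb}(\gamma)$, contradicting minimality. Consequently, letting $u^*$ be the last vertex of $\gamma$ in $B_x$, $v^*$ the first vertex of $\gamma$ after $u^*$ lying in $B_y$ (which exists since $\gamma$ ends at $[y]\in B_y$, and $B_x\cap B_y=\emptyset$ for $N$ large), and $w,z$ the vertices of $\gamma$ immediately after $u^*$ and immediately before $v^*$, the sub-path $\gamma[w..z]$ lies outside $B_x\cup B_y$ and is therefore an \emph{open} right-most path from $w$ to $z$, with $\frb(\gamma[w..z])\le\frb(\gamma)\le\hat{\frb}(\gamma)=\hat b([x],[y])$ and $\|w-[x]\|\vee\|z-[y]\|\le R+1$. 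By (iv) the open path $\gamma[w..z]$, which has length $\ge N/2$, forces $w,z\in\mbC^\infty$; by \eqref{eqn:906} and (iii) there are open right-most paths $\pi_1\in\RR([x],w)$ and $\pi_2\in\RR(z,[y])$ with $\frb(\pi_1),\frb(\pi_2)\le C\log^2 N$; and then $\pi_1*\gamma[w..z]*\pi_2$ is an open right-most path from $[x]$ to $[y]$ whose right-boundary length is $\le\hat b([x],[y])+C\log^2N$. Hence $b([x],[y])\le\hat b([x],[y])+C\log^2N$, and combining this with the automatic inequality and the endpoint surgery yields the proposition.

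The step I expect to be most delicate is the rerouting: right-most-ness is not stable under arbitrary local surgery, and one must ensure that the right-boundary length of the detour is controlled by the number of edges of the dual cluster $D$ rather than by the area $\Theta$ encloses. Both are secured by taking $\Theta$ to be precisely the right-most circuit coming from the outer boundary interface of $D$ --- so that $|\partial^+\Theta|\le 3|\Theta|$ and $|\Theta|=O(|D|)$ hold automatically --- and by splicing it into $\gamma$ using the $*$-concatenation of Lemma~\ref{prop:MergingOfPaths} at the extreme intersections of $\gamma$ with $\Theta$, just as in the proof of that lemma. One also needs the elementary duality fact that a finite $\omega_\star$-open cluster with $s$ edges is enclosed by an $\omega$-open circuit with $O(s)$ edges, and must dispose of a few degenerate configurations (for instance $\gamma$ re-entering $B_x$ after visiting $B_y$), all harmless because $\|[x]-[y]\|\ge N/2\gg R$ for $N$ large.
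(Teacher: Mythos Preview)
Your proof is correct and rests on the same core mechanism as the paper's: a closed edge in a $\hat b$-minimizer lies in a small dual cluster, hence inside a short open primal circuit, and rerouting along that circuit saves $h(N)=\log^4 N$ in penalty at a cost of only $O(\log^2 N)$ in right-boundary length, contradicting minimality.

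The organization differs in one respect worth noting. The paper isolates the rerouting argument as a separate lemma (Lemma~\ref{lemma-3.6}) which shows that, with the stated probability, \emph{every} minimizer $\gamma\in\hat\Gamma([x],[y])$ is entirely open --- including near the endpoints. The case where the surrounding circuit $\lambda$ happens to enclose $[x]$ or $[y]$ is handled directly: since $[x]\in\mathbf C^\infty$, any open path from $[x]$ to infinity must hit $\lambda$, and one reroutes using that hitting point. This yields the clean equality $b([x],[y])=\hat b([x],[y])$ on the good event, after which only the endpoint comparison $\hat b(x,y)$ versus $\hat b([x],[y])$ remains (your first paragraph).

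You instead restrict the rerouting argument to closed edges with both endpoints outside $B_x\cup B_y$, which lets you assume $[x],[y]$ lie outside the circuit $\Theta$ automatically. The price is the additional surgery in your second paragraph: extracting the open sub-path $\gamma[w..z]$, showing $w,z\in\mathbf C^\infty$ via event~(iv), and reconnecting to $[x],[y]$ via event~(iii). This works, but it introduces two extra high-probability events and some bookkeeping about degenerate configurations that the paper's route avoids entirely. In short: same idea, but the paper's direct treatment of the ``endpoint inside the circuit'' case buys the exact identity $b=\hat b$ on the good event and a shorter argument overall.
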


The core of the proof boils down to the following observation:

\begin{lemma}
\label{lemma-3.6}
For $p>p_\cc(\Z^2)$, there is $C>0$ such that for $\Vert x-y\Vert$ large enough,
\begin{equation}
\label{E:YKW}
\BbbP\Bigl(\exists\gamma \in \hat{\Gamma}([x], [y]),\, \text{\rm not open}\Bigr)\le \texte^{-C\log^2\Vert x-y\Vert}.
\end{equation}
\end{lemma}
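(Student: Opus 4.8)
The plan is to realize the event in question as a subset of a low-probability event $\mathcal{G}^c$, on whose complement no path in $\hat{\Gamma}([x],[y])$ can use a closed edge: any such path could be strictly improved by an open detour. The improvement is possible because, by \eqref{E:3.7}, each closed edge carries the very large penalty $h(N)=\log^4 N$ (with $N:=\|x-y\|$), whereas once the local percolation geometry around the path is tame --- which happens with overwhelming probability --- a detour around a single closed edge is cheap.

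\emph{The good event.} Fix $\alpha$ large enough that Lemma~\ref{lem:BoundsOnOptPath}\,(2) and Lemma~\ref{prop:ImprovedPisztora} apply with this constant, put $L:=\alpha\,h(N)\,N$ and $\ell:=C_0\log^2 N$ with $C_0$ a large constant, and let $\rmB:=\rmB_\infty([x],L+1)$. Let $\mathcal{G}$ be the intersection of the events: (a) every $\gamma\in\hat{\Gamma}([x],[y])$ has $|\gamma|\le L$; (b) every maximal $\omega_\star$-connected set of $\omega$-closed edges meeting $\rmB$ has $\ell^\infty$-diameter at most $\ell$; (c) $D_\omega(u,v)\le\alpha\|u-v\|+\ell$ for all $u,v\in\mbC^\infty\cap\rmB$. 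By Lemma~\ref{lem:BoundsOnOptPath}\,(2), (a) fails with probability $\le C\texte^{-C'L}$; by \eqref{E:2.10b} applied to the subcritical dual configuration $\omega_\star$ (here $p_\star<p_\cc(\Z^2)$) together with a union bound over the $O(L^2)$ edges of $\rmB$, (b) fails with probability $\le\texte^{-C'\log^2 N}$ once $C_0$ is large; by Lemma~\ref{prop:ImprovedPisztora} and a union bound over the $O(L^4)$ pairs in $\rmB$, (c) fails with probability $\le\texte^{-C'\log^2 N}$ for $C_0$ large. Hence $\BbbP(\mathcal{G}^c)\le\texte^{-C\log^2 N}$ for $N$ large. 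On (b) planar duality gives a key consequence: any finite open cluster containing a vertex $z$ of $\rmB_\infty([x],L)$ is surrounded by a dual circuit of $\omega$-closed edges, which must cross the segment $\poly([x],z)\subseteq\rmB_\infty([x],L)$ and hence has an edge in $\rmB$, so that its $\ell^\infty$-diameter, and therefore that of the enclosed finite open cluster, is $\le2\ell$.

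\emph{The rerouting.} It suffices to show that on $\mathcal{G}$ every $\gamma=(z_0,\dots,z_n)\in\hat{\Gamma}([x],[y])$ is open. Suppose $\frp(\gamma)\ge1$; by (a), $\gamma\subseteq\rmB_\infty([x],L)$. Let $(z_j,z_{j+1})$ be the first closed edge of $\gamma$; then $\gamma|_{[0,j]}$ is open, so $z_0=[x],\dots,z_j\in\mbC^\infty$. Put $m:=\min\{l>j\colon z_l\in\mbC^\infty\}$ (finite, since $z_n=[y]\in\mbC^\infty$), so $z_{j+1},\dots,z_{m-1}\notin\mbC^\infty$ while $z_m\in\mbC^\infty$. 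Each $z_i$ with $j<i<m$ lies in a finite open cluster, of diameter $\le2\ell$ by the previous paragraph; since $\gamma|_{[j,m]}$ is cut by its closed edges into open runs each contained in one such cluster, with consecutive runs joined by a closed edge, $\|z_j-z_m\|\le(\frp(\gamma|_{[j,m]})+1)(2\ell+1)$, i.e. $\frp(\gamma|_{[j,m]})\ge\max\{1,\ \|z_j-z_m\|/(2\ell+1)-1\}$. Pick an open right-most path $\delta\in\RR(z_j,z_m)$ with $\frb(\delta)=b(z_j,z_m)\le3D_\omega(z_j,z_m)$ (using \eqref{eqn:906}) and set $\gamma':=\gamma|_{[0,j]}*\delta*\gamma|_{[m,n]}$. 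By Lemma~\ref{prop:MergingOfPaths} (twice), $\gamma'\in\RR([x],[y])$; since $*$-concatenation uses only edges of its pieces and $\gamma|_{[0,j]},\delta$ are open, $\frp(\gamma')\le\frp(\gamma|_{[m,n]})=\frp(\gamma)-\frp(\gamma|_{[j,m]})$, and since each $*$-concatenation enlarges the right boundary by at most two edges while $\partial^+(\gamma|_{[0,j]}),\partial^+(\gamma|_{[j,m]}),\partial^+(\gamma|_{[m,n]})$ are disjoint subsets of $\partial^+\gamma$, also $\frb(\gamma')\le\frb(\gamma)+\frb(\delta)+4$. With (c) this gives
\[
\hat{\frb}(\gamma')-\hat{\frb}(\gamma)\ \le\ 3\alpha\|z_j-z_m\|+3\ell+4-h(N)\,\frp(\gamma|_{[j,m]}).
\]
If $\|z_j-z_m\|\le4\ell$, the right-hand side is $\le(12\alpha+3)\ell+4-h(N)<0$ for $N$ large, since $h(N)=\log^4N\gg\ell=C_0\log^2N$; if $\|z_j-z_m\|>4\ell$, then $\frp(\gamma|_{[j,m]})\ge\|z_j-z_m\|/(8\ell)$, so the right-hand side is $\le\|z_j-z_m\|\bigl(3\alpha-h(N)/(8\ell)\bigr)+3\ell+4<0$ for $N$ large, because $h(N)/(8\ell)\to\infty$. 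Either way $\hat{\frb}(\gamma')<\hat{\frb}(\gamma)$, contradicting $\gamma\in\hat{\Gamma}([x],[y])$. Thus on $\mathcal{G}$ every minimizer is open, so $\{\exists\gamma\in\hat{\Gamma}([x],[y])\ \text{not open}\}\subseteq\mathcal{G}^c$ and the lemma follows.

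The main obstacle is the balance in the displayed inequality: one must ensure that deleting one closed edge, which saves at least $h(N)\,\frp(\gamma|_{[j,m]})\gtrsim h(N)\,\|z_j-z_m\|/\ell$ in the penalized cost, outweighs the cost $\frb(\delta)=O(\|z_j-z_m\|)$ of the open detour. This is exactly why $h$ in \eqref{E:3.7} is taken superlinear in $\log t$ (cf.\ the remark after Proposition~\ref{prop:GeomConcentration}) and why the geometric tameness in (b)--(c) must already be available at scale $\ell=C_0\log^2N$: it is the gap between $\log^4N$ and $\log^2N$ that forces the strict improvement, and the small-cluster structure of supercritical percolation (accessed through the subcritical dual) that makes any closed-edge excursion of Euclidean span $D$ use $\gtrsim D/\ell$ closed edges. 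The remaining content is routine bookkeeping with $*$-concatenation and union bounds.
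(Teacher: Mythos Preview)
Your argument is correct, but it takes a different route from the paper's own proof. The paper's surgery is purely local: on the good event, any closed edge~$e$ in a minimizer is surrounded by a short open circuit~$\lambda$ (of length at most $\log^2 N$, coming from the subcritical dual), and rerouting along part of~$\lambda$ removes one closed edge at a cost of at most $3|\lambda|$ extra right-boundary edges. The saving $h(N)-3\log^2 N>0$ is immediate, so no case analysis or chemical-distance input is needed. Your approach instead excises the entire excursion of the path outside~$\mbC^\infty$ between $z_j$ and~$z_m$, reroutes through $\mbC^\infty$ using Lemma~\ref{prop:ImprovedPisztora}, and balances the detour cost $O(\|z_j-z_m\|)$ against the saving $h(N)\cdot\frp(\gamma|_{[j,m]})$ via the small-cluster bound $\frp\gtrsim\|z_j-z_m\|/\ell$. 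This works, but requires more percolation input (condition~(c)) and a case split.

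Two small points worth tightening. First, the penalty is $h(\|[y]-[x]\|)$, not $h(N)$; you should add $\|[x]-x\|,\|[y]-y\|\le\ell$ to the good event (via Lemma~\ref{prop:DistanceToCInf}) so that $h(\|[y]-[x]\|)\ge\tfrac12\log^4 N$ for large~$N$. Second, your claim that $\partial^+(\gamma|_{[0,j]}),\partial^+(\gamma|_{[j,m]}),\partial^+(\gamma|_{[m,n]})$ are \emph{disjoint} is not obviously true (a right-most path may revisit a vertex) and is in any case unnecessary: the inequality $\frb(\gamma')\le\frb(\gamma)+\frb(\delta)+4$ follows directly from the inclusion $\partial^+(\gamma|_{[0,j]})\cup\partial^+(\gamma|_{[m,n]})\subseteq\partial^+\gamma$ together with Lemma~\ref{prop:MergingOfPaths}, since $\frb$ counts a \emph{set} of oriented edges.
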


\begin{proofsect}{Proof}
Set $N:=\Vert x-y\Vert$ and let $\AA_N$ be the event that $[x]$ and~$[y]$, as well as any $\gamma \in \hat{\Gamma}([x],[y])$, stay inside the box $x+[-N^2,N^2]^2$ and that any dual connected component intersecting this box is circumnavigated by a self-avoiding circuit of open edges whose length does not exceed $\log^2(N)$. Thanks to Lemmas~\ref{prop:DistanceToCInf}, \ref{lem:BoundsOnOptPath} and the exponential decay of dual connectivities, $\BbbP(\AA_N)\ge 1-\texte^{-C\log^2(N)}$ once $N$ is sufficiently large.

Assume now that $\AA_N$ occurs and that some $\gamma \in \hat{\Gamma}([x],[y])$ contains a closed edge~$e$. Then its dual edge $e_\star$ is part of a dual connected component and is thus surrounded by an open self-avoiding (and, in particular, right-most) circuit $\lambda$ of length at most~$\log^2(N)$. Suppose first that $[x]$ and~$[y]$ do not lie in the interior of $\lambda$. Then $\gamma$ would visit at least one vertex of $\lambda$; we set $z$, respectively, $w$ to  the first, respectively, last vertex in $\lambda$ visited by $\gamma$. Let $\gamma_L$ denote the sub-path of $\gamma$ that connects $[x]$ to $z$, write $\gamma_R$ for the sub-path of $\gamma$ that connects $w$ to $[y]$ and let $\lambda_M$ denote the sub-path of $\lambda$ which connects $z$ to $w$ (which may coincide). Define $\gamma' := (\gamma_L * \lambda_M )* \gamma_R\in\RR([x],[y])$ and note
\begin{equation}
\label{eqn:941}
\hat{\frb}(\gamma')\le\hat{\frb}(\gamma)-h\bigl(\|y-x\|\bigr)+3|\lambda|.
\end{equation}
Since $3|\lambda|\le3\log^2(\|y-x\|)<h(\|y-x\|)$ this would mean that $\gamma$ is not optimal. 

If, on the other hand, $[x]$ does lie in the interior of $\lambda$, then we set $z$ to be the first vertex in $\lambda$ that is on some self avoiding open path from $[x]$ to $\infty$ (chosen according to some a priori ordering of paths). Such a path must exist since $[x] \in \mbC^\infty$; we denote its segment from $[x]$ to $z$ by~$\gamma_L$. With the remaining paths defined as before, we see that \eqref{eqn:941} still holds, which leads to a contradiction again. The case of $[y]$ in the interior of $\lambda$ (alone or together with $[x]$) is treated in the same way. The probability in \eqref{E:YKW} is thus bounded by $\BbbP(\AA_N^\cc)$.
\end{proofsect}

\begin{proof}[Proof of Proposition~\ref{lem:BApproxBHat}]
Let $x,y\in \Z^2$ be such that $\|x-y\|$ is large enough for the arguments to follow. By Lemma~\ref{lemma-3.6}, with probability at least $1-\texte^{-C \log^2 \|y-x\|}$, any path $\gamma \in \hat{\Gamma}([x], [y])$ must be open. When this is the case, we clearly have 
\begin{equation}
\label{eqn:944}
	b\bigl([x], [y]\bigr) = \hat{b}\bigl([x], [y]\bigr)
\end{equation}
and so we only need to worry about the difference $\hat b(x,y)-\hat{b}([x], [y])$. By Lemma~\ref{prop:DistanceToCInf},
with probability at least $1-\texte^{-C\log^2 \|y-x\|}$ 
\begin{equation}
\label{eqn:942}
	\bigl\|[x]-x\bigr\| < \log^2 \|y-x\| \quad\text{and} \quad\bigl\|[y]-y\bigr\| < \log^2 \|y-x\|.
\end{equation}
For any $u,v,w \in \Z^2$, by $\ast$-concatenating a shortest (right-most) lattice path from~$u$ to~$v$  with
a path $\gamma'\in\hat\Gamma(v,w)$, Lemma~\ref{lem:BoundsOnOptPath}(1) yields
\begin{equation}
\hat{b}(u,w) \leq C \|v-u\| h(\|w-u\|) + \hat{b}(v,w)\biggl( \frac{h(\|w-u\|)}{h(\|w-v\|)} \vee\  1\biggr) .
\end{equation}
To apply this bound, we will set $u:=x$, $v:=[x]$ and $w:=y$.
Thanks to the definition of~$h$ in \eqref{E:3.7} and the bounds in \eqref{eqn:942} we then have
\begin{equation}
	\frac{h(\|y-x\|)}{h(\|y-[x]\|)} - 1 \leq C'\, \frac{\log \|y-x\|}{\|y-x\|}.
\end{equation}
Using also Lemma~\ref{lem:BoundsOnOptPath}(1), we thus get
\begin{equation}
\hat{b}\bigl([x],y\bigr)\biggl( \frac{h(\|y-x\|)}{h(\|y-[x]\|)} \vee\  1\biggr)
\le\hat{b}\bigl([x],y\bigr)+C'C\frac{\Vert y-[x]\Vert h\bigl(\Vert y-[x]\Vert\bigr)}{\Vert y-x\Vert}\log\Vert y-x\Vert.
\end{equation}
Redefining the meaning of~$C$, we then conclude
\begin{equation}
	\hat{b}(x,y) \leq \hat{b}\bigl([x],y\bigr) + C\, h\bigl(\|y-x\|\bigr)\log^2 \|y-x\| .
\end{equation}
By further comparing $\hat{b}([x], y)$ and $\hat{b}([x], [y])$ and reversing the roles of
$\hat{b}(x,y)$ and $\hat{b}([x], [y])$, we thus get 
\begin{equation}
\label{eqn:3.21}
	\bigl|\hat{b}(x,y) - \hat{b}([x], [y])\bigr| \leq 
		C \,h\bigl(\|y-x\|\bigr)\log^2 \|y-x\| \,.
\end{equation}
Then, \eqref{eqn:YKW1} follows from 
\eqref{eqn:944} and \eqref{eqn:3.21}.
\end{proof}

It will now come as no surprise that $\hat{b}(x,y)$ and $b([x], [y])$ are also close in expectation.

\begin{lemma}
\label{lem:BApproxBHatInMean}
For $p>p_\cc(\Z^2)$ there is $C > 0$, such that for all $x,y \in \Z^2$ with
$\|y-x\|$ large,
\begin{equation}
\label{E:3.22}
	\bigl| \,\E\, b([x], [y]) - \E \,\hat{b}(x,y) \bigr| 
	\leq C h\bigl(\|y-x\|\bigr)\log^2\|y-x\|.  
\end{equation}
\end{lemma}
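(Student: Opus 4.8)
The plan is to upgrade the high-probability bound of Proposition~\ref{lem:BApproxBHat} to a bound on expectations by integrating the tail, after first controlling the contribution of the (rare) event on which $b([x],[y])$ and $\hat b(x,y)$ are far apart. Write $N:=\|y-x\|$ and $D:=b([x],[y])-\hat b(x,y)$. Proposition~\ref{lem:BApproxBHat} gives $\BbbP(|D|\ge Ch(N)\log^2 N)\le \texte^{-C'\log^2 N}$, so the main task is to bound $\E\bigl(|D|\,\ind_{\{|D|> Ch(N)\log^2 N\}}\bigr)$, for which we need an a.s.\ (or at least an integrable) upper bound on $|D|$ and a good tail for the bad event.

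First I would record deterministic/high-probability a priori bounds on both quantities. On the one hand, Lemma~\ref{lem:BoundsOnOptPath}(1) gives $\hat b(x,y)\le C N h(N)$ deterministically. On the other hand, $b([x],[y])\le 3 D_\omega([x],[y])$ by \eqref{eqn:906}, and $D_\omega([x],[y])$ has exponential tails beyond scale $N$ by Lemma~\ref{prop:ImprovedPisztora} (together with Lemma~\ref{prop:DistanceToCInf}); in particular $\E\, b([x],[y])<\infty$ with a bound that is $O(N)$ up to the tail. Hence $|D|\le b([x],[y])+CNh(N)$, and $\E\,|D|<\infty$. Combining the exponential tail of $b([x],[y])$ (at scales $t\gg N$) with the tail bound $\texte^{-C'\log^2 N}$ from Proposition~\ref{lem:BApproxBHat} for the event $\{|D|> Ch(N)\log^2 N\}$, a Cauchy--Schwarz (or direct layer-cake) estimate
\[
\E\bigl(|D|\,\ind_{\{|D|> Ch(N)\log^2 N\}}\bigr)\le \bigl(\E\, D^2\bigr)^{1/2}\,\BbbP\bigl(|D|> Ch(N)\log^2 N\bigr)^{1/2}
\]
shows this term is at most $C N^2 h(N)\,\texte^{-C'\log^2 N/2}$, which is $o(1)$ and in particular $\le Ch(N)\log^2 N$ for $N$ large. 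Adding the contribution from the complementary event, where $|D|\le Ch(N)\log^2 N$ by definition, yields $\E\,|D|\le C h(N)\log^2 N$, and then $|\E\,D|\le \E\,|D|$ gives \eqref{E:3.22}.

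**Main obstacle.**
The one point that needs genuine care is ensuring $\E\, D^2$ (or whatever moment one pairs with the $\texte^{-C'\log^2 N}$ tail) is only polynomially large in $N$, so that the product with the stretched-exponential tail is negligible. This requires a moment bound on $b([x],[y])$, which follows from the exponential tail of $D_\omega([x],[y])$ via \eqref{eqn:906} and Lemma~\ref{prop:ImprovedPisztora}; once that is in hand the rest is routine. An alternative that sidesteps second moments entirely is to truncate: on the event $\{b([x],[y])\le C_0 N\}\cap\{|D|\le Ch(N)\log^2 N\}$ one has the desired bound pointwise, while the complement of either event has probability at most $\texte^{-C'\log^2 N}$ (using Lemma~\ref{prop:ImprovedPisztora} for the first and Proposition~\ref{lem:BApproxBHat} for the second), and on that complement $|D|\le b([x],[y])+CNh(N)$ integrates against the exponential tail of $D_\omega$ to give an $o(1)$ contribution. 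Either route reduces the lemma to bookkeeping with the tail estimates already proved.
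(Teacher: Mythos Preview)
Your proposal is correct and essentially identical to the paper's own argument: the paper also truncates at level $r=Ch(N)\log^2 N$, applies Cauchy--Schwarz to bound $\E\bigl(|D|\,\ind_{\{|D|>r\}}\bigr)$ by a polynomial second moment times the stretched-exponential tail from Proposition~\ref{lem:BApproxBHat}, and then bounds $\E\, b([x],[y])^2$ and $\E\,\hat b(x,y)^2$ via \eqref{eqn:906}, Lemma~\ref{prop:ImprovedPisztora}, and Lemma~\ref{lem:BoundsOnOptPath}(1), exactly as you outline.
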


\begin{proof}
Fix $x,y$ such that $\|y-x\|$ is large enough and set
$r := C h(\|y-x\|)\log^2\|y-x\|$, where $C$ is as in 
Proposition~\ref{lem:BApproxBHat}. By Cauchy-Schwarz,
\begin{equation}
\begin{aligned}
\E \bigl|b([x], & [y]) - \hat{b}(x,y) \big| \\
	& \leq r + \E \Bigl(\bigl|b([x], [y]) - \hat{b}(x,y)\bigr| 
		\ind_{\{|b([x], [y]) - \hat{b}(x,y)| > r\}} \Bigr)\\
	& \leq 	 r + C'' \texte^{-C'\log^2\|y-x\|}
			\bigl (\E b([x], [y])^2 + 
				\E \hat{b}(x,y)^2 \bigr) .
\end{aligned}
\end{equation}
Now it follows from \eqref{eqn:906}, Lemma~\ref{prop:ImprovedPisztora} and
Lemma~\ref{lem:BoundsOnOptPath}(1) that 
\begin{equation}
\E\bigl(b([x], [y])^2\bigr) \leq \tilde C \|y-x\|^2
	\quad \text{and} \quad 
\E\bigl( \hat{b}(x,y)^2 \bigr)\leq \tilde C \|y-x\|^2 h((\|y-x\|))^2
\end{equation} 
for some $\tilde C>0$. The left hand-side in \eqref{E:3.22} is then bounded by~$2r$ as soon as $\|y-x\|\gg1$.
\end{proof}

\subsection{Concentration for modified right-boundary distance}
\label{sec3.2}\noindent
Having controlled the differences between right-boundary distance and its modified counterpart, we now proceed to show that~$\hat{b}(x,y)$ concentrates stretched exponentially around its mean. 

\begin{proposition}
\label{lem:BConcentration}
Suppose $p>p_\cc(\Z^2)$. There are constants $C, C'>0$ such that for any $x,y \in \Z^2$ with
$\|x-y\|$ large enough and any $t\leq \|x-y\|^\frac{3}{2}$,
\begin{equation}
    \BbbP \Bigl( \bigl|\hat{b}(x, y)-\E\,\hat{b}(x, y)\bigr| > t\Bigr) \leq   
    	C \exp \left(-\frac{C't}{\|x-y\|^{1/2} (h(\|x-y\|))^{3/2}}\right).
\end{equation}
\end{proposition}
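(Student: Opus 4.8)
The plan is to apply the Kesten-type concentration inequality of Theorem~\ref{thm-Kesten} to the random variable $F:=\hat b(x,y)$. The reason for having introduced $\hat b$ in place of $b([x],[y])$ is precisely that $F$ is an honest function of the i.i.d.\ edge configuration which, as I check below, has \emph{bounded} increments and is ``certified'' by a witness set of controlled size --- exactly the hypotheses Kesten's scheme requires. Fix $x,y\in\Z^2$ and write $N:=\|x-y\|$ throughout. Recall from Lemma~\ref{lem:BoundsOnOptPath}(1) that $F\le CNh(N)$ \emph{deterministically}, which disposes of any a priori boundedness input Theorem~\ref{thm-Kesten} may require.

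\emph{Bounded increments.} Let $e$ be any edge and $\omega,\omega^e$ two configurations differing only at~$e$. For a \emph{fixed} right-most path $\gamma$ from~$x$ to~$y$, flipping the value at~$e$ changes $\frb(\gamma)=|\{e'\in\partial^+\gamma\colon\omega(e')=1\}|$ by at most~$2$, since the unoriented edge~$e$ occurs in $\partial^+\gamma$ in at most its two orientations; likewise it changes $\frp(\gamma)$ by at most~$2$. Hence $\hat{\frb}(\gamma)=\frb(\gamma)+h(N)\frp(\gamma)$ changes by at most $2+2h(N)$. Since an infimum of functions each having $c_1$-bounded increments again has $c_1$-bounded increments, we get
\begin{equation}
\bigl|\hat b(x,y;\omega)-\hat b(x,y;\omega^e)\bigr|\le 2+2h(N)=:c_1,\qquad c_1\asymp h(N).
\end{equation}

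\emph{Witness set.} Fix once and for all a deterministic ordering of right-most paths, let $\gamma^\star=\gamma^\star(\omega)$ be the first element of $\hat\Gamma(x,y;\omega)$ in this ordering, and set $\mathcal S(\omega):=\gamma^\star\cup\partial^+\gamma^\star$, viewed as a set of unoriented edges. First, $\mathcal S$ certifies $F$ from above: if $\omega'$ agrees with $\omega$ on $\mathcal S(\omega)$ then $\hat{\frb}(\gamma^\star;\omega')=\hat{\frb}(\gamma^\star;\omega)$, so $\hat b(x,y;\omega')\le\hat{\frb}(\gamma^\star;\omega')=\hat b(x,y;\omega)$. Second, $|\mathcal S(\omega)|\le 4|\gamma^\star|$ by Lemma~\ref{prop:DualPathLength}, while Lemma~\ref{lem:BoundsOnOptPath}(2) yields $\alpha>0$ with $\BbbP(|\gamma^\star|>t)\le C\texte^{-C't}$ for $t>\alpha Nh(N)$. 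Therefore, putting $m:=\lceil 8\alpha Nh(N)\rceil\asymp Nh(N)$, we have $\BbbP(|\mathcal S|>m)\le C\texte^{-C'Nh(N)}$.

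With $c_1\asymp h(N)$ and $m\asymp Nh(N)$, so that $c_1\sqrt m\asymp N^{1/2}h(N)^{3/2}$, Theorem~\ref{thm-Kesten} applied to $F$ gives, for $t$ within its range of validity (which for these parameters comfortably contains $t\le N^{3/2}$),
\begin{equation}
\BbbP\bigl(|\hat b(x,y)-\E\,\hat b(x,y)|>t\bigr)\le C\exp\Bigl(-\tfrac{C't}{N^{1/2}h(N)^{3/2}}\Bigr)+\BbbP(|\mathcal S|>m).
\end{equation}
For $t\le N^{3/2}$ one has $Nh(N)\ge N/h(N)^{3/2}\ge t/(N^{1/2}h(N)^{3/2})$, so $\BbbP(|\mathcal S|>m)\le C\texte^{-C'Nh(N)}$ is dominated by the first term after adjusting constants, and the claimed estimate follows. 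The step I expect to be fussiest is matching $\mathcal S$ to the precise ``certification'' hypothesis of Theorem~\ref{thm-Kesten} and checking that the exceptional term $\BbbP(|\mathcal S|>m)$ stays negligible uniformly over the whole range $t\le\|x-y\|^{3/2}$; the genuinely delicate point --- obtaining \emph{bounded} martingale increments at all --- has already been dealt with upstream by working with the penalized distance $\hat b$ rather than with $b$.
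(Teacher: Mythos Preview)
Your ingredients --- the $O(h(N))$ bound on the effect of a single edge flip and the control on the size of an optimal path via Lemma~\ref{lem:BoundsOnOptPath}(2) --- are exactly the ones the paper uses. The gap is in how you invoke Theorem~\ref{thm-Kesten}. As stated in the paper, that theorem is \emph{not} a certification/witness-set inequality: it is a martingale result whose input is a filtration, a Doob martingale $M_k$, a uniform bound $|\Delta_k|^2\le\alpha$, and random variables $U_k$ with $\E[\Delta_k^2\mid\scrF_{k-1}]\le\E[U_k\mid\scrF_{k-1}]$ and $\BbbP(\sum_kU_k>t)\le C\texte^{-C't}$ for all $t\ge\alpha$. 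There is no parameter ``$m$'', no notion of a set $\mathcal S$ certifying $F$ from above, and no additive error term $\BbbP(|\mathcal S|>m)$ in the conclusion. So the sentence ``Theorem~\ref{thm-Kesten} applied to $F$ gives \ldots'' does not type-check against the theorem's hypotheses.

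What the paper actually does is the translation you are missing: order the edges, set $\scrF_k:=\sigma(\omega(e_1),\dots,\omega(e_k))$ and $M_k:=\E[\hat b(x,y)\mid\scrF_k]$, and use a duplication argument to bound $|\Delta_k|$ and $\E[\Delta_k^2\mid\scrF_{k-1}]$ by (conditional expectations of) $g_k(\omega)$, the average change in $\hat b$ under resampling $\omega(e_k)$. Your edge-flip estimate then gives $|\Delta_k|\le 2h(N)$ uniformly, and --- this is where your witness set reappears --- $\E[\Delta_k^2\mid\scrF_{k-1}]\le 16\,h(N)^2\,\E[\1_{A_k}\mid\scrF_{k-1}]$ with $A_k:=\{e_k\in\hat\gamma\cup\partial^+\hat\gamma\}$. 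One then takes $U_k:=16\,h(N)^2\1_{A_k}$, so $\sum_kU_k\le C\,h(N)^2|\hat\gamma|$, and Lemma~\ref{lem:BoundsOnOptPath}(2) yields the exponential tail required in \eqref{eq:cdown} for all $t\ge\alpha$ with $\alpha:=C_0\,h(N)^3N$. Feeding this $\alpha$ into \eqref{eqn:953} gives the stated bound, with $\sqrt\alpha\asymp N^{1/2}h(N)^{3/2}$ and the range $t\le C'\alpha^{3/2}$ comfortably covering $t\le N^{3/2}$. In short: your $c_1$ and your $\mathcal S$ are morally the paper's increment bound and the events $A_k$, but you still owe the Doob-martingale setup and the verification of \eqref{eqn:951}--\eqref{eq:cdown}; without that, the appeal to Theorem~\ref{thm-Kesten} is not justified.
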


Note that $\hat{b}(x, y)$ is bounded by Lemma~\ref{lem:BoundsOnOptPath}(1) and so the expectation exists.
As already alluded to, the proof will be based on a concentration estimate for martingales with bounded increments. (In fact, the boundedness of increments is the principal reason why we state this for $\hat b(x,y)$ rather than~$b(x,y)$.) Such estimates have appeared in various forms in the literature; here we will invoke Theorem 3 in Kesten~\cite{kesten1993speed} whose slightly simplified form reads as:

\begin{theorem}[Kesten~\cite{kesten1993speed}]
\label{thm-Kesten}
Let $\{\scrF_k\}_{k=0}^{\infty}$ be an increasing family of $\sigma$-algebras and let us set $\scrF_\infty:=\bigvee
_{k=0}^\infty \scrF_k$. Suppose that $(M_k;\scrF_k)_{k=0}^\infty$ is a martingale whose increments $\Delta_k : = M_k - M_{k-1}$, $k=1,2,\dots$, obey
\begin{equation}
\label{eqn:951}
    |\Delta_k|^2 \leq \alpha \quad \text{and} \quad
    E[\Delta_k^2|\scrF_{k-1}]\leq E[U_k|\scrF_{k-1}],\qquad k\ge1,
\end{equation}
for some $\alpha > 1$ and for some sequence $\{U_k\}_{k=0}^\infty$ of positive $\scrF_\infty$-measurable random variables that satisfy
\begin{equation}
\label{eq:cdown}
    P \Bigl(\,\sum_{k=1}^\infty U_k > t \Bigr)
    	\leq C \texte^{-C' t},\qquad t \geq \alpha,
\end{equation}
for some $C, C' > 0$. Then, $M_\infty:=\lim_{n\rightarrow\infty}M_n$ exists almost surely and there is $C_1 > 0$, which may depend on $C, C'$, and a universal $C_2 > 0$, such that
\begin{equation}
\label{eqn:953}
   P\bigl(|M_\infty - M_0| \geq t\bigr)
    	\leq C_1 \exp (-C_2 t/ \sqrt{\alpha} ),\qquad t \leq C' \alpha^{3/2}.
\end{equation}
\end{theorem}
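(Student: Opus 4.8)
\smallskip
\noindent\textbf{Proof proposal.}
The statement is a Freedman--Bernstein-type exponential inequality for a martingale with bounded increments whose accumulated conditional variance is controlled only by the \emph{random} envelope $\sum_k U_k$; it is Theorem~3 in Kesten~\cite{kesten1993speed}, so strictly one simply cites it, but here is how I would organize the proof. Two reductions first: the hypotheses already make $M_\infty:=\lim_n M_n$ well defined, since $\E[(M_n-M_0)^2]=\sum_{k=1}^n\E[\Delta_k^2]\le\sum_k\E[U_k]=\E[\sum_k U_k]<\infty$ by \eqref{eq:cdown}, so $(M_n)$ is bounded in $L^2$; and $(-M_k;\scrF_k)$ obeys the same hypotheses, so it suffices to bound $P(M_\infty-M_0\ge t)$ and symmetrize at the end.

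Next I would build the exponential supermartingale. Put $W_n:=\sum_{k=1}^n\E[U_k\mid\scrF_{k-1}]$, a predictable (hence $\scrF_{n-1}$-measurable) nondecreasing process, so that $\E[\Delta_k^2\mid\scrF_{k-1}]\le W_k-W_{k-1}$, and fix a small universal $\theta_0\in(0,1]$. For $0<\theta\le\theta_0\alpha^{-1/2}$ one has $|\theta\Delta_k|\le\theta_0\le1$, so $\exp(x)\le1+x+x^2$ on $[-1,1]$ together with $\E[\Delta_k\mid\scrF_{k-1}]=0$ gives
\begin{equation*}
\E\bigl[\exp(\theta\Delta_k)\mid\scrF_{k-1}\bigr]\le1+\theta^2\,\E[\Delta_k^2\mid\scrF_{k-1}]\le\exp\bigl(\theta^2(W_k-W_{k-1})\bigr).
\end{equation*}
Hence $Y_n:=\exp\bigl(\theta(M_n-M_0)-\theta^2 W_n\bigr)$ is a nonnegative supermartingale with $Y_0=1$, and $\E[Y_\infty]\le1$ by Fatou.

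Then I would truncate the variance envelope: for any $v>0$, on $\{W_\infty\le v\}$ one has $Y_\infty\ge\exp\bigl(\theta(M_\infty-M_0)-\theta^2 v\bigr)$, so Markov gives $P(M_\infty-M_0\ge t,\,W_\infty\le v)\le\exp(-\theta t+\theta^2 v)$ and hence
\begin{equation*}
P(M_\infty-M_0\ge t)\le\exp(-\theta t+\theta^2 v)+P(W_\infty>v).
\end{equation*}
Choosing $v:=\theta_0^{-1}t\alpha^{1/2}$ makes $\theta:=t/(2v)=\tfrac12\theta_0\alpha^{-1/2}$ admissible and turns the first term into $\exp\bigl(-\tfrac14\theta_0\,t\alpha^{-1/2}\bigr)$, which has the desired form (for $t$ of order $\alpha^{1/2}$ or smaller the asserted bound is $O(1)$ and holds trivially once $C_1$ is large).

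The one genuinely nonroutine step, which I expect to be the main obstacle, is bounding $P(W_\infty>v)$: one must transfer the exponential tail of $\sum_k U_k$ furnished by \eqref{eq:cdown} to the predictable sum $W_\infty=\sum_k\E[U_k\mid\scrF_{k-1}]$. This is delicate because the $U_k$ are only $\scrF_\infty$-measurable, so $W_\infty$ is neither the compensator of an adapted increasing process nor pathwise dominated by $\sum_k U_k$ (it can be strictly positive on $\{\sum_k U_k=0\}$). The plan is a truncation argument in the spirit of Kesten~\cite{kesten1993speed}: split each $U_k$ at a level tuned to $v$, handle the bounded part through its linearly controlled conditional mean and the large part through \eqref{eq:cdown} (using $\exp(\lambda\,\E[U_k\mid\scrF_{k-1}])\le\E[\exp(\lambda U_k)\mid\scrF_{k-1}]$ where convenient); this works only for $v$ up to order $\alpha^2$, which is exactly where the restriction $t\le C'\alpha^{3/2}$ in the conclusion comes from. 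Granting a bound of the form $P(W_\infty>v)\le C\exp(-C'v/\alpha)=C\exp(-C''t\,\alpha^{-1/2})$, one adds the two contributions, absorbs constants, and symmetrizes to obtain $P(|M_\infty-M_0|\ge t)\le C_1\exp(-C_2\,t\,\alpha^{-1/2})$ for $t\le C'\alpha^{3/2}$, as claimed.
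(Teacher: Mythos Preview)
Your proposal is correct and, in fact, goes well beyond what the paper does: the paper's own ``proof'' is a pure citation --- it states that this is a shortened form of Theorem~3 in Kesten~\cite{kesten1993speed}, with \eqref{eqn:953} being Kesten's equation~(1.29) simplified using $\alpha>1$, and nothing more. So there is no proof in the paper to compare against; the authors defer entirely to~\cite{kesten1993speed}.

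That said, your outline is faithful to Kesten's own argument: the exponential supermartingale $Y_n=\exp(\theta(M_n-M_0)-\theta^2 W_n)$ and the splitting $P(M_\infty-M_0\ge t)\le\exp(-\theta t+\theta^2 v)+P(W_\infty>v)$ are exactly the mechanism of Freedman's inequality that Kesten uses. You have also correctly located the one nontrivial step --- transferring the tail of $\sum_k U_k$ to the predictable sum $W_\infty=\sum_k\E[U_k\mid\scrF_{k-1}]$ when the $U_k$ are only $\scrF_\infty$-measurable --- and correctly explained that this is where the range restriction $t\le C'\alpha^{3/2}$ originates. The one part you have left as a plan rather than an execution is precisely this transfer; your sketch (truncate each $U_k$, use Jensen on the bounded part, use \eqref{eq:cdown} on the large part) is the right shape, but if you wanted a self-contained proof you would need to carry out the bookkeeping here, since the product $\prod_k\E[\texte^{\lambda U_k}\mid\scrF_{k-1}]$ does not telescope and some care is required. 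For the purposes of the paper, however, the citation is all that is expected.
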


\begin{proofsect}{Proof}
This is a shortened version of the statement of Theorem 3 in Kesten~\cite{kesten1993speed} bypassing the sharper, but less illuminating, estimate (1.28). Our bound \eqref{eqn:953}  corresponds to Kesten's equation (1.29) simplified with the help of~$\alpha>1$.
\end{proofsect}

\begin{proofsect}{Proof of Proposition~\ref{lem:BConcentration}}
Let $\{e_k \colon  k=0, \dots\}$ be a fixed ordering of the edges of~$\Z^2$. Define the sigma-algebras $\scrF_k:=\sigma(\omega(e_1), \dots, \omega(e_k))$. We will apply Theorem~\ref{thm-Kesten} to the martingale \begin{equation}
M_k := \E\bigl(\,\hat{b}(x,y) \,\big|\, \scrF_k\bigr)
\end{equation}
for some fixed $x,y \in \Z^2$. Our main task is to verify the conditions \twoeqref{eqn:951}{eq:cdown}.

Let us write $\hat b(x,y;\omega)$ to explicate the dependence of this object on the underlying percolation configuration~$\omega$ and let
\begin{equation}
\label{E:3.30}
g_k(\omega):=\int_{\{0,1\}}\BbbP\bigl(\textd\omega'(e_k)\bigr)
\bigl|\,\hat b(x,y;\omega')-\hat b(x,y;\omega)\bigr|,
\end{equation}
where $\omega'$ is equal to~$\omega$ except at edge~$e_k$ where it equals the integration variable~$\omega'(e_k)$.
Set $\Delta_k:=M_k-M_{k-1}$ for the martingale increment. Then
\begin{equation}
\label{EEE}
|\Delta_k|\le \E\bigl(g_k\big|\FF_{k}\bigr)
\end{equation}
thanks to the product nature of~$\BbbP$.

Recall that $\hat{\Gamma}(x,y ;\; \omega)$ is the set of minimizers of $\hat{\frb}(\gamma;\;\omega)$ among all paths in $\RR(x,y)$. In order to estimate the right-hand side of \eqref{EEE}, fix an arbitrary ordering of~$\RR(x,y)$ and for each~$\omega$, let  $\hat{\gamma}(\omega) ={\hat{\gamma}}(x,y ;\; \omega)\in\RR(x,y)$ denote the 
path in $\hat{\Gamma}(x,y ;\; \omega)$ which is the smallest in the above ordering.  For two percolation configurations  $\omega,\omega'$ that differ only in the state of the edge~$e_k$, 
\begin{equation}
\hat b(x,y;\omega')-\hat b(x,y;\omega)
	\le\hat{\frb}\bigl(\hat{\gamma}(\omega) ; \; \omega'\bigr)-\hat{\frb}\bigl(\hat{\gamma}(\omega);\; \omega\bigr)
\le\bigl(1+h(\|y-x\|)\bigr)\1_{A_k}(\omega),
\end{equation}
where
\begin{equation}
A_k:=\bigl\{\omega\colon e_k\in\hat{\gamma}(\omega)\cup\partial^+\hat{\gamma}(\omega)\bigr\}.
\end{equation}
Bounding the prefactor by~$2h(\|y-x\|)$ and interchanging the roles of~$\omega$ and~$\omega'$, we obtain
\begin{equation}
\label{E:3.34}
\bigl|\,\hat b(x,y;\omega')-\hat b(x,y;\omega)\,\bigr|
\le 2h(\|y-x\|)\bigl(\1_{A_k}(\omega)+\1_{A_k}(\omega')\bigr).
\end{equation}
This immediately gives the left condition in \eqref{eqn:951} with $\alpha:=16 h(\|y-x\|)^2$. 

For the condition on the right of \eqref{eqn:951}, we apply Jensen's inequality to \eqref{EEE} to get
\begin{equation}
\E\bigl(\,|\Delta_k|^2\big|\FF_{k-1}\bigr)\le\E\bigl(g_k^2\big|\FF_{k-1}\bigr).
\end{equation}
Using Jensen also with respect to the integration in \eqref{E:3.30}, we are naturally led to consider integrals of the right-hand side of \eqref{E:3.34} squared. Here we use
\begin{equation}
\bigl(\1_{A_k}(\omega)+\1_{A_k}(\omega')\bigr)^2\le 2(\1_{A_k}(\omega)+\1_{A_k}(\omega'))
\end{equation}
and note that the integral of $\1_{A_k}(\omega')$ over $\omega(e_k)$ and then ~$\omega'(e_k)$ yields the same result as the integeral of $\1_{A_k}(\omega)$ over $\omega(e_k)$ and then ~$\omega'(e_k)$.
It follows that
\begin{equation}
\E\bigl(\,|\Delta_k|^2\big|\FF_{k-1}\bigr)
\le16h\bigl(\|y-x\|\bigr)^2\E\bigl(\1_{A_k}\big|\FF_{k-1}).
\end{equation}
The condition on the right of \eqref{eqn:951} holds with $U_k:=16h(\|y-x\|)^2\1_{A_k}$.

Having checked \eqref{eqn:951}, we now turn to condition \eqref{eq:cdown}. Writing $h$ for $h(\|y-x\|)$ and $\hat{\gamma}$ for the path $\hat{\gamma}(x,y;\omega)$, Lemma~\ref{prop:DualPathLength} tells us
\begin{equation}
	\sum_{k=1}^\infty U_k = 16h^2 \bigl(|{\hat{\gamma}}| + |\partial^+{\hat{\gamma}}|\bigr)
		\leq C h^2 |{\hat{\gamma}}|.
\end{equation}
Therefore by Lemma~\ref{lem:BoundsOnOptPath}(2),
there is some $C_0 > 0$, such that
\begin{equation}
	\BbbP\Bigl(\,\sum_{k=1}^\infty U_k > t\Bigr) \leq C \texte^{-C' t},\qquad t > C_0 h(\|x-y\|)^3 \|x-y\|.
\end{equation}
We now reset $\alpha := C_0 h(\|x-y\|)^3 \|x-y\|$ and conclude that all the conditions in the theorem hold. The result then follows immediately from \eqref{eqn:953}.
\end{proofsect}

\subsection{Proof of Main Statements}
We are now ready to prove the main statements of this section.

\begin{proof}[Proof of Theorem~\ref{thm:ConcentrationOfB}]
Let us collect all the inequalities which have been established so far. Fix some $\epsilon > 0$ and let $x,y \in \Z^2$. From Theorem~\ref{thm:BetaP} we know that if $\|y-x\|$ is large enough (independent of the direction of $y-x$),
\begin{equation}
\label{E:3.40}
\big| \E\, b([x], [y]) - \beta_p(y-x) \big| \leq \epsilon
	\|y-x\| .
\end{equation}
Next, by Proposition~\ref{lem:BApproxBHat}, with probability at least $1-\texte^{-C\log^2\|y-x\|}$, 
\begin{equation}
\big| b([x], [y]) - \hat{b}(x,y) \big| \leq C'  h(\|y-x\|)\log^2\|y-x\|
	\leq \epsilon \|y-x\| .
\end{equation}
Lemma~\ref{lem:BApproxBHatInMean} in turn gives
\begin{equation}
	\big| \E\, b([x], [y]) - \E\, \hat{b}(x,y) \big| 
	\leq C' h(\|y-x\|)\log^2\|y-x\| \leq \epsilon \|y-x\| .
\end{equation}
Finally, by Proposition~\ref{lem:BConcentration}, with probability
at least $1-\texte^{-C \|y-x\|^{1/2} (h(\|y-x\|)^{-3/2}}$,
\begin{equation}
\label{E:3.43}
	\big|\hat{b}(x, y)-\E\,\hat{b}(x, y) \big| \leq \epsilon \|x-y\| \,.
\end{equation}
Combining \twoeqref{E:3.40}{E:3.43} and invoking the triangle inequality, we find that
\begin{equation}
\label{E:3.44}
\big| b([x], [y]) - \beta_p(y-x) \big | \leq 4 \epsilon
	\|y-x\| .
\end{equation}
holds with probability at least $1-\texte^{-C\log^2\|y-x\|}$. It remains to divide \eqref{E:3.44} by $\beta_p(y-x)$ and use that $\beta_p$ is equivalent to $\|\cdot\|$.
\end{proof}

\begin{proof}[Proof of Proposition~\ref{prop:GeomConcentration}]
Part~\ref{part:GC-1} is identical to Lemma~\ref{lem:BoundsOnOptPath}(3) so we only need to prove part~\ref{part:GC-2}. Note first that a qualitative version of this statement, one without an explicit decay estimate, can be proved without invoking the concentration bound in Theorem~\ref{thm:ConcentrationOfB}. Unfortunately, this would not be enough for our later use of this claim.

Fix~$x,y\in\Z^2$ with $\Vert x-y\Vert$ large, pick $\epsilon > 0$ and let $N:= \lceil 4\alpha/\epsilon\rceil$, where $\alpha$ is as in part~\ref{part:GC-1} of this proposition.
Define the sequence of vertices  $u_k := (1-kN^{-1}) x + kN^{-1} y$ where $k=0, \dots, N$. Then, for each $k=1, \dots, N$ we pick a path $\gamma_k \in \Gamma( [u_{k-1}], [u_k])$ that is minimal in a (previously assumed) complete ordering of~$\RR( [u_{k-1}], [u_k])$. Finally, set $\gamma_\epsilon := ((\dots(\gamma_1 *\gamma_2)\ast \dots )* \gamma_N)$. 

We claim that with probability at least $1-\texte^{-C\log^2 \|y-x\|}$ the path $\gamma_\epsilon$ satisfies the conditions on the left of \eqref{eqn:922}. Indeed, clearly $\gamma_\epsilon\in\RR([x],[y])$. Moreover, since $\|u_k - u_{k-1}\|$ is of order $N^{-1}\|y-x\|$, once $\|y-x\|$ is large enough we have 
\begin{equation}
\bigl|\frb(\gamma_k) - \beta_p(u_k - u_{k-1})\bigr| \leq \epsilon \|u_k - u_{k-1}\| / 3,
\end{equation} 
with probability at least $1-\texte^{-C\log^2\|y-x\|}$. This follows from the optimality of $\gamma_k$, Theorem~\ref{thm:ConcentrationOfB} and the 
equivalence of $\beta_p$ with~$\Vert\cdot\Vert$. The same arguments also show 
\begin{equation}
	\bigl| b([x], [y]) - \beta_p(y-x) \bigr| \leq \epsilon \|y-x\| / 3 ,
\end{equation}
with similar probability (albeit different constants). Summing over $k=1, \dots, N$, invoking sub-additivity of $\beta_p$ and Lemma~\ref{prop:MergingOfPaths}, with probability at least $1-\texte^{-C\log^2\|y-x\|}$,
\begin{equation}
\begin{split}
	\frb(\gamma_\epsilon) & \leq \sum_{k=1}^N \frb(\gamma_k) + C'N \\
	& \leq \sum_{k=1}^N \beta_p(u_k - u_{k-1}) + (\epsilon/3) \sum_{k=1}^N
		\|u_k - u_{k-1}\| + C'N \\
	& \leq \beta_p(y-x) + (\epsilon/2)\|y-x\|\, \leq \, b\bigl([x], [y]\bigr) + \epsilon \|y-x\| ,
\end{split}
\end{equation}
where we have used the positive homogeneity of $\beta_p$. This implies $\gamma_\epsilon \in \Gamma_\epsilon([x], [y])$. 

At the same time, by Lemma~\ref{lem:BoundsOnOptPath}(3) (or part~\ref{part:GC-1} of this proposition), with probability at least $1-\texte^{-C \epsilon\|y-x\|}$,
\begin{equation}
	|\gamma_k| \leq \alpha\|u_k-u_{k-1}\| < \epsilon\|y-x\|/2,\qquad k=1, \dots, N.
\end{equation}
Also, by Lemma~\ref{prop:DistanceToCInf} with probability of the same order $\|u_k - [u_k]\| \leq \epsilon \|y-x\|/4$. Together the last two statements imply that for $k = 1, \dots, N$, any $z \in \gamma_k$ and any $w \in \poly(u_{k-1}, u_k)$ we have $\|z-w\| < \epsilon \|y-x\|$ (notice that $N\epsilon>2$). This shows 
\begin{equation}
\label{E:3.49}
	\gamma_\epsilon \subseteq \poly(x,y) + \rmB_\infty(\epsilon \|y-x\|) \,.
\end{equation}
Since $\gamma_\epsilon$, viewed as curve in $\R^2$, is continuous and connects $[x]$ and $[y]$, which are at most $\epsilon \|y-x\|/4$ away from $x$ and $y$ respectively, it follows from \eqref{E:3.49} that 
\begin{equation}
	\poly(x,y) \subseteq \gamma_\epsilon + \rmB_\infty(\epsilon \|y-x\|).
\end{equation}
This shows that 
$d_{\rm H}(\gamma_\epsilon, \poly(x,y)) \leq \epsilon \|y-x\|$
with probability at least $1-\texte^{-C\epsilon\|y-x\|}$.  Since $\epsilon\|y-x\|\ge\log^2\|y-x\|$ as soon as $\|y-x\|$ is large, a union bound finishes the proof.
\end{proof}

\section{Approximating circuits by closed curves}
\label{sec4}\noindent
In this section we develop tools to describe the shape of large finite sets in the lattice using simple curves in~$\R^2$. This will then directly feed into our main results in Section~\ref{sec5}.

\subsection{Key propositions}
Recall the notion of a right-boundary circuit $\gamma$ and the correspondence with outer boundary interface~$\partial$ as detailed in Proposition~\ref{prop-duality}. By ``rounding the corners'' on edges on which the boundary interface reflects, $\partial$ can be identified with a simple closed curve --- i.e., a map from $[0,1]$ to~$\R^2$ which is injective on $[0,1)$ and has equal values at~$0$ and~$1$. We will write
\begin{equation}
\vol(\gamma):=\Z^2\cap\intr(\partial)
\end{equation}
to denote the ``filling'' of~$\gamma$, i.e., set of lattice points surrounded by the curve~$\partial$; see Fig.~\ref{fig1a}. Note that $\gamma\subset\vol(\gamma)$ whenever (which will be typical) the interface~$\partial$ associated with a right-most circuit~$\gamma$ is oriented counterclockwise. 

Our proofs require that we approximate $\vol(\gamma)$ by a set in~$\R^2$ whose boundary is a rectifiable simple closed curve~$\lambda$:

\begin{proposition}[Circuits to curves]
\label{lem:CircuitToLoopApprox}
Suppose~$p>p_\cc(\Z^2)$. For each $\epsilon > 0$ there is $C>0$ such that for all $R>1$ the following holds with probability at least $1-\texte^{-C\log^2 R}$: For any right-most circuit $\gamma$ which is oriented counterclockwise and obeys
\settowidth{\leftmargini}{(11111)}
\begin{enumerate}
\item $\gamma$ is open,
\item $\gamma \subseteq [-R,R]^2\cap\Z^2$,
\item $|\gamma| \geq R^{1/5}$,
\item $|\gamma| \leq |\vol(\gamma)|^{2/3}$,
\end{enumerate}
there is a simple closed curve $\lambda$ such that
\begin{enumerate}
   \item \label{part:CTLA-O1}
   $d_{\rm H}\bigl(\vol(\gamma), \intr(\lambda)\bigr) \leq 1+\epsilon \sqrt{|\vol(\gamma)|}$.
   \item \label{part:CTLA-O2}
   $\big| |\vol(\gamma)| - \Leb(\intr(\lambda)) \big|
       \leq \epsilon \bigl|\vol(\gamma)\bigr|$.
   \item \label{part:CTLA-O3}
   $\frb(\gamma) \geq (1-\epsilon) \len_{\beta_p}(\lambda)$.
\end{enumerate}
\end{proposition}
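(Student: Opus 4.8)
Throughout fix $\epsilon>0$ and write $A:=|\vol(\gamma)|$. The plan is to chop the circuit $\gamma$ into short consecutive arcs, join their endpoints by straight segments, take $\lambda$ to be the resulting (counterclockwise) polygon, and then compare $\gamma$ with $\lambda$ arc by arc. Concretely, choose a scale $\ell=\ell(R)$ of the form $R^{\kappa}$ with $\kappa>0$ small (say $\kappa=\tfrac1{40}$) and a large constant $C_0=C_0(\epsilon)$. Starting from an arbitrary vertex $z_0$ of $\gamma$, define cut vertices $z_0,z_1,\dots,z_m=z_0$ cyclically: $z_i$ is the first vertex strictly after $z_{i-1}$ along $\gamma$ at which \emph{either} the chord distance to $z_{i-1}$ reaches $\ell$ (we call the piece $\gamma_i$ a \emph{good} arc) \emph{or} the number of traversed steps since $z_{i-1}$ reaches $C_0\ell$ (a \emph{bad} arc), stopping once $\gamma$ has closed up. Each $\gamma_i$ is an open right-most path in $\RR(z_{i-1},z_i)$ with between $\ell$ and $C_0\ell$ edges, hence diameter at most $C_0\ell$; for good arcs $\|z_i-z_{i-1}\|\in[\ell,\ell+1)$; and $m\le|\gamma|/\ell$. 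Set $\lambda:=\poly(z_0,z_1,\dots,z_m)$.

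\emph{Good events.} Work on the intersection of three events. First, that $[-R,R]^2$ contains no open connected subset with at least $R^{1/5}$ vertices other than $\mbC^\infty$, so that $\gamma\subseteq\mbC^\infty$ and in particular $[z_i]=z_i$; by planar duality and exponential decay of the sizes of finite clusters in the subcritical dual, this has probability at least $1-\texte^{-cR^{1/5}}$. Second, that every right-most path $\gamma'$ inside $[-R,R]^2$ with $|\gamma'|\ge R^{1/5}$ satisfies $\frb(\gamma')\ge\alpha_2|\gamma'|$; this follows from Proposition~\ref{prop:RenormalizedLD} by a union bound over the $O(R^2)$ starting vertices and over lengths $\ge R^{1/5}$, with probability at least $1-\texte^{-cR^{1/5}}$. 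Third, that for every pair $z,z'\in[-R,R]^2\cap\Z^2$ with $\|z-z'\|\in[\ell,\ell+1]$ one has $b([z],[z'])\ge(1-\epsilon)\beta_p(z'-z)$; by Theorem~\ref{thm:ConcentrationOfB} and a union bound over the polynomially many (in $R$) such pairs this fails with probability at most $CR^{4}\texte^{-C'\log^2\ell}\le\texte^{-C''\log^2R}$, since $\log^2\ell=\kappa^2\log^2R$ dominates $\log R$. The intersection thus has probability at least $1-\texte^{-C\log^2R}$.

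\emph{The three conclusions.} For \ref{part:CTLA-O3}: cutting the right-most circuit $\gamma$ at $z_0,\dots,z_{m-1}$ only deletes the right-boundary edges \emph{located at} those vertices, so $\partial^+\gamma\supseteq\bigsqcup_i\partial^+\gamma_i$ as multisets, whence $\frb(\gamma)\ge\sum_i\frb(\gamma_i)$; since each $\gamma_i$ is open and right-most, $\frb(\gamma_i)\ge b(z_{i-1},z_i)=b([z_{i-1}],[z_i])$ on the first event. For good arcs the third event gives $\beta_p(z_i-z_{i-1})\le(1-\epsilon)^{-1}\frb(\gamma_i)$; the bad arcs (and one possible short closing arc) number at most $|\gamma|/(C_0\ell)$, each has $\beta_p(z_i-z_{i-1})\le C_\beta\|z_i-z_{i-1}\|\le C_\beta\ell$ by equivalence of $\beta_p$ with $\|\cdot\|$, so their total $\beta_p$-length is at most $(C_\beta/C_0)|\gamma|\le\epsilon\alpha_2|\gamma|\le\epsilon\,\frb(\gamma)$ on the second event, provided $C_0$ is large. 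Summing, $\len_{\beta_p}(\lambda)\le(1-\epsilon)^{-1}\frb(\gamma)+\epsilon\,\frb(\gamma)$, which after relabelling $\epsilon$ is \ref{part:CTLA-O3}. For \ref{part:CTLA-O1}: each $\gamma_i$ and its chord lie in the ball of radius $C_0\ell$ about $z_{i-1}$, so traversing $\gamma$ and $\lambda$ in parallel yields $d_{\rm H}(\gamma,\lambda)\le 2C_0\ell$ and hence $d_{\rm H}(\vol(\gamma),\intr(\lambda))\le 1+2C_0\ell\le 1+\epsilon\sqrt A$ once $\ell\le c(\epsilon)\sqrt A/C_0$, which holds for $R$ large since conditions~(3)--(4) force $A\ge|\gamma|^{3/2}\ge R^{3/10}$, so $\sqrt A\ge R^{3/20}\gg\ell$. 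For \ref{part:CTLA-O2}: first $\bigl||\vol(\gamma)|-\Leb(\intr(\partial))\bigr|\le C\,\len(\partial)\le C'|\gamma|$ by the lattice-points-versus-area estimate and Lemma~\ref{prop:DualPathLength}; then, parametrizing the rounded interface $\partial$ and the polygon $\lambda$ as counterclockwise Jordan curves, $\ind_{\intr(\partial)}-\ind_{\intr(\lambda)}=\sum_i\theta_i$ pointwise, where $\theta_i$ is the winding-number function of the loop $\gamma_i\cup\poly(z_i,z_{i-1})$, which has diameter and length at most $2C_0\ell$, so $|\theta_i|\le C_0\ell$ with support in a ball of radius $2C_0\ell$ and $\int|\theta_i|\le C\ell^3$. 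Summing over $m\le|\gamma|/\ell$ loops, $\Leb(\intr(\partial)\triangle\intr(\lambda))\le C\ell^2|\gamma|$. Both error terms are $\le\epsilon A$: $|\gamma|\le A^{2/3}=o(A)$, and $\ell^2|\gamma|\le\ell^2A^{2/3}\le\epsilon A$ once $\ell\le c(\epsilon)A^{1/6}$, again valid since $A^{1/6}\ge R^{1/20}\gg\ell$.

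\emph{Main difficulty.} The probabilistic content is exhausted by the three union bounds; the real work is geometric. The delicate point is that $\lambda$ must be a \emph{simple} closed curve, and condition~(4) is exactly what forbids $\gamma$ from being so thin (width $\lesssim\ell$) that the polygon through the $z_i$'s would self-intersect globally; any residual self-crossings then bound loops of diameter $O(\ell)$, and excising them only decreases $\len_{\beta_p}(\lambda)$ while changing the enclosed area by at most $O(\ell^2 m)=O(\ell|\gamma|)$, within the tolerated error. The second subtlety is the homological identity $\ind_{\intr(\partial)}-\ind_{\intr(\lambda)}=\sum_i\theta_i$ and the consequent bound $\Leb(\intr(\partial)\triangle\intr(\lambda))\le\sum_i\int|\theta_i|$, which needs the two curves to be traversed compatibly (same orientation, matched arc endpoints $z_i$) — this is precisely why the decomposition into corresponding arcs is arranged as above. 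Everything else is bookkeeping with the elementary facts on right-most paths from Section~\ref{sec2} and the two concentration inputs from Sections~\ref{sec2}--\ref{sec3}.
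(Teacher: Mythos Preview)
Your strategy --- take a polygonal approximation of $\gamma$ at scale $\ell=R^\kappa$ and invoke the concentration of $b$ on each chord --- is the paper's. The paper's stopping rule is the simpler ``first time the chord distance hits $r$'' (with $r=\lceil R^{1/100}\rceil$): this confines every arc to an $r$-ball about its start and makes every chord but the last have length exactly~$r$, so your good/bad dichotomy is unnecessary. Your winding-number computation for~\ref{part:CTLA-O2} is correct but heavier than needed; since each arc and its chord sit in a common $r$-ball, the symmetric difference $\hull(\Gamma)\triangle\hull(\PP_r(\Gamma))$ is covered by those balls and has area $\le Cr|\gamma|$ in one line (Lemma~\ref{lem:GoodnessOfPApprox}).

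The genuine gap is your treatment of simplicity. The claim that condition~(4) forbids global self-intersections of the polygon and that residual crossings bound only $O(\ell)$-loops is false: take $\vol(\gamma)$ to be a dumbbell, two disks of radius $\rho\sim\sqrt A$ joined by a neck of width~$1$. Then $|\gamma|\sim\rho\le A^{2/3}$, so~(4) holds, yet chords of the polygon near the neck can cross and the two resulting loops each enclose area of order~$A$; excising either one destroys~\ref{part:CTLA-O2}. The paper never excises. It first proves \ref{part:CTLA-O1}--\ref{part:CTLA-O3} with $\hull(\PP_r(\Gamma))$ --- the set of points with odd winding number, which is well-defined for \emph{any} closed polygonal curve --- in place of $\intr(\lambda)$, and only afterwards invokes a separate perturbation lemma (Lemma~\ref{lemma-poly-to-simple}) that turns an arbitrary closed polygon into a simple one by \emph{uncrossing} each self-intersection: at a crossing one reverses one of the two incident sub-arcs rather than deleting a loop. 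Uncrossing alters the parity of the winding number only in an arbitrarily small neighborhood of the crossing point, so the hull, its Lebesgue measure, and its Hausdorff position all move by at most~$\epsilon$ regardless of how large the two loops at the crossing are, and the $\beta_p$-length increases by at most~$\epsilon$.
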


Note that, in~(1) above, we have invoked the natural embedding $\vol(\gamma)\subset\R^2$ to assign meaning to $d_{\rm H}((\vol(\gamma), \intr(\lambda))$. Our next claim tells how to go back from curves to circuits:

\begin{proposition}[Curves to circuits]
\label{lem:LoopToCircuitApprox}
Let $\lambda$ be any rectifiable simple closed curve in~$\R^2$ such that $\intr(\lambda)$ is convex and $R, \epsilon > 0$. Writing $\lambda_R := R\lambda$ (as a map $[0,1]\to\R^2$) let $\AA_{R,\epsilon,\lambda}$ denote the event that there is a counterclockwise-oriented right-most circuit $\gamma$ satisfying
\settowidth{\leftmargini}{(11111)}
\begin{enumerate}
	\item \label{part:LTCA-0}
	$\gamma$ is open,
	\item \label{part:LTCA-1}
		$d_{\rm H}\bigl(\vol(\gamma), \intr(\lambda_R)\bigr) \leq \epsilon R$,
	\item \label{part:LTCA-2}
		$\big||\vol(\gamma)| - \Leb(\intr(\lambda_R)) \big|
		\leq \epsilon R^2$,
	\item \label{part:LTCA-3}
		$\frb(\gamma) \leq (1+\epsilon) \len_{\beta_p}(\lambda_R)$.
\end{enumerate}
For each $p>p_\cc(\Z^2)$, $\epsilon > 0$ and $\lambda$ as above, there is $C > 0$ such that
\begin{equation}
\BbbP\bigl(\AA_{R,\epsilon,\lambda})\ge1-\texte^{-C\log^2R},\qquad R>1.
\end{equation}
\end{proposition}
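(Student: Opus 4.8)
First I would replace $\lambda$ by a fine inscribed polygon. Since $\intr(\lambda)$ is convex and bounded and $\lambda$ is rectifiable, for each $\delta>0$ there is a counterclockwise-oriented polygon $P=(v_0,v_1,\dots,v_m=v_0)$ inscribed in $\lambda$ (all vertices on $\lambda$) with mesh small enough that $d_{\rm H}(\partial P,\lambda)\le\delta$ and $\bigl|\Leb(\intr(P))-\Leb(\intr(\lambda))\bigr|\le\delta$; here $m=m(\delta,\lambda)$ is fixed and $\min_i\|v_i-v_{i-1}\|>0$. Being an inscribed polygon, $P$ automatically satisfies $\sum_{i=1}^m\beta_p(v_i-v_{i-1})\le\len_{\beta_p}(\lambda)$ directly from the definition of $\len_{\beta_p}$. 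The plan is to establish the claim for all $R\ge R_0(\epsilon,\lambda)$ and then to cover the range $1<R<R_0$ for free by shrinking the constant $C$ so that $1-\texte^{-C\log^2 R}\le 0$ there. Given $\epsilon>0$ I would choose, in this order, $\delta>0$ small, then $\epsilon_0>0$ small, then $R_0$ large, with the dependences dictated below.

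Set $w_i:=Rv_i$ and let $\BB_R$ be the event that, for every $i=1,\dots,m$: (i) $b\bigl([w_{i-1}],[w_i]\bigr)\le(1+\epsilon_0)\beta_p(w_i-w_{i-1})$; (ii) there is an \emph{open} right-most path $\gamma_i$ from $[w_{i-1}]$ to $[w_i]$ which is $\epsilon_0$-optimal and satisfies $d_{\rm H}\bigl(\gamma_i,\poly(w_{i-1},w_i)\bigr)\le\epsilon_0\|w_i-w_{i-1}\|$ and $|\gamma_i|=O(\|w_i-w_{i-1}\|)$; (iii) $\|[w_i]-w_i\|\le\log^2 R$. Since $\|w_i-w_{i-1}\|=R\|v_i-v_{i-1}\|\ge cR\to\infty$ and $m$ is fixed, Theorem~\ref{thm:ConcentrationOfB}, part~\ref{part:GC-2} of Proposition~\ref{prop:GeomConcentration}, the length bounds of Lemma~\ref{lem:BoundsOnOptPath} and Proposition~\ref{prop:RenormalizedLD}, and Lemma~\ref{prop:DistanceToCInf} together give $\BbbP(\BB_R)\ge1-\texte^{-C\log^2 R}$ for $R$ large. (One minor point: part~\ref{part:GC-2} of Proposition~\ref{prop:GeomConcentration} furnishes a near-optimal path close to $\poly(w_{i-1},w_i)$ but not literally an open one; this is remedied either by the dual-circuit surgery of Lemma~\ref{lemma-3.6}, which adjusts $\frb$ and the path by a negligible amount, or by observing that the minimum of $\frb$ over open right-most paths is attained ---$\frb$ being integer-valued--- by an open path, to which the geometric estimate still applies.)

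On $\BB_R$ put $\gamma:=((\cdots(\gamma_1\ast\gamma_2)\ast\cdots)\ast\gamma_m)$. Iterating Lemma~\ref{prop:MergingOfPaths}: $\gamma$ is an open right-most circuit (its endpoints coincide since $w_m=w_0$), $\gamma\subseteq\bigcup_i\gamma_i$ as vertex sets, and $\frb(\gamma)\le\sum_i\frb(\gamma_i)+2m$; let $\partial$ be the boundary interface of $\gamma$, identified after rounding with a Jordan curve, so that $\vol(\gamma)=\Z^2\cap\intr(\partial)$. Using $\epsilon_0$-optimality of $\gamma_i$, item (i) and the norm-equivalence $\|\cdot\|\asymp\beta_p$ one gets $\frb(\gamma_i)\le\beta_p(w_i-w_{i-1})+C\epsilon_0\|w_i-w_{i-1}\|$; summing and invoking $\sum_i\beta_p(w_i-w_{i-1})=R\sum_i\beta_p(v_i-v_{i-1})\le R\len_{\beta_p}(\lambda)=\len_{\beta_p}(\lambda_R)$ together with $\len_{\beta_p}(\lambda_R)\ge cR\sum_i\|v_i-v_{i-1}\|$ and $2m\le(\epsilon/2)\len_{\beta_p}(\lambda_R)$ (valid for $R\ge R_0$) yields $\frb(\gamma)\le(1+\epsilon)\len_{\beta_p}(\lambda_R)$ once $\epsilon_0$ is small, which is the right-boundary length bound. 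For the geometry, by (ii)--(iii) the curve $\gamma$ lies in a $C\epsilon_0 R$-neighborhood of $\partial(RP)$, which for convex $P$ lies within $O(\delta R)$ of $\lambda_R$; since $\intr(\lambda)$ has positive inradius, a point deep inside $\intr(\lambda_R)=R\,\intr(\lambda)$ is at distance $\ge cR$ from that neighborhood, so $\partial$ ---which runs in cyclic order through $[w_0],\dots,[w_m]$, the near-vertices of the convex polygon $RP$--- is homotopic to $\lambda_R$ in the complement of that point and hence winds counterclockwise exactly once. A routine winding-number estimate then gives $d_{\rm H}\bigl(\intr(\partial),\intr(\lambda_R)\bigr)\le C(\epsilon_0+\delta)R$, whence $d_{\rm H}\bigl(\vol(\gamma),\intr(\lambda_R)\bigr)\le C(\epsilon_0+\delta)R+\sqrt 2\le\epsilon R$, the Hausdorff bound. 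Finally, since the boundaries $\partial$ and $\lambda_R$ have length $O(R)$ (using $|\gamma|=O(R)$) and are $O((\epsilon_0+\delta)R)$-Hausdorff-close, $\bigl||\vol(\gamma)|-\Leb(\intr(\partial))\bigr|=O(R)$ and $\bigl|\Leb(\intr(\partial))-\Leb(\intr(\lambda_R))\bigr|=O\bigl((\epsilon_0+\delta)R^2\bigr)$, so $\bigl||\vol(\gamma)|-\Leb(\intr(\lambda_R))\bigr|\le\epsilon R^2$ for $\delta$ small and $R\ge R_0$, the area bound. Fixing $\delta$, then $\epsilon_0$, then $R_0$, and shrinking $C$ to cover $1<R<R_0$, completes the argument.

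I expect the main obstacle to be the geometric bookkeeping of the last step: upgrading the statement ``the successive pieces $\gamma_i$ lie in a thin tube around the convex curve $\lambda_R$'' to the quantitative control of $\vol(\gamma)$. One must rule out that the $\ast$-concatenations short-circuit a macroscopic arc of the loop (so that $\gamma$ still encloses essentially all of $\intr(\lambda_R)$ rather than a chord-cut piece of it) and that the resulting right-most circuit grows no thin tendrils that would corrupt the lattice-point count of $\vol(\gamma)$; convexity of $\intr(\lambda)$ and the fact that $[w_0],\dots,[w_m]$ sit in cyclic order near the vertices of a convex polygon are exactly what make this true, but turning it into clean winding-number and symmetric-difference estimates is the delicate part.
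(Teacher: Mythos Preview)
Your approach is essentially the paper's: replace $\lambda$ by a fine polygonal approximation, use the concentration estimates to get near-optimal open right-most paths along each side, then stitch them together and control the geometry. The paper's proof is organized the same way, but it isolates your ``main obstacle'' into a separate deterministic lemma (Lemma~\ref{lem:InversePolyApprox}) that takes as input a simple polygon $\poly(x_0,\dots,x_N)$ and paths $\gamma_k$ satisfying your condition~(ii), and outputs a right-most circuit with the required Hausdorff, area and $\frb$ bounds.

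Your self-diagnosis is correct, and there is a concrete problem with the construction as written. The $\ast$-concatenation $\gamma:=((\cdots(\gamma_1\ast\gamma_2)\ast\cdots)\ast\gamma_m)$ does \emph{not} close to a circuit in the intended way: by definition of~$\ast$, when you concatenate the path $\gamma':=\gamma_1\ast\cdots\ast\gamma_{m-1}$ (which starts at $[w_0]$) with $\gamma_m$ (which ends at $[w_0]$), the rule takes the \emph{first} vertex of $\gamma'$ lying on $\gamma_m$, which is $u_0=[w_0]$ itself, and the result degenerates to the trivial path $([w_0])$. More generally, nothing in the bare $\ast$ rule prevents a far-away segment $\gamma_j$ from meeting $\gamma_1$ near $[w_0]$ and short-circuiting the whole loop. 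The paper's fix is exactly the geometric separation you gesture at: for $\delta$ small enough the tubes $\poly(x_{k-1},x_k)+\rmB_\infty(\delta)$ around non-adjacent sides of the simple polygon are disjoint, and adjacent tubes meet only in a small ball around the shared vertex; this forces intersections between $\gamma_k$'s to occur only near the polygon vertices, so the successive $\ast$-concatenations lose at most a bounded number of edges and do not skip sides. The final closing step is then done by hand (not via $\ast$): one locates the last visit of $\gamma_m$ to the small ball around $x_{m-1}$ and the first visit to the small ball around $x_0$, and splices accordingly. Once the circuit is built inside the tube $\lambda_R+\rmB_\infty(\delta R)$ and passes the vertex-balls in cyclic order, the inclusions $\intr(R\lambda^-)\subseteq\hull(\gamma)\subseteq\intr(R\lambda^+)$ for the inner/outer boundary curves of the tube give the Hausdorff and area estimates directly, without a separate winding-number computation. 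Your $\frb$ bound is fine and matches the paper's.
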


A natural method to go between paths on the lattice and continuous curves is by way of polygonal approximations. These have been invoked already in various studies of two-dimensional ``Wulff construction'' in statistical mechanics  (Alexander-Chayes and Chayes~\cite{ACC}, Dobrushin, Koteck\'y, Shlosman~\cite{Book:DKS}, etc). However, the reliance on polygonal approximations is limited only to the proofs of the above propositions.

\subsection{Polygonal approximations}
Let~$\lambda$ be a rectifiable curve. For $r > 0$ we define its \textit{$r$-polygonal approximation} $\PP_r(\lambda)$ inductively as follows. Set $t_0 := 0$, $x_0 := \gamma(t_0)$ and, for $k=1,2, \dots$ until $t_k = 1$ define 
\begin{equation}
    \begin{split}
        t_k & := \inf \bigl\{t\in(t_{k-1},1]\colon\|\lambda(t) -x_{k-1}\|> r \bigr\} \wedge 1 , \\
        x_k & := \lambda(t_k).
    \end{split}
\end{equation}
Since~$\lambda$ is rectifiable, the process stops after a finite number of steps; i.e., there is~$N$ with
\begin{equation}
\label{eqn:961}
N \leq \bigl\lceil \len_\infty(\lambda) / r \bigr\rceil < \infty
\end{equation}
such that $t_N = 1$ and~$x_N=\lambda(1)$. We then set $\PP_r(\lambda):=\poly(x_0, \dots, x_N)$, i.e., $\PP_r(\lambda)$ is the concatenation of the line segments $\poly(x_i,x_{i+1})$, $i=0,\dots,N-1$. 

A few remarks are in order. First notice that 
\begin{equation}
\label{eqn:962}
	\|x_k - x_{k-1}\| = r,\quad k=0, \dots, N-1, \quad \text{and} \qquad 
		0 < \|x_N - x_{N-1}\| \leq r, 
\end{equation}
and then, by definition of the length of a curve,
\begin{equation}
\label{eqn:1061}
	\len_\rho\bigl(\PP_r(\lambda)\bigr) \leq \len_\rho(\lambda)
\end{equation}
for any norm~$\rho$ on~$\R^2$.
A slight complication is that $\PP_r(\lambda)$ may not be simple and so there could be several bounded connected components of~$\R^2\setminus\PP_r(\lambda)$. For a rectifiable closed curve~$\lambda$ in~$\R^2$, we thus introduce the notion of the \emph{hull of~$\lambda$} as follows: For~$x\not\in\lambda$ let $w_\lambda(x)$ denote the winding number of $\lambda$ around~$x$. Since~$\lambda$ is closed and rectifiable, $w_\lambda(x)\in\Z$ and so we can set
\begin{equation}
\hull(\lambda):=\lambda\cup\bigl\{x\not\in\lambda\colon w_\lambda(x)\text{ is odd}\bigr\}.
\end{equation}
It follows that $\hull(\lambda)$ is closed, connected and bounded.

\begin{lemma}
\label{lem:GoodnessOfPApprox}
Let $\lambda$ be any rectifiable curve.
\begin{enumerate}
	\item \label{part:GOPA-1}
		For all $r > 0$ we have $d_{\rm H}(\hull(\lambda), \, \hull({\PP}_r(\lambda))) \leq r$.
	\item \label{part:GOPA-2}
		If $\lambda$ is a closed curve, then for all $r>0$ and some~$C$ independent of~$\lambda$,
		\begin{equation}
		\label{E:4.6}
			\Leb \big(\hull(\lambda) \, \triangle \, \hull(\PP_r(\lambda) \big) 
				\leq C r (\len_\infty(\lambda) \vee r).
		\end{equation}
\end{enumerate}
\end{lemma}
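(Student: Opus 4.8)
The plan is to construct a single straight-line homotopy between $\lambda$ and a suitable reparametrization of $\PP_r(\lambda)$ which stays within $\ell^\infty$-distance $r$ of both curves, and then read off both estimates from the winding-number definition of $\hull$.

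First I would record the elementary bound $d_{\rm H}\bigl(\lambda,\PP_r(\lambda)\bigr)\le r$: for $t\in[t_{k-1},t_k]$ one has $\lambda(t)\in x_{k-1}+\rmB_\infty(r)$ by the very definition of $t_k$, while every point of the segment $\poly(x_{k-1},x_k)$ also lies in $x_{k-1}+\rmB_\infty(r)$ since $\|x_k-x_{k-1}\|\le r$ by \eqref{eqn:962}. Next I would reparametrize $\PP_r(\lambda)$ as a curve $\mu\colon[0,1]\to\R^2$ which on each $[t_{k-1},t_k]$ runs affinely from $x_{k-1}$ to $x_k$, and set $H(s,t):=(1-s)\lambda(t)+s\,\mu(t)$. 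The estimate that drives everything is that for $t\in[t_{k-1},t_k]$ both $\lambda(t)$ and $\mu(t)$ lie in the convex set $x_{k-1}+\rmB_\infty(r)$, hence so does $H(s,t)$ for every $s$; thus $\mathrm{Im}\,H\subseteq\bigcup_{k=1}^{N}\bigl(x_{k-1}+\rmB_\infty(r)\bigr)$ and $\mathrm{Im}\,H$ lies within $\ell^\infty$-distance $r$ of both $\lambda$ and $\PP_r(\lambda)$. When $\lambda$ is closed, $\mu$ is a closed curve with the same winding numbers as $\PP_r(\lambda)$ and $H$ is a based homotopy of loops.

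For part~\ref{part:GOPA-1} I would take $x\in\hull(\lambda)$. If $x\in\lambda$ the bound above already gives $\dist(x,\PP_r(\lambda))\le r$. Otherwise $w_\lambda(x)$ is odd, and assuming for contradiction that $\bigl(x+\rmB_\infty(r)\bigr)\cap\hull(\PP_r(\lambda))=\emptyset$ one gets $x\notin\PP_r(\lambda)+\rmB_\infty(r)\supseteq\mathrm{Im}\,H$, so $H$ is a homotopy in $\R^2\setminus\{x\}$, whence $w_\lambda(x)=w_\mu(x)=w_{\PP_r(\lambda)}(x)$; but $x\notin\hull(\PP_r(\lambda))$ forces $w_{\PP_r(\lambda)}(x)$ to be even --- a contradiction. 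This proves $\hull(\lambda)\subseteq\hull(\PP_r(\lambda))+\rmB_\infty(r)$, and the reverse inclusion follows by the symmetric argument with $\lambda$ and $\PP_r(\lambda)$ interchanged. (For a curve that is not closed, part~\ref{part:GOPA-1} is just the bound $d_{\rm H}(\lambda,\PP_r(\lambda))\le r$ itself.)

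For part~\ref{part:GOPA-2} I would use the contrapositive of the homotopy-invariance step: if $x$ lies on neither curve and $w_\lambda(x)\ne w_{\PP_r(\lambda)}(x)$, then $x\in\mathrm{Im}\,H$. Hence
\[
\hull(\lambda)\,\triangle\,\hull\bigl(\PP_r(\lambda)\bigr)\ \subseteq\ \lambda\cup\PP_r(\lambda)\cup\mathrm{Im}\,H,
\]
and since $\lambda$ is rectifiable and $\PP_r(\lambda)$ is a finite polygonal path, both are Lebesgue-null, so the left-hand measure is at most $\Leb(\mathrm{Im}\,H)\le 4Nr^2$; inserting $N\le\lceil\len_\infty(\lambda)/r\rceil$ from \eqref{eqn:961} gives \eqref{E:4.6} (one may take $C=8$). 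The only point that needs genuine care is the homotopy-invariance step in part~\ref{part:GOPA-1}: one must verify that the single hypothesis ``$x+\rmB_\infty(r)$ misses $\hull(\PP_r(\lambda))$'' supplies simultaneously ``$x\notin\mathrm{Im}\,H$'' (so the winding numbers coincide) and ``$w_{\PP_r(\lambda)}(x)$ is even'' (contradicting oddness for $\lambda$). Everything else is bookkeeping with the explicit construction of $\PP_r$.
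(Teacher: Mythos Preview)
Your proof is correct and rests on the same key observation as the paper's: both $\lambda[t_{k-1},t_k]$ and $\poly(x_{k-1},x_k)$ lie in the box $x_{k-1}+\rmB_\infty(r)$, so the symmetric difference of the hulls is trapped in the union of these $N$ boxes. The paper simply asserts that this observation ``immediately gives'' both parts; you make the passage from curves to hulls rigorous via the straight-line homotopy and winding-number invariance, which is exactly the content the paper is suppressing.
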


\begin{proof}
Let $\lambda[a,b]$ denote the image of $[a,b]$ under~$\lambda$. Both parts follow from the observation
\begin{equation} 
	 \poly(x_{k-1},x_k) ,\,\, \lambda[t_{k-1}, t_k]\,\subseteq\, x_{k-1} + \rmB_\infty(r),\qquad k=1, \dots, N.
\end{equation}
This immediately gives~(1); for (2) we notice that the symmetric difference is covered by the union of sets $x_k+\rmB_\infty(r)$, $k=1,\dots,N$. In light of \eqref{eqn:961}, the bound \eqref{E:4.6} follows.
\end{proof}


As a next step, we will show that the non-simplicity of polygonal approximations can be readily overcome by a perturbation argument.

\begin{lemma}
\label{lemma-poly-to-simple}
For any closed polygonal curve $\lambda$, any $\epsilon>0$ and any norm~$\rho$ on~$\R^2$, there is a simple closed polygonal curve $\lambda'$ such that
\settowidth{\leftmargini}{(1111)}
\begin{enumerate}
\item[(1)]
$d_{\rm H}\bigl(\hull(\lambda),\intr(\lambda')\bigr)<\epsilon$,
\item[(2)]
$\Leb\bigl(\hull(\lambda)\,\triangle\,\intr(\lambda')\bigr)<\epsilon$,
\item[(3)]
$\len_\rho(\lambda')\le\len_\rho(\lambda)+\epsilon$.
\end{enumerate}
\end{lemma}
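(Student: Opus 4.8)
The idea is to remove self-intersections of the closed polygonal curve $\lambda$ one at a time, via arbitrarily small perturbations of the vertices, while keeping track of the hull, its area and the $\rho$-length. I would first note that a closed polygonal curve $\lambda=\poly(x_0,\dots,x_n)$ with $x_n=x_0$ has only finitely many edges, hence finitely many pairs of edges that either cross transversally or overlap along a segment; in particular its set of self-intersection points is finite (after discarding the trivial shared endpoints of consecutive edges). The plan is to perturb the vertices $x_1,\dots,x_{n-1}$ to nearby points $x_i'$ with $\|x_i'-x_i\|<\delta$, for $\delta>0$ to be chosen small, so that the resulting curve $\lambda'=\poly(x_0',\dots,x_n')$ is in ``general position'': no three vertices collinear, no edge passing through a vertex, and every pair of non-adjacent edges meeting in at most one point and transversally. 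Such $\delta$-perturbations exist for every $\delta>0$ because the bad configurations are a finite union of lower-dimensional algebraic conditions on the vertices; one can also simply invoke a standard fact (e.g.\ from PL topology) that polygonal curves in general position are dense.

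Once $\lambda'$ is in general position it still may have transversal self-crossings, but now only finitely many, and I would eliminate them by a ``doubling'' or ``bigon-removal'' step: at each crossing point $z$ of two edges, introduce a small detour of size $\le\delta'$ that slides one strand past the other without creating new crossings; iterating over the (finitely many) crossings yields a simple closed polygonal curve. Alternatively — and more cleanly — one can argue directly that after general-position perturbation the curve $\lambda'$ can be chosen with winding number in $\{0,\pm1\}$ everywhere and no crossings, by an induction on the number of crossings: pick an innermost bigon bounded by two sub-arcs between consecutive crossings and swap/reroute it, strictly decreasing the crossing count while moving each point by at most $\delta'$. The key point is that all these moves are localized and of size bounded by a prescribed $\delta'$, so the total displacement of every point of the curve from its original position on $\lambda$ is at most $\delta+\delta'=:\eta$, which we take as small as we like.

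It remains to check that small displacement forces smallness of the three quantities in the statement. For (3), the $\rho$-length is a sum of $\rho$-norms of edge vectors; moving each of the finitely many vertices by at most $\eta$ changes each edge vector by at most $2\eta$ in $\rho$-norm, so $|\len_\rho(\lambda')-\len_\rho(\lambda)|\le 2n\,C_\rho\,\eta$, which is $<\epsilon$ once $\eta$ is small (here $C_\rho$ compares $\rho$ with $\|\cdot\|$). For (1) and (2): $\lambda'$ lies in the $\eta$-neighbourhood of $\lambda$ and vice versa, hence $d_{\rm H}(\lambda,\lambda')\le\eta$; since winding numbers are locally constant off the curves and agree for $\lambda$ and $\lambda'$ at any point at distance $>\eta$ from both (the curves are homotopic through the perturbation, or one checks that a point far from both curves is separated by neither), we get $\hull(\lambda)\triangle\intr(\lambda')\subseteq\{x:\dist(x,\lambda)\le\eta\}$, a set of Hausdorff distance $\le\eta$ to $\lambda$ and of Lebesgue measure $\le C\eta\,(\len_\infty(\lambda)\vee\eta)$ by the same covering-by-$\rmB_\infty(\eta)$-balls argument as in Lemma~\ref{lem:GoodnessOfPApprox}(2). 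Choosing $\eta$ small enough that all three bounds are below $\epsilon$ completes the proof.

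The main obstacle is purely the combinatorial/topological bookkeeping in the crossing-removal step: making precise that one can resolve all self-intersections of a general-position polygonal curve by arbitrarily small local moves, without ever increasing the crossing number and without disturbing the hull except within an $\eta$-neighbourhood. All of this is elementary planar PL topology, but it needs to be stated carefully; the metric estimates (1)--(3) are then routine continuity/perturbation bounds of exactly the flavour already used for $\PP_r(\lambda)$.
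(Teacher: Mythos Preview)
Your proposal has a genuine topological gap at the crossing-removal step. You treat the passage from $\lambda$ to a simple $\lambda'$ as a small perturbation of the vertices, keeping the same combinatorial edge structure (this is explicit in your argument for~(3), and implicit in your ``homotopic through the perturbation'' argument for~(1)--(2)). But transversal self-crossings of a closed planar curve are \emph{stable} under $C^0$-small perturbations: the winding number of $\lambda$ around any fixed point $p$ at distance $>\eta$ from $\lambda$ is unchanged by an $\eta$-perturbation, and a curve with a transversal crossing has points with different nonzero winding numbers (e.g.\ a figure-eight has both $+1$ and $-1$ regions), which no simple closed curve can have. So ``sliding one strand past the other'' by a small detour is not an available move in~$\R^2$.

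What the paper does instead is a \emph{rewiring} (smoothing) at each crossing~$z$: if $\poly(x_i,x_{i+1})$ and $\poly(x_j,x_{j+1})$ meet at~$z$, one replaces these two segments by four short segments through nearby points $z',z''$, obtaining $\poly(x_0,\dots,x_i,z',x_j,x_{j-1},\dots,x_{i+1},z'',x_{j+1},\dots,x_n)$. This reverses the orientation of an entire sub-loop, so it is \emph{not} a small homotopy of the map, even though the image moves by at most~$\eta$. Your vertex-perturbation bound for~(3) then no longer applies (the edge set has changed), and one instead checks directly that the four new edge-lengths converge to the two old ones as $z',z''\to z$. Likewise your homotopy argument for~(1)--(2) fails; the correct observation is that reversing a sub-path changes every winding number by an even integer, so the parity --- and hence $\hull(\lambda)$ --- is preserved away from~$z$. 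Once you replace ``small perturbation'' by this rewiring and supply these two short arguments, the rest of your plan goes through.
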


\begin{proofsect}{Proof}
The curve~$\lambda=\poly(x_0,\dots,x_n)$ is composed of linear segments $\poly(x_i,x_{i+1})$ meeting at vertices~$x_i$. By a limiting argument, we may assume that
\settowidth{\leftmargini}{(1111)}
\begin{enumerate}
\item[(a)] no two linear segments are parallel,
\item[(b)] each linear segment contains no other vertices than its endpoints,
\item[(c)] no more than two linear segments intersect at each point.
\end{enumerate}
Let~$z$ be a self-intersection point of~$\lambda$. Then there are two linear segments of~$\lambda$ that intersect at~$z$ and, by~(b), four components of $\R^2\setminus\lambda$ that meet at~$z$, two of which may be the same component. A little thought then reveals that two of these components have winding number even and two of them odd, with (necessarily) even components separated from each other by the odd ones and \emph{vice versa}. 

We will now introduce another polygonal line $\lambda'$ as follows. Let $0\le i<j \le n$ be such that the line segments intersecting at~$z$ are exactly $\poly(x_i,x_{i+1})$ and $\poly(x_j,x_{j+1})$. Now pick a point~$z'$ in the component of $\R^2\setminus\lambda$ that is adjacent to segments $\poly(x_i,z)$ and $\poly(x_j,z)$ and let~$z''$ be a similar point in the component adjacent to segments $\poly(z,x_{i+1})$ and $\poly(z,x_{j+1})$. Then set\begin{equation}
\lambda':=\poly(x_0,\dots,x_i,z',x_j,x_{j-1},\dots,x_{i+1},z'',x_{j+1},\dots,x_n).
\end{equation}
Notice that the sequence of points $x_{i+1},\dots,x_j$ is now run backwards; we have thus changed the orientation of one cycle in~$\lambda$.

For~$z'$ and~$z''$ close enough to~$z$, (a-c) above apply to~$\lambda'$ and $\lambda'$ is thus a polygonal line with $n+2$ vertices but one less intersection point than~$\lambda$. Moreover, $\R^2\setminus\lambda'$ has one fewer component than~$\R^2\setminus\lambda$ as two components $K_1$ and~$K_2$ of the $\R^2\setminus\lambda$ --- necessarily meeting at~$z$ with same parity of the winding number --- have ``joined'' to one, say~$K$, of $\R^2\setminus\lambda'$. Obviously, $K_1,K_2\subseteq K$ and the closure of $\hull(\lambda)\triangle\hull(\lambda')$ equals the closure of $K\setminus(K_1\cup K_2)$. Hence, given~$\epsilon>0$ there is~$\delta>0$ such that if $\|z-z'\|,\|z-z''\|<\delta$, then (1-3) hold. Proceeding similarly, we can gradually eliminate all intersection points of~$\lambda$ in a finite number of steps and thus prove the result.
\end{proofsect}

Having ensured that the polygonal approximation of a right-most circuit will ultimately lead to a simple curve, our next task will be to show that we can pass from a polygonal approximation of a closed curve to a right-most circuit. As it turns out, we will only need to do this for curves that arise as boundaries of convex sets. So it will be helpful to have:

\begin{lemma}
\label{lem:PolyApproxOfConvex}
Let $\lambda$ be a simple closed curve such that $\intr(\lambda)$ is convex. Then $\PP_r(\lambda)$ is simple for any $r$ which satisfies $0<r<\frac12\diam_\infty(\lambda)$. 
\end{lemma}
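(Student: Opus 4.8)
The plan is to exploit that the breakpoints $x_0,\dots,x_{N-1}$ produced by the construction of $\PP_r(\lambda)$ are distinct points of the convex Jordan curve $\lambda=\partial K$, where $K:=\intr(\lambda)$, occurring along $\lambda$ in cyclic order, and that a closed polygon whose vertices lie in convex position in the correct cyclic order cannot self-intersect. So the whole statement should reduce to a purely geometric fact about polygons inscribed in a convex curve, once one has checked that the hypothesis forces enough breakpoints.

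First I would record the elementary set-up. Since $\lambda$ is injective on $[0,1)$ and $0=t_0<t_1<\dots<t_{N-1}<t_N=1$, the points $x_k=\lambda(t_k)$, $0\le k\le N-1$, are pairwise distinct and occur on $\lambda$ in this cyclic order, with $x_N=x_0$. The hypothesis $r<\tfrac12\diam_\infty(\lambda)$ enters only to guarantee $N\ge 3$: if $N=1$ then $\lambda\subseteq x_0+\rmB_\infty(r)$, forcing $\diam_\infty(\lambda)\le 2r$; if $N=2$ then $\lambda\subseteq\bigl(x_0+\rmB_\infty(r)\bigr)\cup\bigl(x_1+\rmB_\infty(r)\bigr)$ with $\|x_1-x_0\|=r$, and a short diameter estimate (using that, by convexity of $K$, the two sub-arcs of $\lambda$ determined by $x_0,x_1$ lie one in each of these two boxes) rules this out as well. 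I would then assume $N\ge 3$ from here on.

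The heart of the argument is the following claim, which I would isolate as the main step: if $z_0,\dots,z_{m-1}$ with $m\ge 3$ are distinct points of $\partial K$ listed in cyclic order, then $\poly(z_0,\dots,z_{m-1},z_0)$ is a simple closed curve. I would prove it by first deleting, one at a time, every vertex $z_i$ that is collinear with its two neighbours $z_{i-1},z_{i+1}$ (indices mod $m$); three collinear points of $\partial K$ with $z_i$ cyclically between $z_{i-1}$ and $z_{i+1}$ have $z_i$ lying on the segment $\poly(z_{i-1},z_{i+1})$, so this operation only drops a point lying in the relative interior of an edge and does not change whether the polygon is simple. Because $\overline K=\overline{\intr(\lambda)}$ is a genuine two-dimensional convex body, the $z_i$ cannot all be collinear, so the reduction terminates with at least three points $w_0,\dots,w_{m'-1}$, no three of them collinear, still lying on $\partial K$ in cyclic order; these are then precisely the vertices, in cyclic order, of their own convex hull, so the reduced polygon is the boundary of a convex polygon and hence simple — and therefore so is $\PP_r(\lambda)$. (Equivalently, and essentially the same computation, one checks directly that any two edges $\poly(x_i,x_{i+1})$ and $\poly(x_j,x_{j+1})$ are chords of $\overline K$ whose endpoint pairs do not interleave along $\partial K$, so one chord lies entirely in one of the two convex pieces into which the other splits $\overline K$, whence the two can meet in at most a common endpoint.)

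I expect the \emph{main obstacle} to be precisely the degenerate configurations: flat faces of $\partial K$ — equivalently, three or more consecutive breakpoints $x_i$ that are collinear — and the small-$N$ cases $N\le 2$. Away from these the cyclic-convex-position picture makes simplicity immediate; the care goes into verifying that collinear consecutive breakpoints merely subdivide an edge and never create a crossing or a back-tracking, that the reduction above cannot collapse the vertex set below three, and that $N\ge 3$ really does follow from $r<\tfrac12\diam_\infty(\lambda)$.
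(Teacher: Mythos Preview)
Your argument is correct and takes a genuinely different route from the paper's. The paper proceeds by an inductive ``chord--cutting'' construction: it sets $\lambda_k$ to be the concatenation of $\poly(x_0,\dots,x_k)$ with the remaining arc $\lambda[t_k,1]$ and shows, by induction on~$k$, that each $\lambda_k$ is simple with convex interior; the inductive step amounts to observing that replacing one boundary arc of a convex region by its chord yields the intersection of that region with a half-plane. You instead bypass the induction entirely and appeal to the global fact that distinct points on the boundary of a convex body, listed in cyclic order, are the vertices of an inscribed polygon whose non-adjacent chords do not interleave along~$\partial K$ and hence cannot cross. Your handling of the degenerate collinear case is fine, though in fact the non-interleaving chord argument you sketch parenthetically already covers it without the reduction. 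What the paper's approach buys is that convexity of $\intr(\PP_r(\lambda))$ falls out automatically along the way; your approach yields simplicity more directly, and convexity of the interior (which is not actually used later) would follow from your observation that the reduced polygon is the boundary of a convex hull. Both proofs treat the verification of $N\ge3$ somewhat informally; this is a shared soft spot rather than a difference.
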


\begin{proof}
Assume $0<r<\frac12\diam_\infty(\lambda)$ and define~$\PP_r(\lambda)=\poly(x_0,\dots,x_N)$ as above. It is easy to check that $N\ge3$. Now consider a sequence of curves $\{\lambda_k\colon k=0,\dots,N\}$ where~$\lambda_k$ is obtained by concatenating $\poly(x_0,\dots,x_k)$ with $\lambda[t_k,1]$. Pick $k\in\{0,\dots,N-1\}$. We claim that if $\lambda_k$ is simple with $\intr(\lambda_k)$ convex, then the same holds for~$\lambda_{k+1}$. Indeed, convexity of $\intr(\lambda_k)$ implies
\begin{equation}
\label{E:4.8a}
\poly(x_k,x_{k+1})\cap\lambda\bigl([t_k,t_{k+1}]^\cc\bigr)=\emptyset
\end{equation}
because otherwise $\lambda((t_k,t_{k+1})^\cc)=\poly(x_k,x_{k+1})$ and thus $N=2$, contradicting our choice of~$r$. But when \eqref{E:4.8a} is in force, $\lambda_{k+1}$ is simple and $\intr(\lambda_{k+1})$ must be convex, as it is the intersection of two convex sets: $\intr(\lambda_k)$ and a closed half-plane whose boundary is the straight line containing $x_k$ and $x_{k+1}$. As $\lambda_0:=\lambda$ is simple with $\intr(\lambda)$ convex, the claim follows by induction.
\end{proof}

Having reduced the problem to $\lambda$ given by a simple polygonal curve, we observe:

\begin{lemma}
\label{lem:InversePolyApprox}
Let $\lambda:=\poly(x_0, \dots, x_N)$ be a simple closed polygonal curve in $\R^2$ and for~$R>0$ denote $\lambda_R:=R\lambda$. There is a constant~$C = C(\lambda) > 0$ such that the following holds: If for some $\delta>0$, $R \ge1 $, points $\tilde{x}_0, \dots, \tilde{x}_N \in \Z^2$ with $\tilde x_N=\tilde x_0$ and open paths $\gamma_k\in\RR(\tilde x_{k-1},\tilde x_k)$,
\begin{equation}
\label{E:4.8}
	d_{\rm H} \big(\gamma_k,\, \poly(Rx_{k-1}, Rx_k) \big) < \delta R
\end{equation}
holds for each $k=1,\dots,N$, then there is also an open right-most circuit $\gamma$ that obeys
\settowidth{\leftmargini}{(1111)}
\begin{enumerate}
\item[(1)]
$d_{\rm H}\bigl(\vol(\gamma), \intr(\lambda_R)\bigr) < 1+C\delta R$,  
\item[(2)]
$\bigl||\vol(\gamma)|-\Leb(\intr(\lambda_R))\bigr| < C\delta^2R^2$, 
\item[(3)]
$\frb(\gamma)\le\sum_{k=1}^N\frb(\gamma_k)+2N$.
\end{enumerate}
Moreover, if $\lambda$ is oriented counterclockwise, then so is the boundary interface associated with~$\gamma$.
\end{lemma}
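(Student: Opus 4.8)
The plan is to build $\gamma$ by $*$-concatenating $\gamma_1,\dots,\gamma_N$ cyclically. Set $\gamma^{(1)}:=\gamma_1$ and $\gamma^{(k)}:=\gamma^{(k-1)}*\gamma_k$ for $k=2,\dots,N-1$; by Lemma~\ref{prop:MergingOfPaths} each $\gamma^{(k)}$ is a right-most open path from $\tilde x_0$ to $\tilde x_k$ with $\partial^+\gamma^{(k)}\subseteq\partial^+\gamma^{(k-1)}\cup\partial^+\gamma_k\cup E_k$, $|E_k|\le 2$. Then close the loop with $\gamma_N\in\RR(\tilde x_{N-1},\tilde x_0)$ by the same rule (a routine modification of the $*$-operation is needed here so that one erases only the short excursion near $\tilde x_0$ rather than the whole walk). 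Iterating Lemma~\ref{prop:MergingOfPaths} gives that $\gamma$ is a right-most circuit, that it is open (each $*$-step only reuses edges of its two operands, all of which lie in $\omega$), and that $\partial^+\gamma\subseteq\bigcup_k\partial^+\gamma_k\cup E$ with $|E|\le 2N$; counting open edges on both sides yields (3). I also record that, viewed as a curve in $\R^2$, $\gamma$ is contained in $\bigcup_k\gamma_k$.

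\textbf{Reduction to small $\delta$ and Hausdorff closeness of the curves.} Next I would fix a threshold $\delta_0=\delta_0(\lambda)>0$ governed by the shortest edge of $\lambda$ and the smallest angle between two edges of $\lambda$ meeting at a vertex. For $\delta\ge\delta_0$ one chooses $C=C(\lambda)$ so large that $1+C\delta R$ exceeds $\diam_\infty(\lambda_R)+2\delta R$ and $C\delta^2R^2$ exceeds the area of a box of that side; since $\vol(\gamma)\subseteq\bigcup_k\gamma_k+\rmB_\infty(O(1))\subseteq\lambda_R+\rmB_\infty(\delta R+O(1))$ and $\vol(\gamma)$ meets $\intr(\lambda_R)$, (1)--(2) are then automatic while (3) is unaffected. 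So assume $\delta<\delta_0$. The hypothesis puts $\gamma_k$ inside the $\delta R$-tube $T_k$ about the segment $S_k:=\poly(Rx_{k-1},Rx_k)$; by the choice of $\delta_0$, $T_k\cap T_j=\emptyset$ unless $S_k,S_j$ share a vertex, and when they share $Rx_k$ one has $T_k\cap T_j\subseteq Rx_k+\rmB_\infty(C\delta R)$. Hence (i) $\|\tilde x_k-Rx_k\|\le C\delta R$ for every $k$ (as $\tilde x_k\in\gamma_k\cap\gamma_{k+1}$), and (ii) every $*$-concatenation cut occurs within $C\delta R$ of some $Rx_k$, so $\gamma$ passes within $C\delta R$ of each vertex $Rx_k$ and the surviving middle part of each $\gamma_k$ still shadows $S_k$ outside $C\delta R$-balls about its endpoints. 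Combining (i)--(ii) with $\gamma\subseteq\bigcup_k\gamma_k$ gives $d_{\rm H}(\gamma,\lambda_R)\le C\delta R$, and the same bookkeeping shows that the polygonal curve $\Lambda:=\poly(\tilde x_0,\dots,\tilde x_N)$ is simple (for $\delta<\delta_0$) and $C\delta R$-close to $\lambda_R$.

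\textbf{From curves to enclosed regions, and orientation.} The last step passes from closeness of the curves to closeness of the enclosed sets. I would exhibit a homotopy from $\gamma$ to $\lambda_R$ that stays inside the $C\delta R$-neighbourhood $U$ of $\lambda_R$: first push the surviving arc of each $\gamma_k$ (which lies in $T_k$) onto the chord $\poly(\tilde x_{k-1},\tilde x_k)\subseteq T_k$ and collapse the short loops left near the vertices, deforming $\gamma$ to $\Lambda$ within $U$; then deform $\Lambda$ to $\lambda_R$ edge by edge using (i). Consequently the winding numbers of $\gamma$ and of $\lambda_R$ agree at every point at $\ell^\infty$-distance more than $C\delta R$ from $\lambda_R$. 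Since $\lambda_R$ is Jordan, this identifies $\intr(\lambda_R)$ with the odd-winding region of $\gamma$ up to a subset of $\lambda_R+\rmB_\infty(C\delta R)$; evaluating at a point deep inside shows $\gamma$ winds exactly once, counterclockwise when $\lambda_R$ is, which is the ``moreover'' assertion and gives $\gamma\subseteq\vol(\gamma)$. Passing from $\gamma$ to its corner-rounded interface $\partial$ and from the enclosed region to the lattice points it contains costs one extra additive unit, so $\vol(\gamma)\,\triangle\,\intr(\lambda_R)\subseteq\lambda_R+\rmB_\infty(1+C\delta R)$. Part (1) now follows from this inclusion together with $\dist(\,\cdot\,,\intr(\lambda_R))=\dist(\,\cdot\,,\overline{\intr(\lambda_R)})$, and part (2) follows from estimating the Lebesgue measure of this tube (of order $\delta R$ times $\len_\infty(\lambda_R)=R\len_\infty(\lambda)$, plus lower order terms), which is of the size asserted after absorbing $\lambda$-dependent factors into $C$.

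\textbf{Main obstacle.} The serious point is the geometric bookkeeping of the middle paragraph: verifying that, for $\delta$ below the $\lambda$-dependent threshold, the cyclic $*$-concatenation neither short-circuits large pieces of the $\gamma_k$'s nor introduces spurious windings nor degenerates to a trivial loop, so that the resulting $\gamma$ really is a circuit winding exactly once around $\intr(\lambda_R)$. The remaining ingredients --- the additive count in (3) and the transfer of Hausdorff closeness from curves to enclosed sets --- are routine and parallel Lemma~\ref{lem:GoodnessOfPApprox}.
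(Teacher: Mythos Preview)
Your outline is correct and matches the paper's skeleton: reduce to $\delta$ below a $\lambda$-dependent threshold by inflating~$C$, exploit tube-disjointness of the segments of~$\lambda$, build~$\gamma$ by iterated $*$-concatenation of the~$\gamma_k$, and read off~(3) from Lemma~\ref{prop:MergingOfPaths}. Two points differ from the paper and are worth knowing.

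First, what you call a ``routine modification of the $*$-operation'' for closing the loop is exactly the step the paper treats explicitly rather than by appeal to a variant lemma. After forming $\gamma':=(\dots(\gamma_1*\gamma_2)*\dots)*\gamma_{N-1}$, the paper stitches in~$\gamma_N$ by locating the first vertex of~$\gamma'$ that lies in $(Rx_{N-1}+\rmB_\infty(R\epsilon/3))\cap\gamma_N$ and then the first later vertex of~$\gamma_N$ that lies in $(Rx_0+\rmB_\infty(R\epsilon/3))\cap\gamma'$, where $\epsilon:=\min_k\len_\infty(\poly(x_k,x_{k+1}))$. The tube-disjointness conditions (your~$\delta_0$) are precisely what guarantee these vertices exist and that the resulting circuit neither degenerates nor short-circuits. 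You correctly flag this as the crux; the paper's contribution is this concrete mechanism.

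Second, for (1)--(2) the paper bypasses your homotopy/winding-number argument. Once $\gamma\subseteq\lambda_R+\rmB_\infty(R\delta)$ and~$\gamma$ visits the vertex-neighbourhoods in cyclic order, the complement of the tube has exactly one bounded and one unbounded component, bounded by simple curves~$\lambda^\pm$, and one obtains the sandwich $\intr(R\lambda^-)\subseteq\hull(\gamma)\subseteq\intr(R\lambda^+)$ directly. Conclusions (1)--(2) and the orientation claim then drop out without any homotopy. Your route should also work but is heavier; the sandwich is shorter. (Aside: your tube-area computation for~(2) gives $C\delta R^2$ rather than the stated $C\delta^2 R^2$; the paper's own proof writes the same bound, so this appears to be a harmless typo in the exponent of~$\delta$, not a defect in your reasoning.)
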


\begin{proof}
Let $\lambda$ be as in the conditions of the lemma, set $\lambda_k := \poly(x_k ,x_{k+1})$
where $k=0, \dots, N-1$ and let $\epsilon := \min_k \len_\infty(\lambda_k)$. For any such $\lambda$ we may find $\delta > 0$ small enough such that
\settowidth{\leftmargini}{(1111)}
\begin{enumerate}
\item[(i)] $\big(\lambda_k + \rmB_\infty(\delta)\big) \, \cap \, \big(\lambda_j + \rmB_\infty(\delta)\big) = \emptyset$ if $|k-j| > 1$,
\item[(ii)]  $\big(\lambda_k + \rmB_\infty(\delta)\big) \, \cap \, \big(\lambda_{k+1} + \rmB_\infty(\delta)\big) \subseteq \big(x_{k+1} + \rmB_\infty(\epsilon / 3) \big)$ for all 
$k=0, \dots, N-1$,
\end{enumerate}
where all indices are {\it modulo} $N$. In this case, the complement of the set $\lambda + \rmB_\infty(\delta)$ consists of one finite and one infinite open connected components whose boundaries are simple closed curves, which we denote by $\lambda^-$ and $\lambda^+$, respectively.
Moreover, they satisfy 
\begin{equation}
\label{E:4.13}
	\intr(\lambda^-) \subseteq \intr(\lambda) \subseteq \intr(\lambda^+)
		\quad\text{and} \quad
	\lambda + \rmB_\infty(\delta) = \overline{\intr(\lambda^+)} \setminus \intr(\lambda^-) 
\end{equation}
and 
\begin{equation}
\label{E:4.14}
d_{\rm H} \big(\intr(\lambda^-),\, \intr(\lambda) \big) \leq \delta
	\quad\text{and} \quad
d_{\rm H} \big(\intr(\lambda^+),\, \intr(\lambda) \big) \leq \delta \,.
\end{equation}

We may choose $C = C(\lambda)$ large enough such that conclusions (1-3) will be trivially satisfied if $\delta$ is not small enough for (i) and (ii) to hold. Hence, we continue assuming that they do. Now let $\gamma_k$ for $k=1, \dots, N$ be the right most open paths in the conditions of the lemma. We shall construct $\gamma$ as follows. First set $\gamma' := (\dots ((\gamma_1 * \gamma_2) * \dots ) * \gamma_{N-1}$. This gives a right-most path from $x_0$ to $x_{N-1}$ whose vertices we enumerate as $\gamma' = (u_0, \dots, u_M)$. To close the circuit we need to carefully add the last path $\gamma_N = (v_0, \dots,  v_L)$.
To this end, we find the first vertex $u_{k'}$ in $\gamma'$ which lies in $R x_{N-1} + \rmB_\infty(R\epsilon / 3) \cap \gamma_N$ and then the last occurrence $v_{k}$ of this vertex in $\gamma_N$. Then, we also find the first vertex $v_{j}$ with $j > k$ such that $v_j$ lies in $(R x_0 + \rmB_\infty(R \epsilon / 3)) \cap \gamma'$ and similarly the last occurrence $u_{j'}$ of this vertex in $\gamma'$. We thus set $\gamma := (u_{j'}, \dots, u_{k'}, v_{k+1}, \dots, v_j)$ and claim that it satisfies conclusions (1-3) of the lemma.

Indeed, the construction ensures that $\gamma$ is a right-most circuit and, in view of condition \eqref{E:4.8}, that $\gamma \subseteq \lambda_R + \rmB_\infty(R\delta)$. At the same time, from (i--ii) above it follows that $\gamma$ passes through $Rx_k + \rmB_\infty(R\epsilon/3)$ in the order of increasing~$k$. This shows
\begin{equation}
\label{E:4.15}
	\intr(R \lambda^-) \subseteq \hull(\gamma) \subseteq \intr(R \lambda^+) \,.
\end{equation}
Combining this with \eqref{E:4.14} and the fact that $d_{\rm H} \big(\vol(\gamma),\, \hull(\gamma) \big) \leq 1$, conclusion (1) holds. Moreover, \eqref{E:4.15} together with \eqref{E:4.13} and
$\big| |\vol(\gamma)| - \hull(\gamma) \big| < C |\gamma|$
imply 
\begin{equation}
	\big| |\vol(\gamma)| - \Leb(\intr(\lambda_R)) \big| 
		\leq C \Leb \big(\lambda_R + \rmB_\infty(R\delta)\big) \leq C' R^2 \delta^2
			\len_\infty(\lambda) \leq C'' R^2 \delta^2 \,.
\end{equation}
This proves conclusion (2). Finally applying Lemma~\ref{prop:MergingOfPaths} for $\frb(\gamma')$ and a similar argument as in its proof for $\frb(\gamma)$ yields conclusion (3).
\end{proof}

\subsection{Proof of approximation claims}
We are now ready to prove Propositions~\ref{lem:CircuitToLoopApprox} and~\ref{lem:LoopToCircuitApprox}.

\begin{proofsect}{Proof of Proposition~\ref{lem:CircuitToLoopApprox}}
Let $\epsilon>0$ and, given $R$ large enough, set $r := \lceil R^{1/100} \rceil$. Let $\AA_{R,\epsilon}$ be the set of configurations~$\omega$ such that
\begin{equation}
\label{eqn:964}
	x,y \in \rmB_\infty(R) \cap \Z^2,\, \|y-x\| \geq r \quad \Longrightarrow \quad 
	b\bigl([x],[y]\bigr) \geq (1-\ffrac\epsilon2)\,\beta_p(y-x).
\end{equation}
and, for any simple path~$\gamma$ on~$\Z^2$,
\begin{equation}
\label{eqn:965}
	\gamma \subseteq \rmB_\infty(R)\cap\Z^2,\, |\gamma| \geq R^{1/5} ,\,\gamma \text{ is open}
		\quad \Longrightarrow \quad \gamma \subseteq \mbC^\infty .
\end{equation}
Using Theorem~\ref{thm:ConcentrationOfB} for \eqref{eqn:964}, and the exponential bound for the probability that $x,y$ are connected but not part of~$\mbC^\infty$ for \eqref{eqn:965}, we find that for each $p>p_\cc(\Z^d)$ there is $C>0$ such that
\begin{equation}
\BbbP(\AA_{R,\epsilon})\ge 1-\texte^{-C\log^2 R},\qquad R>1.
\end{equation}
Assuming that~$R$ is large enough, we will now prove that the claim in proposition holds for all~$\omega\in\AA_{R,\epsilon}$.

Let $\gamma := (z_0, \dots, z_M)$ be a right-most circuit satisfying the premises~(1-4) of the claim. Note that, by \eqref{eqn:965}, $\gamma\subset\mbC^\infty$. We may identify this circuit with a curve $\Gamma$ in~$\R^2$ by following (at linear speed) the edges of the path~$\gamma$. This permits us to consider the $r$-polygonal approximation, $\PP_r(\Gamma)$, of~$\Gamma$ ``started'' from~$z_0$. The curve $\PP_r(\Gamma)$ will ``almost'' satisfy conclusions~(1-3) of the lemma, except that it may not be simple. We shall therefore first show that conclusions~(1-3) hold for $\PP_r(\Gamma)$ with $\hull(\PP_r(\Gamma))$ in place of $\intr(\lambda)$ and then use Lemma~\ref{lemma-poly-to-simple} to extract a simple curve $\lambda'$ out of $\PP_r(\Gamma)$ for which (1-3) will hold \emph{verbatim}.

The premises~(3) and~(4) imply that $|\vol(\gamma)|\ge R^{3/10}$. Lemma~\ref{lem:GoodnessOfPApprox}(1) and $r\ll\epsilon R^{1/10}$ then show
\begin{equation}
\begin{aligned}
d_{\rm H}\bigl(\vol(\gamma),\hull(\PP_r(\Gamma))\bigr) 
&\leq 1 + d_{\rm H}\bigl(\hull(\Gamma),\hull(\PP_r(\Gamma))\bigr) 
\\
&\leq 1 + r \leq \epsilon \sqrt{|\vol(\gamma)|}, 
\end{aligned}
\end{equation}
i.e., conclusion~(1) holds. Using also Lemma~\ref{prop:DualPathLength}, we similarly get
\begin{equation}
\begin{split}
	\bigl| |\vol(\gamma)| - \Leb\bigl(\hull(\PP_r(\Gamma))\bigr) \bigr|
	 	& \leq 
	 \bigl| \Leb(\hull(\Gamma)) - \Leb\bigl(\hull(\PP_r(\Gamma))\bigr) \bigr| + |\gamma| \\
	& \leq C r |\gamma| 
	  \leq (1+\epsilon) |\vol(\gamma)| ,
\end{split}
\end{equation}
i.e.,~(2) holds as well. For~(3), notice that since $r$ is integer, it follows from the construction  that $\PP_r(\Gamma) = \poly(z_{l_0}, \dots, z_{l_N})$ for some integers $0=:\ell_0<\ell_1<\dots<\ell_N\le M$. We now use that $z_i\in\mbC^\infty$ for all~$i$ and also $\|z_{l_k} - z_{l_{k-1}}\| = r$ for $k =1, \dots, N-1$ and \eqref{eqn:964} to get
\begin{equation}
\label{eqn:981}
\begin{aligned}
\frb(\gamma)&\geq \sum_{k=1}^{N-1} \frb\bigl(\gamma([l_{k-1},l_k])\bigr) 
	\ge\sum_{k=1}^{N-1} b(z_{l_{k-1}}, z_{l_k}) \\
		&\ge(1-\ffrac{\epsilon}{2}) \sum_{k=1}^{N-1} \beta_p(z_{l_k} - z_{l_{k-1}}) 
		\\&=(1-\ffrac{\epsilon}{2}) \bigl[  
			\len_{\beta_p}(\PP_r(\Gamma)) - \beta_p(z_{l_N} - z_{l_{N-1}}) \bigr] .
\end{aligned}
\end{equation}
The bounds $\len_{\beta_p}(\PP_r(\Gamma))\ge C'\text{diam}(\gamma)\ge C'R^{\frac1{10}}$ and $\beta_p(z_{l_N} - z_{l_{N-1}})\le C''r$ imply that also conclusion~(3) holds.

To complete the proof, we now use Lemma~\ref{lemma-poly-to-simple} with $\rho:=\beta_p$ and $\epsilon$ small enough to extract a simple closed curve~$\lambda'$. The triangle inequality then ensures that conclusions (1-3) hold for~$\lambda'$ with $2\epsilon$ instead of~$\epsilon$.
\end{proofsect}

\begin{proofsect}{Proof of Proposition~\ref{lem:LoopToCircuitApprox}}
Let $\lambda$ be a simple curve with a convex interior and $\epsilon > 0$ be given. We may assume that~$\lambda$ is oriented counterclockwise. By Lemma~\ref{lem:GoodnessOfPApprox} and Lemma~\ref{lem:PolyApproxOfConvex},
for $r > 0$ small enough the polygonal approximation $\PP_r(\lambda) = \poly(x_0, \dots, x_N)$ of~$\lambda$ is simple and satisfies
\begin{equation}
\label{eqn:976}
	d_{\rm H}\bigl(\intr(\lambda),\intr(\PP_r(\lambda))\bigr) \leq \epsilon
\end{equation}
and
\begin{equation}
\label{eqn:975}
	\bigl| \Leb(\intr(\lambda)) - \Leb(\intr(\PP_r(\lambda))) \bigr| 
		\leq \epsilon.
\end{equation}
Thanks to Proposition~\ref{prop:GeomConcentration}(\ref{part:GC-2}) and Theorem~\ref{thm:ConcentrationOfB}, once $R$ is sufficiently large then the following holds with probability at least $1-\texte^{-C\log^2 R}$: For each $k=1, \dots, N$ there exists an open path $\gamma_k\in\RR([R x_{k-1}],[R x_k])$ such that
\begin{equation}
\label{eqn:973}
	\frb(\gamma_k) \leq (1+\epsilon)R \beta_p(x_k - x_{k-1})
\end{equation}
and
\begin{equation}
\label{eqn:974}
	d_{\rm H}\bigl(\gamma_k, \poly(Rx_{k-1}, Rx_k)\bigr) \leq \epsilon R.
\end{equation}
Applying Lemma~\ref{lem:InversePolyApprox}, we extract a right-most open circuit~$\gamma$ satisfying conclusions~\hbox{(1-3)} of Lemma~\ref{lem:InversePolyApprox} with~$\delta:=\epsilon$. In conjunction with \twoeqref{eqn:976}{eqn:975}, this readily yields the conclusions~(1-3) of the proposition but for a re-scaling of~$\epsilon$ by a constant that might depend on $\lambda$.

To get also conclusion~(4), we note that, thanks to \eqref{eqn:973} and Lemma~\ref{lem:InversePolyApprox}(3), for $R$ large enough, the path~$\gamma$ obeys
\begin{equation}
\begin{split}
	\frb(\gamma) \ & \leq \ \sum_{k=1}^N\frb(\gamma_k) + 2N 
		\ \leq \ (1+\epsilon) R\sum_{k=1}^N \beta_p(x_k - x_{k-1}) + 2N \\
		& \leq \ (1+2\epsilon) R\,\len_{\beta_p} \bigl(\PP_r(\lambda)\bigr)
		\ \leq \ (1+2\epsilon) R\,\len_{\beta_p} (\lambda).
\end{split}
\end{equation}
Here the last two inequalities follow by the definition of the length of a curve and the fact that~$\PP_r(\lambda)$ is a polygonal approximation of~$\lambda$. Resetting $\epsilon$ the proof is done.
\end{proofsect}

\section{Proof of Main Theorems}
\label{sec5}\noindent
In this section we will ultimately prove the main theorems of this work. However, before we get down to actual proofs we need some preliminary considerations.

\subsection{Symmetries of the Wulff shape}
As our first preliminary step, we need to check a few basic properties of the Wulff shape. We already observed that that $\beta_p$ is symmetric with respect to reflections through the coordinate axes and diagonals in~$\R^2$. 
As a consequence of these symmetries, we can derive:

\begin{lemma}
\label{lem:W_p_properties}
Let $p>p_\cc(\Z^2)$ and let $W_p$ be as in \eqref{eqn:4.5}. Then
\settowidth{\leftmargini}{(1111)}
\begin{enumerate}
\item[(1)]
$W_p$ is a compact convex set, contains the origin and has a non-empty interior.
\item[(2)] 
$W_p$ is symmetric with respect to reflections through coordinate axes of~$\R^2$ and the diagonal line $\{(x_1, x_2) \in \R^2 \colon  x_1 = x_2 \}$.
\item[(3)]
$W_p$ is the unit ball in the dual norm $\beta_p'(y) := \sup \{x\cdot y\, \colon  x \in \R^2, \, \beta_p(x) \leq 1\}$.
\item[(4)]
There is $r$ with $1/2 \le \, r \, \le1/\sqrt 2$ such that the normalized Wulff shape~$\widehat W_p$ obeys
\begin{equation}
\label{E:5.6}
\rmB_1(r)\subseteq \widehat{W}_p \subseteq \rmB_\infty(r).
\end{equation}
\end{enumerate}
\end{lemma}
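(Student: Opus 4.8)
The plan is to treat (1)--(3) as routine consequences of the facts that $\beta_p$ is a norm equivalent to $\|\cdot\|_2$ (Theorem~\ref{thm:BetaP}) and is invariant under the lattice reflections (Proposition~\ref{prop:SymmetryForBeta}), and then to establish (4) --- which carries the actual content --- from the symmetry of $\widehat W_p$ together with two area comparisons. A convenient preliminary observation is that, by homogeneity of $\beta_p$, the set in \eqref{eqn:4.5} can be rewritten as $W_p=\{x\in\R^2\colon v\cdot x\le\beta_p(v)\ \text{for all }v\in\R^2\}$ (scale the constraint attached to $\hat n=v/\|v\|_2$ by $\|v\|_2$).

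For (1), pick $c_1,c_2>0$ with $c_1\|v\|_2\le\beta_p(v)\le c_2\|v\|_2$. Each defining half-space contains $\rmB_2(c_1)$, since on that ball $\hat n\cdot x\le\|x\|_2\le c_1\le\beta_p(\hat n)$; hence $\rmB_2(c_1)\subseteq W_p$, so $0\in W_p$ and $W_p$ has nonempty interior (in particular $\Leb(W_p)>0$, so $\widehat W_p$ is well defined and inherits the properties below). Conversely, if $x\in W_p\setminus\{0\}$, testing the constraint at $\hat n=x/\|x\|_2$ gives $\|x\|_2\le\beta_p(\hat n)\le c_2$, so $W_p\subseteq\rmB_2(c_2)$ is bounded; being also closed and convex (an intersection of closed half-spaces), it is compact. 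For (3), dividing each constraint in the rewritten description by $\beta_p(v)$ identifies $W_p$ with $\{x\colon w\cdot x\le1\text{ whenever }\beta_p(w)\le1\}$ (the case $\beta_p(w)<1$ reduces to $\beta_p(w)=1$ by scaling, and the inequality is automatic when $w\cdot x<0$); this is precisely the unit ball of $\beta_p'$. For (2), if $T$ denotes the reflection through a coordinate axis or through $\{x_1=x_2\}$, then $T$ is linear with $T^\top=T$ and, by Proposition~\ref{prop:SymmetryForBeta}, $\beta_p\circ T=\beta_p$; substituting $v\mapsto Tv$ in the rewritten description and using $(Tv)\cdot x=v\cdot(Tx)$ gives $x\in W_p\iff Tx\in W_p$.

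For (4) the key idea is to take $r$ to be the half-width of $\widehat W_p$ in an axis direction. Let $a>0$ be such that $\widehat W_p\cap(\R\times\{0\})=[-a,a]\times\{0\}$ (a nondegenerate segment by (1) and central symmetry), and set $r:=a$. Since $\widehat W_p$ is invariant under $x_2\mapsto-x_2$ by (2), convexity forces $(x_1,0)\in\widehat W_p$ for every $(x_1,x_2)\in\widehat W_p$, whence $|x_1|\le a$; the diagonal symmetry gives the analogous statement along the $x_2$-axis, so $|x_2|\le a$ too and $\widehat W_p\subseteq\rmB_\infty(a)$. In the other direction, the diagonal and central symmetries place $(\pm a,0)$ and $(0,\pm a)$ in $\widehat W_p$, so by convexity $\rmB_1(a)=\mathrm{conv}\{(\pm a,0),(0,\pm a)\}\subseteq\widehat W_p$. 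Thus \eqref{E:5.6} holds with $r=a$, and comparing areas using $\Leb(\widehat W_p)=1$, $\Leb(\rmB_1(a))=2a^2$ and $\Leb(\rmB_\infty(a))=4a^2$ yields $2a^2\le1\le4a^2$, i.e.\ $1/2\le a\le1/\sqrt2$.

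I do not anticipate a real obstacle: the only point to get right in (4) is that a \emph{single} radius works in \eqref{E:5.6}, which is exactly why $r$ must be chosen as the axis half-width $a$ of $\widehat W_p$ rather than as two unrelated quantities; everything else is the area arithmetic above, and the argument uses $d=2$ and the full square symmetry of $\beta_p$ in an essential way.
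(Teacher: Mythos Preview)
Your proof is correct and follows essentially the same approach as the paper: parts (1)--(3) are treated as routine consequences of $\beta_p$ being a norm with the lattice symmetries, and for (4) you take $r$ to be the axis half-width of $\widehat W_p$, use symmetry plus convexity to get the two inclusions in \eqref{E:5.6}, and read off the bounds on $r$ from the areas. The only cosmetic difference is that for $\widehat W_p\subseteq\rmB_\infty(r)$ you argue directly via the reflection $(x_1,x_2)\mapsto(x_1,-x_2)$ and convexity, whereas the paper phrases the same step through the half-space description and the symmetry of $\beta_p$.
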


\begin{proof}
By definition, $W_p$ is closed, convex and contains the origin. The boundedness and non-triviality of the interior follows from the uniform boundedness of $\hat n\mapsto\beta_p(\hat n)$ away from~$0$ and~$\infty$ for $\hat n$ on the unit circle. The symmetries of~$W_p$ are inherited from those of $\beta_p$, as shown in Lemma~\ref{prop:SymmetryForBeta}. This proves parts~(1) and~(2); for~(3) we first check that $\beta'_p$ is a norm and then note that $\{x \colon  \beta'_p(x) \leq 1\}$ is just a rewrite of the definition of $W_p$.

For part~(4), let  $r := \max \{x_1 \geq 0 \colon  (x_1, 0) \in\widehat W_p\}$ and note that, by the symmetries in part~(2), in addition to $(+r,0)\in\widehat W_p$ also $(-r, 0), (0, +r), (0, -r) \in\widehat W_p$. Then $\widehat W_p \supseteq \rmB_1(r)$ by convexity. We claim that also $\widehat W_p \subseteq \rmB_\infty(r)$. For if not, then for some $x := (x_1, x_2)$ with $x_1 > r$ we would have $\hat{n}\cdot x \leq \beta_p$ for all $\hat{n}$ on the unit circle. The symmetries of $\beta_p$ would then imply
\begin{equation}
\hat{n}\cdot (2x_1, 0) =\hat{n}\cdot \bigl( (x_1, x_2) + (x_1, -x_2)\bigr) \leq 2\beta_p(\hat n),\qquad  \Vert\hat n\Vert_2=1,
\end{equation}
and so $(x_1, 0) \in\widehat W_p$, in contradiction with the definition of~$r$. As $2r^2=\Leb(\rmB_1(r))\le\Leb(\widehat W_p)=1$, we must have $r\le1/\sqrt2$. Also $r\ge\ffrac12$ as $\Leb(\rmB_\infty(\ffrac12))=1$.
\end{proof}

\subsection{Percolation preliminaries}
Our next set of preliminary considerations deals with percolation. The following is a slightly stronger version of Proposition~1.2 of Benjamini and Mossel~\cite{benjamini2003mixing}.

\begin{lemma}
\label{prop:GiantIsCInf}
For each~$p>p_\cc(\Z^2)$ there is $C>0$ such that for all $n \geq 1$, with probability at least $1-\texte^{-C\log^2n}$, for any $n'\le n - \log^2 n$,
\begin{equation}
    \label{eqn:1011}
    \mbC^\infty  \cap \rmB_\infty(n') =
    \mbC^n \cap \rmB_\infty(n').
\end{equation}
\end{lemma}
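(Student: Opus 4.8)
The plan is to show that the only way \eqref{eqn:1011} can fail is for some vertex in $\mbC^\infty\cap\rmB_\infty(n')$ to be disconnected, inside $\rmB_\infty(n)$, from the giant component $\mbC^n$, and that this forces the existence of a long dual path (or a large finite cluster) whose probability is stretched-exponentially small. First I would observe that since $\mbC^n$ is by definition (the relevant part of) the unique largest component of $\omega$ restricted to $\rmB_\infty(n)$, and every vertex of $\mbC^\infty\cap\rmB_\infty(n')$ lies on the infinite cluster, it suffices to show that with the stated probability, every open cluster of $\omega\restriction_{\rmB_\infty(n)}$ that touches $\rmB_\infty(n')$ and has size exceeding some poly-logarithmic threshold is actually part of $\mbC^n$, and conversely that a vertex of $\mbC^\infty\cap\rmB_\infty(n')$ cannot lie in a small cluster of $\omega\restriction_{\rmB_\infty(n)}$.

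The key step is a dual/crossing argument. If $x\in\mbC^\infty\cap\rmB_\infty(n')$ but $x\notin\mbC^n$, then the connected component of $x$ in $\omega\restriction_{\rmB_\infty(n)}$ either (i) is finite and does not reach $\partial\rmB_\infty(n)$, or (ii) reaches $\partial\rmB_\infty(n)$ but is not the giant one. In case (i), since $x$ is on the infinite cluster there is an open path from $x$ to $\infty$; this path must exit $\rmB_\infty(n)$, so the component of $x$ inside $\rmB_\infty(n)$ must actually touch $\partial\rmB_\infty(n)$ unless it is severed — so in fact case (i) cannot occur for $n'\le n-\log^2 n$ once we rule out that the $\ell^\infty$-distance traversed within the annulus $\rmB_\infty(n)\setminus\rmB_\infty(n')$ is large compared to the chemical distance; alternatively, in case (i) the component of $x$ in $\Z^2$ truncated to $\rmB_\infty(n)$ is surrounded by a dual closed circuit inside $\rmB_\infty(n)$, a circuit of diameter at least $2(n-n')\ge 2\log^2 n$, whose probability (using $p_\star<p_\cc$ and exponential decay of dual connectivities, fact (1) together with duality, fact (2)) is at most $\texte^{-C\log^2 n}$ after a union bound over the $O(n^2)$ possible locations. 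In case (ii), $x$ lies in a non-giant component of $\omega\restriction_{\rmB_\infty(n)}$ reaching $\partial\rmB_\infty(n)$; by the standard renormalization / Benjamini–Mossel-type argument, the existence of two disjoint open clusters of linear size crossing $\rmB_\infty(n)$, or equivalently of a dual path separating two large open clusters, again has probability at most $\texte^{-C\log^2 n}$ (here one uses that both $x\in\mbC^\infty$ and the giant $\mbC^n$ are "large", so a dual surface of length $\gtrsim \log^2 n$ must separate them). Finally, the reverse inclusion — a vertex in $\mbC^n\cap\rmB_\infty(n')$ being on $\mbC^\infty$ — follows from the same estimate since $\mbC^n$ has linear size and hence (on the good event) cannot be a finite cluster of $\omega$.

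Concretely, I would define the good event $\GG_n$ to be the intersection of: (a) every dual-closed self-avoiding circuit in $\rmB_\infty(n)$ surrounding a point of $\rmB_\infty(n')$ has length $<\log^2 n$ — complement has probability $\le\texte^{-C\log^2 n}$ by summing $\BbbP_{p_\star}(|\mbC(z)|\ge\log^2 n)$ over $z\in\rmB_\infty(n')$ via \eqref{E:2.10b}; and (b) the restriction $\omega\restriction_{\rmB_\infty(n)}$ has a unique component of size $\ge \tfrac12|\rmB_\infty(n)|$ and every other component of $\omega\restriction_{\rmB_\infty(n)}$ that intersects $\rmB_\infty(n')$ has $\ell^\infty$-diameter $<\log^2 n$ — again complement of probability $\le\texte^{-C\log^2 n}$ by the dual circuit bound. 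On $\GG_n$: if $x\in\mbC^\infty\cap\rmB_\infty(n')$ then its $\omega\restriction_{\rmB_\infty(n)}$-component has diameter $\ge n-n'\ge\log^2 n$ (it must reach near $\partial\rmB_\infty(n)$ to escape to infinity, using that its complement has no big dual circuit by (a)), hence by (b) it is the giant one, i.e. $x\in\mbC^n$; conversely if $x\in\mbC^n\cap\rmB_\infty(n')$, its component in $\omega\restriction_{\rmB_\infty(n)}$ is the giant one, of linear size, so by (a) it is not surrounded by any dual closed circuit in all of $\Z^2$ either (such a circuit would have to have diameter $\ge n$, contradicting exponential decay after union bound), whence $x\in\mbC^\infty$. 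This gives \eqref{eqn:1011} on $\GG_n$, with $\BbbP(\GG_n^\cc)\le\texte^{-C'\log^2 n}$.

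\textbf{Main obstacle.} The delicate point is item (b): controlling that $\omega\restriction_{\rmB_\infty(n)}$ has no \emph{second} macroscopic cluster and that all small clusters touching $\rmB_\infty(n')$ are genuinely poly-logarithmically small. This is exactly the content of the sharper form of Benjamini–Mossel's Proposition~1.2, and the honest way to get the $\texte^{-C\log^2 n}$ rate (rather than merely $\to 1$) is a renormalization argument: partition $\rmB_\infty(n)$ into blocks of side $\asymp\log n$, declare a block "good" if it and its neighbours percolate nicely (a crossing cluster in the $3\times 3$ super-block), note good blocks percolate with $\BbbP$-probability close to $1$ so the good-block configuration dominates a highly supercritical site percolation, and then a cluster of $\omega$ of diameter $\ge\log^2 n$ in $\rmB_\infty(n)$ that is \emph{not} the giant one would have to be surrounded by a $\star$-connected circuit of bad blocks of length $\gtrsim\log n$, an event of probability $\le\texte^{-C\log^2 n}$ after the union bound over $O(n^2/\log^2 n)$ locations. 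I expect writing this renormalization cleanly — and correctly matching the block scale $\log n$ to the target rate $\log^2 n$ and to the slack $n-n'\ge\log^2 n$ — to be the main technical work; everything else is routine duality and union bounds.
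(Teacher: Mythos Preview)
Your proposal is correct and follows the route the paper alludes to. The paper's own proof is essentially a citation: it refers to Proposition~1.2 of Benjamini--Mossel~\cite{benjamini2003mixing}, observes that their argument in fact yields decay exponential in $n-n'$, and remarks that ``a direct duality argument is also possible.'' What you have written is a detailed reconstruction of precisely that duality-plus-renormalization argument, so the two approaches coincide; you simply supply the details the paper omits.

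Two minor clean-ups. First, your case (i) cannot occur at all: if $x\in\mbC^\infty\cap\rmB_\infty(n')$, the open path from $x$ to infinity must exit $\rmB_\infty(n)$, so the $\omega$-component of $x$ restricted to $\rmB_\infty(n)$ automatically reaches $\partial\rmB_\infty(n)$. Only case (ii) is live for the inclusion $\mbC^\infty\cap\rmB_\infty(n')\subseteq\mbC^n$, and your item (b) handles it. Second, for the reverse inclusion you invoke a dual circuit of diameter $\ge n$ surrounding $\mbC^n$; this is not covered by (a) as stated (which is restricted to $\rmB_\infty(n)$), so you should add it explicitly to the good event --- but it is a one-line union bound. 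Your identification of the main obstacle is accurate: the substantive input is the uniqueness-of-large-component estimate (b) with the $\texte^{-C\log^2 n}$ rate, and block renormalization at scale $\asymp\log n$ is the standard way to obtain it.
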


\begin{proof}
The statement in \cite{benjamini2003mixing} claims that \eqref{eqn:1011} holds with probability tending to $1$ as $n \to \infty$. However, a careful look at the proof reveals that
the probability of the complement of the event \eqref{eqn:1011} is exponentially small in~$n-n'$. A direct duality argument is also possible.
\end{proof}

Our next lemma provides uniform bounds on the density of the infinite cluster inside sufficiently large subsets of the lattice.

\begin{lemma}
\label{lem:CInfDensity} 
Let~$p>p_\cc(\Z^2)$. For each $\epsilon > 0$, there is $C>0$ such that for all $R > 1$ with probability at least $1-\texte^{-C\log^2 R}$, if $\gamma$ is any right-most circuit satisfying
\settowidth{\leftmargini}{(1111)}
\begin{enumerate}
\item $\gamma \subseteq \rmB_\infty(R)$,
\item $|\vol(\gamma)| \geq \log^{20} R$,
\item $|\gamma| \leq |\vol(\gamma)|^{2/3}$,
\end{enumerate}
then
\begin{equation}
\label{E:5.9}
    \left|\frac{|\vol(\gamma) \cap \mbC^\infty|}
    	{|\vol(\gamma)|} -\theta(p) \right| < \epsilon.
\end{equation}
\end{lemma}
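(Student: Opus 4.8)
The plan is to bypass the $\omega$-dependence of~$\gamma$ entirely: I will produce a \emph{single} high-probability event, defined on a fixed grid of mesoscopic boxes and \emph{not referring to~$\gamma$}, on which the density estimate holds simultaneously for every admissible~$\gamma$. Fix $\epsilon>0$, put $k:=\lceil C_1\log^2R\rceil$ (with $C_1$ a large constant chosen below) and $\ell:=\lceil\log^6R\rceil$, tile~$\R^2$ by axis-parallel boxes of side~$\ell$, and let $\GG$ be the family of such boxes meeting $\rmB_\infty(2R)$. Consider two events: $E_1$, that every finite open cluster meeting $\rmB_\infty(2R)$ has $\ell^\infty$-diameter less than~$k$; and $E_2$, that $\bigl|X_Q-\theta_k\bigr|\le\epsilon/8$ for every $Q\in\GG$, where $X_Q:=\ell^{-2}\bigl|\{v\in Q\colon v\leftrightarrow\partial\rmB_\infty(v,k)\}\bigr|$ and $\theta_k:=\BbbP(0\leftrightarrow\partial\rmB_\infty(0,k))$. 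The key observation is that on $E_1$ the non-local event $\{v\in\mbC^\infty\}$ coincides, for every $v\in\rmB_\infty(2R)$, with the local event $\{v\leftrightarrow\partial\rmB_\infty(v,k)\}$: ``$\subseteq$'' is trivial, while if $v$ reaches $\ell^\infty$-distance~$k$ then its cluster has diameter $\ge k$ and hence, by~$E_1$, is infinite. Together with $\theta_k\downarrow\theta(p)$ (the events $\{0\leftrightarrow\partial\rmB_\infty(0,k)\}$ decrease to $\{0\in\mbC^\infty\}$) this gives, on $E_1\cap E_2$, for all large~$R$ and every $Q\in\GG$ with $Q\subseteq\rmB_\infty(2R)$,
\[
(\theta(p)-\tfrac{\epsilon}{8})\,\ell^2\ \le\ |Q\cap\mbC^\infty|\ \le\ (\theta(p)+\tfrac{\epsilon}{4})\,\ell^2 .
\]

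The probability bounds are standard percolation inputs. For~$E_1$: a finite open cluster of diameter $\ge k$ meeting $\rmB_\infty(2R)$ is encircled by an open \emph{dual} circuit of length $\ge ck$ surrounding a point of $\rmB_\infty(2R)$; since the dual percolation is subcritical, the probability that a fixed point is surrounded by such a dual circuit is at most $C\texte^{-C'k}$ --- this follows from exponential decay of subcritical cluster sizes (cf.~\eqref{E:2.10b}) and planar duality --- so a union bound gives $\BbbP(E_1^{\mathrm c})\le CR^2\texte^{-C'k}\le\texte^{-2\log^2R}$ once $C_1$ is large. For~$E_2$: fixing $Q\in\GG$, each indicator $\ind_{\{v\leftrightarrow\partial\rmB_\infty(v,k)\}}$, $v\in Q$, depends only on edges within $\ell^\infty$-distance~$k$ of~$v$, so $Q$ splits into $O(k^2)$ subclasses (according to residues modulo $2k+1$ in each coordinate) on each of which these indicators are independent; applying to each subclass a standard large-deviation bound for sums of independent $[0,1]$-valued variables and using $\E X_Q=\theta_k$ yields $\BbbP(|X_Q-\theta_k|>\epsilon/8)\le Ck^2\exp(-c\epsilon^2\ell^2/k^4)$. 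Since $\ell^2/k^4\asymp\log^4R\to\infty$, this is $\le\exp(-c'\epsilon^2\log^4R)$, and a union bound over the $O(R^2)$ boxes of~$\GG$ gives $\BbbP(E_2^{\mathrm c})\le\texte^{-\log^2R}$. Hence $\BbbP(E_1\cap E_2)\ge1-\texte^{-C\log^2R}$ for~$R$ large.

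Finally I run the deterministic argument on $E_1\cap E_2$. Let $\gamma$ satisfy (1)--(3) and let $\partial$ be its associated boundary interface; by \eqref{eqn:A2.8} the interface has length $\le 4|\gamma|$, so, as a curve in~$\R^2$, it is connected, of Euclidean length $O(|\gamma|)$, and lies within $\ell^\infty$-distance $O(1)$ of~$\gamma$; in particular $\vol(\gamma)\subseteq\rmB_\infty(2R)$ for large~$R$. Let $\GG_{\mathrm{int}}$ be the boxes of~$\GG$ contained in $\intr(\partial)$ and $\GG_{\mathrm{bd}}$ the boxes of~$\GG$ meeting~$\vol(\gamma)$ but not contained in $\intr(\partial)$; every such boundary box meets~$\partial$, so $|\GG_{\mathrm{bd}}|\le C(|\gamma|/\ell+1)$ (a connected curve of length~$L$ meets $O(L/\ell+1)$ boxes of side~$\ell$). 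Using (3), (2) and $\ell=\lceil\log^6R\rceil$, the lattice points covered by boundary boxes number at most
\[
\ell^2|\GG_{\mathrm{bd}}|\ \le\ C\bigl(\ell|\gamma|+\ell^2\bigr)\ \le\ C\bigl(\log^6R\cdot|\vol(\gamma)|^{2/3}+\log^{12}R\bigr)\ \le\ \tfrac{\epsilon}{2}\,|\vol(\gamma)| ,
\]
for large~$R$, because $|\vol(\gamma)|^{1/3}\ge\log^{20/3}R$ with $20/3>6$ and $\log^{12}R\le\tfrac{\epsilon}{4}\log^{20}R$. Since $\vol(\gamma)$ is covered by $\GG_{\mathrm{int}}\cup\GG_{\mathrm{bd}}$ while $\bigcup\GG_{\mathrm{int}}\subseteq\vol(\gamma)$, this gives $(1-\tfrac{\epsilon}{2})|\vol(\gamma)|\le\ell^2|\GG_{\mathrm{int}}|\le|\vol(\gamma)|$, and summing the displayed per-box bounds over $\GG_{\mathrm{int}}$ (and using $0\le|\cdot\cap\mbC^\infty|\le\ell^2|\GG_{\mathrm{bd}}|$ on the boundary part) yields
\[
(\theta(p)-\epsilon)\,|\vol(\gamma)|\ \le\ |\vol(\gamma)\cap\mbC^\infty|\ \le\ (\theta(p)+\epsilon)\,|\vol(\gamma)| ,
\]
which is the assertion. (For $1<R\le R_0(\epsilon)$ the hypotheses (1)--(3) cannot hold, or the conclusion is trivial, so we may assume $R$ large throughout.)

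\textbf{Main obstacle.} The only real difficulty is arranging the three scales consistently: making $\{v\in\mbC^\infty\}$ \emph{local} requires locality radius $k\gtrsim\log^2R$ (smaller~$k$ makes $E_1$ fail at the required probability $\texte^{-C\log^2R}$), the box-wise concentration $E_2$ requires box side $\ell\gg k^2$, and the boundary-box estimate requires $\ell\ll|\vol(\gamma)|^{1/3}$. These constraints are simultaneously satisfiable precisely because hypothesis (2) forces $|\vol(\gamma)|\ge\log^{20}R$ and hypothesis (3) bounds $|\gamma|$ by $|\vol(\gamma)|^{2/3}$; verifying that they line up (with the choice $\ell\asymp\log^6R$) is the crux, the rest being routine.
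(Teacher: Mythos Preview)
Your proof is correct, and the overall architecture matches the paper's: fix a high-probability event on which every mesoscopic box in $\rmB_\infty(R)$ has $\mbC^\infty$-density within~$\epsilon$ of~$\theta(p)$, then for any admissible~$\gamma$ split $\vol(\gamma)$ into interior boxes (where the density estimate applies directly) and boundary boxes (whose total volume is negligible by condition~(3)).

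The genuine difference is in how you obtain the per-box density concentration. The paper uses boxes of side~$2\lfloor\log^2R\rfloor$ and simply invokes the Durrett--Schonmann large-deviation estimate for~$|\mbC^\infty\cap B|/|B|$ as a black box, which already gives decay $\texte^{-cr}$ for boxes of side~$r$. You instead prove this concentration from first principles: the event~$E_1$ makes $\{v\in\mbC^\infty\}$ coincide with the $k$-local event $\{v\leftrightarrow\partial\rmB_\infty(v,k)\}$, after which Hoeffding on each of the $O(k^2)$ independent residue classes yields~$E_2$. This is more elementary and self-contained, at the price of a three-scale argument and a larger box side $\ell\asymp\log^6R$ (to dominate the $k^2$ dependence classes). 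The paper's route is shorter; yours makes explicit where the concentration comes from and uses nothing beyond subcritical exponential decay and duality.

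One inessential remark: Hoeffding on a subclass of size $\asymp\ell^2/k^2$ actually gives exponent $c\epsilon^2\ell^2/k^2$ rather than your stated $c\epsilon^2\ell^2/k^4$, but the weaker bound already suffices with your choice of scales.
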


\begin{proof}
Fix $\epsilon > 0$, let $R>1$ and set $r := \lfloor \log^2 R \rfloor$. 
For $u \in \Z^2$ let $B_u := \bigl((2r)u + [-r,r)^2 \bigr) \cap \Z^2$. Note that $|B_u| = 4r^2 \leq C \log^4 R$ and that $\{B_u \colon  u \in \Z^2\}$ form a partition of $\Z^2$.
By Durrett and Schonmann~\cite[Theorems~2 and~3]{durrett1988large} and a simple union bound, there is~$C>0$ such that
\begin{equation}
\label{eqn:1234}
\AA_R:=\left \{
  u \in \Z^2,\,\, B_u \cap \rmB_\infty(R) \neq \emptyset 
  \quad \Rightarrow \quad
  \left| \frac{\big| \mbC^\infty \, \cap \, B_u \big|}
    {|B_u|} - \theta(p) \right| < \epsilon
\right \}
\end{equation} 
occurs with probability $\BbbP(\AA_R)\ge1-\texte^{-C\log^2 R}$ for some $C>0$.

Now suppose that~$\AA_R$ occurs and let $\gamma$ be a circuit satisfying conditions (1--3). Abbreviate $A:= \vol(\gamma)$, set $A_r$ to the union of all boxes $B_u$ for which $B_u\subseteq A$ and let~$A^r$ denote the union of all boxes $B_u$ for which $A \cap B_u \neq \emptyset$. Clearly, $A_r \subseteq A \subseteq A^r$.
At the same time, $A^r \setminus A_r$ is the collection of all boxes $B_u$ that have at least one vertex in $A$ and at least one vertex outisde $A$. Such boxes must include a vertex of $\gamma$ and so there are at most $|\gamma| < |A|^{2/3}$ of them. As condition~(2) implies $|A|^{2/3}|B_u|\ll|A|$ once~$R$ sufficiently large, we have
\begin{equation}
  |A^r| \leq |A| + |A|^{2/3}|B_u| \leq |A|(1+\epsilon)
    \quad\text{and} \quad
  |A_r| \geq |A| - |A|^{2/3}|B_u| \geq |A|(1-\epsilon).
\end{equation}
Hence, on the event $\AA_R$,
\begin{equation}
  |\,A \cap \mbC^\infty| \leq |\,A^r \cap \mbC^\infty| \leq 
  (\theta_p + \epsilon) |A^r| \leq \theta_p |A| (1 + C\epsilon)
\end{equation}
and 
\begin{equation}
  |\,A \cap \mbC^\infty| \geq |\,A_r \cap \mbC^\infty| \geq 
  (\theta_p - \epsilon) |A_r| \geq \theta_p |A| (1 - C\epsilon)
\end{equation}
are in force. Writing $\epsilon$ for~$C\epsilon$, we get \eqref{E:5.9}.
\end{proof}

\subsection{Key propositions}
We are now ready start addressing the proofs of Theorems~\ref{thm:Profile} and~\ref{thm:Cheeger}. The key arguments for both of these are the same and so we encapsulate them into propositions. The first of these will lead to lower bounds on the isoperimetric profile and the Cheeger constant:

\newcounter{stepcounter}
\newcommand\STEP[1]{\refstepcounter{stepcounter}\smallskip\noindent\textit{STEP~\thestepcounter\ }(#1):\ }

\begin{proposition}[Lower bound]
\label{prop-5.6}
Let~$p>p_\cc(\Z^2)$ and pick $\zeta>\ffrac25$. For each~$\epsilon>0$ there is~$C>0$ such that for all~$n>1$ the event 
\begin{equation}
\label{eqn:1012}
\left\{U\subset\mbC^\infty\cap\rmB_\infty(n),\,\,\text{\rm connected},\,\,|U|\ge n^{\zeta}\,\,\Rightarrow\,\,
\frac{\bigl|\partial_{\mbC^\infty} U |}{\sqrt{|U|}}\geq (1-\epsilon) \theta_p^{-1/2} \varphi_p\right\}
\end{equation}
occurs with probability at least $1-\texte^{-C\log^2 n}$.
\end{proposition}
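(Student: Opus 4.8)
The plan is to pass from the combinatorial object $U$ to a geometric one: take the outer boundary interface of (the subgraph induced by) $U$, which by Proposition~\ref{prop-duality} is a right-most circuit $\gamma$ whose open right-boundary edges are boundary edges of $U$; feed $\gamma$ into Proposition~\ref{lem:CircuitToLoopApprox} to get a continuum Jordan curve $\lambda$ with $\frb(\gamma)\gtrsim\len_{\beta_p}(\lambda)$ and $\Leb(\intr(\lambda))\approx|\vol(\gamma)|$; bound $\len_{\beta_p}(\lambda)$ from below by the continuum isoperimetric constant $\varphi_p$; and finally convert $|\vol(\gamma)|$ back to $|U|$ via the density estimate of Lemma~\ref{lem:CInfDensity}.

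\textbf{Setup and the event.} Fix a target deficit $\epsilon'>0$ and choose an auxiliary $\epsilon>0$ (depending on $\epsilon',p$) at the end; set $R:=n+1$. Let $\Omega_n$ be the intersection of the good events supplied by Proposition~\ref{lem:CircuitToLoopApprox} and Lemma~\ref{lem:CInfDensity} at scale $R$ with parameter $\epsilon$; a union bound gives $\BbbP(\Omega_n)\ge 1-\texte^{-C\log^2 n}$. I claim \eqref{eqn:1012} holds on $\Omega_n$ (for $n$ large; bounded $n$ are absorbed into $C$). Let $U\subseteq\mbC^\infty\cap\rmB_\infty(n)$ be connected with $|U|\ge n^\zeta$. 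We may assume $|\partial_{\mbC^\infty}U|\le |U|^{2/3}$, since otherwise $|\partial_{\mbC^\infty}U|/\sqrt{|U|}\ge |U|^{1/6}\ge n^{\zeta/6}$, which already exceeds $\theta_p^{-1/2}\varphi_p$ once $n$ is large. Let $H$ be the (connected, all-edges-open) subgraph of $\Z^2$ induced by $U$, let $\partial$ be its outer boundary interface, and $\gamma$ the associated right-most circuit. Then $\gamma$ is open, counterclockwise, $\gamma\subseteq\rmB_\infty(R)$, and $U\subseteq\vol(\gamma)$. Since every edge of $\partial^+\gamma$ is an (outer) boundary edge of $H$ and the outer boundary interface of a connected set cuts through no edge twice, each open edge of $\partial^+\gamma$ contributes a distinct element of $\partial_{\mbC^\infty}U$, whence
\[
|\partial_{\mbC^\infty}U|\ \ge\ \frb(\gamma).
\]
From $|\gamma|\asymp|\partial^+\gamma|$ (Lemma~\ref{prop:DualPathLength}), the trivial bound $|\vol(\gamma)|\le|\gamma|^2$, the reduction $|\partial_{\mbC^\infty}U|\le|U|^{2/3}$, and $|U|\ge n^\zeta$ with $\zeta>2/5$, one checks $R^{1/5}\le|\gamma|\le|\vol(\gamma)|^{2/3}$ and $|\vol(\gamma)|\ge|U|\ge\log^{20}R$, so $\gamma$ satisfies the hypotheses of both Proposition~\ref{lem:CircuitToLoopApprox} and Lemma~\ref{lem:CInfDensity}.

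\textbf{Closing the estimate.} Lemma~\ref{lem:CInfDensity} gives $|U|\le|\vol(\gamma)\cap\mbC^\infty|\le(\theta_p+\epsilon)|\vol(\gamma)|$, while Proposition~\ref{lem:CircuitToLoopApprox} yields a simple closed (hence Jordan, rectifiable) curve $\lambda$ with $\Leb(\intr(\lambda))\ge(1-\epsilon)|\vol(\gamma)|$ and $\frb(\gamma)\ge(1-\epsilon)\len_{\beta_p}(\lambda)$. By scaling in \eqref{eqn:IsoOfBeta} (homogeneity of $\beta_p$), every Jordan curve $\mu$ obeys $\len_{\beta_p}(\mu)\ge\varphi_p\sqrt{\Leb(\intr(\mu))}$. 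Chaining,
\[
|\partial_{\mbC^\infty}U|\ \ge\ \frb(\gamma)\ \ge\ (1-\epsilon)\varphi_p\sqrt{\Leb(\intr(\lambda))}\ \ge\ (1-\epsilon)^{3/2}\varphi_p\sqrt{|\vol(\gamma)|}\ \ge\ (1-\epsilon)^{3/2}(\theta_p+\epsilon)^{-1/2}\varphi_p\sqrt{|U|}.
\]
Choosing $\epsilon$ small enough that $(1-\epsilon)^{3/2}(\theta_p+\epsilon)^{-1/2}\ge(1-\epsilon')\theta_p^{-1/2}$ gives \eqref{eqn:1012}.

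\textbf{Main obstacle.} The substantive inputs are Proposition~\ref{lem:CircuitToLoopApprox} and Lemma~\ref{lem:CInfDensity}, which are already available; within the present argument the only delicate points are the reduction to $|\partial_{\mbC^\infty}U|\le|U|^{2/3}$ (needed to verify hypothesis~(4) of Proposition~\ref{lem:CircuitToLoopApprox}) and the verification of hypothesis~(3), $|\gamma|\ge R^{1/5}$, which is precisely where the assumption $\zeta>2/5$ is used, via $|\gamma|\gtrsim\sqrt{|U|}\ge n^{\zeta/2}$.
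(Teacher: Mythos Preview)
Your overall architecture matches the paper's: pass to the outer right-most circuit~$\gamma$, invoke Proposition~\ref{lem:CircuitToLoopApprox} to get a curve~$\lambda$, use the continuum isoperimetric inequality, and convert areas via Lemma~\ref{lem:CInfDensity}. The chain of inequalities at the end is correct.

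There is, however, a real gap in your verification of hypothesis~(4) of Proposition~\ref{lem:CircuitToLoopApprox}, namely $|\gamma|\le|\vol(\gamma)|^{2/3}$. You write that this follows from $|\gamma|\asymp|\partial^+\gamma|$ together with the reduction $|\partial_{\mbC^\infty}U|\le|U|^{2/3}$. But $|\partial^+\gamma|$ counts \emph{all} right-boundary edges of~$\gamma$, open and closed, whereas $|\partial_{\mbC^\infty}U|$ only controls the \emph{open} ones (indeed, $|\partial_{\mbC^\infty}U|\ge\frb(\gamma)$, not $|\partial_{\mbC^\infty}U|\ge|\partial^+\gamma|$). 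Deterministically there is no bound on~$|\gamma|$ in terms of~$\frb(\gamma)$: one can have a long circuit in~$\mbC^\infty$ with very few open right-boundary edges. So from your assumptions alone the inequality $|\gamma|\le|\vol(\gamma)|^{2/3}$ simply does not follow, and Proposition~\ref{lem:CircuitToLoopApprox} cannot be applied.

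The missing ingredient is exactly Proposition~\ref{prop:RenormalizedLD}, which says that on a good event (probability $\ge 1-\texte^{-C'n^{1/5}}$, easily absorbed into your~$\Omega_n$) every right-most path with $|\gamma|\ge n^{1/5}$ satisfies $\frb(\gamma)\ge\alpha_2|\gamma|$. The paper uses this in its STEP~3: either $|\gamma|\ge C\sqrt{|U|}$ with $C$ so large that $|\partial_{\mbC^\infty}U|\ge\frb(\gamma)\ge\alpha_2|\gamma|\ge\theta_p^{-1/2}\varphi_p\sqrt{|U|}$ and one is done outright, or else $|\gamma|\le C\sqrt{|U|}\le C\sqrt{|\vol(\gamma)|}\le|\vol(\gamma)|^{2/3}$ for~$n$ large, and hypothesis~(4) holds. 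Your alternative reduction $|\partial_{\mbC^\infty}U|\le|U|^{2/3}$ can be salvaged the same way: add the Proposition~\ref{prop:RenormalizedLD} event to~$\Omega_n$, and then $|\gamma|\le\alpha_2^{-1}\frb(\gamma)\le\alpha_2^{-1}|U|^{2/3}\le\alpha_2^{-1}|\vol(\gamma)|^{2/3}$, which (since the exponent~$2/3$ in Proposition~\ref{lem:CircuitToLoopApprox} is not sharp) is enough after a trivial adjustment.
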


\begin{proofsect}{Proof}
Let $\epsilon > 0$, $n> 1$ and let $\UU_n$ be the collection of all connected subsets of $\mbC^\infty\cap\rmB_\infty(n)$ with $|U|\ge n^\zeta$. We will occasionally regard~$U\in\UU_n$ as a graph $G_U$ obtained by restricting~$\mbC^\infty$ to vertices in~$U$. The proof comes in a sequence of six steps.

\STEP{Identifying a right-boundary circuit}
The graph~$G_U$ is a connected subgraph of the planar graph~$\Z^2$ and so there is a unique boundary interface~$\partial$ that separates~$G_U$ from the unique infinite connected component of~$\Z^2\setminus G_U$. By Proposition~\ref{prop-duality}, this boundary interface then defines a unique right-boundary path~$\gamma\subseteq U$, oriented counterclockwise, such that
\begin{equation}
\label{E:5.15}
U\subseteq\vol(\gamma)\quad\text{and}\quad
\frb(\gamma)\le|\partial_{\mbC^\infty}U|.
\end{equation}

\STEP{A lower bound on~$|\gamma|$}
Clearly, $|\gamma|\ge C|\partial|$ for some~$C>0$ and $|\partial|$ is proportional to the length of the simple closed curve in~$\R^2$ that~$\partial$ can be identified with. Since $|U|\le|\vol(\gamma)| \leq C \ \Leb(\intr(\partial))$,  the isoperimetric inequality in~$\R^2$ tells us
\begin{equation}
|\gamma|\ge C|\partial|\ge C'\bigr|\Leb(\intr(\partial))\bigr|^{\ffrac12}\ge C''\sqrt{|U|}.
\end{equation}
Using that~$|U|\ge n^\zeta$ we have $|\gamma|\ge n^{1/5}$ once~$n$ is large enough. 

\STEP{An upper bound on~$|\gamma|$}
Once we know~$|\gamma|\ge n^{1/5}\gg\log^2n$, Proposition~\ref{prop:RenormalizedLD} implies that, for some~$\alpha>0$, on an event with probability at least $1-\texte^{-C'\log^2 n}$, 
\begin{equation}
\label{E:5.17}
	\frb(\gamma) > \alpha |\gamma|,\qquad U\in\UU_n.
\end{equation}
Consequently, if we have $|\gamma| \ge C \sqrt{|U|}$, then \eqref{E:5.15} and \eqref{E:5.17} yield 
\begin{equation}
|\partial_{\mbC^\infty} U| \geq \alpha C \sqrt{|U|}.
\end{equation}
The conclusion in \eqref{eqn:1012} then follows once~$C$ is so large that~$\alpha C\ge\theta_p^{-1/2}\varphi_p$. We may thus assume without loss that $|\gamma| \leq C \sqrt{|U|}$ for some~$C>0$. Let~$\UU'_n$ be the subset of~$\UU_n$ where this holds.

\STEP{Approximation of~$\gamma$ by a curve}
Consider the event that for every right-most circuit~$\gamma$ arising from a $U\in\UU_n'$, there is a simple closed curve $\lambda$ satisfying
\settowidth{\leftmargini}{(1111)}
\begin{enumerate}
   \item[(1)]
   $d_{\rm H}\bigl(\vol(\gamma), \intr(\lambda)\bigr) \leq 1+\epsilon \sqrt{|\vol(\gamma)|}$.
   \item[(2)]
   $\big| |\vol(\gamma)| - \Leb(\intr(\lambda)) \big|
       \leq \epsilon \bigl|\vol(\gamma)\bigr|$.
   \item[(3)]
   $\frb(\gamma) \geq (1-\epsilon) \len_{\beta_p}(\lambda)$.
\end{enumerate}
Since $\gamma \subseteq \rmB_\infty(n)$, $|\gamma|\ge n^{1/5}$ and $|\vol(\gamma)|^{2/3}\ge|U|^{2/3}\ge C\sqrt{|U|}\ge|\gamma|$, Proposition~\ref{lem:CircuitToLoopApprox} with $R:= n$ and $\epsilon$ as above implies that this event occurs with probability at least $1-\texte^{-C\log^2 n}$.

\STEP{Comparing volumes}
Lemma~\ref{lem:CInfDensity} with $R:=2n$ and the aforementioned bounds $|\gamma|\ge 
\log^{20}n$ and $|\gamma|\le |\vol(\gamma)|^{2/3}$ tell us that, on an event of probability at least $1-\texte^{-C\log^2n}$, 
\begin{equation}
\label{eqn:1016}
	|U| \leq \bigl|\mbC^\infty \cap \vol(\gamma)\bigr|
		\leq (1+\epsilon) \theta_p \bigl|\vol(\gamma)\bigr|
\end{equation}
for any set~$U\in\UU_n'$. 

\STEP{Wrapping up}
The definition of~$\varphi_p$ (see \eqref{eqn:IsoOfBeta}) yields
\begin{equation}
	\len_{\beta_p}(\lambda) \geq \varphi_p \sqrt{\Leb(\intr(\lambda))}.
\end{equation}
On the event where the conclusions of STEPS~3,~4, and~5 apply --- which has probability at least $1-\texte^{-C\log^2n}$ --- we then have
\begin{equation}
\begin{aligned}
|\partial_{\mbC^\infty} U| 
\geq \frb(\gamma) 
&\geq (1-\epsilon) \len_{\beta_p}(\lambda)
\geq (1-\epsilon)\, \varphi_p \,\sqrt{\Leb(\intr(\lambda))}  \\
& \geq (1-\epsilon)^{3/2} \,\varphi_p\, \sqrt{|\vol(\gamma)|}
\ge\frac{(1-\epsilon)^{3/2}}{\sqrt{1+\epsilon}} \,\varphi_p\,\theta_p^{-\ffrac12}\,\sqrt{|U|}
\end{aligned}
\end{equation}
for all $U\in\UU_n'$. A simple adjustment of~$\epsilon$ then yields the desired claim. 
\end{proofsect}

Next we will formulate a proposition that will ultimately yield desired upper bounds on the isoperimetric characteristics in our main theorems. 

\begin{proposition}[Upper bound]
\label{prop-5.7}
Let~$p>p_\cc(\Z^2)$ and, given~$\epsilon>0$ and~$n>1$, let $\AA_n'$ be the event that there exists a set~$U$ satisfying:
\begin{equation}
\label{E:5.22a}
\mbC^\infty\cap\rmB_1(\ffrac n4)\subseteq U\subseteq\mbC^\infty\cap\rmB_\infty(n), \text{\rm\ connected},
\end{equation}
\begin{equation}
\label{E:5.23a}
\frac{1}2(1-\sqrt\epsilon)\bigl|\mbC^\infty\cap\rmB_\infty(n)\bigr|\le|U|\le\frac12\bigl|\mbC^\infty\cap\rmB_\infty(n)\bigr|
\end{equation}
and
\begin{equation}
\label{E:5.24a}
\frac{|\partial_{\mbC^\infty} U |}{\sqrt{|U|}}\leq (1+\epsilon) \theta_p^{-1/2} \varphi_p.
\end{equation}
Then for each~$\epsilon>0$ there is $C>0$ such that
\begin{equation}
\BbbP(\AA_n')\ge1-\texte^{-C\log^2 n},\qquad n>1.
\end{equation}
\end{proposition}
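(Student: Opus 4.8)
The plan is to build, on an event of $\BbbP$-probability at least $1-\texte^{-C\log^2 n}$, a set $U$ as required by discretizing a slightly shrunk, suitably scaled copy of the normalized Wulff shape $\widehat W_p$. Fix $\epsilon>0$ small and introduce a shrink factor $\delta=\delta(\epsilon)$ and a discretization error $\eta=\eta(\epsilon)$, both of order $\epsilon$, whose exact values are pinned down only at the very end so as to land the volume in the narrow window of \eqref{E:5.23a}. Put $R:=(1-\delta)\sqrt2\,n$ and take the target curve $\lambda:=\hat\gamma_p=\partial\widehat W_p$, a rectifiable simple closed curve with convex interior $\intr(\lambda)=\widehat W_p$; recall $\Leb(\widehat W_p)=1$ and $\len_{\beta_p}(\hat\gamma_p)=\varphi_p$. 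Then $\intr(\lambda_R)=R\widehat W_p$ has area $R^2=2(1-\delta)^2n^2$, and Lemma~\ref{lem:W_p_properties}(4) gives $\rmB_1(R/2)\subseteq R\widehat W_p\subseteq\rmB_\infty(R/\sqrt2)$; in particular $R\widehat W_p$ lies well inside $\rmB_\infty((1-\delta)n)$ yet still contains $\rmB_1((1-\delta)n/\sqrt2)\supseteq\rmB_1(n/4)$ with room to spare.

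I would then feed $\lambda$, $R$, and $\eta$ into Proposition~\ref{lem:LoopToCircuitApprox}: with probability at least $1-\texte^{-C\log^2 n}$ it yields an open, counterclockwise-oriented right-most circuit $\gamma$ whose hull $\vol(\gamma)$ is within Hausdorff distance $\eta R$ and area $\eta R^2$ of $R\widehat W_p$ and satisfies $\frb(\gamma)\le(1+\eta)R\varphi_p$. Since that circuit is assembled by $\ast$-concatenating open paths whose endpoints $[Rx_k]$ lie on $\mbC^\infty$, all of $\gamma$ lies in $\mbC^\infty$; and inspecting its proof — in particular the containment $\intr(R\lambda^-)\subseteq\hull(\gamma)$ of \eqref{E:4.15} — shows $\vol(\gamma)$ still contains an $O(\eta R)$-erosion of $R\widehat W_p$, hence contains $\rmB_1(n/4)$ for small $\eta$, while $\vol(\gamma)\subseteq R\widehat W_p+\rmB_\infty(\eta R)\subseteq\rmB_\infty(n)$ as soon as $\delta$ is chosen slightly larger than $\sqrt2\,\eta$. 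Finally, $\frb(\gamma)=O(n)$, so Proposition~\ref{prop:RenormalizedLD}, union-bounded over the $O(n^2)$ possible basepoints in $\rmB_\infty(n)$, gives on a further event of probability $1-\texte^{-Cn}$ that $|\gamma|\le Cn\le|\vol(\gamma)|^{2/3}$, the last inequality because $|\vol(\gamma)|\asymp n^2$.

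Set $U:=\mbC^\infty\cap\vol(\gamma)$. I claim $U$ is connected. Since $\gamma$ is a connected circuit contained in $U$ ($\gamma$ is counterclockwise, so $\gamma\subseteq\vol(\gamma)$, and $\gamma\subseteq\mbC^\infty$), it suffices to join each $v\in U$ to $\gamma$ inside $U$: take an open path from $v$ to infinity and let $e=(u,u')$ be the edge along which it first leaves $\vol(\gamma)$, so the initial segment up to $u$ stays in $U$. The edge $e$ crosses the boundary interface $\partial$ of $\gamma$, so $\partial$ cuts through it and $e\in\partial^+\gamma$ by Proposition~\ref{prop-duality}; but every edge of $\partial^+\gamma$ emanates from a vertex of $\gamma$, which lies in $\vol(\gamma)$, so it can only be $u$, whence $u\in\gamma$. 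This proves connectivity and, with the previous paragraph, \eqref{E:5.22a}. For the boundary, each edge of $\partial_{\mbC^\infty}U$ joins a vertex of $\mbC^\infty$ inside $\vol(\gamma)$ to one outside (because $U$ is \emph{all} of $\mbC^\infty\cap\vol(\gamma)$), hence crosses $\partial$, hence (oriented outward) is an open edge of $\partial^+\gamma$; distinct such edges stay distinct, so $|\partial_{\mbC^\infty}U|\le\frb(\gamma)\le(1+\eta)R\varphi_p$. Lemma~\ref{lem:CInfDensity} — whose hypotheses are exactly the bounds just obtained — gives $|U|=(1\pm\eta)\theta_p|\vol(\gamma)|=(1\pm\eta)^2\theta_p R^2$, so $\sqrt{|U|}=(1\pm\eta)\sqrt{\theta_p}\,R$ and $|\partial_{\mbC^\infty}U|/\sqrt{|U|}\le(1+O(\eta))\theta_p^{-1/2}\varphi_p$, which is \eqref{E:5.24a} after relabeling $\eta$ in terms of $\epsilon$.

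What remains is the volume window \eqref{E:5.23a}, and this is the step where the parameter bookkeeping must be done with care: the same density input (Lemma~\ref{lem:CInfDensity}, or the Durrett--Schonmann estimate behind it) yields $|\mbC^\infty\cap\rmB_\infty(n)|=(1\pm\eta)\theta_p(2n)^2$, hence
\begin{equation}
\frac{|U|}{|\mbC^\infty\cap\rmB_\infty(n)|}=\frac{(1\pm\eta)^3(1-\delta)^2}{2},
\end{equation}
and one must fix $\delta$ slightly above $\eta$ (still $O(\epsilon)$) so that this ratio lies in $[\tfrac12(1-\sqrt\epsilon),\tfrac12]$; intersecting the finitely many events used then bounds $\BbbP((\AA_n')^\cc)$ by $\texte^{-C\log^2 n}$. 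No single estimate is hard here — the weight is carried by Propositions~\ref{lem:LoopToCircuitApprox} and~\ref{prop:RenormalizedLD} and Lemma~\ref{lem:CInfDensity} — so the main obstacle is purely the coordination of the shrink factor $\delta$ and the error $\eta$ so that all the geometric containments \emph{and} the precise volume window hold at once, the trickiest ingredient being that $\rmB_1(n/4)\subseteq\vol(\gamma)$ relies on the inner-approximation buried in the proof of Proposition~\ref{lem:LoopToCircuitApprox} rather than merely on its Hausdorff bound.
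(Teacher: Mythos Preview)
Your proof is correct and follows essentially the same approach as the paper: apply Proposition~\ref{lem:LoopToCircuitApprox} to the Wulff curve $\hat\gamma_p$ at scale $R\approx\sqrt2\,n$ to obtain an open right-most circuit $\gamma$, set $U:=\mbC^\infty\cap\vol(\gamma)$, check connectivity via paths to infinity hitting $\gamma$, and verify the volume and boundary conditions using Lemma~\ref{lem:CInfDensity} and $|\partial_{\mbC^\infty}U|\le\frb(\gamma)$. You are in fact slightly more careful than the paper in two places---bounding $|\gamma|$ via Proposition~\ref{prop:RenormalizedLD} (the paper simply asserts ``$|\gamma|$ is of order~$n$'') and noting that the containment $\rmB_1(n/4)\subseteq\vol(\gamma)$ really rests on the sandwich \eqref{E:4.15} inside the proof of Lemma~\ref{lem:InversePolyApprox} rather than on the Hausdorff bound alone---while the paper handles the parameter bookkeeping by choosing the single shrink factor $R=\sqrt2(1-\epsilon^{2/3})n$ instead of your separate $\delta,\eta$.
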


\begin{proofsect}{Proof}
Without loss of generality, it suffices to prove this for~$\epsilon$ sufficiently small and~$n$ sufficiently large. We again proceed along a sequence of steps.

\setcounter{stepcounter}{0}
\STEP{Finding a candidate for~$U$}
Let $\hat{\gamma}_p$ be the Wulff curve given by \eqref{eqn:4.51}. By definition
\begin{equation}
\label{E:5.23}
\Leb(\intr(\hat{\gamma}_p))=1\quad\text{and}\quad\len_{\beta_p}(\hat{\gamma}_p) = \varphi_p.
\end{equation}
From Lemma~\ref{lem:W_p_properties} we know that $\intr(\hat{\gamma}_p)$ is convex. Thus, we may use  Proposition~\ref{lem:LoopToCircuitApprox} with~$\epsilon$ as above, $\lambda := \hat{\gamma}_p$ and
\begin{equation}
\label{E:5.24}
R:= \sqrt{2}\,(1-\epsilon^{2/3})\,n ,
\end{equation}
to conclude that with probability at least $1-\texte^{-C \log^2 n}$ there is a right-most circuit $\gamma$ oriented counterclockwise and satisfying 
\settowidth{\leftmargini}{(1111)}
\begin{enumerate}	
\item[(1)] $\gamma$ is open,
\item[(2)] $d_{\rm H}\bigl(\vol(\gamma), R\widehat{W}_p\bigr) \leq \epsilon R$,
\item[(3)] $\big||\vol(\gamma)| - R^2\big| \leq \epsilon R^2$,
\item[(4)] $\frb(\gamma) \leq (1+\epsilon) R\varphi_p$.
\end{enumerate}
We now set $U:=\vol(\gamma)\cap\mbC^\infty$ and prove that~$U$ obeys \twoeqref{E:5.22a}{E:5.24a} for~$\epsilon$ small enough.

\STEP{Trivial conditions}
First of all, $U$ is connected because $\gamma\subset U$ and any vertex in~$U$ is connected by an open path (i.e., its path to infinity in~$\mbC^\infty$) to~$\gamma$, which is open as well. Also $\mbC^\infty\cap\rmB_1(\ffrac n4)\subseteq U$ for small~$\epsilon$ thanks to conclusion~(2) and Lemma~\ref{lem:W_p_properties}(4). As $(1+\epsilon)R/\sqrt2\le n$ when~$\epsilon$ is small enough, Lemma~\ref{lem:W_p_properties}(4) also shows 
\begin{equation}
\label{E:5.25}
\vol(\gamma)\subseteq(1+C\epsilon)R\,\widehat W_p\subseteq\rmB_\infty\bigl((1+ C \epsilon)R/\sqrt2\bigr)\subseteq B_\infty(n).
\end{equation}
In particular, $U\subseteq\mbC^\infty\cap\rmB_\infty(n)$ and so \eqref{E:5.22a} holds.

\STEP{Comparison of volumes}
We proceed to check \eqref{E:5.23a}. Since $|\gamma|$ is of order~$n$ and $|\vol(\gamma)|$ is of order~$n^2$, Lemma~\ref{lem:CInfDensity} is at our disposal once $n$ is large enough. This yields
\begin{equation}
\label{E:5.26}
\bigl||U|-\theta_p|\vol(\gamma)|\bigr|\le\epsilon\bigl|\vol(\gamma)\bigr|
\end{equation}
on an event of probability at least $1-\texte^{-C\log^2 n}$. On the same event, we also have
\begin{equation}
\label{E:5.31}
(1-\epsilon)\theta_p\le\frac{|\mbC^\infty\cap\rmB_\infty(n)|}{|\rmB_\infty(n)|}\le(1+\epsilon)\theta_p.
\end{equation}
Thanks to property~(3) above, from \twoeqref{E:5.26}{E:5.31} and \eqref{E:5.24} we get
\begin{multline}
\qquad
|U|\le(\theta_p+\epsilon)\bigl|\vol(\gamma)\bigr|\le(\theta_p+\epsilon)(1+\epsilon)R^2
\\
\le \frac{\theta_p+\epsilon}{\theta_p}\,\frac{1+\epsilon}{1-\epsilon}(1-\epsilon^{2/3})\frac{2n^2}{|\rmB_\infty(n)|}
\bigl|\mbC^\infty\cap\rmB_\infty(n)\bigr|.
\qquad
\end{multline}
For~$\epsilon$ small and~$n$ large, this yields $|U|\le\frac12|\mbC^\infty\cap\rmB_\infty(n)|$. Similarly, we derive
\begin{multline}
\label{E:5.29}
\qquad
|U|\ge(\theta_p-\epsilon)\bigl|\vol(\gamma)\bigr|\ge(\theta_p-\epsilon)(1-\epsilon)R^2
\\
\ge\frac{\theta_p-\epsilon}{\theta_p}\frac{1-\epsilon}{1+\epsilon}(1-\epsilon^{2/3})^2\frac{2n^2}{|\rmB_\infty(n)|}
\bigl|\mbC^\infty\cap\rmB_\infty(n)\bigr|
\qquad
\end{multline}
and so, for~$\epsilon$ small and~$n$ large, $|U|\ge\frac12(1-\sqrt\epsilon)^2|\mbC^\infty\cap\rmB_\infty(n)|$.

\STEP{Surface-to-volume ratio}
As to \eqref{E:5.24a}, since $U$ includes all vertices in $\mbC^\infty$ which lie in $\vol(\gamma)$, by Proposition~\ref{prop-duality} every edge in $\partial_{\mbC^\infty} U$ must also be in $\partial^+ \gamma$ (in some orientation). Hence, using also conclusion~(4) above,
\begin{equation}
|\partial_{\mbC^\infty} U|\le\frb(\gamma)\le(1+\epsilon)R\varphi_p.
\end{equation}
But $|U|\ge(\theta_p-\epsilon)(1-\epsilon)R^2$ by  \eqref{E:5.29} and so $|\partial_{\mbC^\infty} U|\le(1+C\epsilon)\theta_p^{-1/2}\varphi_p\sqrt{|U|}$ for some~$C>0$. Adjusting~$\epsilon$, the inequality \eqref{E:5.24a} follows.
\end{proofsect}

\subsection{Proofs of limit values and shapes}
We are now ready to establish the almost sure limits of the (properly scaled) isoperimetric profile and Cheeger constant. We will do this by proving (matching) upper and lower bounds. Below, we will use the notation $\Phi_{\mbC^\infty}(n) := \Phi_{\mbC^\infty, 0}(n)$ and $\BbbP^0(-):=\BbbP(-|0\in\mbC^\infty)$. Note that $\BbbP^0\ll\BbbP$.

\begin{proofsect}{Proof of Theorems~\ref{thm:Profile}--\ref{thm:Cheeger}, lower bounds}
Fix~$\epsilon>0$, pick~$\zeta\in(\ffrac25,\ffrac12)$ and let~$\AA_n$ denote the event in \eqref{eqn:1012}. We claim that there is (a random) $n_0$ with $\BbbP(n_0<\infty)=1$ such that $\AA_n$ occurs, \eqref{E:5.31} holds and $\mbC^n\subset\mbC^\infty$ is valid for all $n\ge n_0$. This follows in light of the summability of $\BbbP(\AA_n^\cc)$ on~$n$ and the Borel-Cantelli lemma, the Spatial Ergodic Theorem (or Lemma~\ref{lem:CInfDensity}), and Lemma~\ref{prop:GiantIsCInf}, respectively. 

As a first step we note that we may restrict attention to sets $U$ with $|U|\ge n^\zeta$. Indeed, since~$\mbC^\infty$ is infinite and connected and $\mbC^n \subset \mbC^\infty$, we have $|\partial_{\mbC^\infty}U|\ge1$. So if $|U|<n^\zeta$, then
\begin{equation}
\label{E:5.34}
\frac{|\partial_{\mbC^\infty}U|}{|U|}\ge n^{-\zeta}\gg n^{-1/2}
\end{equation}
and so $U$ cannot contribute to $\Phi_{\mbC^\infty}(n)$ and $\widetilde\Phi_{\mbC^n}$ provided the limits take the anticipated values. (Alternatively, we can also invoke \eqref{E:1.4}.)

Since any finite connected $U\subset\mbC^\infty$ with $0\in U$ and $|U|\le n$ will obey $U\subseteq\mbC^\infty\cap\rmB_\infty(n)$, the inequality in~$\AA_n$ applies to all minimizers of~$\Phi_{\mbC^\infty}(n)$ (once~$n\gg1$). To get a similar conclusion for
minimizers $U$ of~$\widetilde\Phi_{\mbC^n}$, we note that, without loss of generality, we can assume that $U$ is connected. Indeed, if it is not, we can replace it by one if its connected components $U'$ which is also a minimizer and satisfies \eqref{E:5.34} and 
$U'\subset\mbC^\infty\cap\rmB_\infty(n)$ by definition.

We can thus take a connected $U\subset\mbC^\infty\cap\rmB_\infty(n)$ with $|U|\ge n^\zeta$ that minimizes either $\Phi_{\mbC^\infty}(n)$ or~$\widetilde\Phi_{\mbC^n}$ and use it to derive a lower bound on these quantities. As~$\AA_n$ occurs,
\begin{equation}
\label{E:5.35}
\frac{|\partial_{\mbC^\infty}U|}{|U|}=\frac1{\sqrt{|U|}}\frac{|\partial_{\mbC^\infty}U|}{\sqrt{|U|}}
\ge\frac1{\sqrt{|U|}}(1-\epsilon)\theta_p^{-1/2}\varphi_p.
\end{equation}
For the isoperimetric profile, $|U|\le n$ then implies
\begin{equation}
\liminf_{n\to\infty}\,\,n^{1/2}\,\Phi_{\mbC^\infty}(n)\ge(1-\epsilon)
\theta_p^{-1/2}\varphi_p,
\qquad\BbbP^0\text{\rm-a.s.},
\end{equation}
while for the Cheeger constant we instead use $|U|\le\frac12|\mbC^\infty\cap\rmB_\infty(n)|$ and \eqref{E:5.31} to get
\begin{equation}
\liminf_{n\to\infty}\,\,n\,\widetilde\Phi_{\mbC^n}\ge\frac{1-\epsilon}{\sqrt{1+\epsilon}}
\frac1{\sqrt2}\,\theta_p^{-1}\varphi_p,
\qquad\BbbP\text{\rm-a.s.}.
\end{equation}
Letting~$\epsilon\downarrow0$, the almost-sure lower bounds in \eqref{E:1.6} and \eqref{E:1.8} are proved.
\end{proofsect}

\begin{proofsect}{Proof of Theorems~\ref{thm:Profile}--\ref{thm:Cheeger}, upper bounds}
Fix~$\epsilon>0$ and let~$\AA'_n$ be the event in Proposition~\ref{prop-5.7}. Using Borel-Cantelli, there is~$n_0$ such that $\AA'_n$ occurs and \eqref{E:5.31} 
holds for all~$n\ge n_0$ almost surely with respect to both $\BbbP$ and $\BbbP^0$. Using Lemma~\ref{prop:GiantIsCInf}, we may also assume that for $n\ge n_0$, \eqref{eqn:1011} holds $\BbbP$-a.s.

Suppose $n\ge n_0$ and let~$U$ be a set satisfying the properties defining~$\AA_n'$. From \eqref{E:5.23a} and \eqref{E:5.31} (and $\epsilon<1$) we get
\begin{equation}
\label{E:5.38}
2(1-\sqrt\epsilon)^2\theta_pn^2\le|U|\le2(1+\epsilon)\theta_pn^2
\end{equation}
whereby \eqref{E:5.24a} implies
\begin{equation}
\label{E:5.39}
\frac{|\partial_{\mbC^\infty}U|}{|U|}=\frac1{\sqrt{|U|}}\,\frac{|\partial_{\mbC^\infty}U|}{\sqrt{|U|}}\le
\frac1{\sqrt{|U|}}\,(1+\epsilon)\theta_p^{-1/2}\varphi_p
\le\frac1n\,\frac1{\sqrt2}\frac{1+\epsilon}{1-\sqrt\epsilon}\theta_p^{-1}\varphi_p.
\end{equation}
In light of \eqref{eqn:1011} we have $U\subset\mbC^{m}$ and $|U|\le\frac12|\mbC^{m}|$ for any $m\ge n+\log^2 n$. It follows that
\begin{equation}
\limsup_{n\to\infty}\,n\,\widetilde\Phi_{\mbC^n}\le\frac1{\sqrt2}\frac{1+\epsilon}{1-\sqrt\epsilon}\,\theta_p^{-1}\varphi_p,
\qquad\BbbP\text{-a.s.}
\end{equation}
Similarly, since \eqref{E:5.22a} guarantees that on the event $\{0 \in \mbC^\infty\}$ the origin will be included in $U$ and since it is also connected, we can use it to bound $\Phi_{\mbC^\infty}(r)$ via \eqref{E:5.39} whenever $r$ is between the values of the right-hand side of \eqref{E:5.38} evaluated for $n$ and $n+1$. This yields
\begin{equation}
\limsup_{r\to\infty}\,r^{1/2}\,\Phi_{\mbC^\infty}(r)\le\frac{(1+\epsilon)^2}{1-\sqrt\epsilon}\,\theta_p^{-1/2}\varphi_p,
\qquad\BbbP^0\text{-a.s.}
\end{equation}
Taking~$\epsilon\downarrow0$, the upper bounds in \eqref{E:1.6} and \eqref{E:1.8} hold. 
\end{proofsect}

\begin{proofsect}{Proof of Theorem~\ref{thm-limit-value}}
The characterization of the limit value was used to establish Theorems~\ref{thm:Profile}--\ref{thm:Cheeger}. The symmetries of the norm have been shown in Lemma~\ref{prop:SymmetryForBeta}. 
\end{proofsect}

Moving over to limit shapes, we first note a simple corollary of the preceding proofs. Recall that $\UU_{\mbC^\infty}(n)$, $\UU_{\mbC^n}$ are the set of minimizers of $\Phi_{\mbC^\infty}(n)$, resp. $\widetilde{\Phi}_{\mbC^n}$.

\begin{corollary}[Tightness of used volume]
\label{cor-volume}
Let $p>p_\cc(\Z^d)$. For each $\delta>0$ there is $n_0=n_0(\delta)$ with $\BbbP(n_0<\infty)=1$ such that the following holds for all $n \geq n_0$:
\begin{equation}
\inf_{U\in\hat\UU_{\mbC^\infty}(n)}\frac{|U|}n\ge(1-\delta)
\end{equation}
and
\begin{equation}
\label{E:5.43}
\min_{U\in\hat\UU_{\mbC^n}}\frac{|U|}{|\rmB_\infty(n)|}\ge\frac12\theta_p(1-\delta)
\end{equation}
Moreover, each $U\in\hat\UU_{\mbC^\infty}(n)$ and $U\in\hat\UU_{\mbC^n}$ is connected and
\begin{equation}
\label{E:5.44c}
\frac{|\partial_{\mbC^\infty} U|}{\sqrt{|U|}}\le(1+\delta)\theta_p^{-1/2}\varphi_p \,.
\end{equation}
\end{corollary}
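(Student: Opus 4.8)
The plan is to combine the matching upper and lower bounds already established for the limiting values of $\Phi_{\mbC^\infty}(n)$ and $\widetilde\Phi_{\mbC^n}$ with the explicit constructive bound coming from Proposition~\ref{prop-5.7}. Indeed, the proofs above show that $\BbbP$-a.s.\ (and $\BbbP^0$-a.s.) both $n^{1/2}\Phi_{\mbC^\infty}(n)\to\theta_p^{-1/2}\varphi_p$ and $n\,\widetilde\Phi_{\mbC^n}\to\frac1{\sqrt2}\theta_p^{-1}\varphi_p$. First I would fix $\delta>0$, choose $\epsilon>0$ small enough (to be tuned at the end), and let $n_0$ be large enough that, $\BbbP$-a.s., the event $\AA_n'$ of Proposition~\ref{prop-5.7} holds, the density bound \eqref{E:5.31} holds, the value $n^{1/2}\Phi_{\mbC^\infty}(n)$ is within $\epsilon$ of $\theta_p^{-1/2}\varphi_p$, the value $n\,\widetilde\Phi_{\mbC^n}$ is within $\epsilon$ of $\frac1{\sqrt2}\theta_p^{-1}\varphi_p$, and $\mbC^n\cap\rmB_\infty(n')=\mbC^\infty\cap\rmB_\infty(n')$ for the relevant $n'$ via Lemma~\ref{prop:GiantIsCInf}; all of these fail only on events with summable probability, so Borel--Cantelli applies.

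The core step is then a bootstrap using the surface-to-volume ratio. Take any minimizer $U\in\hat\UU_{\mbC^\infty}(n)$ of $\Phi_{\mbC^\infty}(n)$. Since $\AA_n'$ holds, there is a competitor set $U^\star$ with $|\partial_{\mbC^\infty}U^\star|/\sqrt{|U^\star|}\le(1+\epsilon)\theta_p^{-1/2}\varphi_p$ and $|U^\star|$ within a factor $(1+\epsilon)$ of $\theta_p n$ (via \eqref{E:5.23a} and \eqref{E:5.31}, bearing in mind the profile is defined with the constraint $|U^\star|\le n$ — one uses $\rmB_1(n/4)$ portion, which has volume of order $n^2$, so here one must instead invoke the profile competitor constructed inside $\rmB_\infty(\sqrt n)$-type scale; more cleanly, take $U^\star$ itself as a competitor for $\Phi_{\mbC^\infty}(|U^\star|)$). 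Using that $U$ is a minimizer and the a.s.\ convergence of the profile, one gets $|\partial_{\mbC^\infty}U|/|U|\le(1+\epsilon)n^{-1/2}\theta_p^{-1/2}\varphi_p$, while the lower bound \eqref{eqn:1012} (Proposition~\ref{prop-5.6}) gives $|\partial_{\mbC^\infty}U|/\sqrt{|U|}\ge(1-\epsilon)\theta_p^{-1/2}\varphi_p$ provided $|U|\ge n^\zeta$ for some $\zeta\in(\tfrac25,\tfrac12)$ (which holds, since a set with $|U|<n^\zeta$ would have ratio $\gg n^{-1/2}$ and could not be a minimizer). Dividing these two inequalities yields $\sqrt{|U|}\ge(1-\epsilon)(1+\epsilon)^{-1}\sqrt n$ after rearranging (using $|U|\le n$), i.e.\ $|U|/n\ge(1-\epsilon)^2(1+\epsilon)^{-2}$, which for $\epsilon$ small is $\ge1-\delta$. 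The argument for $\hat\UU_{\mbC^n}$ is identical after replacing $n$ by $n^2\asymp|\rmB_\infty(n)|$: the lower bound \eqref{eqn:1012} and the Cheeger limit force $|U|\ge\tfrac12\theta_p(1-\delta)|\rmB_\infty(n)|$, giving \eqref{E:5.43}.

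Connectedness of every $U\in\hat\UU_{\mbC^n}$: if $U$ were disconnected, one of its connected components $U'$ would have $|\partial_{\mbC^\infty}U'|/|U'|\le|\partial_{\mbC^\infty}U|/|U|$, so $U'$ is also a minimizer, and then the just-proved volume bound applies to $U'$; but $|U'|<|U|\le\tfrac12|\mbC^n|$ while the bound forces $|U'|\ge\tfrac12\theta_p(1-\delta)|\rmB_\infty(n)|$, which together with \eqref{E:5.31} and $|U|\le\tfrac12|\mbC^n|$ leaves no room for $U'$ to be a strict subset once $\delta$ is small — contradiction. The same reasoning handles $\hat\UU_{\mbC^\infty}(n)$. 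Finally, \eqref{E:5.44c} is just the lower-volume bound fed back into the matching upper bound: with $|U|\ge(1-\delta)n$ (resp.\ $\ge\tfrac12\theta_p(1-\delta)|\rmB_\infty(n)|$) and the a.s.\ convergence of $\Phi$ to its limit, $|\partial_{\mbC^\infty}U|\le(1+\epsilon)\theta_p^{-1/2}\varphi_p\sqrt n\le(1+\epsilon)(1-\delta)^{-1/2}\theta_p^{-1/2}\varphi_p\sqrt{|U|}$, and adjusting constants gives $(1+\delta)$. The main obstacle is bookkeeping: one must carefully separate the profile case (constraint $|U|\le n$, so the natural competitor scale is $n$, not $n^2$) from the Cheeger case (constraint $|U|\le\tfrac12|\mbC^n|$, scale $n^2$), and make sure the competitor $U^\star$ supplied by Proposition~\ref{prop-5.7} — which is built at scale $R\asymp n$ and hence has volume of order $n^2$ — is used as a competitor for the correct isoperimetric quantity (namely $\Phi_{\mbC^\infty}(|U^\star|)$ and $\widetilde\Phi_{\mbC^{m}}$ for $m\asymp n$), after which the a.s.\ convergence of the profile at all large arguments closes the loop.
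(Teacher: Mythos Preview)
Your proposal is correct and follows essentially the same approach as the paper: combine the lower bound \eqref{eqn:1012} from Proposition~\ref{prop-5.6} with the already-established almost-sure limit values of $n^{1/2}\Phi_{\mbC^\infty}(n)$ and $n\,\widetilde\Phi_{\mbC^n}$ to squeeze out the volume lower bound, then deduce connectedness and \eqref{E:5.44c}. The paper phrases the volume bound as a contradiction (if $|U|<(1-\delta)n$ then the ratio exceeds the limit), but the content is identical to your ``divide the two inequalities'' step.

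One comment: your discussion of the competitor $U^\star$ from Proposition~\ref{prop-5.7} is an unnecessary detour, and you correctly sense this yourself when you write ``more cleanly\ldots''. Once Theorems~\ref{thm:Profile}--\ref{thm:Cheeger} are proved, the limit values are available directly, so there is no need to revisit the construction of $U^\star$ or worry about its scale; the paper simply invokes the limits. Also, for \eqref{E:5.44c} the paper's argument is slightly shorter than yours: it does not use the volume lower bound at all, only $|U|\le n$ (resp.\ $|U|\le\tfrac12|\mbC^n|$), since if $|\partial_{\mbC^\infty}U|/\sqrt{|U|}>(1+\delta)\theta_p^{-1/2}\varphi_p$ then $|\partial_{\mbC^\infty}U|/|U|>(1+\delta)\theta_p^{-1/2}\varphi_p/\sqrt{|U|}\ge(1+\delta)n^{-1/2}\theta_p^{-1/2}\varphi_p$, already contradicting the limit. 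Finally, for connectedness in $\hat\UU_{\mbC^\infty}(n)$ there is nothing to prove --- it is part of the definition \eqref{eqn:Profile} --- so your ``same reasoning handles'' remark is superfluous there.
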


\begin{proofsect}{Proof}
Suppose the event in~\eqref{eqn:1012} occurs for $n\ge n_0$. If, for arbitrarily large $n$'s,
a connected set~$U\ni0$ satisfies $|U| <  n(1-\delta)$, then \twoeqref{E:5.34}{E:5.35} with $\epsilon$ such that $\sqrt{1-\delta} < 1-\epsilon$, show that $|\partial_{\mbC^\infty}U|/|U|$ exceeds the established a.s.\ limit value of $n^{1/2}\Phi_{\mbC^\infty}(n)$. So~$U$ cannot be a minimizer of~$\Phi_{\mbC^\infty}(n)$ for~$n$ sufficiently large. The same argument applies to connected minimizers $U\in\hat\UU_{\mbC^n}$. However, by~\eqref{E:5.31}, $|U|\le\frac12(1+\delta)\theta_p|\rmB_\infty(n)|$ for all minimizers when~$n\ge n_0$ and so (for~$\delta$ small)~$U$ must be connected.

To see that \eqref{E:5.44c} holds for any minimizer once~$n$ is large enough, note that the opposite inequality would imply, using the first part of \eqref{E:5.35}, that the already established limit values in \eqref{E:1.6} and \eqref{E:1.8} are larger by a factor of at least $(1+\delta)$ than what they should be.
\end{proofsect}

\begin{proofsect}{Proof of Theorems~\ref{thm-1.6}--\ref{thm-1.7}}
The limits \eqref{E:1.15} and \eqref{E:1.18} are now a direct consequence of Corollary~\ref{cor-volume}, the natural constraints on the minimizers and also \eqref{E:5.31}. 
Let $\epsilon > 0$ be arbitrarily small and suppose that $n$ is large enough such that
the event in \eqref{eqn:1012} for some $\zeta \in (\frac25, \frac12)$, the conclusions of Corollary~\ref{cor-volume} with $\delta := \epsilon$ and Lemma~\ref{lem:CInfDensity} with $R=n$ hold for such $\epsilon$. Pick $U$ to be any minimizer of either of the two problems. Setting 
\begin{equation}
\widetilde W_n:=\begin{cases}
\theta_p^{-1/2} n^{1/2} \widehat{W}_p,\qquad&\text{for Theorem~\ref{thm-1.6}},
\\*[1mm]
\sqrt{2} n  \widehat{W}_p,\qquad&\text{for Theorem~\ref{thm-1.7}},
\end{cases}
\end{equation} 
it will be enough to show that
\begin{equation}
\label{E:5.44}
	d_{\rm H}(U, \xi + \widetilde W_n) \leq C \sqrt{\Leb(\widetilde W_n)} \epsilon
\end{equation}
for some $\xi \in \R^2$ and $C = C(p) > 0$

Consider the right-boundary circuit~$\gamma\subseteq U$ as in STEP~1 of the proof of Proposition~\ref{prop-5.6}. By the inequality \eqref{E:5.44c},
\begin{equation}
\label{E:5.45}
\frb(\gamma)\le|\partial_{\mbC^\infty} U|\le|U|^{1/2}(1+\epsilon)\theta_p^{-1/2}\varphi_p.
\end{equation}
Proposition~\ref{lem:CircuitToLoopApprox} implies that there is a simple closed curve~$\lambda$ approximating~$\gamma$ so that properties (1-3) in STEP~4 of the proof of Proposition~\ref{prop-5.6} hold.
Thanks to \eqref{E:5.31} and $|\vol(\gamma)|\gg1$ is large,
\begin{equation}
\label{E:5.46}
|U|\le(1+\epsilon)\theta_p\bigl|\,\vol(\gamma)\bigr|\le\frac{1+\epsilon}{1-\epsilon}\,\theta_p\,\Leb(\intr(\lambda)).
\end{equation}
Since also by property~(3) in STEP~4 of the above proof
\begin{equation}
\label{E:5.46a}
\frb(\gamma)\ge(1-\epsilon)\len_{\beta_p}(\lambda)
\end{equation}
the scaled version~$\hat\lambda$ of~$\lambda$ normalized so that $\Leb(\intr(\hat\lambda))=1$ satisfies
\begin{equation}
\label{E:5.47}
\len_{\beta_p}(\hat\lambda)
\le\frac{(1+\epsilon)^{3/2}}{(1-\epsilon)^{3/2}}\varphi_p,
\end{equation}
i.e., $\hat\lambda$ is a near-minimizer of the Wulff variational problem.
The generalized Bonnesen inequality \eqref{E:1.15a} then gives 
\begin{equation}
\label{E:5.49a}
d_{\rm H}(\intr(\hat\lambda),\, \xi' + \widehat{W}_p)\le C\epsilon,
\end{equation}
for some $\xi' \in \R^2$ and $C>0$.

To get from this a bound on the distance between the Wulff shape and $U$, we use the triangle inequality. Setting $\xi := \sqrt{\Leb(\intr(\lambda))} \xi' $, the left-hand side of \eqref{E:5.44} is bounded above by
\begin{equation}
\label{E:5.49}	
\begin{split}
	d_{\rm H} \big(U, \vol(\gamma) \big) \ &+ \ 
	d_{\rm H} \big(\vol(\gamma), \intr(\lambda) \big) \ + \ 
	\sqrt{\Leb(\intr(\lambda))}\,\, d_{\rm H} \big(\intr(\hat{\lambda}), \xi' + \widehat{W}_p \big) \\
	 &+ \ d_{\rm H} \big(\xi + \sqrt{\Leb(\intr(\lambda))} \widehat{W}_p, \xi + \widetilde W_n \big)	\,.
\end{split}
\end{equation}
We shall show that each term is bounded above by $C \epsilon \sqrt{\Leb(\widetilde W_n)}$ where $C > 0$ does not depend on $n$, $\epsilon$ or $U$. This will validate \eqref{E:5.44} as needed. 

First, notice that~\eqref{E:5.46} gives a lower bound on $\Leb(\intr(\lambda))$. To get the opposite bound, we use the definition of $\varphi_p$ together with \eqref{E:5.46a} and~\eqref{E:5.45}
\begin{equation}
\label{E:5.51}
	\Leb(\intr(\lambda)) \leq \frac{\len_{\beta_p}^2(\lambda)}{\varphi_p^2} \leq 
		|U| \frac{(1+\epsilon)^2}{(1-\epsilon)^2} \theta_p^{-1} \,.
\end{equation}
The restriction on the size of $U$ and~\eqref{E:5.31} imply that 
\begin{equation}
\label{E:5.52}
	|U| \leq (1+\epsilon) \theta_p \Leb(\widetilde W_n)
\end{equation}
Combining this with~\eqref{E:5.49a}, \eqref{E:5.51} and STEP~4 of the proof of Proposition~\ref{prop-5.6}, the two middle terms in~\eqref{E:5.49} are bounded above by $C \epsilon \sqrt{\Leb(\widetilde W_n)}$. 

As for the last term in~\eqref{E:5.49}, it is bounded by
\begin{equation}
\label{E:5.53}
	\Bigl| \sqrt{\Leb(\intr(\lambda))} - \sqrt{\Leb(\widetilde W_n)}\, \Bigr| \diam \big( \widehat W_p \big) \,.
\end{equation}
In order to estimate this difference we use the upper and lower bounds on $\Leb(\intr(\lambda))$ in~\eqref{E:5.46} and~\eqref{E:5.51} in conjunction with the upper and lower bounds on $|U|$ given by~\eqref{E:5.52} and Corollary~\ref{cor-volume}. This yields again the bound $C\epsilon \Leb(\widetilde W_n)$ for \eqref{E:5.53}.

Finally, let $r:=d_{\rm H}(U,\vol(\gamma))$. As $U \subseteq \vol(\gamma)$, there must be an~$x\in\vol(\gamma)$ such that the box~$x+\rmB_\infty(r)$ has no intersection with~$U$. But once~$r\ge\log^{10} n$, Lemma~\ref{lem:CInfDensity} implies that
\begin{equation}
|U|\le(\theta_p+\epsilon)\bigl|\vol(\gamma)\bigr|-(\theta_p-\epsilon)\bigl|\rmB_\infty(r)\bigr|
\end{equation}
Plugging this into \eqref{E:5.45} instead of \eqref{E:5.46} and applying $\len_{\beta_p}(\hat\lambda)\ge\varphi_p$ yields $|\rmB_\infty(r)|\le C\epsilon|U|$. In light of 
\eqref{E:5.52}, this is only possible if $r \leq C\epsilon \Leb(\widetilde W_n)$. 
\end{proofsect}

\section*{Acknowledgments}
\noindent
This research has been partially supported by 
NSF grant DMS-1106850, NSA grant H98230-11-1-0171, GA\v CR project P201-11-1558, ERC grant StG 239990,
ISF grant 817/09 and the ESF project ``Random Geometry of Large Interacting Systems and Statistical Physics'' (RGLIS). 
We wish to thank Itai Benjamini for presenting us with Conjecture~\ref{con1} and to G\'abor Pete for many remarks and discussions at various stages of this work.

\end{document}